\documentclass[a4paper,12pt]{amsart}
\usepackage{amssymb}
\usepackage{amsmath}
\usepackage{lscape}
\usepackage{tikz}
\usepackage{xcolor}
\usetikzlibrary{positioning, arrows.meta, shapes.multipart, calc}
\usepackage{capt-of}
\usepackage{makecell}
\newtheorem{theorem}{Theorem}[section] 
\newtheorem{lemma}[theorem]{Lemma} 
\newtheorem{corollary}[theorem]{Corollary} 
\newtheorem{proposition}[theorem]{Proposition}
 
\theoremstyle{definition}
\newtheorem{definition}[theorem]{Definition} 
\newtheorem{remark}[theorem]{Remark}

\newtheorem{example}[theorem]{Example}

\title[]{Classification of degenerate Verma modules over $E(4,4)$}

\author{Nicoletta Cantarini}\author{Fabrizio Caselli}\author{Victor Kac}

\address{Nicoletta Cantarini and Fabrizio Caselli, Dipartimento di matematica, Universit\`a di Bologna, Piazza di Porta San Donato 5, 40126 Bologna, Italy; Victor Kac, Math.\ Department, MIT, Cambridge, MA02139}

\email{nicoletta.cantarini@unibo.it}
\email{fabrizio.caselli@unibo.it}
\email{kac@math.mit.edu}

\subjclass[2010]{17B15, 17B25 (primary), 17B65, 17B70 (secondary)}
\keywords{Lie superalgebras, Verma modules, Singular vectors, Exceptional de Rham complexes}
 
  \DeclareMathOperator{\Hom}{Hom}

  \DeclareMathOperator{\diver}{div}

  \DeclareMathOperator{\Ind}{Ind}

\newcommand{\C}{\mathbb{C}}
\newcommand{\Z}{\mathbb{Z}}

\newcommand{\slq}{\mathfrak{sl}_4}

\newcommand{\de}{\partial}
\newcommand{\phat}{\hat{\mathfrak{p}}(4)}
\newcommand{\inlinewedge}{\textrm{\raisebox{0.6mm}{\footnotesize $\bigwedge$}}}
\newcommand{\displaywedge}{\textrm{\raisebox{0.6mm}{\tiny $\bigwedge$}}}

\begin{document}
%\today

\begin{abstract}
In this paper we classify degenerate Verma modules over the linearly compact Lie superalgebra $E(4,4)$. This completes the description of Verma modules over the exceptional linearly compact Lie superalgebras. As in the other cases all degenerate modules and morphisms between them give rise to infinite bilateral complexes which may be viewed as a generalization of de Rham complexes. 
\end{abstract}

\maketitle
\section{Introduction}
Let $\Omega^0(n)$ be the linearly compact algebra of formal power series in $x_1, \dots, x_n$ over $\C$ and let $\Omega(n)$ be the superalgebra of differential forms with coefficients in $\Omega^0(n)$. Recall that $\Omega(n)$ is a unital (super) commutative associative superalgebra over the even subalgebra $\Omega^0(n)$ with odd generators $dx_1, \dots, dx_n$. The superalgebra $\Omega(n)$ carries an odd square zero derivation $d$, defined by
\[d(x_i)=dx_i,\,\, d(dx_i)=0,\,\, i=1, \dots, n,\]
and a $\Z$-gradation 
\begin{equation}
\Omega(n)=\bigoplus_{j=0}^n\Omega^j(n),
\label{Omegagradation}
\end{equation}
defined by
\[
\deg x_i=0,\,\, \deg(dx_i)=1.
\]
The formal de Rham complex is the following exact sequence of linearly compact vector spaces over $\C$:
\begin{equation}
\Omega^0(n)\xrightarrow{d}\Omega^1(n)\xrightarrow{d}\cdots\xrightarrow{d}\Omega^{n-1}(n)\xrightarrow{d}\Omega^n(n).\label{Omegasequence}
\end{equation}
The (linearly compact) Lie algebra of formal vector fields
$W(n):=\{\sum_{i=1}^nP_i\de_i\,|\, P_i\in\Omega^0(n)\}$ acts continuously on the superalgebra $\Omega(n)$ via Lie derivatives, this action preserves gradation \eqref{Omegagradation}, hence acts on each member of the sequence \eqref{Omegasequence}, and commutes with $d$. Sequence \eqref{Omegasequence} is called the {\it formal de Rham complex}, and $d$ is called the {\it de Rham differential}.

Taking duals of the linearly compact members of \eqref{Omegasequence}, we obtain the following exact sequence of vector spaces with discrete topology:
\begin{equation}
\Omega^0(n)^*\xleftarrow{d^*}\Omega^1(n)^*\xleftarrow{d^*}\cdots
\xleftarrow{d^*}\Omega^{n-1}(n)^*\xleftarrow{d^*}\Omega^{n}(n)^*,
\label{Omegadual}
\end{equation}
with a continuous action of the Lie algebra $W(n)$.

More explicitly the $W(n)$-modules $\Omega^j(n)^*$ can be expressed as the induced modules:
\begin{equation}
M(\displaywedge^j(\C^{n^*})):=\Ind_{W_{\geq 0}(n)}^{W(n)}\displaywedge^j(\C^{n^*}),
\label{induced}
\end{equation}
where we use the $\Z$-gradation $W(n)=\prod_{j\geq -1}W_j(n)$ defined by $\deg x_i=-\deg\de_i=1$, and the $W_0(n)$-module $\inlinewedge^j(\C^{n^*})$ is defined by the usual action of $W_0(n)\cong
\mathfrak{gl}_n(\C)$, the $W_j(n)$ with $j>0$ acting trivially. It is easy to see that
the differential $d^*$ in \eqref{Omegadual} is given by
\begin{equation}
d^*=\sum_{k=1}^n\de_k\otimes \iota_k,
\label{d*}
\end{equation}
where $\iota_k: \inlinewedge(\C^{n^*})\rightarrow \inlinewedge(\C^{n^*})$ is the odd derivation, defined by $\iota_k(x_j^*)=\delta_{kj}$ (contraction by $x_k$), where $\{x_i\}$ is the standard basis of $\C^n$, and $\{x_i^*\}$ is the dual basis of $\C^{n*}$.

Let $F$ be an irreducible $W_0(n)=\mathfrak{gl}_n(\C)$-module, extended trivially to $W_{>0}(n)$, and let
$$M(F):=\Ind_{W_{\geq 0}(n)}^{W(n)}F$$
be the corresponding induced $W(n)$-module. The famous theorem of Rudakov \cite{R1} states that the $W(n)$-module $M(F)$ is irreducible if and only if it is not one of the modules appearing in \eqref{Omegadual}
(except for the last term); moreover, all non-trivial morphisms between the $W(n)$-modules $M(F)$ appear in \eqref{Omegadual}.

A similar result holds for the other three simple infinite-dimensional linearly compact Lie algebras $S(n)$, $H(n)$, and $K(n)$ in Cartan's classification. For $S(n)$ and $H(n)$ this follows from \cite{R2} and \cite{E}, and for
$K(n)$ this follows from \cite{Ko}, where all the singular vectors were found, and the paper of Rumin \cite{Ru}, where an analogue of  the de Rham complex was constructed for contact manifolds.

The classification of simple infinite-dimensional linearly compact Lie superalgebras over $\C$ is much richer than in the Lie algebra case. It consists of 10 infinite series and 5 exceptional examples, denoted by $E(1,6)$, $E(3,6)$, $E(3,8)$, $E(4,4)$, and $E(5,10)$ \cite{K}. Maximal open subalgebras in all these Lie superalgebras have been classified in \cite{CK}. Unlike the Lie algebra case, which contains a unique maximal open subalgebra, the simple infinite-dimensional linearly compact Lie superalgebras may contain several maximal open subalgebras, however each of them, denote it by $L$, contains essentially a unique maximal open subalgebra $L_{\geq 0}$, invariant with respect to all inner automorphisms of $L$. Moreover, most of them, including all 5 exceptional ones, admit a $\Z$-gradation
\begin{equation}
L=\prod_{j=-d}^\infty L_j,\,\,\mbox{where}\, d=1,2\,\mbox{or}\,3,
\label{maximalgraded}
\end{equation}
by finite-dimensional subspaces.

As above, given a finite-dimensional irreducible $L_0$-module $F$, we consider the induced $L$-module
\begin{equation}
M(F)=\Ind_{L_{\geq 0}}^L F,
\label{inducedmodule}
\end{equation}
where the $L_{\geq 0}$-module $F$ is obtained from the $L_{0}$-module by letting ${L_j}$ act trivially on $F$ for $j>0$.
We call the $L$-module $M(F)$ a {\it Verma module}. The first basic problem is to classify all $F$, for which the $L$-module $M(F)$ is not irreducible; such a Verma module is called {\it degenerate}. 
In fact every continuous irreducible locally finite module over $L$ is a 
quotient of a unique Verma module. By a locally finite module $V$ we mean a module such that $L_{\geq 0}v$ has finite dimension for every $v\in V$. 

The Rudakov classification of degenerate Verma modules over $L=W(n)$ is described above.
The method of classification consists in finding in $M(F)$ all singular vectors of positive degree. Note that the $\Z$-gradation
\eqref{maximalgraded} induces on $M(F)$ the $\Z$-gradation
\begin{equation}
M(F)=\bigoplus_{j\in\Z_{\geq 0}}M(F)_{-j},\,\,\mbox{where}\, M(F)_0=F.
\label{gradedVerma}
\end{equation}
If $v\in M(F)_{-s}$, we say that $v$ has {\it degree} $s$. A non-zero vector $v\in M(F)$ is called {\it singular} if 
\[L_jv=0\,\,\mbox{for}\, j>0.\]
Obviously, all non-zero homogeneous components of a singular vector in gradation \eqref{gradedVerma} are singular as well, and $M(F)$ is degenerate if and only if it has a singular vector of positive degree.

Along these lines, which were used also in \cite{R1} and \cite{R2}, all degenerate Verma modules were classified in \cite{KR1, KR2, KR3}, \cite{KR4},
\cite{BKL1, B}, and \cite{CCK2} for $E(3,6)$, $E(3,8)$, $E(1,6)$, and $E(5,10)$ respectively. The simplifying property in all these cases is that $L_0$ is a reductive Lie algebra, which is not the case for the remaining Lie superalgebra $L=E(4,4)$, studied in the present paper. In this case,
$L_0=\phat$, the non-trivial central extension  of the simple Lie superalgebra $\mathfrak{p}(4)$. It is well-known that the simple Lie superalgebra of the ``strange" series $\mathfrak p(n)$ \cite{K77} admits a non-trivial central extension if and only if $n=4$.

The classification of irreducible modules over $\phat$ was obtained in \cite{S}, and an exposition of it is given in Section 3 of the present paper. In Section 4 we recall the classification of singular vectors of Verma modules over $W(4)$ and $S(4)$ from \cite{R1} and \cite{R2}, which, due to the fact that the even part of $E(4,4)$ is isomorphic to $W(4)$, allows us to prove that the degrees of singular vectors in Verma modules over $E(4,4)$ do not exceed 5 (Proposition \ref{bound}).

In Sections 6--9 we describe all singular vectors of degree 1,2,3 and 4 
(Theorems \ref{listagrado1}, \ref{listagrado2}, \ref{listagrado3}, and 
\ref{listagrado4}). The absence of singular vectors of degree 5 is proved with the help of a computer.

Of course, each singular vector in a Verma module $M$ gives rise to a morphism from another Verma module to $M$. In Proposition \ref{explicitmorphisms}
we describe the morphisms of degree 1 explicitly.

A remarkable, and still unexplained observation is that all degenerate Verma modules for all exceptional $L$ can be arranged in a beautiful picture, consisting of infinitely many sequences of modules and we show that this holds for $L=E(4,4)$ as well (see Figure \ref{all}).

In a small number of cases the composition of two consecutive maps
\[M_1\xrightarrow{\varphi_1} M_2\xrightarrow{\varphi_2}M_3\]
between Verma modules is not zero, but replacing it by $M_1\xrightarrow{\varphi_2\circ\varphi_1} M_3$, we can make the composition of consecutive maps zero, to obtain, what we call the 
{\it exceptional de Rham complex}, associated with $L$.
For $L=E(4,4)$ this complex is presented  in Figures \ref{deRham1} and \ref{deRham2}. An interesting open problem is to compute the cohomology of this complex.

\section{The Lie superalgebra $E(4,4)$}\label{Section2}
The Lie superalgebra $L=E(4,4)$ is defined as follows (\cite[\S 5.3]{CK2}). Its even part $L_{\bar{0}}$ is the Lie algebra $W(4)$ of vector fields in four even indeterminates  $x_1,x_2,x_3,x_4$ with coefficients in $\Omega^0(4)$, the algebra of formal power series over $\C$, and its odd part $L_{\bar{1}}$ is the space $\Omega^1(4)$ of all  one-forms in $x_1,x_2,x_3,x_4$  with coefficients in  $\Omega^0(4)$, with the following action of $L_{\bar 0}$: for $X\in L_{\bar{0}}$,
$\omega\in L_{\bar{1}}$,
$$[X,\omega]=L_X(\omega)-\frac{1}{2}\diver(X)\omega,$$
where $L_X(\omega)$ denotes the Lie derivative of the one-form $\omega$ along the vector field $X$.  Furthermore, for $\omega_1, \omega_2\in L_{\bar{1}}$ the commutator is
$$[\omega_1, \omega_2]=d\omega_1\wedge \omega_2+\omega_1\wedge d\omega_2,$$
where the differential three-form in the right hand side is identified with vector fields via contraction with the standard volume form. 
%In other words, $L_{\bar{1}}$ can be interpreted as the space $\Omega_1(4)^{-\frac{1}{2}}$ of formal sections of the bundle $T^*\otimes K^{-\frac{1}{2}}$ over the formal four disk.

We will use the notation $\de_{i}$ to denote the vector field $\frac{\de}{\de x_i}$, and we will denote by $d_i$ the differential one-form $dx_i$.

The Lie superalgebra $L$ has, up to conjugacy, a unique irreducible $\Z$-grading $L=\prod_{j\geq -1} L_j$, called the principal grading, defined by setting $\deg(x_i)=1$ and $\deg d=-2$ (\cite[Corollary 9.8]{CK}).
The subalgebra $L_0$ is isomorphic to the unique nontrivial central extension $\phat$ of the strange Lie superalgebra $\mathfrak p(4)$ \cite[\S 2.1.3]{K77} (where it is denoted by $\mathfrak p(3)$). This is a simple finite-dimensional Lie superalgebra with the following  consistent irreducible  $\Z$-grading:
$$\mathfrak p(4)=\mathfrak p(4)_{-1}\oplus \mathfrak p(4)_0\oplus \mathfrak p(4)_1$$
where $\mathfrak p(4)_0\cong \slq$ and, as $\slq$-modules, $\mathfrak p(4)_{-1}\cong \inlinewedge^2(\C^{4^*})$ and 
$\mathfrak p(4)_1\cong S^2(\C^4)$. 
Observe that $\inlinewedge^2(\C^{4^*})$ is isomorphic to the standard $\mathfrak{so}_6$-module, with scalar product $(\cdot,\cdot)$; define 
$\varphi:\inlinewedge^2(\mathfrak p(4))\rightarrow \C$ by setting, for $x\in \mathfrak p(4)_i, y\in \mathfrak p(4)_j$,
$$\varphi(x,y):=\left\{\begin{array}{cc}
	(x,y) & {\mbox{if}}\, i=j=-1,\\
	0 & {\mbox{otherwise.}}
\end{array}\right.$$
Then $\phat$ is the central extension of $\mathfrak p(4)$ defined by this cocycle (\cite[Example 3.6]{K}, \cite{S}).
Therefore one has the following $\Z$-graded Lie superalgebra
$$\phat=\phat_{-2}\oplus\phat_{-1}\oplus \phat_0\oplus \phat_1,$$
where $\phat_{-2}=\C C$ is central. The element $C$ acts on $L$ as a grading element with respect to the principal grading, i.e., $[C,a]=ja$
for $a\in L_j$.

In the isomorphism $L_0\cong \phat$ we have that $\phat_{1}$ is spanned by the closed differential one-forms \[b_{ij}=x_id_j+x_jd_i\] (with $i,j=1,2,3,4$), $\phat_0\cong \slq$ is spanned by the vector fields $x_i\de_{j}$ and $h_{ij}=x_i\de_{i}-x_j\de_{j}$ (with $i,j=1,2,3,4$, $i\neq j$), $\phat_{-1}$ is spanned by the differential one-forms \[a_{ij}=x_id_j-x_jd_i\] (with $i,j=1,2,3,4)$ and $\phat_{-2}$ is spanned by $C=x_1\de_{1}+x_2\de_{2}+x_3\de_{3}+x_4\de_{4}$. Note that $\phat_0\oplus \phat_{-2}\cong \mathfrak{gl}_4$.

\bigskip
Throughout the paper we shall use the following explicit formulas for the brackets. First, $[b_{ij}, b_{lm}]=0$. Second,  the brackets between elements $b_{ij}\in \phat_1$ and $a_{lm}\in \phat_{-1}$ are given by the following rules:
\begin{itemize}
	\item $[b_{ij},a_{lm}]=0$ if $|\{i,j,l,m\}|\leq 2$
	\item $[b_{ij},a_{lm}]=2\epsilon_{ijlm} h_{ij}$ if $|\{i,j,l,m\}|= 4$;
	\item $[b_{jj},a_{lm}]=4\epsilon_{ijlm} x_j \de_i$;
	\item $[b_{ij},a_{im}]=-2\epsilon _{ijlm} x_i \de_l$.
\end{itemize}
Here we denote by $\epsilon_{ijlm}$  the sign of the permutation ${ijlm}$, where we mean that $\epsilon_{ijlm}=0$ whenever two of the indices coincide.
In particular we have
\begin{itemize}
	\item $[b_{44},a_{12}]=4x_4\de_3$;
	\item $[b_{44},a_{13}]=-4x_4\de_2$;
	\item $[b_{44},a_{23}]=4x_4 \de_1$.
\end{itemize}

Finally, $[a_{ij},a_{lm}]=2\epsilon_{ijlm}C$.

Let us fix the Borel subalgebra $B=\langle x_i\partial_j, h_{ij}  ~|~ i<j\rangle$ of $\phat_0=\mathfrak{sl}_4$ and  consider the usual set of simple roots of the corresponding root system, denoted by $\{\alpha_{12},\alpha_{23},\alpha_{34}\}$. We let $\Lambda$ be the weight lattice of $\mathfrak{sl}_4$ and we express all weights of $\mathfrak{sl}_4$ using their coordinates with respect to the fundamental weights $\omega_{12},\omega_{23},\omega_{34}$, i.e., for $\lambda\in \Lambda$ we write $\lambda=(\lambda_{12},\lambda_{23},\lambda_{34})$ for some $\lambda_{i\,i+1}\in \mathbb Z$ to mean $\lambda=\lambda_{12}\omega_{12}+\lambda_{23}\omega_{23}+\lambda_{34}\omega_{34}$.
If $\lambda=(a,b,c)\in \Lambda$ is a dominant weight, i.e., $a,b,c\in\Z_{\geq 0}$ and $t\in \C$, we shall denote by $F_t(a,b,c)$ the irreducible $\mathfrak{gl}_4$-module of highest weight $(a,b,c)$ where the central element $C$ acts as multiplication by $t$.  

The component $L_{-1}$ has superdimension $(4|4)$ and is an irreducible $\phat$-module, spanned by $\de_1,\de_2,\de_3,\de_4,d_1,d_2,d_3,d_4$, which is isomorphic as a $\mathfrak{gl}_4$-module to $F_{-1}(1,0,0)\oplus F_{-1}(0,0,1)$. 
The component $L_1$  is also an irreducible $\phat$-module and, as a $\mathfrak{gl}_4$-module, it decomposes as follows:
$L_1\cong F_1(2,0,1)\oplus F_1(1,0,0)\oplus F_1(3,0,0)\oplus F_1(1,1,0)$.

Recall that by Weyl dimension formula we have
\[
\dim F_t(a,b,c)=\frac{1}{12}(a+1)(b+1)(c+1)(a+b+2)(b+c+2)(a+b+c+3).
\]
We let $\mathfrak p^+= \phat_{-2}\oplus \phat_0 \oplus \phat_1$ and we extend the action of $\mathfrak{gl}_4=\phat_{-2}+\phat_0$ on $F_t(a,b,c)$ to $\mathfrak p^+$  letting $\phat_1$ act trivially.
We let 
\[
K_t(a,b,c)=\mathcal U(\phat)\otimes _{\mathcal U(\mathfrak p^+ )}F_t(a,b,c)
\]
be the corresponding (thin) Kac module. This is a finite dimensional vector super space
with $F_t(a,b,c)$ an even subspace, and as a $\slq$-module we have
\begin{equation}\label{uno}
K_t(a,b,c)\cong \mathcal U(\phat_{-1})\otimes_{\C} F_t(a,b,c).
\end{equation}
Every irreducible finite dimensional $\phat$-module is a quotient of a Kac module $K_t(a,b,c)$. We will dedicate the following section to a description of these modules.

\section{Irreducible $\phat$-modules}
In this section we provide an explicit description of some results contained 
in \cite{FT} for $t=0$ and in \cite{S} for $t\neq 0$. We refer to these papers for all details although we will use slightly different notation.

It is straightforward that if $t=0$ the Kac module $K_0(a,b,c)$ has a unique irreducible quotient that we denote by $W_0(a,b,c)$. Nevertheless, if $t\neq 0$,  the situation is different as it is described by the following proposition which can be deduced from \cite[\S 4]{S}.

\begin{proposition}\label{unique} If $t\neq 0$ the Kac module $K_t(a,b,c)$ has a unique irreducible quotient unless $a=c=0$ and $b\geq 1$. Moreover, for all $b\geq 2$, $K_t(0,b,0)$ is the direct sum of two irreducible submodules and $K_t(0,1,0)$ is the direct sum of two submodules one of which is irreducible.
\end{proposition}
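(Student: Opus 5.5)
The approach is to exploit the fact that, for $t\neq 0$, the odd part $\phat_{-1}$ acts on any module with $C=t$ through a Clifford algebra. Since $[a_{ij},a_{lm}]=2\epsilon_{ijlm}C$, on $K_t(a,b,c)$ the image of $\mathcal U(\phat_{-1})$ is the Clifford algebra $Cl_6$ of the nondegenerate form $t(\cdot,\cdot)$ on $\inlinewedge^2(\C^{4^*})$. Hence \eqref{uno} refines to an $\slq$-isomorphism $K_t(a,b,c)\cong Cl_6\otimes F_t(a,b,c)$, where $Cl_6\cong\inlinewedge(\inlinewedge^2\C^{4^*})$ has dimension $64$.

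Next I would use $\slq\cong\mathfrak{so}_6$ to write $Cl_6\cong \mathrm{End}(S)$ with $S=\C^4\oplus\C^{4^*}$ the spinor module. This is equivariant for the adjoint action of $\slq$, and the generators $a_{ij}$ act by left multiplication, so $K_t(a,b,c)\cong S\otimes(S^*\otimes F)$ as a module over $\phat_{-1}+\phat_0$.

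The uniqueness of the irreducible quotient is equivalent to the sum of all proper submodules being proper. Since $K_t$ is generated by the highest weight vector $v$ of $F=F_t(a,b,c)$, a submodule is proper iff it does not contain $v$. So I would study the $\slq$-highest weight vectors $u$ of weight $\lambda=(a,b,c)$ in $Cl_6\otimes F$. These come from the zero-weight part of $Cl_6$: for example $1$, $a_{12}a_{34}$, $a_{13}a_{24}$, $a_{14}a_{23}$, and their products, together with higher terms paired with lower weight vectors of $F$. For each such $u$ I would check whether repeated application of $\phat_1$ (the $b_{ij}$, which kill $F$) and of $\phat_{-1}$ returns a nonzero multiple of $v$. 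To do this I would use the bracket table $[b_{ij},a_{lm}]$ to commute the $b$'s to the right, producing $\mathfrak{gl}_4$-elements acting on $F$ by explicit scalars depending on $a,b,c,t$. The goal is to show that whenever $(a,c)\neq(0,0)$ every vector of weight $\lambda$ not in $\C v$ generates a submodule containing $v$, or else lies in a submodule avoiding $v$ which is contained in a single fixed maximal one. Either way the top is simple.

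For the exceptional case $F=F_t(0,b,0)$, I would exhibit the splitting directly. The volume element $\Gamma=a_{12}a_{13}a_{14}a_{23}a_{24}a_{34}$ is, up to a scalar involving $t$, an involution in $Cl_6$. It commutes with $\slq$ and anticommutes with the $a_{ij}$. The weight $(0,b,0)$ is fixed by the outer automorphism of $\mathfrak{so}_6$ exchanging $\C^4$ and $\C^{4^*}$. I expect to combine $\Gamma$ with the corresponding twist on $F$ to produce an endomorphism of $K_t(0,b,0)$ commuting with all of $\phat$, including $\phat_1$, and squaring to a nonzero scalar. Its two eigenspaces then give the decomposition into two submodules, and $v$ has nonzero components in both.

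For $b\geq 2$ I would show each summand is irreducible by rerunning the highest weight analysis inside the summand. For $b=1$ the same analysis should show one summand is irreducible while the other contains a proper submodule, detected by a degeneration of one of the scalars computed above at $b=1$.

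I expect the main obstacle to be the middle step: controlling all $\slq$-highest weight vectors of weight $\lambda$ in $Cl_6\otimes F$ for general $(a,b,c)$, and computing the $\phat_1$-action on them uniformly. If a direct computation becomes unwieldy, my fallback is the factorization $K\cong S\otimes(S^*\otimes F)$. I would decompose $S^*\otimes F$ into at most four or eight $\slq$-irreducibles via Pieri's rule. I would then argue that the top is simple unless two constituents can produce vectors of weight $\lambda$ with the same infinitesimal character, which should happen exactly when $a=c=0$.
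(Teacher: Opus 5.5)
Your factorization $K_t(a,b,c)\cong S\otimes(S^*\otimes F)$ over $\phat_{\le 0}$ and your cyclicity reformulation are correct. However, the two steps that carry the content are either missing or would fail. First, the splitting of $K_t(0,b,0)$ via the volume element does not work. Every even $\phat_{\le 0}$-endomorphism of $K_t$ has the form $\mathrm{id}_S\otimes T$ with $T\in\mathrm{End}_{\slq}(S^*\otimes F)$. The only such operator $\Gamma$ yields is the chirality, $\pm1$ on $\C^4\otimes F$ and $\mp1$ on $\C^{4^*}\otimes F$: left multiplication by $\Gamma$ anticommutes with the $a_{ij}$, so you must use right multiplication, i.e.\ left multiplication composed with parity. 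For $F=F(0,b,0)$ the two chirality pieces are $F(1,b,0)\oplus F(0,b-1,1)$ and $F(0,b,1)\oplus F(1,b-1,0)$. So if your operator commuted with $\phat_1$, then $\tilde F_t(1,b,0)\oplus\tilde F_t(0,b-1,1)$ would be a $\phat$-submodule. But the actual summands are $\tilde F_t(1,b,0)\oplus\tilde F_t(0,b,1)$ and $\tilde F_t(1,b-1,0)\oplus\tilde F_t(0,b-1,1)$ (Proposition \ref{hmodules}, Figure \ref{DISEGNO}; for $b=1$ the latter is the reducible summand), and each contains one constituent of each chirality. Twisting $F$ by the diagram automorphism $\sigma$ cannot repair this. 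Such a map intertwines $x$ with $\sigma(x)$, so its composites with $\slq$-invariant operators stay $\sigma$-twisted. Moreover $\sigma$ does not extend to $\phat$, since it would carry $\phat_1\cong S^2\C^4$ to $S^2\C^{4^*}$. The symmetry of $(0,b,0)$ enters differently: each summand is $\tilde F_t(\mu)\oplus\tilde F_t(\sigma\mu)$, and these are exactly the eigenspaces of the $\slq$-Casimir of the multiplicity space $S^*\otimes F$. An operator of that type, not one built from $\Gamma$, is what you would have to show commutes with $\phat_1$.

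Second, for $(a,c)\neq(0,0)$ your middle step only restates the conclusion. $\phat$ has no grading element and $\phat_{-1}$ generates a Clifford algebra, so submodules need not be graded and the $\lambda$-weight space is many-dimensional. Hence knowing that each proper submodule misses $v$ says nothing about their sum; in $K_t(0,b,0)$, $v$ is the sum of its components in two proper submodules. Your requirement that every weight-$\lambda$ vector either generates $v$ or lies inside one fixed maximal submodule is exactly what must be proved, and you give no mechanism to decide it. The infinitesimal-character fallback is unfounded. The Casimir of $S^*\otimes F$ commutes with $\phat_{\le 0}$ but in general not with $\phat_1$: for $abc\neq 0$, $K_t(a,b,c)$ is irreducible (Proposition \ref{decomposizioniK}), yet its $\mathfrak h$-constituents do not all share one Casimir eigenvalue. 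The workable version of your idea is as follows. By your own factorization, $\phat_{\le 0}$-submodules of $K_t(a,b,c)$ correspond to $\slq$-submodules of $S^*\otimes F$, which is multiplicity free with at most eight constituents. So every $\phat$-submodule is a sum of some of the $\tilde F_t(\mu)$, and the submodule lattice is determined by which constituents $\phat_1$ connects. That finite computation is the real content; equivalently, it amounts to locating the $\phat$-singular vectors behind the morphisms of Definition \ref{morfismipi4}. The paper does not carry it out; it deduces the statement from \cite[\S 4]{S}, and uses the same $\mathfrak h$-decomposition reasoning itself only in case 4 of Proposition \ref{quozienti}.
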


We will denote by $W_t(a,b,c)$ the irreducible quotient of $K_t(a,b,c)$ when it is unique. The Kac modules $K_t(0,b-1,0)$ and $K_t(0, b,0)$, for $b\geq 2$, share an irreducible component that we denote by $W_t(0,b,0)$ so that for $b\geq 2$,
\[K_t(0,b,0)\cong W_t(0,b,0)\oplus W_t(0,b+1,0).\]
Finally $K_t(0,1,0)$ is the direct sum of $W_t(0,2,0)$ and a submodule $S$ which is not irreducible. We will denote by $W_t(0,1,0)$ the unique irreducible quotient of $S$.

\begin{example}\label{homcomp}
For $d=-1$ and $d>0$ the homogeneous component $E(4,4)_d$ of $E(4,4)$ in the principal grading is the irreducible $\phat$-module $W_d(d+2,0,0)$. %This module has a unique and  the unique $\phat$-singular vector (i.e.\ eigenvector of $B$) in $E(4,4)_d$ is $x_1^{d+1}d_1$. 

For $d=0$ the adjoint module $E(4,4)_0$ is not irreducible: nevertheless we have $[E(4,4)_0,E(4,4)_0]$ $\cong W_0(2,0,0)$ and so $E(4,4)_0\cong W_0(2,0,0)\oplus W_0(0,0,0)$.
\end{example}

\begin{example}[The standard representation of $\phat$]\label{standard}
The irreducible representation $W_t(1,0,0)$ can be easily described as follows for all $t\in \C$. Recall by the previous example that $W_{-1}(1,0,0)\cong E(4,4)_{-1}$. Now we want to realize  $W_t(1,0,0)$ as a deformation of it. So we let $W_t(1,0,0)$ be $E(4,4)_{-1}$ as a vector super space. The action of $\phat$ on $W_t(1,0,0)$ (denoted here by $\cdot$) is given by the following simple rules:
\begin{itemize}
	\item $C\cdot v=tv$ for all $v\in E(4,4)_{-1}$;
	\item $a\cdot v=-t[a,v]$ for all $a\in \phat_{-1}$ and for all even vectors $v\in E(4,4)_{-1}$;
	\item $a\cdot v=[a,v]$ for all $a\in \phat_{-1}$ and for all odd vectors $v\in E(4,4)_{-1}$;
	\item $b\cdot v=[b,v]$ for all $b\in \phat_{\geq 0}$ and all $v\in E(4,4)_{-1}$.	
\end{itemize}
\end{example}
\begin{example}\label{Wtd+2}
Examples \ref{homcomp} and \ref{standard} can be simultaneously generalized as follows: for $d=-1$ or $d> 0$ the irreducible representation $W_t(d+2,0,0)$ is a deformation of $E(4,4)_d$ given by the following rules:
\begin{itemize}
	\item $C\cdot v=tv$ for all $v\in E(4,4)_{d}$;
	\item $a\cdot v=\frac{t}{d}[a,v]$ for all $a\in \phat_{-1}$ and for all even vectors $v\in E(4,4)_{d}$;
	\item $a\cdot v=[a,v]$ for all $a\in \phat_{-1}$ and for all odd vectors $v\in E(4,4)_{d}$;
	\item $b\cdot v=[b,v]$ for all $b\in \phat_{\geq 0}$ and all $v\in E(4,4)_{d}$.	
\end{itemize}
\end{example}
\begin{example}\label{Wt200}
The special case $W_t(2,0,0)$ for $t\neq 0$ can also be obtained as a deformation of $E(4,4)_0$, the adjoint representation of $\phat$, although we recall that the latter is not irreducible. In order to define such deformation we recall from \cite{CaCaKac25} the definition of the integral of a $k$-form: if $\omega \in \Omega^k(4)$ is a $k$-form whose coefficients are homogeneous polynomials of degree $d$ we let 
\[
\tiny \int \omega=\frac{1}{d+k}\iota_C\omega,
\]
where $\iota_C(\omega)$ denotes the contraction of $\omega$ along the vector field $C$. It turns out that $\int$  commutes with the action of $\slq$ on $\Omega(4)$, $\int^2=0$ and $d\int+\int d=id$. In particular one can verify that $\int b_{ij}=x_ix_j$ and $\int a_{ij}=0$ for all $i,j=1,\ldots,4$.
The irreducible representation $W_t(2,0,0)$, for $t\neq 0$ can be obtained as a deformation of $\phat$ in the following way: for all $X\in \phat_{\bar 0}$ with $\diver X=0$, $\omega,\omega'\in \phat_{\bar 1} $ and $v\in \phat$ we let
\begin{itemize}
	\item $C\cdot v=tv$;
	\item $X\cdot v=[X,v]$;
	\item $\omega \cdot X=[\omega,X]-\frac{t}{2}d\iota_X(\int d \omega)$;
	\item $\omega \cdot C=\frac{t}{2}\int d\omega-\frac{t}{2}d\int \omega$;
	\item $\omega \cdot \omega'=[\omega,\omega']+\frac{t}{4}\int d\omega\wedge d\omega'-\frac{t}{4}d\omega\wedge \int d\omega'$.
\end{itemize}
\end{example}
The proof that the definitions given in Examples \ref{Wtd+2} and \ref{Wt200} provids genuine $\phat$-representations is a long and tedious but elementary  verification.
\begin{remark}\label{defor}
If $t,s\neq 0$, the irreducible module $W_s(a,b,c)$ can also be obtained from $W_t(a,b,c)$ by means of a deformation. Namely, let $u\in \C$ be a square root of $t/s$. Then for all $v\in W_t(a,b,c)$ and $x\in \phat_d$ we have
\[
\phi_s(x)v=u^d\phi_t(x)v,
\]
where $\phi_t$ and $\phi_s$ denote the action of $\phat$ on $W_t(a,b,c)$ and $W_s(a,b,c)$ respectively.
\end{remark}

Following Serganova \cite{S}, we let $\mathfrak h=\phat_{\leq 0}$ and $Rep^t_{\mathfrak h}$ be the category of finite-dimensional $\mathfrak h$-modules where $C$ acts as multiplication by $t$. It turns out that if $t\neq 0$ then  $Rep^t_{\mathfrak h}$ is semisimple (\cite{S}). Let $V_t$ be the restriction of  the standard $\phat$-module $W_t(1,0,0)$ to $\mathfrak h$ and let $\tilde{F}_t(a,b,c)=F(a,b,c)\otimes V_t$ where the action of $\slq$ on $F(a,b,c)$ is extended trivially to $\mathfrak h$. Then, if $t\neq 0$, $\tilde{F}_t(a,b,c)$ is an irreducible $\mathfrak h$-module and every irreducible $\mathfrak h$-module is isomorphic to a unique 
$\tilde{F}_t(a,b,c)$ (\cite{S}).

We now want to define some morphisms between Kac modules which will be useful in the description of their composition series and of their irreducible quotients (see also \cite[\S 4,5]{S}). We point out that a morphism $f: K_t(\lambda) \rightarrow K_t(\mu)$ is completely determined by the image of the highest weight vector $v_{\lambda}$ of  $F_t(\lambda)$. We note that $f(v_{\lambda})$ satifies the following properties:
\begin{itemize}
	\item $(x_i\de_{i+1})w=0$ for all $i=1,2,3$;
	\item $bw=0$ for all $b\in \phat_1$.
\end{itemize}
We call such a vector a $\phat$-singular vector.
Conversely, any $\phat$-singular vector $w$ of weight $\lambda$ in a $\phat$-module $M$, such that $Cw=tw$ for some $t\in\C$,  gives rise to a morphism of $\phat$-modules  $K_t(\lambda)\rightarrow M$. 

Recall that $K_t(a,b,c)^*\cong K_{-t}(c,b,a)$ (see \cite[Lemma 4.4]{S}).
\begin{definition}\label{morfismipi4}
Let $\{x_1, \dots, x_4\}$ be the standard basis of the irreducible $\slq$-module 
$\C^4$,  $\{x_1^*, \dots, x_4^*\}$ its dual basis, and $x_{ij}=x_i\wedge x_j\in \inlinewedge^2(\C^4)$.
The following vectors are $\phat$-singular vectors in Kac modules:
\begin{itemize}
\item[i)] $a_{12}v$ in $K_t(a,b-1,0)$ with $a\geq 0$, $b\geq 1$, where 
$v=x_1^ax_{12}^{b-1}$ is the highest weight vector in $F_t(a,b-1,0)$;
\item[ii)] $a_{12}a_{13}a_{14}v$ in $K_t(a-2,0,0)$ with $a\geq 2$,
where $v=x_1^{a-2}$ is the highest weight vector in $F_t(a-2,0,0)$;
\item[iii)] $a_{14}x_1^{a-1}(x_4^*)^{c+1}+a_{13}x_1^{a-1}x_3^*(x_4^*)^{c}
+a_{12}x_1^{a-1}x_2^*(x_4^*)^{c}$ in $K_t(a-1,0,c+1)$ with $a\geq 1$, $c\geq 0$.
\end{itemize}
Therefore we have the following list of morphisms:
\begin{itemize}
\item[a)] $\theta_{a,b}: K_t(a,b,0)\rightarrow K_t(a,b-1,0)$, with $a\geq 0$, $b\geq 1$, determined by the $\phat$-singular vector in i);
\item[b)] $\varphi_{a}: K_t(a,0,0)\rightarrow K_t(a-2,0,0)$, with $a\geq 2$, determined by the $\phat$-singular vector in ii);
\item[c)] $\eta_{a,c}: K_t(a,0,c) \rightarrow K_t(a-1,0,c+1)$, with $a\geq 1$, $c\geq 0$, determined by the $\phat$-singular vector in iii);
\item[d)] $\xi_{b,c}: K_t(0,b,c)\rightarrow K_t(0,b+1,c)$, with $b,c\geq 0$, dual of $\theta_{c,b+1}$;
\item[e)] $\psi_{c}: K_t(0,0,c) \rightarrow K_t(0,0,c+2)$, with $c\geq 0$, dual of
$\varphi_{c+2}$.
\end{itemize}
\end{definition}

\begin{proposition}\label{quozienti}  The following is a description of all the irreducible quotients of Kac modules
for all $t\neq 0$: 
\begin{itemize}
\item[1)] $W_t(a,b-1,0)\cong K_t(a,b-1,0)/Im(\theta_{a,b})$ for $b\geq 3$ or $a\geq 1$ and $b=1$;
\item[2)] $W_t(0,1,0)\cong K_t(0,1,0)/\ker(\psi_0\theta_{0,1})$;
\item[3)] $W_t(a,1,0)\cong K_t(a,1,0)/\ker(\eta_{a,0}\theta_{a,1})$ for $a\geq 1$;
\item[4)] $W_t(0,0,0)\cong K_t(0,0,0)/Im(\varphi_2)$;
\item[5)] $W_t(a,0,c)\cong K_t(a,0,c)/Im(\eta_{a+1,c-1})$ for $a\neq 1$ and $c\geq 1$;
\item[6)] $W_t(1,0,c)\cong K_t(1,0,c)/\ker(\xi_{0,c+1}\eta_{1,c})$;
\item[7)] $W_t(0,b,c)\cong K_t(0,b,c)/Im(\xi_{b-1,c})$ for $b\geq 1$ and $c\geq 1$.
\end{itemize} 
\end{proposition}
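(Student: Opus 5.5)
The plan is to use Serganova's structure theory. For $t\neq 0$ the category $Rep^t_{\mathfrak h}$ is semisimple, with simple objects $\tilde F_t(a,b,c)=F(a,b,c)\otimes V_t$. Every Kac module $K_t(\lambda)$, restricted to $\mathfrak h=\phat_{\leq 0}$, is the module coinduced (equivalently induced) from the $\mathfrak{gl}_4$-module $F_t(\lambda)$. Hence its $\mathfrak h$-structure, and in particular its $\slq$-character, is computable from \eqref{uno}: $K_t(\lambda)\cong\inlinewedge(\inlinewedge^2\C^{4*})\otimes F(\lambda)$, since $\phat_{-1}$ is an odd abelian-modulo-centre subspace and $\mathcal U(\phat_{-1})$ has the same character as the exterior algebra.

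Using Frobenius reciprocity together with the semisimplicity of $Rep^t_{\mathfrak h}$, I will decompose each $K_t(\lambda)|_{\mathfrak h}$ into summands $\tilde F_t(\mu)$. The number of these summands is $2^6/\dim V_t\cdot\ldots$, which is a small count. Comparing $\slq$-characters, each $K_t(\lambda)$ has length at most $2$ or $3$, and the composition factors are read off from these characters.

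Next I verify that the vectors in Definition \ref{morfismipi4} are $\phat$-singular. This is a direct check: one shows $b_{ij}$ kills them using the bracket table $[b_{ij},a_{lm}]$, and that $x_i\de_{i+1}$ kills them. The check for iii) is the only nontrivial one, since it requires cancellation among the three terms. Duality $K_t(\lambda)^*\cong K_{-t}(\lambda^*)$ together with Remark \ref{defor} then yields d) and e) as genuine morphisms.

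For each item I then argue as follows.
\begin{itemize}
\item[(i)] The morphism is nonzero and not surjective, because its image misses the top $F_t(\lambda)$.
\item[(ii)] By the length count, the image (respectively the kernel) is exactly the maximal proper submodule.
\end{itemize}

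For items 1), 4), 5), 7) I will show that $K_t(\lambda)/\mathrm{Im}$ is irreducible. Its dimension or $\slq$-character should equal that of a single $\mathfrak h$-simple $\tilde F_t(\mu)$, and a $\phat$-module that is simple over $\mathfrak h$ is simple over $\phat$.

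For the composite cases 2), 3), 6), the quotient by $\ker(\text{composite})$ is isomorphic to the image of the composite. I will show this image is nonzero by computing the composite on the highest weight vector: for example, $\psi_0\theta_{0,1}(v)$ involves $a_{12}$ times the dual singular vector, and I expect it to be nonzero. I will also show the image is irreducible because it is $\mathfrak h$-simple. Since it is a quotient of $K_t(\lambda)$ containing the image of $F_t(\lambda)$ nontrivially, it equals the unique irreducible quotient $W_t(\lambda)$ of Proposition \ref{unique}. This is consistent with the exceptional $a=c=0$ cases, where uniqueness is replaced by the given definition of $W_t(0,1,0)$ and $W_t(0,b,0)$.

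The main obstacle is the bookkeeping in these composite cases. There, a single morphism's image is not maximal, since $K$ has length $3$. One must therefore show that the composite is nonzero and that its kernel is precisely the maximal submodule. I would first try a character computation: show $\mathrm{ch}\,K_t(\lambda)-\mathrm{ch}\,\mathrm{Im}(\theta)$ contains two $\mathfrak h$-simple pieces, and that the second map kills exactly one of them.
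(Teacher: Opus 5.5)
Your irreducibility step fails for most items. You claim that each quotient $K_t(\lambda)/\mathrm{Im}$, and each image of a composite, is a single $\mathfrak h$-simple module $\tilde F_t(\mu)$, so that $\mathfrak h$-simplicity forces $\phat$-simplicity. By Proposition \ref{hmodules} this holds only for $W_t(a,0,0)=\tilde F_t(a-1,0,0)$ and $W_t(0,0,c)=\tilde F_t(0,0,c+1)$, which covers $W_t(0,1,0)\cong W_t(0,0,0)$. So it applies only to items 4), 2), 1) with $b=1$, and 5) with $a=0$. Even there you must still show that the image exhausts all the other $\mathfrak h$-summands of $K_t(\lambda)$.

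The remaining targets are not $\mathfrak h$-simple. $W_t(a,b,0)$ with $b\geq 2$, $W_t(a,0,c)$ with $a\geq 2$, $c\geq 1$, and $W_t(0,b,c)$ with $b,c\geq 1$ each have three $\mathfrak h$-summands. $W_t(a,1,0)$ with $a\geq 1$ and $W_t(1,0,c)$ with $c\geq 1$ each have two. For items 1) with $b\geq 3$, 3), 5) with $a\geq 2$, 6) with $c\geq 1$, and 7), you would have to prove that no proper sum of these summands is stable under $\phat_1$. Nothing in your plan does that.

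Characters cannot replace this step. Frobenius reciprocity gives you the $\mathfrak h$-decomposition of $K_t(\lambda)$, but not how its summands are glued into $\phat$-composition factors, so lengths cannot be ``read off''. Your length bound is also false: $K_t(1,0,1)$ has length $4$ by Proposition \ref{decomposizioniK}(7), which is the case $c=1$ of item 6).

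This analysis of the $\phat_1$-action is exactly what the paper takes from Serganova \cite{S} for every item except 4). Item 4) is proved by your mechanism. One has $K_t(0,0,0)=\tilde F_t(1,0,0)\oplus\tilde F_t(0,0,1)$, and the type $\tilde F_t(0,0,1)$ does not occur in $K_t(2,0,0)$. Hence $\mathrm{Im}(\varphi_2)$ is the summand $\tilde F_t(1,0,0)$ and the quotient is $\mathfrak h$-simple. Proposition \ref{unique} then makes $\mathrm{Im}(\varphi_2)$ the unique maximal submodule.

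Your properness argument (i) is valid only for $t=0$. For $t\neq 0$ the Kac module carries no $\mathbb Z$-grading by $\phat_{-1}$-degree, and images do reach the top. With $v$ the highest weight vector of $F_t(a,b-1,0)$, the image $\mathrm{Im}(\theta_{a,b})$ contains $a_{34}a_{12}v=2tv-a_{12}a_{34}v$. In fact $\theta_{0,1}$ is surjective for $t\neq 0$: its image contains $\phat_{-1}\otimes 1$, hence also $(a_{12}a_{34}+a_{34}a_{12})\otimes 1=2t\otimes 1$. This is exactly why $(a,b)=(0,1)$ is excluded from item 1). Properness must be obtained differently, for example from an $\mathfrak h$-summand of $K_t(\lambda)$ that does not occur in the source Kac module, as in the argument for $\varphi_2$ above.
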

\begin{proof}
All cases are  given in \cite{S} except for 4). In this case  we have $K_t(0,0,0)=\tilde F_t(1,0,0)\oplus \tilde F_t(0,0,1)$ as $\mathfrak h$-modules. Since $\tilde F_t(0,0,1)$ is not an $\mathfrak h$-submodule  of $K_t(2,0,0)$ the image of $\varphi_{2}$ is necessarily $\tilde F_t(1,0,0)$, therefore $K_t(0,0,0)/Im(\varphi_2)$ is isomorphic, as an $\mathfrak{h}$-module, to $\tilde{F}_t(1,0,0)$ and is therefore irreducible also as a $\phat$-module. Note that $Im(\varphi_2)$ is the unique maximal $\phat$-submodule of $K_t(0,0,0)$ by Proposition \ref{unique}.
\end{proof}

Thanks to the decomposition of Kac modules into direct sums of irreducible $\mathfrak h$-submodules and to the decription of morphisms between Kac modules, the following decompositions can be deduced.
\begin{proposition}\label{decomposizioniK}
For $t\neq 0$ the following identities hold in the Grothendieck group of finite-dimensional representations of $\phat$:
\begin{enumerate}
\item $[K_t(0,0,0)]=[W_t(0,0,0)]+[W_t(2,0,0)]$;
\item $[K_t(0,1,0)]=[W_t(0,1,0)]+[W_t(2,0,0)]+[W_t(0,2,0)]$;
\item $[K_t(a,0,0)]=[W_t(a,0,0)]+[W_t(a,1,0)]+[W_t(a+2,0,0)]$ for $a\geq 1$;
\item $[K_t(0,0,1)]=[W_t(0,0,1)]+[W_t(1,0,0)]+[W_t(1,1,0)]$;
\item $[K_t(0,0,c)]=[W_t(0,0,c)]+[W_t(1,0,c-1)]+[W_t(0,0,c-2)]$ for $c\geq 2$;
\item $[K_t(0,1,1)]=[W_t(0,1,1)]+[W_t(1,1,0)]+[W_t(0,0,1)]$;
\item $[K_t(1,0,1)]=[W_t(1,0,1)]+[W_t(2,0,0)]+[W_t(2,1,0)]+[W_t(0,0,0)]$;
\item $[K_t(1,0,c)]=[W_t(1,0,c)]+[W_t(2,0,c-1)]+[W_t(0,0,c-1)]$ for $c\geq 2$;
\item $[K_t(a,0,1)]=[W_t(a,0,1)]+[W_t(a+1,0,0)]+[W_t(a+1,1,0)]$ for $a\geq 2$;
\item $[K_t(0,1,c)]=[W_t(0,1,c)]+[W_t(0,0,c)]+[W_t(1,0,c-1)]$ for $c\geq 2$;
\item $[K_t(a,1,0)]=[W_t(a,1,0)]+[W_t(a,2,0)]+[W_t(a+2,0,0)]$ for $a\geq 0$;
\item $[K_t(a,b,0)]=[W_t(a,b,0)]+[W_t(a,b+1,0)]$ for $a\geq 0$, $b\geq 2$;
\item $[K_t(a,0,c)]=[W_t(a,0,c)]+[W_t(a+1,0,c-1)]$ for $a\geq 2$, $c\geq 2$;
\item $[K_t(0,b,c)]=[W_t(0,b,c)]+[W_t(0,b-1,c)]$ for $b\geq 2$, $c\geq 1$;
\item $[K_t(a,b,c)]=[W_t(a,b,c)]$ for $abc\neq 0$.
\end{enumerate}
\end{proposition}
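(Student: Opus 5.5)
Since $Rep^t_{\mathfrak h}$ is semisimple for $t\neq 0$, I will compute everything after restricting to $\mathfrak h=\phat_{\leq 0}$ and only return to $\phat$ at the end. A finite-dimensional $\phat$-module is then determined in the Grothendieck group of $\mathfrak h$-modules by the multiplicities of the $\tilde F_t(\lambda)$.

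\textbf{Step 1: Kac modules as $\mathfrak h$-modules.} By \eqref{uno}, $K_t(a,b,c)\cong \mathcal U(\phat_{-1})\otimes F_t(a,b,c)$, and $\mathcal U(\phat_{-1})$ is the Clifford-type algebra on $\inlinewedge^2(\C^{4*})$ with $[a_{ij},a_{lm}]=2\epsilon_{ijlm}C$. So I expect
\[
K_t(a,b,c)\cong \operatorname{Ind}_{\C C+\slq}^{\mathfrak h}F_t(a,b,c).
\]
Comparing with $\tilde F_t(\mu)=F(\mu)\otimes V_t$, and using Frobenius reciprocity with the fact that $V_t|_{\slq}=\C^4\oplus\C^{4*}$, the multiplicity of $\tilde F_t(\mu)$ in $K_t(\lambda)$ should be the multiplicity of $F(\lambda)$ in $F(\mu)\otimes(\C^4\oplus\C^{4*})$. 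A dimension count ($\dim K=64\dim F$ versus $\dim\tilde F=8\dim F$) forces the Clifford module to be $8$ copies of the $8$-dimensional spinor-like $V_t$ up to twist. Hence
\[
[K_t(\lambda)]=\sum_\mu [F(\mu)\otimes(\C^4\oplus\C^{4*}):F(\lambda)]\,[\tilde F_t(\mu)],
\]
where the sum runs over $\mu$ such that $\lambda$ is $\mu\pm\epsilon_i$ (a Pieri-rule computation).

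\textbf{Step 2: irreducibles as $\mathfrak h$-modules.} I will use Proposition~\ref{quozienti} together with the morphisms of Definition~\ref{morfismipi4}. In each case $W_t(\lambda)=K_t(\lambda)/\mathrm{Im}$ or $K_t(\lambda)/\ker$ of an explicit map. Since the category is semisimple, the images and kernels are sums of $\tilde F_t$'s, and I can pin these down by tracking which $\tilde F_t(\mu)$ are common to the source and the target, as in the proof of case 4) of Proposition~\ref{quozienti}. I will process the cases inductively in the partial order on highest weights, starting from the generic case (15), where no common components survive. This yields an explicit $\mathfrak h$-decomposition of every $W_t(\lambda)$, typically one to three $\tilde F_t$'s. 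For example, $W_t(a,b,0)$ should be the part of $K_t(a,b,0)$ not in $\mathrm{Im}\,\theta_{a,b+1}$.

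\textbf{Step 3: matching.} For each identity (1)--(15), I will compare the $\mathfrak h$-multiplicities of the two sides. The $\mathfrak h$-class of a $\phat$-module determines its $\phat$-composition factors, because the $\mathfrak h$-decompositions of the distinct $W_t(\lambda)$ are linearly independent: each $W_t(\lambda)$ contains $\tilde F_t(\mu)$ for a distinguished extremal $\mu$ determined by $\lambda$. So equality of $\mathfrak h$-characters gives the identities. As a sanity check, I will compare $\mathfrak{gl}_4$-dimensions using the Weyl formula, e.g.\ $\dim K_t(0,0,0)=64=\dim W_t(0,0,0)+\dim W_t(2,0,0)$, with the two terms taken as $8$ and $56$.

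\textbf{Main obstacle.} The hard part is the degenerate small-weight cases (2), (3), (4), (6), (7), where $\lambda\pm\epsilon_i$ leaves the dominant chamber and the maps $\theta,\eta,\psi$ compose nontrivially. There the kernel and image must be determined by hand, either by checking that an explicit singular vector is nonzero, or via Proposition~\ref{unique} and duality $K_t(\lambda)^*\cong K_{-t}(\lambda^*)$. My first attempt at each such case will be to use duality to reduce it to an already settled case.
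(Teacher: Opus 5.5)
Your route is the one the paper has in mind: decompose Kac modules over $\mathfrak h$, then identify the irreducibles using the maps of Definition~\ref{morfismipi4} and Proposition~\ref{quozienti}. Step 1 is correct.

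The genuine gap is item (15). Proposition~\ref{quozienti} says nothing about $K_t(a,b,c)$ with $abc\neq 0$. None of the maps $\theta,\varphi,\eta,\xi,\psi$ involves these modules, and the absence of a listed map proves nothing unless you know that list of $\phat$-singular vectors is complete. Your stated reason, that ``no common components survive'', is false: by your own Pieri rule, $K_t(1,1,1)$ and $K_t(0,1,0)$ share $\tilde F_t(1,1,0)$ and $\tilde F_t(0,1,1)$.

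This gap is load-bearing. Step 3 turns equality of $\mathfrak h$-characters into equality in the Grothendieck group of $\phat$-modules, which requires linear independence of the $\mathfrak h$-characters of \emph{all} irreducibles, generic ones included. You only know the generic ones after (15). The paper relies on Serganova's results for (15), as for Proposition~\ref{hmodules}. You must either import it too, or prove that a generic $K_t(\lambda)$ has no $\phat$-singular vector other than multiples of its highest weight vector; any proper submodule would contain such a vector, since $\phat_1$ acts nilpotently. Granting (15), the independence you assert does hold: order weights $\mu$ by $\mu_1+\mu_2+\mu_3$ and then lexicographically, and distinct $W_t(\lambda)$ have distinct leading summands.

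There are two further problems in Step 2. First, your sanity check is wrong. By your Step 1, $K_t(0,0,0)=\tilde F_t(0,0,1)\oplus\tilde F_t(1,0,0)$ over $\mathfrak h$, so $\dim W_t(0,0,0)=\dim W_t(2,0,0)=32$, consistent with $\dim\phat=32$ and Example~\ref{Wt200}. The $8$-dimensional module is $\tilde F_t(0,0,0)=V_t=W_t(1,0,0)$, which does not occur in $K_t(0,0,0)$ at all. Since Step 2 is exactly this kind of bookkeeping, that misidentification would propagate.

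Second, summands common to source and target only bound an image from above; to pin it down you need Proposition~\ref{unique}. Take $a\geq 1$, $b\geq 2$. Since $K_t(a,b,0)$ is multiplicity free over $\mathfrak h$, $\ker\theta_{a,b}$ must contain the summands $\tilde F_t(a+1,b,0)$, $\tilde F_t(a-1,b+1,0)$, $\tilde F_t(a,b,1)$, because these do not occur in $K_t(a,b-1,0)$. On the other hand, $\ker\theta_{a,b}$ lies in the unique maximal submodule $\mathrm{Im}\,\theta_{a,b+1}$. The summands of that submodule are among those shared with $K_t(a,b+1,0)$, which are the same three. Hence $\ker\theta_{a,b}=\mathrm{Im}\,\theta_{a,b+1}$, and $\mathrm{Im}\,\theta_{a,b}\cong W_t(a,b,0)$ is the sum of the remaining three summands. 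The degenerate cases need arguments of this kind, together with the kernel descriptions in cases 2), 3), 6) of Proposition~\ref{quozienti}.
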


\begin{remark}\label{decomposizioniK0}
The decomposition of $K_0(a,b,c)$ in the Grothendieck group is given in \cite[p.\ 367, Eqns 1--19] {S}. Note that in \cite{S} the isomorphism $\phat_0\cong \mathfrak{so}(6)$ is used to describe weights, therefore $K_t(a,b,c)$ is denoted by $K_t(c,b,a)$ in \cite{S}.
Moreover $W_0(a,b,c)$ is denoted by $V_0(c,b,a)$. We also point out that Equations 14., 15. and 18. in \cite{S} should be fixed as follows (in our notation):
\begin{itemize}
\item[14.] $[K_0(0,0,1)]=[W_0(0,0,1)]+2[W_0(1,0,0)]+[W_0(1,1,0)]$;
\item[15.] $[K_0(1,0,0)]=2[W_0(1,0,0)]+[W_0(1,1,0)]+[W_0(3,0,0)]$;
\item[18.] $[K_0(0,1,0)]=2[W_0(0,1,0)]+[W_0(0,2,0)]+2[W_0(0,0,0)]+2[W_0(2,0,0)]$;
\end{itemize}
\end{remark}

\begin{proposition}\label{hmodules}
For $t\neq 0$ the irreducible $\phat$-modules $W_t(a,b,c)$ decompose as direct sums of  irreducible $\mathfrak h$-modules as follows:
\begin{itemize}	
\item $W_t(a,0,0)=\tilde F_t(a-1,0,0)$ for all $a\geq 1$;
\item $W_t(0,0,c)=\tilde F_t (0,0,c+1)$ for all $c\geq 0$;
%\item $V_t(0,1,0)=\tilde L(0,0,1)$ (I think $=V_t(0,0,0)$ as $\g$-modules);
%\item $W_t(0,b,0)=\tilde L(1,b-1,0)\oplus \tilde L(0,b-1,1)$ for all $b\geq 2$;
\item $W_t(a,1,0)=\tilde  F_t(a-1,1,0)+\tilde F_t(a,0,1)$ for all $a\geq 1$;
\item $W_t(1,0,c)=\tilde F_t(1,0,c+1)+\tilde F_t(0,1,c)$ for all $c\geq 1$;
%\item $W_t(0,1,c)=\tilde L(1,1,c)+\tilde L(0,2,c-1)+\tilde L(0,1,c+1)$ for all $c\geq 1$;
%\item $W_t(a,0,1)=\tilde L(a-1,1,1)+\tilde L(a-1,0,1)+\tilde L(a,0,2)$ for all $a\geq 0$;
\item $W_t(a,b,0)=\tilde F_t(a+1,b-1,0)+\tilde F_t(a-1,b,0)+\tilde F_t(a,b-1,1)$ for all $a\geq 0$ and  $b\geq 2$;
\item $W_t(a,0,c)=\tilde F_t(a-1,1,c)+\tilde F_t(a-1,0,c)+\tilde F_t(a,0,c+1)$ for all $a\geq 2$ and $c\geq 1$;
\item $W_t(0,b,c)=\tilde F_t(1,b,c)+\tilde F_t(0,b+1,c-1)+\tilde F_t(0,b,c+1)$ for all $b,c\geq 1$.
\end{itemize}
\end{proposition}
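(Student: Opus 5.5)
The plan is to exploit the semisimplicity of $Rep^t_{\mathfrak h}$ for $t\neq 0$, together with the Grothendieck group identities of Proposition \ref{decomposizioniK} and the description of quotients in Proposition \ref{quozienti}.

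\emph{Step 1: Kac modules as $\mathfrak h$-modules.} Since $\mathfrak h=\phat_{\le 0}$ and $K_t(a,b,c)\cong \mathcal U(\phat_{-1})\otimes F_t(a,b,c)$ as $\slq$-modules, the $\mathfrak h$-module $K_t(a,b,c)$ is coinduced/induced from $\mathfrak{gl}_4$. I would identify it with $\tilde F_t$-pieces as follows. The restriction of $K_t(0,0,0)$ to $\mathfrak h$ is $\tilde F_t(1,0,0)\oplus\tilde F_t(0,0,1)$, as already used in the proof of Proposition \ref{quozienti}. By tensor identity, restricted to $\mathfrak h$ one has $K_t(a,b,c)|_{\mathfrak h}\cong F(a,b,c)\otimes K_t(0,0,0)|_{\mathfrak h}$, where $F(a,b,c)$ is extended trivially on $\phat_{-1}$. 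Hence
\[
K_t(a,b,c)|_{\mathfrak h}\cong F(a,b,c)\otimes F(1,0,0)\otimes V_t\;\oplus\; F(a,b,c)\otimes F(0,0,1)\otimes V_t .
\]
To justify the middle isomorphism, I would first compare $\mathfrak{gl}_4$-characters. I would then use that $Rep^t_{\mathfrak h}$ is semisimple, with $\tilde F_t(\lambda)$ determined by its top $\slq$-component (the $\slq$-character of $\tilde F_t(\lambda)$ is $\operatorname{ch}F(\lambda)\cdot(\operatorname{ch}\C^4+\operatorname{ch}\C^{4*})$). Since $\tilde F_t(\lambda)$ are pairwise non-isomorphic and the map $\lambda\mapsto \operatorname{ch}\tilde F_t(\lambda)$ is injective and triangular with respect to dominance, any $\mathfrak h$-module in $Rep^t_{\mathfrak h}$ is determined by its $\slq$-character. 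Expanding $F(a,b,c)\otimes\C^4$ and $F(a,b,c)\otimes\C^{4*}$ by Pieri's rule then gives $K_t(a,b,c)|_{\mathfrak h}$ as an explicit sum of at most eight $\tilde F_t$'s.

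\emph{Step 2: peeling off.} Each $W_t(a,b,c)$ is, by Proposition \ref{quozienti}, a quotient of a Kac module by the image (or a kernel) of morphisms between Kac modules. Its $\mathfrak h$-decomposition is obtained by subtracting, via Proposition \ref{decomposizioniK}, the $\mathfrak h$-decompositions of the other composition factors. I would proceed by induction along the chains of Proposition \ref{decomposizioniK}. The base cases are:
\begin{itemize}
\item $W_t(a,0,0)=\tilde F_t(a-1,0,0)$, which follows from Example \ref{Wtd+2}/Example \ref{standard}: $\dim W_t(a,0,0)$ equals $\dim E(4,4)_{a-2}=8\dim F(a-1,0,0)$, and its top weight is $(a-1,0,0)$ for the $\slq$-character computed from $W(4)$- and $\Omega^1$-parts.
\item $W_t(0,0,c)=\tilde F_t(0,0,c+1)$.
\end{itemize}
For the second base case I would use duality $K_t(a,b,c)^*\cong K_{-t}(c,b,a)$ together with Remark \ref{defor}, or directly Proposition \ref{decomposizioniK}(5) combined with the $\mathfrak h$-character of $K_t(0,0,c)$ from Step 1. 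Then items (3), (9), (11), (12), (13), (14) of Proposition \ref{decomposizioniK}, read as equalities of $\slq$-characters (hence of $\mathfrak h$-modules, by Step 1's rigidity), yield the remaining cases by subtraction. For example, (12) combined with Step 1 gives $W_t(a,b,0)+W_t(a,b+1,0)$, and a telescoping induction on $b$ starting from (11) isolates $W_t(a,b,0)$.

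\emph{Main obstacle.} The delicate point is the rigidity claim: that an object of $Rep^t_{\mathfrak h}$ is determined up to isomorphism by its $\slq$-character. This requires checking that the characters $\operatorname{ch}F(\lambda)(\operatorname{ch}\C^4+\operatorname{ch}\C^{4*})$ are linearly independent. I would prove this by triangularity: the highest term is $\operatorname{ch}F(\lambda+\omega_{12})$ with coefficient one, and this term does not occur for smaller $\lambda$. The second bookkeeping hazard is the boundary cases ($a$ or $c$ equal to $0$ or $1$, $b=1$), where Pieri terms vanish. These must be handled individually, using the exceptional items (2), (4), (6)–(8), (10) of Proposition \ref{decomposizioniK}, and I expect this to be where the computation needs the most care.
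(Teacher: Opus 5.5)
Your argument is correct in substance, but it takes a genuinely different route from the paper. The paper proves nothing here. It imports the decompositions from Serganova's \S 5 and only supplies a dictionary between her labels ($V_t$, $\tilde L_t(c,b,a)$) and the present ones ($W_t$, $\tilde F_t(a,b,c)$).

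You instead rederive the list internally, in three steps:
\begin{enumerate}
\item By PBW, $K_t(a,b,c)|_{\mathfrak h}=\Ind_{\mathfrak{gl}_4}^{\mathfrak h}F_t(a,b,c)\cong F(a,b,c)\otimes K_t(0,0,0)|_{\mathfrak h}$ by the tensor identity, so Kac modules split into $\tilde F_t$'s by Pieri.
\item The family $W_t(a,0,0)$ is fixed by the explicit realizations.
\item Everything else is forced by restricting the Grothendieck identities of Proposition \ref{decomposizioniK} to the semisimple category $Rep^t_{\mathfrak h}$.
\end{enumerate}
Carrying out the subtractions does reproduce every item of the statement.

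What this buys is a check that the $\mathfrak h$-structure is determined by the composition factors plus one base family. What it costs is that it takes as input Proposition \ref{decomposizioniK} and Examples \ref{Wtd+2}, \ref{Wt200}, whose verification the paper omits. Moreover, the paper says Proposition \ref{decomposizioniK} is itself deduced from the $\mathfrak h$-decompositions and the Kac-module morphisms, i.e.\ from the same Serganova material. So your proof relocates that dependence rather than removing it.

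Several details need fixing.
\begin{itemize}
\item \emph{Rigidity.} This is easier than you expect. The character ring of $\slq$ is an integral domain and $\operatorname{ch}\C^4+\operatorname{ch}\C^{4*}\neq 0$, so multiplication by it is injective. Your triangularity ``with respect to dominance'' is false as stated: $\lambda+\omega_{12}$ and $\lambda+\omega_{34}$ lie in different cosets of the root lattice and are incomparable.
\item \emph{Base case.} The top weight of $W_t(a,0,0)$ is $(a,0,0)$, not $(a-1,0,0)$. The clean argument is that $F(a,0,0)$ lies in some $\tilde F_t(\mu)$ with $\mu\in\{(a-1,0,0),(a+1,0,0),(a-1,1,0),(a,0,1)\}$. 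Only $\mu=(a-1,0,0)$ has dimension at most $8\dim F(a-1,0,0)=\dim W_t(a,0,0)$.
\item \emph{$W_t(0,0,c)$ is not a base case.} Item (5) contains the unknown $W_t(1,0,c-1)$. You must run a joint induction on $c$ through items (1), (4), (7), (9), (13), (5), (8). Avoid the duality route: the needed isomorphism $W_t(0,0,c)^*\cong W_{-t}(c+2,0,0)$ is Corollary \ref{duali}, which the paper deduces from this very proposition.
\item \emph{The chain $W_t(0,b,0)$, $b\ge 2$.} It can only be started from items (2)/(11) with $a=0$. That first requires $W_t(0,1,0)\cong W_t(0,0,0)=\tilde F_t(0,0,1)$, which needs Remark \ref{a12} together with item (1).
\end{itemize}
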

\begin{proof}
	This result is proved in \cite[\S 5]{S} with different notation. Namely one has $V_t(0,b,a)=W_t(a,b,0)$ ($b\geq 2$, $a\geq 1$), $W_t(0,b,c)=V_t(c,b+1,0)$ ($b,c\geq 1$), $W_t(a,0,c)=V_t(c,0,a-1)$ ($a\geq 2, c\geq 1$), $W_t(0,0,c)=V_t(c+1,0,0)$ ($c\geq 0$), $W_t(a,0,0)=V_t(0,0,a-1)$ ($a\geq 1$), $W_t(a,1,0)=V_t(0,1,a)$ ($a\geq 1$), $W_t(1,0,c)=V_t(c+1,1,0)$ ($c\geq 1$), $W_t(0,b,0)=V_t(0,b-1,0)$, ($b\geq 2$). Moreover, for all $a,b,c\geq 0$, the module $\tilde F_t(a,b,c)$ is denoted  in \cite{S} by $\tilde L_t(c,b,a)$.
\end{proof}
The reason why we preferred to change notation is that in this way one has that the irreducible module $W_t(a,b,c)$ is a quotient of the Kac module $K_t(a,b,c)$ and therefore it has a $\phat$-singular vector of weight $(a,b,c)$. This singular vector is unique with one exception: for $t\neq 0$ the irreducible $\phat$-module $W_t(0,0,0)$ has one more $\phat$-singular vector of weight $(0,1,0)$. Indeed, the modules $W_t(a,b,c)$ are all non isomorphic with the unique exception $W_t(0,0,0)\cong W_t(0,1,0)$ for all $t\neq 0$. This follows from the decomposition given in Proposition \ref{hmodules} and the following observation.
\begin{remark}\label{a12} According to Definition \ref{morfismipi4}, if $t\neq 0$, then $W_t(0,0,0)$ has a non-zero singular vector $a_{12}v$. Indeed this vector generates $W_t(0,0,0)$ since the vector $a_{34}v$ is generated by $a_{12}v$ and  $(a_{12}a_{34}+a_{34}a_{12})v=2tv$.
	Hence we have a surjective map $K_t(0,1,0)\rightarrow W_t(0,0,0)$, and $W_t(0,1,0)$ is isomorphic to $W_t(0,0,0)$ since $W_t(0,1,0)$ is the unique irreducible quotient of $K_t(0,1,0)$. 
\end{remark}

The description of the irreducible quotients of Kac modules, their decomposition in the Grothendieck group as $\phat$-modules and morphisms between Kac modules  are summarized in Figure \ref{DISEGNO}.

\begin{remark}\label{K0(000)} Here we explicitly describe the $\phat$-module $K_0(0,0,0)$. This module contains the following three $\phat$-singular vectors: $z_1=a_{12}$, $z_2=a_{12}a_{13}a_{14}$ and 
$z_3=a_{12}a_{13}a_{14}a_{23}a_{24}a_{34}$. Let us denote by $M_1$, $M_2$ and $M_3$ the submodules generated by $z_1$, $z_2$ and $z_3$, respectively. We have the following isomorphisms:
\begin{itemize}
\item[a)] $K_0(0,0,0)/M_1\cong M_3\cong W_0(0,0,0)$ (the trivial module);
\item[b)] $M_1/M_2\cong W_0(0,1,0)$;
\item[c)] $M_2/M_3\cong W_0(2,0,0)$.
\end{itemize}
\end{remark}

The grading of the Kac module $K_0(a,b,c)=\mathcal{U}(\phat_{-1})\otimes F(a,b,c)$ with respect to $\mathcal{U}(\phat_{-1})$ induces a grading on $W_0(a,b,c)$.
We now give the graded decomposition of the irreducible modules $W_0(a,b,c)$ into irreducible $\slq$-modules. In this description the term $F(a,b,c)q^d$ denotes an irreducible submodule $F(a,b,c)$ of degree $d$.
\begin{proposition}\label{sl4modules} The irreducible $\phat$-modules $W_0(a,b,c)$ decompose as direct sums of irreducible $\slq$-modules as follows:
\begin{align*}
	  W_0(a,0,0)&=F(a,0,0)+F(a-1,0,1)q+F(a-2,1,0)q^2+F(a-2,0,0)q^3,\,\,  \text{for}\,\, a\neq 2;\\
	 W_0(2,0,0)&=F(2,0,0)+F(1,0,1)q+F(0,1,0)q^2;&\\
	W_0(0,b,0)&=F(0,b,0)+F(1,b-1,1)q+(F(2,b-1,0)+F(0,b-1,2))q^2+F(1,b-1,1)q^3&\\
	&+F(0,b,0)q^4,\,\,  \text{for}\,\, b\geq 2;\\
	W_0(0,1,0)&=F(0,1,0)+F(1,0,1)q+F(0,0,2)q^2;&\\
	W_0(0,0,c)&=F(0,0,c)+F(0,1,c)q+F(1,0,c+1)q^2+F(0,0,c+2)q^3,\,\, \text{for}\,\, c\geq 1;\\
			W_0(a,b,0)&=F(a,b,0)+(F(a+1,b-1,1)+F(a-1,b,1)+F(a,b-1,0))q&\\
		&+(F(a+2,b-1,0)+F(a,b,0)+F(a+1,b-2,1)+F(a,b-1,2)&\\
		&+F(a-2,b+1,0)+F(a-1,b-1,1))q^2
		+(F(a,b-2,2)+F(a+1,b-1,1)&\\
		&+F(a+2,b-2,0)+F(a-1,b,1)+F(a-2,b,0)+F(a,b-1,0))q^3&\\
		&+(F(a,b,0)+F(a+1,b-2,1)+F(a-1,b-1,1))q^4+F(a,b-1,0),\,\,  \text{for}\,\, a\geq 1, b\geq 2;\\
			W_0(a,1,0)&=F(a,1,0)+(F(a+1,0,1)+F(a-1,1,1)+F(a,0,0))q&\\
		&+(F(a,1,0)+F(a,0,2)+F(a-2,2,0)+F(a-1,0,1))q^2&\\
		& +(F(a-1,1,1)+F(a-2,1,0)+F(a,0,0))q^3+F(a-1,0,1))q^4 ,\,\,\text{for}\,\, a\geq 2;\\
 W_0(1,1,0)&=F(1,1,0)+(F(2,0,1)+F(0,1,1)+F(1,0,0))q&\\
		&+(F(1,1,0)+F(1,0,2)+F(0,0,1))q^2+F(0,1,1)q^3\\
	W_0(0,b,c)&=F(0,b,c)+(F(0,b+1,c)+F(1,b-1,c+1)+F(1,b,c-1))q&\\
		&+(F(1,b,c+1)+F(1,b+1,c-1)+F(2,b-1,c)+F(0,b-1,c+2)&\\
		&+F(0,b,c)+F(0,b+1,c-2))q^2
		+(F(2,b,c)+F(1,b-1,c+1)&\\
		&+F(1,b,c-1)+F(0,b,c+2)+F(0,b+2,c-2)+F(0,b+1,c))q^3\\&+(F(1,b,c+1)+F(1,b+1,c-1)+F(0,b,c))q^4+F(0,b+1,c)q^5,\,\, \text{for}\,\, b,c\geq 1;\\		
		W_0(1,0,c)&= F(1,0,c)+(F(1,1,c)+F(0,1,c-1)+F(0,0,c+1))q&\\
		&+(F(2,0,c+1)+F(0,1,c+1)+F(0,2,c-1)+F(1,0,c))q^2&\\
		&= (F(1,0,c+2)+F(1,1,c)+F(0,0,c+1))q^3+F(0,1,c+1)q^4,\,\, \text{for}\,\, c\geq 1;\\
		W_0(a,0,c)&=F(a,0,c)+(F(a,1,c)+F(a-1,1,c-1)+F(a-1,0,c+1))q &\\
		&+(F(a+1,0,c+1)+F(a-1,1,c+1)+F(a-1,2,c-1)+F(a,0,c)&\\
		&+F(a-2,1,c)+F(a-1,0,c-1))q^2
		+(F(a-2,2,c)+F(a,0,c+2)&\\
		&+F(a,1,c)+F(a-1,1,c-1)+F(a-1,0,c+1)+F(a-2,0,c))q^3&\\
		&+(F(a-1,1,c+1)+F(a,0,c)+F(a-2,1,c))q^4+F(a-1,0,c+1)q^5,\,\, \text{for}\,\, a\geq 2, c\geq 1.
\end{align*}
\end{proposition}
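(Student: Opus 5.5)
The plan is to compute $W_0(a,b,c)$ directly as the graded quotient of $K_0(a,b,c)\cong \mathcal U(\phat_{-1})\otimes F(a,b,c)$ by its maximal submodule. When $t=0$ the central element acts trivially, so $\phat_{-1}$ acts by anticommuting operators on $K_0(a,b,c)$. Hence $K_0(a,b,c)\cong \inlinewedge(\inlinewedge^2\C^{4*})\otimes F(a,b,c)$ as graded $\slq$-modules, with graded pieces $\inlinewedge^k(\inlinewedge^2\C^{4*})\otimes F(a,b,c)$ for $k=0,\dots,6$.

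First I would write down the $\slq$-decompositions
\[
\inlinewedge^0=F(0,0,0),\quad \inlinewedge^1=F(0,1,0),\quad \inlinewedge^2=F(1,0,1),\quad \inlinewedge^3=F(2,0,0)+F(0,0,2),
\]
together with $\inlinewedge^{6-k}\cong\inlinewedge^k$. Tensoring with $F(a,b,c)$ by the Littlewood--Richardson (or Klimyk) rule then gives the graded $\slq$-character of each Kac module.

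Second, I would use the Grothendieck group identities for $t=0$ recorded in Remark \ref{decomposizioniK0}, with the corrections 14, 15 and 18 included. These express $[K_0(a,b,c)]$ as $[W_0(a,b,c)]$ plus a sum of classes $[W_0(\mu)]$ with $\mu$ ``smaller''. Inverting this triangular system recursively, starting from the small weights, gives the $\slq$-character of each $W_0(a,b,c)$. The grading is obtained by keeping track of the degree in which each subquotient's highest weight vector sits. That degree is read off from the degree of the $\phat$-singular vectors producing the morphisms in Definition \ref{morfismipi4} (for instance $\theta$ has degree $1$ and $\varphi$ has degree $3$), and the subquotient is shifted accordingly.

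As a cross-check, and for the degenerate small cases, I would compute directly. Remark \ref{K0(000)} gives $W_0(2,0,0)$ and $W_0(0,1,0)$ as the explicit subquotients $M_2/M_3$ and $M_1/M_2$ of $K_0(0,0,0)$. One can also compare with Example \ref{homcomp}, where $W_0(2,0,0)\cong[E(4,4)_0,E(4,4)_0]$ has pieces $F(2,0,0)$, $F(1,0,1)$ and $F(0,1,0)$ in degrees $0,1,2$. Two further constraints help fix multiplicities. The top degree of $W_0(\lambda)$ must be generated from degree $0$. The module $W_0(\lambda)$ also has a dual symmetry: $W_0(a,b,c)^*\cong W_0(c,b,a)$ up to a degree shift, because $K_t^*\cong K_{-t}$ with the weights reversed.

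I expect the main obstacle to be the boundary families $a\le 2$, $b\le1$, $c\le1$. In those cases the generic Littlewood--Richardson terms produce invalid weights with a negative entry, which must be discarded. Several $W_0$'s also appear with multiplicity $2$ (the corrected formulas 14, 15, 18), so fixing the grading requires care. For these cases I would check the answer by locating explicit $\phat$-singular vectors in the relevant Kac modules, of the same type as those in Definition \ref{morfismipi4}, and verifying the resulting dimensions against the Weyl dimension formula.
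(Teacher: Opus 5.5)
\paragraph{Where the proposal breaks down.} Your first step matches the paper's. At $t=0$ the Kac module is graded with pieces $\inlinewedge^k(\inlinewedge^2\C^{4*})\otimes F(a,b,c)$, and your decomposition of the $\inlinewedge^k$ is correct.

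The gap is in passing from $K_0$ to $W_0$. The paper gets this from Serganova's Theorem 4.10, and your substitute does not supply it.

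The identities of Remark \ref{decomposizioniK0} are ungraded and are not triangular ``starting from small weights''. Even with the standard module $W_0(1,0,0)$ known, the corrected identity 15 has two unknowns, $W_0(1,1,0)$ and $W_0(3,0,0)$. Identity 14 has $W_0(1,1,0)$ and $W_0(0,0,1)$, and the identities for these bring in further ones. The system is triangular only with respect to the degree inside $K_0(\lambda)$: a factor whose top sits in degree $k\geq 1$ only affects degrees $\geq k$. So you need the degree of every composition factor.

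The singular vectors of Definition \ref{morfismipi4} do not give these degrees, because each one only locates the top of the image of its map. By the corrected identity 18, $K_0(0,1,0)$ has seven composition factors. The only maps of Definition \ref{morfismipi4} into it are $\theta_{0,2}$ and $\xi_{0,0}$, both of degree $1$, and they locate only $W_0(0,2,0)$ and one $W_0(0,0,0)$. The following four factors are not detected this way:
\begin{itemize}
\item the second $W_0(0,1,0)$, in degree $2$;
\item one $W_0(2,0,0)$, in degree $2$;
\item the other $W_0(2,0,0)$, in degree $4$;
\item the second $W_0(0,0,0)$, in degree $5$.
\end{itemize}
In this example the degrees can still be recovered by tracking which highest weights occur in which $\inlinewedge^k\otimes F(0,1,0)$. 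You give no argument that this works uniformly for the infinite families $W_0(a,b,0)$, $W_0(0,b,c)$ and $W_0(a,0,c)$, and that is the real content of the statement.

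\paragraph{The duality constraint is false.} The isomorphism $K_t(a,b,c)^*\cong K_{-t}(c,b,a)$ sends the irreducible top of $K_0(a,b,c)$ to the socle of $K_0(c,b,a)$, not to its top. So in general $W_0(a,b,c)^*\not\cong W_0(c,b,a)$.

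For example, by Remark \ref{K0(000)} the lowest-degree piece of $W_0(2,0,0)$ is $F(0,1,0)$. Its dual is annihilated by $\phat_1$ in $W_0(2,0,0)^*$, so $W_0(2,0,0)^*\cong W_0(0,1,0)$, not $W_0(0,0,2)$. The correct pattern is the one in Corollary \ref{duali}, for instance $W_0(a,0,0)^*\cong W_0(0,0,a-2)$ for $a\geq 3$ and $W_0(a,b,0)^*\cong W_0(0,b-1,a)$. Imposing your version would produce contradictions rather than fix multiplicities.
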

\begin{proof} Using Littlewood-Richardson rule one can decompose $K_0(a,b,c)=\mathcal{U}(\phat_{-1})\otimes F(a,b,c)$ into a direct sum of irreducible $\slq$-modules.
	By \cite[Theorem 4.10]{S} the decomposition of the modules $W_0(a,b,c)$ is uniquely determined by that of $K_0(a,b,c)$.
\end{proof}
\begin{corollary}\label{duali}
The following isomorphisms hold for every $t\in\C$.
\begin{itemize}
	\item $W_t(1,0,0)^*=W_{-t}(1,0,0)$;
	\item $W_t(2,0,0)^*=W_{-t}(0,1,0)$;
	\item $W_t(a,0,0)^*=W_{-t}(0,0,a-2)$ for all $a\geq 3$;
	\item $W_t(0,b,0)^*=W_{-t}(0,b,0)$ for all $b\geq 2$;
	\item $W_t(1,1,0)^*=W_{-t}(1,1,0)$;
	\item $W_t(a,1,0)^*=W_{-t}(1,0,a-1)$ for all $a\geq 2$;
	\item $W_t(a,b,0)^*=W_{-t}(0,b-1,a)$ for all $a\geq 1,\,b\geq 2$;
	\item $W_t(a,0,c)^*=W_{-t}(c+1,0,a-1)$ for all $a\geq 2,\, c\geq 1$.
\end{itemize}
\end{corollary}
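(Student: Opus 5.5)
Since the dual of an irreducible module is irreducible, $W_t(\lambda)^*$ is isomorphic to some $W_{-t}(\mu)$. The plan is to identify $\mu$ as the highest weight of a $\phat$-singular vector in $W_t(\lambda)^*$. On the dual, $C$ acts by $-t$. A $\phat$-singular vector in $W^*$ is a vector annihilated by the upper nilpotent part of $\slq$ and by $\phat_1$. Equivalently, it is a linear functional on $W$ that vanishes on $\mathfrak n^-W+\phat_{-1}$-images of appropriate form. More concretely, it is dual to a \emph{lowest} weight vector $u\in W$ killed by $\phat_{-1}$ and by the lower nilpotent subalgebra of $\slq$. Then $\mu=-w_0(\mathrm{wt}(u))$, i.e.\ if $u$ has $\slq$-weight $-(c',b',a')$ then $\mu=(a',b',c')$.

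The steps, in order, are the following.
\begin{enumerate}
\item Describe $W_t(\lambda)$ via the grading induced from $K_t(\lambda)=\mathcal U(\phat_{-1})\otimes F_t(\lambda)$. For $t=0$, Proposition \ref{sl4modules} lists the graded $\slq$-components. The vectors killed by $\phat_{-1}$ then lie in the top degree $q^d$, which is a single irreducible $\slq$-module $F(\nu)$ in every case listed. Hence $\mu=\nu^*$, the dual of the top component.
\item For $t=0$, read off the top components. For $W_0(a,0,0)$ the top is $F(a-2,0,0)q^3$, whose dual is $F(0,0,a-2)$, giving $W_0(0,0,a-2)$. For $W_0(2,0,0)$ the top is $F(0,1,0)$, giving $W_0(0,1,0)$. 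For $W_0(0,b,0)$ the top is $F(0,b,0)$. For $W_0(a,1,0)$ the top is $F(a-1,0,1)$, whose dual $F(1,0,a-1)$ matches. For $W_0(a,b,0)$ the top $F(a,b-1,0)$ dualizes to $(0,b-1,a)$. For $W_0(a,0,c)$ the top $F(a-1,0,c+1)$ dualizes to $(c+1,0,a-1)$. For $W_0(1,1,0)$ the top $F(0,1,1)$ dualizes to $(1,1,0)$. For $W_0(1,0,0)$ the top is $F(0,0,1)q$ (with $a=1$ the terms with $a-2<0$ vanish), so $\mu=(1,0,0)$. These are exactly the claimed identities.
\item For $t\neq 0$, reduce to the same combinatorics in either of two ways. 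The first is Remark \ref{defor}: the deformation shows that the $\mathfrak{gl}_4$-structure of $W_t(\lambda)$ is independent of $t\neq 0$. The second is Proposition \ref{hmodules}, with the observation that $V_t^*\cong V_{-t}$ and $\tilde F_t(\nu)^*\cong\tilde F_{-t}(\nu^*)$ as $\mathfrak h$-modules. Dualizing the $\mathfrak h$-decomposition of $W_t(\lambda)$ gives a decomposition which must match that of a unique $W_{-t}(\mu)$ in the list, using that the $W_{-t}$'s are pairwise non-isomorphic except $W(0,0,0)\cong W(0,1,0)$. For example, $W_t(a,0,0)=\tilde F_t(a-1,0,0)$ dualizes to $\tilde F_{-t}(0,0,a-1)=W_{-t}(0,0,a-2)$. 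Similarly, $W_t(2,0,0)=\tilde F_t(1,0,0)$ dualizes to $\tilde F_{-t}(0,0,1)=W_{-t}(0,0,0)\cong W_{-t}(0,1,0)$.
\end{enumerate}

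The main obstacle is the $t=0$ case in step 1: one must ensure that the top-degree component is exactly the space of $\phat_{-1}$-invariants. For this I would argue that the grading on $W_0$ is finite, and that a $\phat_{-1}$-invariant in lower degree would generate a proper submodule via $\mathcal U(\phat_{\ge 0})$, using $\phat_1$-lowering. Alternatively, one can match characters: $\mathrm{ch}\,W^*$ is the dual of $\mathrm{ch}\,W$, and the candidate $W_0(\mu)$ from Proposition \ref{sl4modules} has the matching (reversed) graded character, so uniqueness gives the isomorphism. I would present this character comparison as the cleaner route.
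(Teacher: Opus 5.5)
Your proof is correct, and its structure is the paper's. The paper also treats $t=0$ through the graded $\slq$-decompositions of Proposition \ref{sl4modules} and $t\neq 0$ through the $\mathfrak h$-decompositions of Proposition \ref{hmodules}. Your second option in step 3 is exactly the paper's $t\neq 0$ argument, and your readings of the top components in step 2 are all correct.

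The real difference is at $t=0$. The paper appeals to compatibility of the reversed, dualized $\slq$-decomposition with that of $W_0(\mu)$. Matching characters alone, as in the route you say you would prefer, needs that graded characters separate irreducibles. Your top-component argument instead proves the isomorphism directly, so keep it, but state it correctly:
\begin{itemize}
\item A $\phat$-singular vector of $W^*$ is a functional vanishing on $\mathfrak n^+W+\phat_1W$, where $\mathfrak n^+$ is spanned by the $x_i\de_j$ with $i<j$. It is not a functional vanishing on $\mathfrak n^-W+\phat_{-1}W$.
\item The justification is purely by degree. $(W_{q^d})^*$ is the top graded piece of $W^*$, so $\phat_1$ kills it.
\item An $\slq$-highest weight vector in $(W_{q^d})^*$, of weight $\nu^*$, therefore gives a nonzero, hence surjective, map $K_0(\nu^*)\to W^*$.
\item So $W^*\cong W_0(\nu^*)$ by uniqueness of the irreducible quotient of $K_0(\nu^*)$.
\end{itemize}
Your ``main obstacle'' about the $\phat_{-1}$-invariants of $W$ is therefore unnecessary.

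For $t\neq 0$, drop the first option. Remark \ref{defor} only relates different nonzero values of $t$ and cannot reach $t=0$. For example, $W_t(0,0,0)\cong\tilde F_t(0,0,1)$ is $32$-dimensional for $t\neq 0$, while $W_0(0,0,0)$ is trivial.

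In the $\mathfrak h$-route, pairwise non-isomorphism of the $W_{-t}(\mu)$ is not what you need. You need that non-isomorphic irreducibles have different $\mathfrak h$-decompositions. This includes the typical modules $W_{-t}(a,b,c)=K_{-t}(a,b,c)$ with $abc\neq 0$, which are absent from Proposition \ref{hmodules}. Restricted to $\mathfrak h$, $K_{-t}(\mu)$ is $\bigoplus\tilde F_{-t}(\nu)$ over the $F(\nu)\subset F(\mu)\otimes(F(1,0,0)\oplus F(0,0,1))$. So these typical modules have eight summands and cannot match any entry of the proposition, each of which has at most three.
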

\begin{proof}
	The duality  for $t=0$ follows from Proposition \ref{sl4modules}, due to the compatibility with the decomposition of $W_0(a,b,c)$ as a direct sum of irreducible $\slq$-modules, and for $t\neq 0$ from Proposition \ref{hmodules}, due to the compatibility with the decomposition of $W_t(a,b,c)$ as a direct sum of irreducible $\mathfrak{h}$-modules.  
\end{proof}

%\begin{lemma}\label{Wt(a00)}
%Let $a\geq 1$ and $z$ be the $\phat$-singular vector of $W_t(a,0,0)$. Then $a_{14}z=0$.
%For $a\geq 1$  let $z$ we have $W_t(a,0,0)\cong K_t(a,0,0)/Im(\theta_{a1})$ if and only if $(a,t)\neq (2,0)$. In particular we have $a_{12}z=0$ in $W_t(a,0,0)$.
%\end{lemma}
%\begin{proof}
%If $t\neq 0$ this is a consequence of Example \ref{homcomp} and Remark \ref{defor} since $a_{14}z=[a_{14},x_1^{a-1}d_1]=0$. Similarly, if $t=0$ and $a=1$, $a_{14}z=0$ by Example \ref{standard}. If $t=0$ and $a\geq 2$, we note that $K_0(a-2,0,0)$ contains the $\phat$-singular vector  $w=a_{12}a_{13}a_{14}x_1^{a-2}$ of weight $(a,0,0)$, hence $W_0(a,0,0)$ can be realized as the quotient of the submodule generated by $w$, therefore it is clear that $a_{14}z=0$, since $z$ is the projection of $w$ onto $W_0(a,0,0)$.
 %\end{proof}

\begin{landscape}
\begin{figure}
\hspace*{-2cm}
%\begin{center}
\begin{tikzpicture}[
    scale=0.58, transform shape,      % adjust to taste (0.62 gives readable text and fits A4)
    %rotate=-180,                       % 90 degrees clockwise rotation for everything
    box/.style={rectangle, draw=blue!75!black, thick, rounded corners,
        minimum width=6.2cm, minimum height=1.4cm, align=center, inner sep=3pt},
    smallbox/.style={rectangle, draw=blue!75!black, thick, rounded corners,
        minimum width=4.4cm, minimum height=1.25cm, align=center, inner sep=2pt},
        %arrow/.style={-{Stealth[length=1.6mm]}, very thick, green!65!black},
    arrowg/.style={-{Stealth[length=3.2mm]}, very thick, green!65!black},
    arrowb/.style={-{Stealth[length=3.2mm]}, very thick, blue!65!black},
    arrowk/.style={-{Stealth[length=2.6mm]}, thick, black!60!black},
    lab/.style={font=\small}
]

% ---------- Row spacing helpers ----------
% We'll define y-coordinates for rows to place nodes precisely
\def\rowA{5.0}   % Row 1 (top)
\def\rowB{1.5}   % Row 2
\def\rowC{-2.0}   % Row 3
\def\rowD{-5.5}   % Row 4
\def\rowE{-9.0}  % Row 5
\def\rowF{-12.5}  % Row 6
\def\rowG{-15.5}  % Row 7
\def\rowH{-3.0}   % Row 3
\def\rowI{-15}

% column x positions (centered grid)
\def\xone{-34.3}
\def\xtwo{ -25.8}
\def\xthr{  -17.6}
\def\xfo{ -8.4}
\def\xfi{ -1}
\def\xsi{4}
\def\xsev{-23.8}

% -----------------------
% Row 1 : 3 boxes
% -----------------------
\node[box] (R1C1) at (\xtwo,\rowA) {$\displaystyle K_t(0,2,0)$\\[2pt]
  \textcolor{red}{$\tilde{F}(1,2,0)+\tilde{F}(0,2,1)$}\ \textcolor{green}{$W_t(0,3,0)$}\\
  \textcolor{red}{$+\tilde{F}(1,1,0)+\tilde{F}(0,1,1)$}
  \textcolor{green}{$W_t(0,2,0)$} };
\node[box] (R1C2) at (\xthr,\rowA) {$\displaystyle K_t(1,2,0)$\\[2pt]
  \textcolor{red}{$\tilde{F}(2,2,0)+\tilde{F}(1,3,0)+\tilde{F}(2,2,1)$}\, \textcolor{green}{$W_t(1,3,0)$}\\
  % \quad
  \textcolor{red}{$+\tilde{F}(2,1,0)+\tilde{F}(0,2,0)+\tilde{F}(1,1,1)$} \textcolor{green}{$W_t(1,2,0)$} };

\node[box] (R1C3) at (\xfo,\rowA) {$\displaystyle K_t(2,2,0)$\\[2pt]
  \textcolor{red}{$\tilde{F}(3,2,0)+\tilde{F}(2,3,0)+\tilde{F}(3,2,1)$} \textcolor{green}{$W_t(2,3,0)$} \\
  \textcolor{red}{$+\tilde{F}(3,1,0)+\tilde{F}(1,2,0)+\tilde{F}(2,1,1)$}\textcolor{green}{$W_t(2,2,0)$} };

% -----------------------
% Row 2 : 3 boxes
% -----------------------
\node[box] (R2C1) at (\xtwo,\rowB) {$\displaystyle K_t(0,1,0)$\\[2pt]
  \textcolor{red}{$\tilde{F}(1,1,0)+\tilde{F}(0,1,1)$}\ \textcolor{green}{$W_t(0,2,0)$}\\
    \textcolor{red}{$+\tilde{F}(0,0,1)$}\ \textcolor{green}{$W_t(0,0,0)$}\\
  \textcolor{red}{$+\tilde{F}(1,0,0)$}\  \textcolor{green}{$W_t(2,0,0)$}};
\node[box] (R2C2) at (\xthr,\rowB) {$\displaystyle K_t(1,1,0)$\\[2pt]
  \textcolor{red}{$\tilde{F}(2,1,0)+\tilde{F}(0,2,0)+\tilde{F}(1,1,1)$}\textcolor{green}{$W_t(1,2,0)$} \\
  % \quad
  \textcolor{red}{$+\tilde{F}(0,1,0)+\tilde{F}(1,0,1)$}\ \textcolor{green}{$W_t(1,1,0)$}\\
 \textcolor{red}{$+\tilde{F}(2,0,0)$}\  \textcolor{green}{$W_t(3,0,0)$} };
\node[box] (R2C3) at (\xfo,\rowB) {$\displaystyle K_t(2,1,0)$\\[2pt]
  \textcolor{red}{$\tilde{F}(3,1,0)+\tilde{F}(1,2,0)+\tilde{F}(2,1,1)$\ \textcolor{green}{$W_t(2,2,0)$} }\\
  % \quad
  \textcolor{red}{$+\tilde{F}(1,1,0)+\tilde{F}(2,0,1)$}\ \textcolor{green}{$W_t(2,1,0)$} \\
  \textcolor{red}{$+\tilde{F}(3,0,0)$}\textcolor{green}\ \textcolor{green}{$W_t(1,0,0)$} };

% -----------------------
% Row 3 : 6 boxes
% -----------------------
\node[smallbox] (R3C1) at (\xone,\rowC) {$K_t(0,1,0)$\\[2pt] 
\textcolor{red}{$\tilde{F}(1,1,0)+\tilde{F}(0,1,1)$}\ \textcolor{green}{$W_t(0,2,0)$}\\
    \textcolor{red}{$+\tilde{F}(0,0,1)$}\ \textcolor{green}{$W_t(0,0,0)$}\\
  \textcolor{red}{$+\tilde{F}(1,0,0)$}\  \textcolor{green}{$W_t(2,0,0)$}};
\node[smallbox] (R3C2) at (\xtwo,\rowC)  {$K_t(0,0,0)$\\[2pt] 
\textcolor{red}{$\tilde{F}(0,0,1)$}\  \textcolor{green}{$W_t(0,1,0)$} \\
\textcolor{red}{$\tilde{F}(1,0,0)$}\ \textcolor{green}{$W_t(2,0,0)$}};
\node[smallbox] (R3C3) at (\xthr,\rowC) {$K_t(1,0,0)$\\[2pt] 
\textcolor{red}{$\tilde{F}(0,1,0)+\tilde{F}(1,0,1)$}\ \textcolor{green}{$W_t(1,1,0)$}\\
\textcolor{red}{$+\tilde{F}(0,0,0)$}\ \textcolor{green}{$W_t(1,0,0)$}\\
\textcolor{red}{$+\tilde{F}(2,0,0)$}\  \textcolor{green}{$W_t(3,0,0)$}};
\node[smallbox] (R3C4) at (\xfo,\rowC)  {$K_t(2,0,0)$\\[2pt] 
\textcolor{red}{$\tilde{F}(1,1,0)+\tilde{F}(2,0,1)$}\ \textcolor{green}{$W_t(2,1,0)$}\\
\textcolor{red}{$+\tilde{F}(1,0,0)$}\ \textcolor{green}{$W_t(2,0,0)$}\\
\textcolor{red}{$+\tilde{F}(3,0,0)$}\  \textcolor{green}{$W_t(4,0,0)$}};
\node[smallbox] (R3C5) at (\xfi,\rowC)  {$K_t(3,0,0)$\\[2pt] 
\textcolor{red}{$\tilde{F}(2,1,0)+\tilde{F}(3,0,1)$}\ \textcolor{green}{$W_t(3,1,0)$}\\
\textcolor{red}{$+\tilde{F}(2,0,0)$}\ \textcolor{green}{$W_t(3,0,0)$}\\
\textcolor{red}{$+\tilde{F}(4,0,0)$}\  \textcolor{green}{$W_t(5,0,0)$}};
\node[] (R3C6) at (\xsi,\rowH)  {}; 
%\node[smallbox] (R3C6) at (\xsi,\rowC){$K_t(4,0,0)$\\[2pt] 
%\textcolor{red}{$L(3,1,0)+L(4,0,1)$}\ \textcolor{green}{$W_t(4,1,0)$}\\
%\textcolor{red}{$+L(3,0,0)$}\ \textcolor{green}{$W_t(4,0,0)$}\\
%\textcolor{red}{$+L(5,0,0)$}\  \textcolor{green}{$W_t(6,0,0)$}};
% -----------------------
% Row 4 : 5 boxes
% -----------------------
\node[smallbox] (R4C1) at (\xone,\rowD) {$K_t(0,1,1)$\\[2pt] 
\textcolor{red}{$\tilde{F}(0,1,2)+\tilde{F}(0,2,0)+\tilde{F}(1,1,1)$\, \textcolor{green}{$W_t(0,1,1)$}}\\
\textcolor{red}{$+\tilde{F}(0,1,0)+\tilde{F}(1,0,1)$}\ \textcolor{green}{$W_t(1,1,0)$}\\ 
\textcolor{red}{$+\tilde{F}(0,0,2)$}\ \textcolor{green}{$W_t(0,0,1)$}};
\node[smallbox] (R4C2) at (\xtwo,\rowD)  {$K_t(0,0,1)$\\[2pt] 
\textcolor{red}{$\tilde{F}(0,1,0)+\tilde{F}(1,0,1)$}\, \textcolor{green}{$W_t(1,1,0)$}\\
\textcolor{red}{$\tilde{F}(0,0,2)$}\ \textcolor{green}{$W_t(0,0,1)$} \\
\textcolor{red}{$\tilde{F}(0,0,0)$}\ \textcolor{green}{$W_t(1,0,0)$}};
\node[smallbox] (R4C3) at (\xthr,\rowD) {$K_t(1,0,1)$\\[2pt] 
\textcolor{red}{$\tilde{F}(0,1,1)+\tilde{F}(1,0,2)$} \textcolor{green}{$W_t(1,0,1)$}\\
\textcolor{red}{$+\tilde{F}(1,1,0)+\tilde{F}(2,0,1)$}\ \textcolor{green}{$W_t(2,1,0)$}\\ 
\textcolor{red}{$+\tilde{F}(0,0,1)$}\ \textcolor{green}{$W_t(0,0,0)$}\\
\textcolor{red}{$+\tilde{F}(1,0,0)$}\ \textcolor{green}{$W_t(2,0,0)$}};
\node[smallbox] (R4C4) at (\xfo,\rowD)  {$K_t(2,0,1)$\\[2pt] 
\textcolor{red}{$\tilde{F}(1,1,1)+\tilde{F}(2,0,2)+\tilde{F}(1,0,1)$} \textcolor{green}{$W_t(2,0,1)$}\\
\textcolor{red}{$+\tilde{F}(2,1,0)+\tilde{F}(3,0,1)$}\ \textcolor{green}{$W_t(3,1,0)$}\\ 
\textcolor{red}{$+\tilde{F}(2,0,0)$}\ \textcolor{green}{$W_t(3,0,0)$}};
\node[smallbox] (R4C5) at (\xfi,\rowD)  {$K_t(3,0,1)$\\[2pt] 
\textcolor{red}{$\tilde{F}(2,1,1)+\tilde{F}(3,0,2)$} \textcolor{green}{$W_t(3,0,1)$}\\
\textcolor{red}{$+\tilde{F}(2,0,1)+\tilde{F}(3,1,0)$}\\
\textcolor{red}{$+\tilde{F}(4,0,1)+\tilde{F}(3,0,0)$}};
%\textcolor{green}{$W_t(2,1,0)$\,
%\textcolor{green}{$W_t(2,0,0)$}}};
%\node[smallbox] (R4C6) at (\xsi,\rowD)  {$K_t(3,0,2)$\\[2pt] \textcolor{red}{$L(3,0,2)$} \ \textcolor{green}{$V(3,0,2)$}};

% -----------------------
% Row 5 : 4 boxes
% -----------------------
\node[smallbox] (R5C1) at (\xone,\rowE) {$K_t(0,1,2)$\\[2pt] 
\textcolor{red}{$\tilde{F}(0,1,3)+\tilde{F}(0,2,1)+\tilde{F}(1,1,2)$\, \textcolor{green}{$W_t(0,1,2)$}}\\
\textcolor{red}{$+\tilde{F}(0,1,1)+\tilde{F}(1,0,2)$}\ \textcolor{green}{$W_t(1,0,1)$}\\ 
\textcolor{red}{$+\tilde{F}(0,0,3)$}\ \textcolor{green}{$W_t(0,0,2)$}};
\node[smallbox] (R5C2) at (\xtwo,\rowE) {$K_t(0,0,2)$\\[2pt] 
\textcolor{red}{$\tilde{F}(0,1,1)+\tilde{F}(1,0,2)$}\ \textcolor{green}{$W_t(1,0,1)$}\\
\textcolor{red}{$+\tilde{F}(0,0,1)$}\ \textcolor{green}{$W_t(0,0,0)$}\\
\textcolor{red}{$+\tilde{F}(0,0,3)$} \textcolor{green}{$W_t(0,0,2)$}};
\node[smallbox] (R5C3) at (\xthr,\rowE) {$K_t(1,0,2)$\\[2pt]
 \textcolor{red}{$\tilde{F}(0,1,2)+\tilde{F}(1,0,3)$} \ \textcolor{green}{$W_t(1,0,2)$}\\
 \textcolor{red}{$+\tilde{F}(1,1,1)+\tilde{F}(2,0,2)+\tilde{F}(1,0,1)$} \ \textcolor{green}{$W_t(2,0,1)$}\\
  \textcolor{red}{$\tilde{F}(0,0,2)$} \ \textcolor{green}{$W_t(0,0,1)$}};
\node[smallbox] (R5C4) at (\xfo,\rowE)  {$K_t(2,0,2)$\\[2pt] 
\textcolor{red}{$\tilde{F}(1,1,2)+\tilde{F}(2,0,3)+\tilde{F}(1,0,2)$} \ \textcolor{green}{$W_t(2,0,2)$}\\
\textcolor{red}{+$\tilde{F}(2,1,1)+\tilde{F}(3,0,2)+\tilde{F}(3,0,0)$} \ \textcolor{green}{$W_t(3,0,1)$}};
%\node[smallbox] (R5C4) at (\xfi,\rowE)  {$K_t(3,0,3)$\\[2pt] \textcolor{red}{$L(3,0,3)$} \ \textcolor{green}{$V(3,0,3)$}};
%\node[smallbox] (R5C5) at (\xsi,\rowE)  {$K_t(4,0,3)$\\[2pt] \textcolor{red}{$L(4,0,3)$} \ \textcolor{green}{$V(4,0,3)$}};

% -----------------------
% Row 6 : 3 boxes
% -----------------------
\node[smallbox] (R6C1) at (\xone,\rowF)  {$K_t(0,1,3)$\\[2pt] 
\textcolor{red}{$\tilde{F}(0,1,4)+\tilde{F}(0,2,2)+\tilde{F}(1,1,3)$\, \textcolor{green}{$W_t(0,1,3)$}}\\
\textcolor{red}{$+\tilde{F}(0,1,2)+\tilde{F}(1,0,3)$}\ \textcolor{green}{$W_t(1,0,2)$}\\ 
\textcolor{red}{$+\tilde{F}(0,0,4)$}\ \textcolor{green}{$W_t(0,0,3)$}};
\node[smallbox] (R6C2) at (\xtwo,\rowF)  {$K_t(0,0,3)$\\[2pt] 
\textcolor{red}{$\tilde{F}(0,1,2)+\tilde{F}(1,0,3)$}\ \textcolor{green}{$W_t(1,0,2)$}\\
\textcolor{red}{$+\tilde{F}(0,0,2)$}\ \textcolor{green}{$W_t(0,0,1)$}\\
\textcolor{red}{$+\tilde{F}(0,0,4)$} \textcolor{green}{$W_t(0,0,3)$}};
\node[smallbox] (R6C3) at (\xthr,\rowF)  {$K_t(1,0,3)$\\[2pt]
 \textcolor{red}{$\tilde{F}(0,1,3)+\tilde{F}(1,0,4)+\tilde{F}(0,0,3)$} \ \textcolor{green}{$W_t(1,0,3)$}\\
 \textcolor{red}{$+\tilde{F}(1,1,2)+\tilde{F}(2,0,3)+\tilde{F}(1,0,2)$} \ \textcolor{green}{$W_t(2,0,2)$}};
%\node[smallbox] (R6C4) at (\xfi,\rowF)  {$K_t(3,0,4)$\\[2pt] \textcolor{red}{$L(3,0,4)$} \ \textcolor{green}{$V(3,0,4)$}};

% -----------------------
% Row 7 : 0 boxes
% -----------------------

\node[] (R7C2) at (\xsev,\rowI)  {}; 

% ============================
% Arrows and labels (many, matching the style of the original)
% ============================

% Row1 -> Row2 vertical green arrows with labels
\draw[arrowg] (R1C1) -- node[midway,left, lab] {$\theta_{02}$} (R2C1);
\draw[arrowg] (R1C2) -- node[midway,left, lab] {$\theta_{12}$} (R2C2);
\draw[arrowg] (R1C3) -- node[midway,left, lab] {$\theta_{22}$} (R2C3);
%\draw[arrowg] (R1C4) -- node[midway,left, lab] {$\theta_{32}$} (R2C4);

% Row2 -> Row3 downward green arrows (some converge into central nodes)
\draw[arrowg] (R2C1) -- node[midway,left, lab] {$\theta_{01}$} (R3C2);
\draw[arrowg] (R2C2) -- node[midway,left, lab] {$\theta_{11}$} (R3C3);
\draw[arrowg] (R2C3) -- node[midway,left, lab] {$\theta_{21}$} (R3C4);

% Row5 -> Row6 vertical arrows
%\draw[arrowg] (R5C2) -- node[midway,left,lab] {$\beta_{0,3}$} (R6C1);
%\draw[arrowg] (R5C3) -- node[midway,left,lab] {$\beta_{1,3}$} (R6C2);
%\draw[arrowg] (R5C4) -- node[midway,left,lab] {$\beta_{2,3}$} (R6C3);
%\draw[arrowg] (R5C4) -- node[midway,left,lab] {$\beta_{3,3}$} (R6C4);

% Blue internal arrows within rows (decay/internal)
\draw[arrowb, bend left=15] (R3C4) to node[above,lab] {$\varphi_{2}$} (R3C2);
\draw[arrowb, bend left=15] (R3C5) to node[above,lab] {$\varphi_{3}$} (R3C3);
%\draw[arrowb, bend left=12] (R3C4) to node[above,lab] {$\gamma_{1}$} (R3C5);
%\draw[arrowb, bend left=12] (R4C3) to node[above,lab] {$\gamma_{0}'$} (R4C4);
%\draw[arrowb, bend right=12] (R4C4) to node[above,lab] {$\gamma_{1}'$} (R4C5);
\draw[arrowb, bend left=65] (R3C2) to node[right,lab] {$\psi_{0}$} (R5C2);
\draw[arrowb, bend left=63] (R4C2) to node[right,lab] {$\psi_{1}$} (R6C2);

\draw[arrowg] (R3C2) -- node[above,lab] {$\xi_{0,0}$} (R3C1);
\draw[arrowg] (R4C2) -- node[above,lab] {$\xi_{0,1}$} (R4C1);
\draw[arrowg] (R5C2) -- node[above,lab] {$\xi_{0,2}$} (R5C1);
\draw[arrowg] (R6C2) -- node[above,lab] {$\xi_{0,3}$} (R6C1);
%\draw[arrowg, bend left=18] (R2C3) to node[midway,above,lab] {$\mu_{2\to2}$} (R4C5);

% Long-range green arrows from mid layers to far right (mirrors the original long curves)
%\draw[arrowg, bend left=28] (R3C5) to node[midway,above,lab] {$\lambda_{2\to3}$} (R5C5);
%\draw[arrowg, bend left=30] (R3C6) to node[midway,above,lab] {$\lambda_{3\to4}$} (R5C4);

% Blue connections from left-most column into central stack
\draw[arrowg] (R3C3) -- node[above,lab] {$\eta_{1,0}$} (R4C2);
\draw[arrowg] (R3C4) -- node[above,lab] {$\eta_{2,0}$} (R4C3);
\draw[arrowg] (R3C5) -- node[above,lab] {$\eta_{3,0}$} (R4C4);
\draw[arrowg] (R3C6) -- node[above,lab] {$\eta_{4,0}$} (R4C5);
\draw[arrowg] (R4C3) -- node[above,lab] {$\eta_{1,1}$} (R5C2);
\draw[arrowg] (R4C4) -- node[above,lab] {$\eta_{2,1}$} (R5C3);
\draw[arrowg] (R4C5) -- node[above,lab] {$\eta_{3,1}$} (R5C4);
\draw[arrowg] (R5C3) -- node[above,lab] {$\eta_{1,2}$} (R6C2);
\draw[arrowg] (R5C4) -- node[above,lab] {$\eta_{2,2}$} (R6C3);
\draw[arrowg] (R6C3) -- node[above,lab] {$\eta_{3,2}$} (R7C2);

% ---------------------------
% Legend (rotated together with the picture)
% ---------------------------
%\node[anchor=west, font=\small] at (-22, 11.6) {
 % \textcolor{red}{red: $L(\cdot)$} \quad
  %\textcolor{green}{green: $V(\cdot)$ and flow arrows} \quad
  %\textcolor{blue!65!black}{blue arrows: internal/decay}
%};

% ---------------------------
% End tikzpicture
% ---------------------------

\end{tikzpicture}
%\end{center}
\caption{Morphisms between Kac modules}
\label{DISEGNO}
\end{figure}
\end{landscape}

\section{Singular vectors of $W(4)$ and $S(4)$}
In this section we recall the classification of singular vectors in Verma modules for the Lie algebras $W(4)$ and $S(4)$ (the Lie subalgebra of $W(4)$ consisting of 0-divergence vector fields).
We recall that $F(a,b,c)$ denotes the irreducible $\slq$-module of highest weight $(a,b,c)$ and $F_t(a,b,c)$  the irreducible $\mathfrak {gl}_4$-module with highest weight $(a,b,c)$ and central charge $t$ (i.e.\ $\sum x_i \de_i$ acts by multiplication by $t$).
We consider the principal grading of $W(4)$ and the induced grading on $S(4)$, and define the following Verma modules:
\[\Ind_{W_{\geq 0}(4)}^{W(4)}(F_t(a,b,c))=\mathcal{U}(W(4))\otimes_{\mathcal{U}(W_{\geq 0}(4))}\otimes F_t(a,b,c)\]
where $W_{>0}(4)$ acts trivially on $F_t(a,b,c)$.

Similarly we define the Verma modules
\[\Ind_{S_{\geq 0}(4)}^{S(4)}(F(a,b,c))=\mathcal{U}(S(4))\otimes_{\mathcal{U}(S_{\geq 0}(4))}\otimes F(a,b,c)\]
where $S_{>0}(4)$ acts trivially on $F(a,b,c)$.
In \cite{R1} and \cite{R2} Rudakov  obtained a complete classification of singular vectors in Verma modules for $W(4)$ and $S(4)$.

Recall that a vector in a Verma module over $W(4)$ is called singular if it is an eigenvector of the subalgebra $\mathcal{B} +W_{>0}(4)$, where
 $\mathcal{B}$ is the Borel subalgebra of $W_0(4)$, and similarly for
 $S(4)$.
  
\begin{theorem}\label{singW4}
	The following is a complete classification of singular vectors (up  to scalar) in Verma modules for $W(4)$:
	\begin{itemize}
		\item $\de_4\otimes 1\in \Ind_{W_{\geq 0}(4)}^{W(4)}(F_0(0,0,0))$;
		\item $\de_3 \otimes x_4^*-\de_4 \otimes x_3^*\in \Ind_{W_{\geq 0}(4)}^{W(4)}(F_{-1}(0,0,1))$;
		\item $\de_2 \otimes x_1\wedge x_2+\de_3 \otimes x_1\wedge x_3+\de_4 \otimes x_1\wedge x_4\in \Ind_{W_{\geq 0}(4)}^{W(4)}(F_{-2}(0,1,0))$;
		\item $\de_1 \otimes x_1+\de_2 \otimes x_2+\de_3 \otimes x_3+\de_4\otimes x_4 \in \Ind_{W_{\geq 0}(4)}^{W(4)}(F_{-3}(1,0,0))$.
	\end{itemize}
	In particular there are no singular vectors of degree $>1$.
	We have therefore the following complex of morphisms (cfr. \eqref{Omegadual}):
	%\[
	%\Ind_{W(4)_{>0}}^{W(4)}(L_{-4}(0,0,0))\rightarrow \Ind_{W(4)_{>0}}^{W(4)}(L_{-3}(1,0,0))\rightarrow
	%\Ind_{W(4)_{>0}}^{W(4)}(L_{-2}(0,1,0))\rightarrow \Ind_{W(4)_{>0}}^{W(4)}(L_{-1}(0,0,1))\rightarrow
%	\Ind_{W(4)_{>0}}^{W(4)}(L_0(0,0,0))
%	\]
\end{theorem}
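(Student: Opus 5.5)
This is Rudakov's theorem \cite{R1}, so I would follow his method and reprove it in three stages.

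\emph{Setup.} First I use the PBW theorem to identify $\Ind_{W_{\geq 0}(4)}^{W(4)}(F_t(a,b,c))$ with $\C[\de_1,\dots,\de_4]\otimes F_t(a,b,c)$. Since $W_{-1}(4)$ is abelian, an element of degree $s$ is a polynomial of degree $s$ in the $\de_i$ with coefficients in $F$. A vector annihilated by $W_1(4)$ is automatically annihilated by all of $W_{>0}(4)$, because $W_{>0}(4)$ is generated by $W_1(4)$ (one checks $[W_1,W_j]=W_{j+1}$). So the singular vector condition reduces to two things: $v$ is a $\mathcal B$-eigenvector, and $x_ix_j\de_k\, v=0$ for all $i,j,k$.

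\emph{Action of $W_1$.} Next I compute how $x_ix_j\de_k$ acts on $\de^{\alpha}\otimes f$ by commuting it to the right. A commutator $[x_ix_j\de_k,\de_l]$ lies in $W_0$. Commuting it further with the remaining $\de$'s produces terms in $W_{-1}$, which just multiply, and terms in $W_0$, which act on $F$. The outcome is a second-order operator
\[
\textstyle\sum \de^2_{\de}\otimes(\text{matrix units}) + \sum \de_{\de}\otimes(\text{$\mathfrak{gl}_4$ action}).
\]
Here the first-order part comes from $x_j\de_k$ acting on $F$, and the second-order part comes from the purely scalar piece $[[x_ix_j\de_k,\de_l],\de_m]=\delta\delta\,\de_k$.

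\emph{Degree one.} For $s=1$, write $v=\sum_k \de_k\otimes f_k$. Requiring $x_ix_j\de_l\,v=0$ gives linear equations in the $f_k$. Equivalently, this says that the map $\C^4\otimes F\to F$, $\,x_k\otimes f\mapsto f_k$, is a $\mathfrak{gl}_4$-component of $\C^{4*}\otimes F$ of a specific shape. Decomposing $\C^{4*}\otimes F$ by Pieri's rule, the $\mathcal B$-highest vectors in $\C^{4*}\otimes F$ are parametrized by removing one box. The condition from $x_i^2\de_j$ together with the trace parts forces $F$ to be $\bigwedge^j\C^{4*}$ with $t=-j$, for $j=0,1,2,3$. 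This reproduces exactly the four listed vectors, which are the components of $d^*$ in \eqref{d*}. I would also verify directly that the listed vectors are singular.

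\emph{Degree at least two.} I expect this to be the main obstacle. The plan is to take the top-order symbol. Let $v$ be singular of degree $s\geq 2$ and apply the quadratic part of $x_ix_j\de_k$, namely the operator $\de_{\de_i}\de_{\de_j}\otimes(\cdot)$ coming from the double commutator, together with the $W_0$-terms. I would then restrict to the subalgebra $\mathfrak{sl}_2$-triples $x_i\de_j$, $x_i^2\de_j$ and use the operators $x_i\sum_k x_k\de_k$, since $E_i=x_iC$ acts very simply:
\[
E_i\cdot \de^\alpha\otimes f=\bigl(\alpha_i(|\alpha|-1+t)\bigr)\de^{\alpha-e_i}\otimes f+\sum \de^{\alpha-e_i}\ldots
\]
Applying $\sum_i x_i\de_i$-type contractions, I would show that a singular vector must be annihilated by all first derivatives $\de_{\de_i}$ up to $\mathfrak{gl}_4$ twisting. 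Hence its $\de$-degree symbol satisfies a system forcing $s\leq 1$. Concretely, I would consider the highest term in lexicographic order in $\de_1,\dots,\de_4$ and use $x_1^2\de_1$-type elements to show that its coefficient vanishes unless $s\leq1$. If that route stalls, I would instead appeal to duality with the formal de Rham complex. Singular vectors correspond to $W(4)$-morphisms of the dual linearly compact modules, and these are differential operators between tensor-field modules. Morphisms of order at least $2$ between such modules are excluded by Rudakov's symbol argument, namely invariance of the principal symbol under $\mathfrak{gl}_4$ and under $W_1$.
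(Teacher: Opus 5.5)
\paragraph{Assessment.} The paper does not prove this theorem. It quotes it from Rudakov \cite{R1}, and what it uses later is the degree bound in Proposition \ref{bound}. Your closing fallback is therefore just that citation again. As a reproof, however, your proposal has a genuine gap in the degree $\geq 2$ stage, which is the substance of the theorem. That paragraph records intentions rather than an argument, and the tools it names cannot do the job by themselves.

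\emph{The $x_iC$ cannot bound the degree.} The elements $x_iC$, together with $W_{-1}(4)$ and $W_0(4)$, span a copy of $\mathfrak{sl}_5$. As an $\mathfrak{sl}_5$-module, $\C[\de_1,\dots,\de_4]\otimes F$ is parabolically induced, and such modules have singular vectors in every degree. Indeed
\[
x_iC\cdot(\de^\alpha\otimes f)=\alpha_i(|\alpha|-1-t)\,\de^{\alpha-e_i}\otimes f-\sum_k\alpha_k\,\de^{\alpha-e_k}\otimes (x_i\de_k)f
\]
(note $-t$, not $+t$). Hence $\de_4^s\otimes 1\in\Ind_{W_{\geq 0}(4)}^{W(4)}(F_t(0,0,0))$ is killed by all $x_iC$ and by the nilradical of $\mathcal B$ when $t=4(s-1)/5$.

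\emph{The diagonal $x_k^2\de_k$ cannot either.} On a vector of $\mathfrak{gl}_4$-weight $\lambda$ they only give $\alpha_k(\alpha_k+1+2\lambda_k)=0$ for each monomial $\de^\alpha$ that occurs. This does not bound $|\alpha|$.

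What actually excludes $\de_4^s\otimes 1$ is an off-diagonal element such as $x_4^2\de_3$. You never say how the off-diagonal part of $W_1(4)$ is to be used. The de Rham/symbol fallback is circular, since the absence of invariant operators of order $\geq 2$ is exactly what is being proved.

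\paragraph{One way to close the gap.}
\begin{itemize}
\item[(i)] Identify $U(W_{-1}(4))$ with $\C[\xi_1,\dots,\xi_4]$, $\xi_k=\de_k$. Write $v$ as an $F$-valued homogeneous polynomial $P$ of degree $s$, and put $P_k=\partial P/\partial\xi_k$, $P_{km}=\partial^2P/\partial\xi_k\partial\xi_m$. Let $\rho$ be the $\mathfrak{gl}_4$-action on $F$, applied to coefficients, and set $E_{ml}=\rho(x_m\de_l)$.
\item[(ii)] For $X=Q\de_l$ with $Q$ quadratic, commuting $X$ to the right gives
$X\cdot P=-\sum_k\rho\bigl(\tfrac{\partial Q}{\partial x_k}\de_l\bigr)P_k+\tfrac12\sum_{k,m}\tfrac{\partial^2 Q}{\partial x_k\partial x_m}\,\xi_lP_{km}$.
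So $W_1v=0$ is equivalent to
\[
\xi_l\,P_{km}=E_{ml}P_k+E_{kl}P_m\qquad\text{for all }k,m,l.
\]
\item[(iii)] Differentiating in $\xi_n$ and antisymmetrizing in $n,k$ gives $(E_{nl}+\delta_{nl})P_{km}=(E_{kl}+\delta_{kl})P_{nm}$.
\item[(iv)] Let $s\geq 2$. Differentiate the first identity $s-1$ more times and the second $s-2$ more times, then combine them. Define $\phi\colon S^s\C^4\to F$ by $e_{i_1}\cdots e_{i_s}\mapsto \partial^sP/\partial\xi_{i_1}\cdots\partial\xi_{i_s}$. Then $\rho(A)\circ\phi=\phi\circ\bigl(\tfrac12\pi_s(A)-\tfrac12\mathrm{tr}\,A\bigr)$ for all $A\in\mathfrak{gl}_4$, where $\pi_s$ is the natural action on $S^s\C^4$.
\item[(v)] Comparing $\rho([A,B])\circ\phi$ with $[\rho(A),\rho(B)]\circ\phi$ yields $\tfrac14\,\phi\circ\pi_s([A,B])=0$. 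So $\phi$ vanishes on $\pi_s(\slq)S^s\C^4=S^s\C^4$, hence $\phi=0$ and $P=0$.
\end{itemize}
This shows there are no nonzero $W_1(4)$-invariants at all in degree $\geq 2$, even without the $\mathcal B$-condition.

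\paragraph{Degree one.} Your degree-one stage is right in outline, but it should also be written out. There the identity reads $E_{ml}f_k+E_{kl}f_m=0$. Together with $E_{ij}f_k=\delta_{jk}f_i$ for $i<j$, it forces $F\cong\bigwedge^j\C^{4*}$ with $t=-j$, $0\leq j\leq 3$.
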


\begin{figure}[h]
	\begin{center}
		$$
	\begin{tikzpicture}
			\draw[->, line width=2pt](-0.2,0)--(2.2,0);
			\draw[->, line width=2pt](2.8,0)--(5.2,0);
			\draw[->, line width=2pt](5.8,0)--(8.2,0);
			\draw[->, line width=2pt](8.8,0)--(11.2,0);
			
			\draw[fill=black]{(-0.5,0) circle(3pt)};
			\draw[fill=black]{(2.5,0) circle(3pt)};
			\draw[fill=black]{(5.5,0) circle(3pt)};
			\draw[fill=black]{(8.5,0) circle(3pt)};
		    \draw[fill=black]{(11.5,0) circle(3pt)};
			
			\node at (-0.5,0.5) {$\Ind F_{-4}(0,0,0)$};
			\node at (2.5,0.5) {$\Ind F_{-3}(1,0,0)$};
			\node at (5.5,0.5) {$\Ind F_{-2}(0,1,0)$};
			\node at (8.5,0.5) {$\Ind F_{-1}(0,0,1)$};
			\node at (11.5,0.5) {$\Ind F_0(0,0,0)$};
			
		\end{tikzpicture}$$
	\end{center}
	\caption{ All non-zero morphisms between Verma modules for $W(4)$. } 
\end{figure}

\begin{theorem}\label{S4sing}
	The following is a complete classification of singular vectors for $S(4)$
	\begin{itemize}
		\item $\de_4\otimes 1\in \Ind_{S_{\geq 0}(4)}^{S(4)}(F(0,0,0))$;
		\item $\de_3 \otimes x_4^*-\de_4 \otimes x_3^*\in \Ind_{S_{\geq 0}(4)}^{S(4)}(F(0,0,1))$;
		\item $\de_2 \otimes x_1\wedge x_2+\de_3 \otimes x_1\wedge x_3+\de_4 \otimes x_1\wedge x_4\in \Ind_{S_{\geq 0}(4)}^{S(4)}(F(0,1,0))$;
		\item $\de_1 \otimes x_1+\de_2 \otimes x_2+\de_3 \otimes x_3+\de_4 \otimes x_4\in \Ind_{S_{\geq 0}(4)}^{S(4)}(F(1,0,0))$;
		\item $\de_4(\de_1 \otimes x_1+\de_2 \otimes x_2+\de_3 \otimes x_3+\de_4\otimes x_4) \in \Ind_{S_{\geq 0}(4)}^{S(4)}(F(1,0,0))$.
	\end{itemize}
	We have therefore the following diagram of morphisms between Verma modules for $S(4)$:
\end{theorem}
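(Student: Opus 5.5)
This theorem is a recollection of Rudakov's classification \cite{R2}, so the plan is to reconstruct his argument rather than find a new one. First we check that each listed vector is singular. Second, and this is the real content, we show that there are no others.

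\emph{Step 1: the listed vectors are singular.} Since $S(4)$ is generated by $S_{-1}(4)$, $S_0(4)\cong\slq$ and $S_1(4)$, it suffices to check that $S_1(4)$ and the raising operators $x_i\de_{i+1}$ kill each vector. For the four degree-one vectors this is the restriction of Theorem \ref{singW4}: $S(4)\subset W(4)$, and the induced module over $S(4)$ embeds in the restricted one over $W(4)$, since $\mathcal U(S_{-1})=\mathcal U(W_{-1})$ is the symmetric algebra on $\de_1,\dots,\de_4$. The central charge is irrelevant because $C\notin S(4)$.

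For the degree-two vector $\de_4 w$, with $w=\sum_i\de_i\otimes x_i$, we compute directly. The vector has weight $(0,0,1)$ and is killed by the raising operators. For $X\in S_1(4)$ we have
\[
X\de_4 w=[X,\de_4]w+\de_4 Xw=[X,\de_4]w,
\]
because $Xw=0$. Here $[X,\de_4]\in S_0(4)=\slq$, and $w$ spans a copy of $F(1,0,0)$ in degree one, since it is the image of $x_1\otimes x_1$ under the $\slq$-morphism. Hence $[X,\de_4]w$ stays in this copy and is killed by $S_1$. We must then verify that the combination $\sum_X$ vanishes. This uses the divergence-free condition, which makes $\sum_j[\de_j,X]$ act on $w$ by a trace that is zero. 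Under the $W(4)$ action this fails by the $\diver$ term, which explains why the vector is new for $S(4)$.

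\emph{Step 2: completeness.} Let $v\in M(F)_{-s}$ be singular, and write
\[
v=\sum_{|\alpha|=s}\de^\alpha\otimes f_\alpha .
\]
Apply the elements $x_ix_j\de_k$ with $k\neq i,j$, which lie in $S_1(4)$, together with the divergence-free combinations $x_i(x_i\de_i-x_j\de_j)$. Contracting one $\de$ gives equations relating the $f_\alpha$ via the $\slq$-action on $F$. Using the $W(4)$ analysis, where the corresponding equations force $s\le 1$ and $F$ to be one of the $\inlinewedge^j$, the $S(4)$ system has exactly one extra degree of freedom, namely the trace part $\sum_i x_i\de_i$, which is absent from $S_0(4)$.

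Concretely, the map $v\mapsto(\text{derivative along }\de_k)$ identifies singular vectors of degree $s$ with $\slq$-highest weight vectors in $S^{s}(\C^4)\otimes F$ satisfying a system of linear conditions. One shows these conditions force, for $s\ge 3$, the vanishing of all components except those of the form $\de_k^{s-1}(\cdots)$. A further application of $x_k^2\de_l$ then kills these components as well. For $s=2$ the same argument shows that $F\cong F(1,0,0)$ is the only survivor.

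\emph{Main obstacle.} The difficult step is this degree bound and uniqueness in degree two. Our first approach would be Rudakov's trick: compose a hypothetical singular vector with the degree-one morphisms already found, and use highest-weight and Casimir arguments for $\slq$ to restrict the weight of $F$ to finitely many candidates. Each candidate would then be checked by explicit computation.
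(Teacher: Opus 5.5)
Step 1 is correct in substance, but the degree-two check is muddled. The vector $w=\sum_i\de_i\otimes x_i$ does not span a copy of $F(1,0,0)$. It has weight $(0,0,0)$ and is $\slq$-invariant, so $[X,\de_4]w=0$ for $X\in S_1(4)$ simply because $[X,\de_4]\in S_0(4)=\slq$. Equivalently, $\de_4 w$ is the image of the singular vector $\de_4\otimes 1$ under the degree-one morphism $\Ind F(0,0,0)\to\Ind F(1,0,0)$, $1\mapsto w$. This composite exists for $S(4)$ only because there is no central charge to match: over $W(4)$ one would need $\de_4\otimes 1$ to be singular in $\Ind F_{-4}(0,0,0)$, whereas Theorem \ref{singW4} gives it only in $\Ind F_0(0,0,0)$. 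Also, the fact you need is that $S_{>0}(4)$ is generated by $S_1(4)$. In fact, as in the proof of Lemma \ref{barw}, it suffices to test the raising operators and the lowest weight vector $x_4^2\de_1$ of $S_1(4)\cong F(2,0,1)$.

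The genuine gap is Step 2, which asserts completeness rather than proving it. Your key claim, that the $S(4)$ system has ``exactly one extra degree of freedom'' compared with $W(4)$, is unsupported and cannot be extracted from Theorem \ref{singW4}. $S(4)$-singularity only requires annihilation by $S_1(4)$, which has codimension $4$ in $W_1(4)=S_1(4)\oplus\{\ell C\mid \ell \text{ linear}\}$. So $W(4)$-singular vectors restrict to $S(4)$-singular ones, but not conversely, and the $W(4)$ classification gives no upper bound here. Indeed $\de_4 w$ is $S(4)$-singular but not $W(4)$-singular for any central charge.

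Your subsequent statements are given no derivation: that for $s\ge 3$ only components $\de_k^{s-1}(\cdots)$ survive and are then killed by $x_k^2\de_l$, and that for $s=2$ only $F(1,0,0)$ survives. These are precisely the content of Rudakov's theorem, and your last paragraph concedes this.

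In addition, the operator $x_i(x_i\de_i-x_j\de_j)$ you propose to use is not in $S_1(4)$: its divergence is $2x_i-x_i=x_i$. Divergence-free choices would be $x_i^2\de_i-2x_ix_j\de_j$, or $x_k(x_i\de_i-x_j\de_j)$ with $k\neq i,j$. Equations obtained from your operator would therefore be spurious.

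The paper itself does not reprove this theorem; it quotes it from \cite{R2}. To turn your outline into a proof, write $v=\sum_{|\alpha|=s}\de^\alpha\otimes f_\alpha$ and impose $x_i\de_{i+1}v=0$ and $(x_4^2\de_1)v=0$. You would then have to solve this linear system degree by degree, including a genuine argument that it has no nonzero solutions for $s\ge 3$. Otherwise, simply cite Rudakov.
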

\begin{figure}[h]
	\begin{center}
		$$
		\begin{tikzpicture}
			\draw[->, line width=2pt](1.8,0)--(-1.8,0);
			\draw[->, line width=2pt](2,3.8)--(2,0.2);
			\draw[->, line width=2pt](-2,0.4)--(-2,3.8);
			\draw[->, line width=2pt](-1.8,4)--(1.8,4);
			\draw[->, line width=2pt](1.8,3.8)--(-1.8,0.2);
			
			\draw[fill=black]{(2,0) circle(3pt)};
			\draw[fill=black]{(-2,0) circle(3pt)};
			\draw[fill=black]{(2,4) circle(3pt)};
			\draw[fill=black]{(-2,4) circle(3pt)};
			
			\node at (2.3,-0.5) {$\Ind F(0,0,0)$};
			\node at (-2.3,-0.5) {$\Ind F(1,0,0)$};
			\node at (3.3,4) {$\Ind F(0,0,1)$};
			\node at (-3.5,4) {$\Ind F(0,1,0)$};
			
		\end{tikzpicture}$$
	\end{center}
	\caption{\label{S4morphisms} All non-zero morphisms between Verma modules for $S(4)$. External morphisms have degree 1, and the ``diagonal'' morphism  has degree 2.} 
\end{figure}
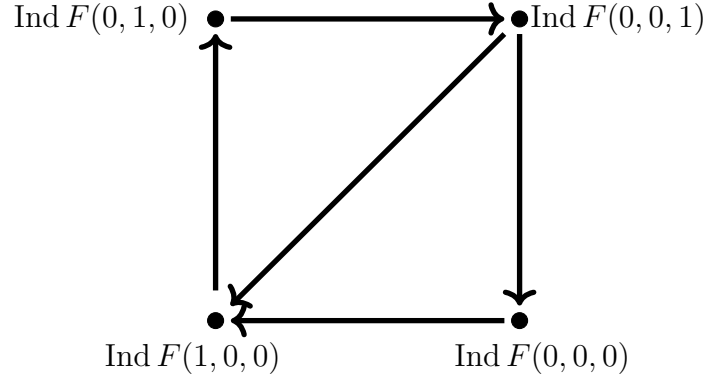

\section{$E(4,4)$-Verma modules and their singular vectors}\label{sec:sing}
Let $W$ be a finite-dimensional $\phat$-module and let us consider the $E(4,4)$-Verma module induced by $W$:
\[
M(W)=\Ind_{E(4,4)_{\geq 0}}^{E(4,4)}W
\]
where $E(4,4)_{>0}$ acts trivially on $W$.
In \cite{CaCaKac} the following result was proved:
\begin{theorem}\label{duality}
There exists a contravariant duality functor on the category of continuous $E(4,4)$-modules with discrete topology that associates $M(W^*)$ to $M(W)$.
\end{theorem}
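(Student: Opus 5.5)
The natural route is to build the duality functor from the restricted (graded) dual, twisted by an anti-automorphism of $E(4,4)$ that exchanges the role of positive and negative degrees, as in the analogous results for $E(3,6)$, $E(3,8)$ and $E(5,10)$. For a continuous module $V$ with discrete topology on which the grading element $C$ acts diagonalizably with finite-dimensional eigenspaces, we set $V^\vee=\bigoplus_j (V_{j})^*$. This is the $C$-finite part of $V^*$, and $L$ acts on it by $(x f)(v)=-(-1)^{p(x)p(f)}f(\sigma(x)v)$, where $\sigma$ is a suitable involution. Because $E(4,4)$ has no Chevalley-type anti-involution mapping $L_j$ to $L_{-j}$ (it is infinite in the positive direction only), the first step will replace this naive construction by the one used for other linearly compact superalgebras. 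We take the \emph{continuous} dual $V^*$, which is a linearly compact $L$-module. We then apply the equivalence between linearly compact $L$-modules and discrete modules given by taking topological duals, combined with the identification $\mathcal U(L_{-1})\cong\bigwedge$-type structure of $M(W)$ as an $L_{-}$-module. Contravariance is automatic for dualization, so the content is in identifying the image of $M(W)$.

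The key step is to show that $M(W)^{*}$, which is a linearly compact module, corresponds under the functor to $M(W^*)$, up to the parity and $C$-twist conventions. First, $M(W)\cong \mathcal U(L_{<0})\otimes W$ as an $L_{<0}$-module. Since $L_{<0}=L_{-1}$ is abelian-like, namely $\C^4$ even plus $\C^4$ odd with $[L_{-1},L_{-1}]=0$, we get $M(W)\cong S(L_{-1})\otimes W$. This is the polynomial algebra in $\de_i$ tensored with the Grassmann algebra in $d_i$, tensored with $W$. Its dual is a module of "formal sections" $\Omega^0(4)\otimes\bigwedge(4)\otimes W^*$, and this dual carries a natural $L$-action as a coinduced module $\mathrm{Hom}_{\mathcal U(L_{\ge0})}(\mathcal U(L),W^*)$. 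The plan is then to exhibit an isomorphism between the coinduced module $\mathrm{Coind}(W^*)$ and the linearly compact dual of $M(W^*)$, after twisting by the Berezinian (supertrace) character of $L_{\ge0}$ acting on $L/L_{\ge0}$.

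The main obstacle is that this twist must be trivial, or absorbable, for $E(4,4)$. Consider the character $\chi(x)=\mathrm{str}(\mathrm{ad}\,x|_{L/L_{\geq0}})$ for $x\in L_0=\phat$. It vanishes on $\phat_{\pm1}$ and on $\slq$ for parity and weight reasons. On $C$ it equals $\mathrm{str}(-\mathrm{id}_{L_{-1}})=-(4-4)=0$, because $L_{-1}$ has superdimension $(4|4)$. So $\chi=0$, and $\mathrm{Coind}(W^*)\cong (M(W))^*$ with no twist. I would verify this carefully, since it is exactly the point where $E(4,4)$ differs from the reductive cases. The remaining check is that $\mathrm{Coind}_{L_{\ge0}}^{L}(U)\cong M(U)^{*}$ canonically for every finite-dimensional $U$, which follows from the standard Frobenius-type pairing $\mathcal U(L)\otimes_{\mathcal U(L_{\ge0})}U\times \mathrm{Hom}_{\mathcal U(L_{\ge0})}(\mathcal U(L),U^*)\to\C$ together with the PBW isomorphism. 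Composing these identifications gives the functor $V\mapsto$ (discrete dual of the coinduced/continuous-dual module), which sends $M(W)$ to $M(W^*)$. It is contravariant and exact, so it also reverses morphisms between Verma modules.
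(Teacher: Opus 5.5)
The paper does not prove this statement; it imports it from \cite{CaCaKac}, and the general version for a Lie superalgebra and a subalgebra of finite codimension is Chemla's theorem \cite{Che}. Your argument has a genuine gap: the functor it produces is trivial. For a discrete $V$, the topological dual of the linearly compact module $V^*$ is canonically $V$. So ``discrete dual of the continuous dual'' is naturally isomorphic to the identity: it is covariant and sends $M(W)$ to $M(W)$.

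Your chain $M(W)\mapsto M(W)^*\cong\mathrm{Coind}(W^*)\cong M(W^*)^*\mapsto M(W^*)$ hides this behind the claim $\mathrm{Coind}(U)\cong M(U)^*$. But the Frobenius pairing you write down pairs $M(U)$ with $\mathrm{Coind}(U^*)$, so it gives $M(U)^*\cong\mathrm{Coind}(U^*)$. This is plain tensor--hom adjunction, valid for every $L$, with no Berezinian twist involved at that stage. With the correct version, the middle isomorphism in your chain would say $M(W)^*\cong M(W^*)^*$, which is false in general. For example, take $W=W_t(1,0,0)$ with $t\neq 0$, so that $W^*=W_{-t}(1,0,0)$. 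Then $C$ acts on $M(W)^*$ with eigenvalues $-t+\Z_{\ge 0}$, but on $M(W^*)^*$ with eigenvalues $t+\Z_{\ge0}$.

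What is missing is the only nontrivial ingredient. You need a contravariant functor $D$ on all continuous discrete $L$-modules, together with natural isomorphisms $D(\Ind F)\cong\Ind(F^*\otimes\chi)$, where $\chi$ is the supertrace character of $L_{\ge 0}$ on $L/L_{\ge0}$. Naturality matters for how the theorem is used in the paper. Morphisms $M(W_1)\to M(W_2)$ come from singular vectors of positive degree, not from maps $W_1\to W_2$, and the paper repeatedly dualizes them. Topological dualization and coinduction cannot supply such a $D$: $\mathrm{Coind}(W^*)$ is linearly compact (power series in the $x_i$), whereas $M(W^*)$ is discrete (polynomials in the $\de_i$). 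Constructing $D$ is the real content of \cite{CaCaKac} and \cite{Che}.

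Your computation $\chi=0$ is correct and is the right final step. You argue that $\slq$ is perfect, odd elements have zero supertrace, and $C$ acts by $-1$ on $L_{-1}$ of superdimension $(4|4)$. You should add that $L_{>0}$ acts trivially on $L/L_{\ge 0}$. In fact $\chi|_{\phat}=0$ is forced, because $\phat$ is perfect: $C=\frac12[a_{12},a_{34}]$. This explains why no twist appears for $E(4,4)$. By contrast, for $W(n)$ the character $\chi$ is a nonzero multiple of the trace of $\mathfrak{gl}_n$, and it is exactly what turns $\Omega^j(n)^*$ into $\Omega^{n-j}(n)^*$. But the $\chi=0$ computation completes a proof only once the general twisted duality theorem is available.
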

\begin{remark} We conjectured this duality in a setup of an arbitrary Lie superalgebra and its subalgebra of finite codimension in \cite{CCK}. Michel Duflo pointed out that this conjecture has been proved long ago by Sophie Chemla in \cite{Che}.
\end{remark}

This result, together with the description of the modules $W_t(a,b,c)^*$ given in Corollary \ref{duali} will be fundamental in the classification of degenerate (i.e., reducible) $E(4,4)$-Verma modules. We also recall that $K_t(a,b,c)^*\cong K_{-t}(c,b,a)$ (see \cite[Lemma 4.4]{S}).

Now let us denote by $M_t(a,b,c)$  the generalized Verma module induced by the $E(4,4)_0=\phat$-module $W_t(a,b,c)$, extended trivially to $E(4,4)_{>0}$, i.e.,
\[
M_t(a,b,c)=\Ind_{E(4,4)_{\geq 0}}^{E(4,4)}W_t(a,b,c),
\]
and similarly define $\tilde M_t(a,b,c)$ as the Verma module induced by the Kac module $K_t(a,b,c)$.
\begin{definition}
Let $0\neq w\in M_t(a,b,c)$. We say that $w$ is a  $E(4,4)$-singular vector if
\begin{itemize}
	\item $(x_i\de_{i+1})w=0$ for all $i=1,2,3$;
	\item $bw=0$ for all $b\in \phat_1$;
	\item $Xw=0$ for all $X\in E(4,4)_{>0}$.
\end{itemize}
\end{definition}
We always assume that a singular vector is homogeneous and a weight vector for $\slq$.
\begin{lemma}Let $0\neq w\in M_t(a,b,c)$. Then $w$ is a $E(4,4)$-singular vector if and only if
	\begin{itemize}
		\item $(x_i\de_{i+1})w=0$ for all $i=1,2,3$;
		\item $b_{44}w=0$;
		\item $E\,w=0$, where $E=x_4^2d_3-x_3x_4 d_4$;
		\item $(x_4C) w=0$, where $C=\sum_i x_i \de_i$ is the central element in $\phat$.
	\end{itemize}
\end{lemma}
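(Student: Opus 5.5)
The forward implication is immediate: $b_{44}\in\phat_1$, while $E=x_4^2d_3-x_3x_4d_4$ and $x_4C$ both lie in $E(4,4)_1$. For the converse I would use the \emph{annihilator} $\mathfrak a=\{X\in E(4,4)\,|\,Xw=0\}$. It is a subalgebra, since $[X,Y]w=XYw\mp YXw$. It contains the nilpotent subalgebra $\mathfrak n^+$ generated by the $x_i\de_{i+1}$, because $w$ is a weight vector. By hypothesis it also contains $b_{44}$, $E$ and $x_4C$. The goal is to show that the subalgebra generated by $\mathfrak n^+$, $b_{44}$, $E$ and $x_4C$ contains $\phat_1$ and $E(4,4)_{>0}$.

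\emph{Step 1: $\phat_1\subset\mathfrak a$.} As an $\slq$-module, $\phat_1\cong S^2(\C^4)$ is irreducible. Its lowest weight vector is $b_{44}=2x_4d_4$: the raising operators $x_i\de_j$ with $i<j$ move the index $4$ down to smaller indices, and nothing lowers it. Repeated brackets $[x_i\de_{i+1},\cdot\,]$ applied to $b_{44}$ therefore generate all of $\phat_1$. For instance $[x_3\de_4,b_{44}]$ is a nonzero multiple of $b_{34}$, and so on.

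\emph{Step 2: $E(4,4)_1\subset\mathfrak a$.} The key observation is that $b_{ij}\in E(4,4)_0$, so $\mathrm{ad}\,\phat_1$ preserves $E(4,4)_1$. Hence $\mathfrak a\cap E(4,4)_1$ is stable under the algebra $\mathfrak b^+=\mathfrak n^+\oplus\phat_1$. It therefore suffices to show that $E(4,4)_1$ is generated as a $\mathfrak b^+$-module by $E$ and $x_4C$.

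Recall that $E(4,4)_1\cong F_1(2,0,1)\oplus F_1(1,0,0)\oplus F_1(3,0,0)\oplus F_1(1,1,0)$. I would identify the pieces concretely.
\begin{itemize}
\item The even part consists of the quadratic vector fields. These split into the divergence-type part $\{\ell\,C\}$, of type $F(1,0,0)$ with lowest weight vector $x_4C$, and a complementary part of type $F(2,0,1)$.
\item The odd part consists of the 1-forms with quadratic coefficients. These split into the closed forms $d(\text{cubics})\cong F(3,0,0)$ and a non-closed complement of type $F(1,1,0)$.
\item The vector $E$ is not closed, since $dE=3x_4\,d_4\wedge d_3\neq0$, and it is annihilated by all lowering operators. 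So $E$ is the lowest weight vector of the $F(1,1,0)$ component.
\end{itemize}
The $\mathfrak n^+$-orbits of $x_4C$ and $E$ give these two components in full. The missing components $F(2,0,1)$ and $F(3,0,0)$ must come from brackets with $\phat_1$. I would compute $[b_{44},E]$ (odd$\times$odd, hence an even quadratic vector field) and $[b_{44},x_4C]$ (a quadratic 1-form). I would then check that, modulo the components already obtained, these are nonzero lowest-weight-type vectors in $F(2,0,1)$ and in $F(3,0,0)$ respectively. If one of them vanishes, I would replace $b_{44}$ by another $b_{ij}$ of suitable weight. This explicit bracket computation, using the formula $[\omega_1,\omega_2]=d\omega_1\wedge\omega_2+\omega_1\wedge d\omega_2$ and the twisted action $L_X-\tfrac12\diver(X)$, is the main obstacle I expect.

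\emph{Step 3: $E(4,4)_{>0}\subset\mathfrak a$.} It remains to use that $E(4,4)_{j+1}=[E(4,4)_1,E(4,4)_j]$ for $j\geq1$, i.e.\ that $E(4,4)_{>0}$ is generated by $E(4,4)_1$. This is standard for the principal (irreducible, transitive) grading: the positive part of the graded algebra is generated by its degree-one component. It can also be checked directly from $W(4)$ together with the description $E(4,4)_d\cong W_d(d+2,0,0)$ of Example \ref{homcomp}. Since $\mathfrak a$ is a subalgebra containing $E(4,4)_1$, it follows that $Xw=0$ for all $X\in E(4,4)_{>0}$, which completes the converse.
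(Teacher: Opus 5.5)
This is essentially the paper's own proof, which consists of exactly your three facts ($b_{44}$ is the lowest weight vector of the irreducible $\slq$-module $\phat_1$, $E$ and $x_4C$ generate $E(4,4)_1$ over $\mathfrak n^+\oplus\phat_1$, and $E(4,4)_{>0}$ is generated by $E(4,4)_1$), stated there without computation. Two small corrections to your details: first, your initial choice fails because $[b_{44},E]=b_{44}\wedge dE=0$, so you need your fallback. The element $b_{24}$ works, since $[b_{24},E]=b_{24}\wedge dE=3x_4^2\,d_2\wedge d_4\wedge d_3$ is a nonzero multiple of the lowest weight vector $x_4^2\de_1$ of $F(2,0,1)$, while $[b_{44},x_4C]=-x_4^2d_4$ does give $F(3,0,0)$. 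Second, Step 3 is not a general feature of transitive irreducible gradings (it fails for $W(1)$). Here it holds because each $E(4,4)_{d+1}$ is an irreducible $\phat$-module (Example \ref{homcomp}), and $[E(4,4)_1,E(4,4)_d]$ is a nonzero $\phat$-submodule of it.
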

\begin{proof}
This is a consequence of the following facts: 
\begin{itemize}
	\item $b_{44}$ is a lowest weight vector in $\phat_1$, which is an irreducible $\slq$-module;
	\item elements $E$ and $x_4C$ generate $E(4,4)_1$ over the Borel subalgebra of $\phat$ generated by $x_i\de_j$ (with $i<j$) and $\phat_1$;
	\item $E(4,4)_{>0}$ is generated by $E(4,4)_1$.  
\end{itemize}
\end{proof}
The main result of this paper is a classification of $E(4,4)$-singular vectors in Verma modules $M_t(a,b,c)$.
We represent a generic vector $w\in M_t(a,b,c)$ in the following form
\[
w=\sum _{I,J} \de_I d_J \otimes v_{I,J},\,\, v_{I,J}\in W_t(a,b,c),
\]
where $I$ is a multiset with entries in $\{1,2,3,4\}$ and $J$ is an ordered subset of $\{1,2,3,4\}$. We sometimes drop the symbol $\otimes$ in $w$. In this notation we have, e.g. $\de_{1,2,2}d_{1,3,2}=-\de_1\de_2^2d_1 d_2 d_3$.
Our first target is to show a bound on the degree of singular vectors. Recall that if $v\in L_{-1}^d\otimes W$, we say that $v$ has degree $d$.

\begin{proposition}\label{bound}
	Let $w$ be a singular vector in $M_t(a,b,c)$. Then $w$ has degree at most 5.
\end{proposition}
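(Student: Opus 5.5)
Write a singular vector of degree $s$ as
\[
w=\sum_{k=0}^{4} w_k,\qquad w_k=\sum_{|J|=k}\sum_{I}\de_I d_J\otimes v_{I,J},
\]
where $w_k$ is the part containing exactly $k$ odd generators $d_J$. Since the $d_j$ are odd, $0\le k\le 4$, so $|I|=s-k\ge s-4$ for every term. The plan is to reduce to Rudakov's result on $W(4)$ (Theorem \ref{singW4}) via the even part $E(4,4)_{\bar 0}=W(4)$. By PBW,
\[
M_t(a,b,c)\cong \mathcal U(W(4)_{-1})\otimes \inlinewedge(\Omega^1(4)_{-1})\otimes W_t(a,b,c)
\]
as vector spaces. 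Filter this by the number of odd generators, and more precisely by $k$ together with the ordered exterior part. The successive quotients are $W(4)$-modules of the form $\Ind_{W_{\ge 0}(4)}^{W(4)} U$, with $U=\inlinewedge^k(\C^{4})\otimes W_t(a,b,c)$ regarded as a $\mathfrak{gl}_4$-module, twisted by the $-\frac12\diver$ factor. We have to check that $W_{>0}(4)$ acts on these subquotients compatibly with the induced structure. Commuting a positive-degree vector field past $d_j$ produces only terms with fewer odd generators, or terms in $E(4,4)_{\ge0}$ acting on $W$.

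The key step is to take the component of $w$ with the largest $k$ for which $w_k\neq 0$, and use the lemma above characterizing singularity through $x_i\de_{i+1}$, $b_{44}$, $E$ and $x_4C$. The even elements $E$ and $x_4C$ lie in $W(4)_1$ and, together with $\slq$, generate $W(4)_{>0}$. Applying them and projecting to the top filtration piece shows that the leading part $w_k$ is annihilated by $W(4)_{>0}$ modulo lower filtration. So $w_k$ gives a nonzero $W(4)$-singular element, possibly after further projecting onto a $\mathfrak{gl}_4$-highest weight component, in a Verma module $\Ind_{W_{\geq0}(4)}^{W(4)}U$. Here some care is needed: a singular vector in Theorem \ref{singW4} is an eigenvector of $\mathcal B+W_{>0}(4)$. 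We can pick a $\mathcal B$-highest vector inside the finite-dimensional space of $W(4)_{>0}$-invariants of the given degree, which is $\mathfrak{gl}_4$-stable.

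By Theorem \ref{singW4} such vectors have $\de$-degree at most $1$, so $|I|\le 1$ on the top component, and hence $s=|I|+k\le 1+4=5$.

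The main obstacle is the compatibility step. We must check that $W(4)_{>0}$ acting on the top-$k$ component does not pick up contributions from components with more odd generators. It is tempting to argue that $[X,d_j]$ for $X\in W(4)_{1}$ lies in $\Omega^1(4)_0\subset E(4,4)_0$ and so lowers the number of $d$'s, but this is not enough on its own. Such terms can then act on $W$ through $\phat_{-1}$, and those elements are odd. Moreover, $[X,\de_i]\in W_0$ acts on the exterior part, preserving $k$ and changing $U$. We therefore set up the filtration by $k$ carefully and verify that the associated graded is exactly an induced $W(4)$-module from the $\mathfrak{gl}_4$-module $\inlinewedge^k(\Omega^1_{-1})\otimes W_t(a,b,c)$. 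This is where we expect the actual work to lie. If the top-$k$ argument fails, the fallback is to choose instead the component minimal with respect to a suitable order, since lowering terms then vanish automatically.
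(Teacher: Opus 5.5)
Your plan is the paper's proof. You filter $M_t(a,b,c)$ by the number $k$ of odd generators $d_j$, note that each $M(k)$ is a $W(4)$-submodule with $M(k)/M(k-1)\cong\Ind_{W_{\geq 0}(4)}^{W(4)}F(k)$, project $w$ to its top nonzero layer to get a $W(4)$-singular vector, and apply Theorem \ref{singW4} to obtain $s-k\le 1$. The compatibility you single out as the main obstacle is immediate, and no fallback is needed: $[\de_i,d_j]=[d_i,d_j]=0$, and for $X\in W(4)_{\geq 1}$ the bracket $[X,d_j]$ is an odd element of $E(4,4)_{\geq 0}$, so every term it produces (acting on $W_t(a,b,c)$, or bracketing with a further $d_l$) has strictly fewer $d$'s.

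One correction: drop the detour through the lemma with $E$ and $x_4C$. $E=x_4^2d_3-x_3x_4d_4$ is an odd one-form, not an element of $W(4)_1$. Also, $x_4C$ together with $\slq$ only produces the abelian span of $x_1C,\dots,x_4C$, not $W(4)_{>0}$. None of this is needed anyway, since $W(4)_{>0}\subset E(4,4)_{>0}$ annihilates $w$ by the very definition of a singular vector.
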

\begin{proof}
	Let $V=F_t(a,b,c)$ and consider the following subspaces of $M_t(a,b,c)$
	\[
	M(k)=\C[\de_1,\ldots,\de_4]\otimes \sum_{j\leq k} \sum_{i_1<\cdots<i_j}\C d_{i_1}\cdots d_{i_j}\otimes V,
	\]
	where $k=0,\ldots,4$.
	The subspace $M(k)$ is not a $E(4,4)$-submodule but we can observe that it is indeed a $W(4)$-submodule of $M_t(a,b,c)$. So we have the following filtration of $M$ into $W(4)$-submodules
	
	\[
	M(0)=\C[\de_1,\ldots,\de_4]\otimes V \subseteq M(1)\subseteq \cdots \subseteq M(4)=M.
	\]
	We also let $N(k)=M(k)/M(k-1)$ for $k=0,\ldots,4$, where $M(-1)=0$. Let 
	\[
	F(k)=\sum_{i_1<\cdots<i_k}\C d_{i_1}\cdots d_{i_k}\otimes V
	\]
	and observe that as a vector space we have
	\[
	N(k)\cong \C[\de_1,\ldots,\de_4]\otimes  F(k).
	\]
	Also observe that $F(k)$ is annihilated by $W_{>0}(4)$ in $N(k)$. It follows that 
	\[N(k)\cong \Ind_{W_{\geq 0}(4)}^{W(4)}F(k).\]
Now consider a singular vector $w\in M_t(a,b,c)$ and let $k$ be minimum such that $w\in M(k)$. Then the projection of $w$ in the quotient $N(k)$ is necessarily a singular vector for $W(4)$. Since singular vectors of $W(4)$ have degree at most 1 by Theorem \ref{singW4}, the vector $w$ has degree at most 5 in $N(k)$, and the result follows.
\end{proof}
Let $N=N(\phat)=N(\slq)\oplus \phat_1$ be a maximal nilpotent subalgebra of $\phat$, and  $B=B(\phat)=B(\slq)\oplus \phat_1$ the Borel subalgebra containing $N$. Assume that 
\[
w=\sum _{I,J} \de_I d_J \otimes v_{I,J}
\]
is a singular vector of degree $d$. Let $U_d=\bigoplus_{|I|+|J|=d}\C \de_I d_J$ and denote by $(\de_I d_J)^*$ the dual basis of $U_d^*$.
Since $N$ acts trivially on $w$ we deduce that the map $U_d^*\rightarrow F_t(a,b,c)$ given by
\begin{equation} \label{vIJaction}
(\de_I d_J)^*\mapsto v_{I,J}
\end{equation}
is a morphism of $N$-modules. This observation significantly simplifies the action of elements in $B$ on vectors $v_{I,J}$. In particular we may observe that $b_{44}v_{I,J}\neq 0$ only if $4$ is an entry in $J$.
 
The classification of singular vectors of $S(4)$ allows us to make a further semplification of possible singular vectors of $E(4,4)$.
\begin{lemma}\label{barw}
	Let $w=\sum _{I,J} \de_I d_J \otimes v_{I,J}$ be a singular vector of degree $k$ in $M_t(a,b,c)$. Then
	\[
	\bar w=\sum_{|I|=k} \de_I \otimes v_{I,\emptyset}
	\]
	is a singular vector for $S(4)$. In particular if $k\geq 3$ we have $\bar w=0$, and if $k=1,2$ then $\bar w$ must be compatible with the classification in Theorem \ref{S4sing}.
\end{lemma}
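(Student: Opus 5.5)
The idea is to reuse the filtration from the proof of Proposition \ref{bound}, now at its lowest level $k=0$. We view $M_t(a,b,c)$ as a module over $S(4)\subset W(4)=E(4,4)_{\bar 0}$ and use the $W(4)$-submodules $M(0)\subseteq M(1)\subseteq\cdots\subseteq M(4)$, where $M(0)=\C[\de_1,\dots,\de_4]\otimes W_t(a,b,c)$. Write $\pi\colon M(4)\to N(4)\oplus\cdots$ for the projection killing all terms containing some $d_j$. In other words, $\pi$ sends $\de_Id_J\otimes v_{I,J}$ to $0$ whenever $J\neq\emptyset$ and leaves it unchanged when $J=\emptyset$. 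Then $\bar w$ is exactly the degree-$k$ part of $\pi(w)$, i.e.\ the component of $w$ lying in $\C[\de]\otimes W$.

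The key point is that $\pi$ intertwines the action of $S(4)_{\geq 0}$, and more generally of $W(4)$. For this I would check two things.

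First, $W(4)$ preserves the "$d$-degree" filtration: for $X\in W(4)$ and $\omega\in L_{-1}$ odd, the bracket $[X,d_j]$ is again a combination of one-forms. Hence $X$ applied to $\de_Id_J\otimes v$ never lowers $|J|$. It produces commutators $[X,\de_i]\in W(4)$, commutators $[X,d_j]\in\Omega^1$, and finally an action on $v$ through $L_{\geq0}$. Terms with a $d$ stay in the span of monomials containing a $d$ once we reorder using PBW. The delicate step is reordering, since brackets $[\Omega^1,\Omega^1]\subset W(4)$ could in principle remove two $d$'s. However, in $\mathcal U(L)$ the element $X$ is even, and moving it to the right only creates brackets $[X,\cdot]$, never brackets of two odd elements. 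Within the degree $-1$ part, $L_{-1}$ is supercommutative modulo $L_{-2}=0$, so $\de$'s and $d$'s anticommute or commute freely. Thus the span $M'$ of vectors with $J\neq\emptyset$ is $W(4)$-stable, and $\pi$ is the $W(4)$-module map $M\to M/M'=M(0)$-type quotient.

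Second, this quotient is isomorphic to $\Ind_{W_{\geq0}(4)}^{W(4)}W_t(a,b,c)|_{\mathfrak{gl}_4}$. We have to be careful that $\phat_{\pm1}$ acts on $v$, but the odd part of $L_0$ does not lie in $W(4)$, so only the restriction of $W_t(a,b,c)$ to $W_0(4)=\mathfrak{gl}_4$ matters.

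Then apply $\pi$ to the singular vector $w$. Every element of $S(4)_{>0}\subset E(4,4)_{>0}$ kills $w$, and every element of the Borel subalgebra of $\mathfrak{sl}_4\subset S(4)_0$ acts on $w$ by an eigenvalue. Hence $\bar w=\pi(w)$ is annihilated by $S(4)_{>0}$ and is a $\mathcal B$-eigenvector. Since $w$ is homogeneous of degree $k$, the image $\pi(w)$ is the degree-$k$ component with $J=\emptyset$, namely $\bar w$.

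Restricting further from $W(4)$ to $S(4)$: the $S(4)$-module $\Ind_{W_{\geq0}}^{W}(V)$ is $\C[\de]\otimes V=\Ind_{S_{\geq0}}^{S}(V)$, because $S_{-1}=W_{-1}$. So $\bar w$, if nonzero, is a singular vector of an $S(4)$-Verma module induced from the finite-dimensional $\mathfrak{sl}_4$-module $W_t(a,b,c)$. That module need not be irreducible, but its $\mathcal B$-eigenvectors still lie in Verma modules induced from irreducible constituents after projecting to a suitable quotient. To handle this I would take a Jordan–Hölder-type filtration of $W_t(a,b,c)$ as an $\mathfrak{sl}_4$-module. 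Since it is completely reducible, we can simply project $\bar w$ onto isotypic summands $\Ind F(\lambda)$, and each nonzero projection is singular.

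By Theorem \ref{S4sing}, all such singular vectors have degree $\le 2$, and those of degrees $1$ and $2$ have the listed forms. So for $k\geq3$ every projection vanishes and $\bar w=0$, while for $k=1,2$ each projection is one of the listed vectors. This gives the claimed compatibility.

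I expect the main obstacle to be the careful verification that $M'$ is $W(4)$-stable, i.e.\ that $\pi$ is a module map. This is essentially the same fact already used in Proposition \ref{bound}, where the $M(k)$ were shown to be $W(4)$-submodules. Here I would phrase it with the complementary filtration, using the same reasoning about the grading by number of $d$'s.
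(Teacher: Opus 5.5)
There is a genuine gap, and it is the step you flagged as the main obstacle: the span $M'$ of the vectors $\de_I d_J\otimes v$ with $J\neq\emptyset$ is not stable under $W(4)$, or even under $S(4)$. So $\pi$ is not a module map, and $\bar w=\pi(w)$ does not inherit singularity automatically.

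Your reordering argument misses the following. For $X$ of degree $\ge 1$, the bracket $[X,d_j]$ is an odd element of $L_{\ge 0}$. When it has degree $0$ it lies in $\phat_{\bar 1}$ and acts nontrivially on $W_t(a,b,c)$. So the odd part of $L_0$, which you said plays no role, does enter. Non-closed forms such as $a_{ij}$ also bracket with the remaining $d$'s and produce $\de$'s.

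Concretely, $[x_4^2\de_1,d_1]=d(x_4^2)=b_{44}$, and $W_{>0}(4)$ kills $W_t(a,b,c)$. Hence $(x_4^2\de_1)(d_1\otimes v)=1\otimes b_{44}v$. In $M_t(1,0,0)$ with $v=\de_4$ this equals $-2\cdot 1\otimes d_4\neq 0$, which is a vector with $J=\emptyset$. The terms $\frac54 a_{j4}z_j$ in \eqref{x4Cw} show the same phenomenon for $x_4C$.

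The analogy with Proposition \ref{bound} runs the other way. There one only uses that $W(4)$ never increases the number of $d$'s. So the $M(k)$ are submodules, and one projects onto the top quotient $N(k)$, which sees the component of $w$ with the most $d$'s. Your $\bar w$ is the component with no $d$'s. It sits in the bottom submodule $M(0)$, and no equivariant projection onto it exists.

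The missing ingredient is that $w$ itself is singular; your argument never uses the $N$-equivariance \eqref{vIJaction}. The paper proceeds element by element, in three steps.

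First, it suffices to show that $\bar w$ is killed by the $x_i\de_{i+1}$ and by the lowest weight vector $x_4^2\de_1$ of the irreducible $\slq$-module $S_1(4)$. Indeed, $S_1(4)$ is obtained from $x_4^2\de_1$ by repeated brackets with the $x_i\de_{i+1}$, and $S_1(4)$ generates $S_{>0}(4)$.

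Second, the $x_i\de_{i+1}$ preserve the number of $d$'s. So $(x_i\de_{i+1})\bar w$ and $(x_i\de_{i+1})(w-\bar w)$ cannot cancel, and both must vanish.

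Third, for $x_4^2\de_1$ one checks that $(x_4^2\de_1)(w-\bar w)$ has no component with $J=\emptyset$. Since $[x_4^2\de_1,d_j]=\delta_{j1}b_{44}$, the only possible such terms are $\de_I\otimes b_{44}v_{I,1}$. These vanish by \eqref{vIJaction}, because $4\notin J=\{1\}$. Therefore $(x_4^2\de_1)\bar w$, which lies in $\bigoplus_{|I|=k-1}\C\,\de_I\otimes W_t(a,b,c)$, must itself vanish.

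With this in place, your final step goes through: use $S_{-1}(4)=W_{-1}(4)$, split $W_t(a,b,c)$ into $\slq$-isotypic summands, and invoke Theorem \ref{S4sing}.
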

\begin{proof}
Recall that $S_1(4)$ is an irreducible $\slq$-module with highest weight $(2,0,1)$. The vector field $x_4^2\de_1$ is a lowest weight vector and we have that $\bar w$ is a $S(4)$-singular vector if and only if it is annihilated by $x_i\de_{i+1}$ (for $i=1,2,3$) and by $x_4^2 \de_1$. It is clear that $(x_i\de_{i+1})(\bar w)$ and $(x_i \de_{i+1})(w-\bar w)$, if not zero, are linearly independent and therefore $(x_i \de_{i+1})(\bar w)=0$.
We claim that also $(x_4^2 \de_1)(\bar w)$ and $(x_4^2 \de_1)(w-\bar w)$, if not zero, are linearly independent. Indeed let 
\[M(i,j)=\bigoplus_{|I|=i,|J|=j} \C \de_I d_J \otimes F_t(a,b,c).\] It is enough to show that $(x_4^2 \de_1)\bar w \in M(k-1,0)$ and $(x_4^2 \de_1)(w-\bar w)\in \bigoplus_{i=0}^{k-2}M(i,k-1-i)$. The first part is clear. As for the second part, we observe that the only terms that could contribute to $M(k-1,0)$ are those of the form $\de_I d_1\otimes v_{I,1}$ because $[x_4^2\de_1,d_i]=\delta_{i,1} b_{44}$. But in this case we have $b_{44}v_{I,1}=0$ and the claim follows.
\end{proof}
\section{Singular vectors of degree 1}
%We will need a result like the following (che avevo dato un po' per scontato)
%\begin{lemma}\label{scontato} Let $w\in W_t(a,b,c)$ be a $\phat$-singular vector.
%	\begin{itemize}
%	\item If $\lambda(w)=(r,0,0)$ for some $r>0$ then $W_t(a,b,c)=W_t(r,0,0)$;
%	\item if $\lambda(w)=(0,1,0)$ then $W_t(a,b,c)=W_t(0,1,0)$;
%	\item if $\lambda(w)=(0,r,r+1)$ then $W_t(a,b,c)=W_t(0,r,r+1)$;
%	\item if $\lambda(w)=(0,0,r)$ for some $r>0$ then $W_t(a,b,c)=W_t(0,0,r)$;
%	\end{itemize}
%\end{lemma}
%\begin{proof}
%If $W_t(a,b,c)$ has a singular vector of weight $(r,0,0)$ there is a nonzero morphism
%\[
%K_t(r,0,0)\rightarrow W_t(a,b,c)
%\]
%and in particular $W_t(a,b,c)$ is a quotient of $K_t(r,0,0)$. By the composition series of $K_t(r,0,0)$ we deduce that $W_t(a,b,c)\cong W_t(r,0,0), W_t(r+2,0,0), W_t(r,1,0)$.
%Now recall the decomposition of $K_t(r,0,0)$ as a $\mathfrak h$-module:
%\[
%K_t(r,0,0)=\tilde F(r-1,0,0)\oplus \tilde F(r+1,0,0) \oplus \tilde F(r-1,1,0)\oplus \tilde F(r,0,1).
%\]
%If $W_t(r,1,0)\cong  \tilde F(r-1,1,0)\oplus \tilde F(r,0,1)$ is a quotient of $K_t(r,0,0)$ then necessarily the $\mathfrak h$-submodule $\tilde F(r-1,0,0)\oplus \tilde F(r+1,0,0)$ would be a $\phat$-submodule too. But this is not the case since the latter contains both copies of $F_t(r,0,0)$ in $K_t(r,0,0)$ and in particular it contains the generator $z$.
%We know that $W_t(r+2,0,0)$ is a submodule of $K_t(r,0,0)$, being the image of $\varphi_{r+2,0}$. So it is not a quotient because we know that $K_t(r,0,0)$ is indecomposable.
%\end{proof}
Let $M_t(a,b,c)$ be the generalized $E(4,4)$-Verma module induced by $W_t(a,b,c)$. We look for possible singular vectors $w$ in $M_t(a,b,c)$ of degree 1. We simplify the notation introduced in the previous section by letting $v_i=v_{i,\emptyset}$ and $z_i=v_{\emptyset,i}$ so that an arbitrary singular vector $w$ is of the form:
\begin{equation}\label{(2a)}
w=\de_1 \otimes v_1+\de_2 \otimes v_2+\de_3\otimes v_3 +\de_4\otimes v_4+d_4\otimes z_4+d_3\otimes z_3+d_2\otimes z_2+d_1\otimes z_1,
\end{equation}
where $v_i,z_i\in W_t(a,b,c)$, $i=1,2,3,4$.
We consider the eight vectors $v_1,v_2,v_3,v_4,z_4,z_3,z_2$, $z_1$ in this order. We observe that if one of them is 0 then all the others to the left of it are also 0, and if any of them is nonzero then all the others to the right of it are nonzero. This is due to the fact that $(x_1\de_2)z_1=-z_2$, $(x_2 \de_3)z_2=-z_3$, $(x_3 \de_4)z_3=-z_4$, $b_{44}z_4=-2v_4$, $(x_3 \de_4)v_4=v_3$, $(x_2 \de_3) v_3=v_2$ and $(x_1\de_2)v_2 =v_1$. 
Indeed in this case the morphism given in \eqref{vIJaction} can be exploited as follows: for all $i<j$
\begin{equation}\label{ev}
	(x_i \de_j)v_k=\delta_{j,k}v_i,
\end{equation}
\begin{equation}\label{ez}
	(x_i \de_j)z_k=-\delta_{ik} z_j,
\end{equation}
and for all $r,s,k$ 
\begin{equation}\label{brsv} b_{ij}v_k=0 
\end{equation}
and
\begin{equation}\label{brsz}
	b_{rs}z_j=-\delta_{rj}v_s-\delta_{sj}v_r.
\end{equation}

Moreover we have
\begin{equation}
	\label{Ew} \frac{1}{3} Ew=\frac{1}{2}a_{34}v_4-x_4 \de_1 z_2+x_4 \de_2 z_1=0
\end{equation}
and 
\begin{equation}
	\label{x4Cw} (x_4C)w=-(x_4\de_1)v_1-(x_4 \de_2)v_2 -(x_4 \de_3) v_3 +\frac{1}{4}(h_{14}+h_{24}+h_{34}-5t+5)v_4+\frac{5}{4}(a_{14}z_1+a_{24}z_2+a_{34}z_3)=0.
\end{equation}

By relations \eqref{ev}--\eqref{brsz}, the first nonzero vector  from left to right among $v_1,v_2,v_3,v_4,z_4,z_3,z_2$, $z_1$ in $(\ref{(2a)})$ must be a $\phat$-singular vector.%, i.e. it has weight $(a,b,c)$.

Given a weight vector $v$ we  denote by $\lambda(v)$ its $\slq$-weight. We  let $\lambda(w)=(\alpha,\beta,\gamma)$. Hence we must have  for the singular vector $(\ref{(2a)})$:
\begin{itemize}
	\item $\lambda(v_1)=(\alpha+1,\beta,\gamma) $;
	\item $\lambda(v_2)=(\alpha-1,\beta+1,\gamma) $;
	\item $\lambda(v_3)=(\alpha,\beta-1,\gamma+1) $;
	\item $\lambda(v_4)=(\alpha,\beta,\gamma-1) $;
	\item $\lambda(z_4)=(\alpha,\beta,\gamma+1) $;
	\item $\lambda(z_3)=(\alpha,\beta+1,\gamma-1) $;
	\item $\lambda(z_2)=(\alpha+1,\beta-1,\gamma) $;
	\item $\lambda(z_1)=(\alpha-1,\beta,\gamma) $.
\end{itemize}

\begin{lemma}\label{lemmagrado1}Let $W$ be a finite dimensional $\phat$-module and $w\in \Ind_{E(4,4)_{\geq 0}}^{E(4,4)}(W)$ be a  $E(4,4)$-singular vector, such that $\lambda(w)=(\alpha,\beta,\gamma)$. %Assume
	%\[
	%w= \de_1 \otimes v_1+\de_2 \otimes v_2+\de_3\otimes v_3 +\de_4\otimes v_4+d_4\otimes z_4+d_3\otimes z_3+d_2\otimes z_2+d_1\otimes z_1.\]
Let $w$ be as in $(\ref{(2a)})$.	
	%Then
\begin{enumerate}
	\item [(a)] If $v_1\neq 0$, then $(\alpha,\beta,\gamma)=(0,0,0)$ and $v_1$ is a $\phat$-singular vector of weight $\lambda (v_1)=(1,0,0)$;
	\item [(b)] if $v_1=0$ and $v_2\neq 0$, then   $(\alpha,\beta,\gamma)=(1,0,0)$ and $v_2$ is a $\phat$-singular vector of weight $\lambda (v_2)=(0,1,0)$;
	\item [(c)] if $v_1=v_2=0$ and $v_3\neq 0$, then $(\alpha,\beta,\gamma)=(0,\gamma+1,\gamma)$ and $v_3$ is a $\phat$-singular vector of weight $\lambda (v_3)=(0,\gamma, \gamma+1)$;
	\item [(d)] if $v_1=v_2=v_3=0$ and $v_4\neq 0$, then $(\alpha,\beta,\gamma)=(0,0,\gamma)$ and $v_4$ is a $\phat$-singular vector of weight $\lambda (v_4)=(0,0, \gamma-1)$;
	\item [(e)] if $v_1=v_2=v_3=v_4=0$ and $z_4\neq 0$, then $(\alpha,\beta,\gamma)=(0,0,\gamma)$ and $z_4$ is a $\phat$-singular vector of weight $\lambda (z_4)=(0,0, \gamma+1)$;
	\item[(f)]if $v_1=v_2=v_3=v_4=z_4=0$, then $z_3=z_2=0$; moreover,  if $z_1\neq 0$, then $(\alpha,\beta,\gamma)=(\alpha,0,0)$ and $z_1$ is a $\phat$-singular vector of weight $\lambda (z_1)=(\alpha-1,0,0)$.
\end{enumerate}
\end{lemma}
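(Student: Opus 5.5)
The plan is to use only the relations \eqref{ev}--\eqref{brsz}, together with the chain $(x_1\de_2)z_1=-z_2,\dots,(x_1\de_2)v_2=v_1$ recorded before the statement, and the weight list for $v_i,z_i$. In each case the first nonzero vector in the ordered list $v_1,v_2,v_3,v_4,z_4,z_3,z_2,z_1$ is killed by all $x_i\de_{i+1}$ and by $\phat_1$. For $v_k$ this follows from \eqref{ev} and \eqref{brsv}. For $z_k$ it follows from \eqref{ez} and \eqref{brsz}, using that the $v$'s vanish. Hence that vector is a $\phat$-singular vector, so its weight must be dominant. On the other hand, it is obtained by applying lowering-type relations from vectors further to the right, so the other components constrain $(\alpha,\beta,\gamma)$ as well. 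The strategy is to combine dominance of the first nonzero vector with the requirement that the subsequent nonzero vectors have weights compatible with the $\slq$-structure.

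Concretely, \eqref{ev} gives the following. If $v_k\neq0$, then $(x_i\de_k)v_k=v_i$ for every $i<k$, while $(x_i\de_j)v_k=0$ for $j\ne k$. Thus the map $\C^4\to W$, $e_k^*\mapsto v_k$ (in the dual picture) is a $B(\slq)$-module map. For case (a) the span of the $v_i$ is a copy of a module generated by a vector of weight $\lambda(v_1)$. It contains $v_4$, whose weight must differ from $\lambda(v_1)$ by a sum of positive roots, namely $\alpha_{12}+\alpha_{23}+\alpha_{34}$. The key point is to extract the constraints using $\slq$-theory on the finite-dimensional module $W$: if $u$ is a highest weight vector of weight $\mu$ and $(x_i\de_{i+1})$ maps some vector $y$ to $u$, then $y$ lies in the $\mathfrak{sl}_2$-string. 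Better, I would apply the opposite lowering $x_{i+1}\de_i$ and use $\mathfrak{sl}_2$-representation theory. For instance, in case (a) $v_1$ is singular and $v_2=(x_2\de_1)v_1/\mu_{12}$ up to the kernel. The decisive constraint comes from a vector $v$ whose $\mathfrak{sl}_2^{(i)}$-weight is $\le 0$ while $(x_i\de_{i+1})v\neq0$ is highest.

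The honest key input is the equations \eqref{Ew} and \eqref{x4Cw}. For (a)--(d) I would apply suitable raising operators $x_i\de_{i+1}$ and $b_{rs}$ to \eqref{x4Cw} and \eqref{Ew} to reduce them to scalar identities on the singular vector. For example, \eqref{x4Cw} expressed via \eqref{ev} gives
\[
(x_4\de_k)v_k=(h_{k4}\text{-type})\,v_4+\dots,
\]
and evaluating the $\mathfrak{sl}_2$-commutators $[x_i\de_4,x_4\de_i]=h_{i4}$ turns each term into a multiple of $v_4$ determined by the weight. This produces a linear relation of the form $(\text{linear in }\alpha,\beta,\gamma,t)\,v_4=$ terms involving $z$'s. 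Applying $(x_1\de_4)$ and friends pushes everything to the highest component and yields numerical conditions. Combined with dominance of $\lambda(v_k)$ and of $(\alpha,\beta,\gamma)$, these conditions force the stated weights. In (a), dominance of $(\alpha+1,\beta,\gamma)$ together with the requirement that $v_2,v_3,v_4$ sit in an irreducible $\slq$-constituent of highest weight $\lambda(v_1)$ with the correct weights forces $\beta=\gamma=0$ and then $\alpha=0$. The other cases (b)--(e) are handled analogously. In (e) the $z$-chain $z_4,z_3,z_2,z_1$ behaves like the dual $\C^{4*}$, so $\lambda(z_4)$ must be $(0,0,\gamma+1)$, which forces $\alpha=\beta=0$.

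For (f), suppose all $v$'s and $z_4$ vanish. Then $z_3=-(x_3\de_4)^{-}$ \ldots; more precisely, $z_3$ is singular by \eqref{ez} and \eqref{brsz}, and $(x_3\de_4)z_3=-z_4=0$. Its weight is $(\alpha,\beta+1,\gamma-1)$, so we need \eqref{Ew}: with $v_4=0$, equation \eqref{Ew} reads $x_4\de_1z_2=x_4\de_2z_1$. Applying $x_1\de_4$ and $x_2\de_4$ and using \eqref{ez} gives $z_2$ and $z_1$ in terms of each other with weight-dependent coefficients. The consequence is $z_2=0$, hence $z_3=0$, and then $z_1$ is singular with $\beta=\gamma=0$. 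The main obstacle is the bookkeeping in extracting the scalar constraints from \eqref{x4Cw}, where the $t$-dependent term and the $a_{k4}z_k$ terms must be controlled by applying $b_{44}$ and the raising operators in the right order.
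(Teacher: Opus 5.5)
Your sketch of (f) works and is the paper's argument. With $v_4=0$, applying $x_1\de_4$ to \eqref{Ew} gives $(\alpha+\beta+\gamma+1)z_2+(x_4\de_2)z_4=0$. So $z_4=0$ forces $z_2=0$ and $z_3=-(x_2\de_3)z_2=0$, and then $x_2\de_4$ applied to \eqref{Ew} gives $(\beta+\gamma)z_1=0$.

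For (a)--(e), however, there is a genuine gap. The only concrete arguments you give there are $\slq$-theoretic, and they fail:
\begin{itemize}
\item in (a), that $v_2,v_3,v_4$ fitting into a constituent of highest weight $\lambda(v_1)$ forces $\beta=\gamma=0$ and $\alpha=0$;
\item in (e), that the $z$-chain behaving like $\C^4$ forces $\alpha=\beta=0$.
\end{itemize}
Relations \eqref{ev} say exactly that $\sum_k\de_k\otimes v_k$ is an $N(\slq)$-invariant of weight $(\alpha,\beta,\gamma)$ in $F(0,0,1)\otimes W$. For every dominant $(\alpha,\beta,\gamma)$, the highest weight vector of $F(\alpha,\beta,\gamma)\subset F(0,0,1)\otimes F(\alpha+1,\beta,\gamma)$ gives such a family with $v_1\neq 0$. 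Likewise, $F(\alpha,\beta,\gamma)\subset F(1,0,0)\otimes F(\alpha,\beta,\gamma+1)$ gives a chain satisfying \eqref{ez} with $z_4\neq 0$. When all $v_k=0$, the $\phat_1$-conditions \eqref{brsz} only say $\phat_1 z_j=0$, which holds for instance on the top component of a Kac module. So the restrictions on $(\alpha,\beta,\gamma)$ cannot come from weight bookkeeping. They come only from the $E(4,4)_1$-conditions \eqref{Ew} and \eqref{x4Cw}, and your sentence about applying ``suitable raising operators'' to these never says which operators or what scalars result.

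The missing idea is to use the odd elements $b_{rs}$ to turn \eqref{Ew} and \eqref{x4Cw}, which mix $v$'s, $z$'s and $a_{ij}$'s, into equations from which raising operators extract scalars.
\begin{itemize}
\item Applying $b_{12}$ to \eqref{Ew}, and using \eqref{brsz} and $[b_{12},a_{34}]=2h_{12}$, gives $\alpha v_4+(x_4\de_1)v_1-(x_4\de_2)v_2=0$. Then $x_1\de_4$ yields $(2\alpha+\beta+\gamma)v_1=0$, which is (a), and $x_2\de_4$ yields $(\alpha-\beta-\gamma-1)v_2=0$.
\item Applying $x_1\de_4$ and then $b_{23}$ to \eqref{x4Cw} gives $a_{12}v_3+a_{13}v_2+2\beta z_4=0$. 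From this, $b_{24}$ and $b_{34}$ give $(2\beta+\gamma)v_2=0$ and $(\gamma+1-\beta)v_3=0$.
\item Cases (c) and (d) also need the nonvanishing to propagate along the chain. In (c), $v_3\neq 0$ forces $v_4\neq 0$, so $\alpha v_4=0$ gives $\alpha=0$. In (d), $v_4\neq0$ forces $z_4\neq 0$, so $2\beta z_4=0$ gives $\beta=0$.
\item For (e), applying $x_2\de_4$ to $(\alpha+\beta+\gamma+1)z_2+(x_4\de_2)z_4=0$ gives $\alpha z_4=0$, and the $b_{23}$-identity above gives $\beta z_4=0$.
\end{itemize}
Without these computations, or an equivalent mechanism, (a)--(e) remain unproved.
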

\begin{proof}
	Applying $b_{12}$ to \eqref{Ew} we obtain
	\begin{equation}\label{b12(7)}
		\alpha v_4+(x_4\de_1)v_1-(x_4 \de_2)v_2=0,
	\end{equation}
and applying $x_1 \de_4$ to this equation we deduce $(2\alpha+\beta+\gamma)v_1=0$. Case (a) follows since $\alpha, \beta, \gamma\in \Z_{\geq 0}$ because $w$ is a highest weight vector in a finite-dimensional $\slq$-module.
We can therefore assume $v_1=0$. By applying $x_2 \de_4$ to $\eqref{b12(7)}$ we obtain $(\alpha-\beta-\gamma-1)v_2=0$. 
Next we apply $x_1 \de_4$ to \eqref{x4Cw} and obtain
\begin{equation}\label{x1de4(8)}
		a_{12}z_2+a_{13}z_3+a_{14}z_4=0,
\end{equation}
and applying $b_{23}$ to \eqref{x1de4(8)} we obtain 
\begin{equation}\label{b23x1de4(8)}a_{12}v_3+a_{13}v_2+2\beta z_4=0.
\end{equation}
If we apply $b_{24}$ to this equation we deduce $(2\beta+\gamma)v_2=0$. In particular, if $v_2\neq 0$,  we have that $\beta=\gamma=0$ and $\alpha=1$. Case (b) follows and we can assume that $v_1=v_2=0$.

Equation \eqref{b12(7)} provides $\alpha v_4=0$ and, applying $b_{34}$ to \eqref{b23x1de4(8)}, we obtain $(\gamma+1-\beta)v_3=0$. Case (c) follows and we can assume that $v_1=v_2=v_3=0$.

By \eqref{b23x1de4(8)} we get $2\beta z_4=0$ and case (d) follows.

We can assume that $v_i=0$ for all $i=1,\ldots,4$. Applying $x_1\de_4$ to \eqref{Ew} we obtain
\begin{equation}\label{x1de4(7)}(\alpha+\beta+\gamma+1)z_2+(x_4\de_2)z_4=0
\end{equation}
and applying $x_2\de_4$ to this equation we obtain $\alpha z_4=0$. By \eqref{b23x1de4(8)} we also obtain $2\beta z_4=0$ and case (e) follows.
Now we can assume that also $z_4=0$. 
By \eqref{x1de4(7)} we have $(\alpha+\beta+\gamma+1)z_2=0$ which forces $z_2=0$ since $\alpha,\beta,\gamma$ are nonnegative integers. Hence $z_3=-(x_2\de_3)z_2=0$. Finally we can apply $x_2 \de_4$ to \eqref{Ew} to obtain $(\beta+\gamma)z_1=0$ and the proof is complete.
\end{proof}
\begin{proposition}\label{w8}
	Assume we are in case (a) of Lemma \ref{lemmagrado1} with $W=W_t(a,b,c)$ for some $a,b,c\geq 0$. Then $a=t=1$, $b=c=0$, so that we are in the standard module, described in Example \ref{standard}, and 
	\[
	w=\de_1 \otimes d_1+\de_2 \otimes d_2+\de_3 \otimes d_3 +\de_4 \otimes d_4+ d_1\otimes \de_1+d_2\otimes \de_2+ d_3 \otimes \de_3+d_4\otimes \de_4
	\]
	is indeed an $E(4,4)$-singular vector. It is unique up to a scalar factor.
\end{proposition}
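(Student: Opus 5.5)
In case (a) of Lemma \ref{lemmagrado1} we have $\lambda(w)=(0,0,0)$ and $v_1\neq 0$. By the left-to-right nonvanishing pattern established after \eqref{(2a)}, all eight vectors $v_1,\dots,v_4,z_4,\dots,z_1$ are then nonzero. Moreover, $v_1$ is a $\phat$-singular vector of weight $(1,0,0)$. The plan has four steps: identify the module $W_t(a,b,c)$, pin down $t$, verify the explicit vector, and prove uniqueness.

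\emph{Identifying the module.} The $\phat$-singular vector $v_1$ gives a nonzero morphism $K_t(1,0,0)\to W_t(a,b,c)$, which is surjective since the target is irreducible. Hence $W_t(a,b,c)$ is an irreducible quotient of $K_t(1,0,0)$. By Proposition \ref{unique} this quotient is unique, so $W_t(a,b,c)\cong W_t(1,0,0)$; this works for all $t$, using Proposition \ref{quozienti} for $t\neq0$ and the uniqueness statement for $t=0$. Since the $W_t(a,b,c)$ are pairwise non-isomorphic apart from $W_t(0,0,0)\cong W_t(0,1,0)$, and weight $(1,0,0)$ does not occur there, we get $(a,b,c)=(1,0,0)$.

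We now work in the concrete model of Example \ref{standard}, where $W_t(1,0,0)=E(4,4)_{-1}$ as a superspace. The weight constraints listed before Lemma \ref{lemmagrado1} fix each vector up to scalar:
\begin{itemize}
\item $v_i$ has the weight of $d_i$;
\item $z_i$ has the weight of $\de_i$.
\end{itemize}
The relations \eqref{ev}--\eqref{brsz}, namely $(x_i\de_{i+1})$ moving between consecutive vectors and $b_{44}z_4=-2v_4$, then fix all the scalars relative to one another. This forces $v_i=\lambda d_i$ and $z_i=\lambda\de_i$ for a single $\lambda$, up to the sign conventions that must be checked against the bracket table. Consequently $w$ is proportional to the displayed vector, and uniqueness up to scalar is already settled at this stage.

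\emph{Determining $t$.} It remains to impose the two conditions \eqref{Ew} and \eqref{x4Cw}, using the deformed action $a\cdot v=-t[a,v]$ on even vectors and $a\cdot v=[a,v]$ on odd ones.
\begin{itemize}
\item In \eqref{Ew}, the term $a_{34}v_4=a_{34}\cdot d_4$ involves an odd vector, so it equals $[a_{34},d_4]$, a multiple of some $\de_j$. The terms $x_4\de_1 z_2$ and $x_4\de_2 z_1$ are plain brackets. I expect this equation to hold identically or to fix nothing new.
\item In \eqref{x4Cw}, the Cartan terms $\frac14(h_{14}+h_{24}+h_{34}-5t+5)v_4$ contribute a multiple of $d_4$ that involves $t$. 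The terms $\frac54 a_{i4}z_i$ act on even vectors $\de_i$, so they carry the factor $-t$ and land in the odd span, giving another multiple of $d_4$.
\end{itemize}
Collecting the coefficient of $d_4$ should give a linear equation in $t$ whose solution is $t=1$. Along the way one checks that all other components cancel.

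\emph{Main obstacle.} The hardest part is the sign and normalization bookkeeping. This includes the super-signs in the ordering of $\de_I d_J$, the exact values of $[a_{ij},\de_k]$ and $[a_{ij},d_k]$ in $E(4,4)$, and the conventions in \eqref{Ew} and \eqref{x4Cw}. I would first compute $[a_{i4},\de_i]$ directly from the definition $[X,\omega]=L_X\omega-\frac12\diver(X)\omega$, then recheck the relations \eqref{ez} and \eqref{brsz} against the proposed $v_i,z_i$.

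\emph{Verification.} Conversely, for $t=1$ the explicit $w$ is annihilated by $x_i\de_{i+1}$ and by $b_{44}$. This is immediate, since $w$ is built as the natural $\mathfrak{gl}_4$-invariant pairing $\sum(\de_i\otimes d_i+d_i\otimes\de_i)$ and $b_{44}$ acts compatibly. Together with the vanishing of \eqref{Ew} and \eqref{x4Cw} at $t=1$, the preceding lemma on singularity criteria shows that $w$ is $E(4,4)$-singular.
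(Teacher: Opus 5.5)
Your proposal is correct and follows the paper's proof essentially step by step: identify the module as $W_t(1,0,0)$, use weights together with \eqref{ez} and \eqref{brsz} to force $z_i=\de_i$ and $v_i=d_i$ up to a common scalar, and then read off $t$ from \eqref{Ew} and \eqref{x4Cw}. The one step you leave as an expectation is the final computation, which the paper carries out: $Ew=0$ holds identically because $[a_{34},d_4]=0$ and $[x_4\de_1,\de_2]=[x_4\de_2,\de_1]=0$, while in \eqref{x4Cw} the three groups of terms contribute $-3$, $\frac14(2-5t)$ and $\frac{15}{4}t$ times $d_4$, so $(x_4C)w=\frac52(t-1)d_4$, which vanishes exactly when $t=1$.
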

\begin{proof} By Lemma \ref{lemmagrado1}, $M_t(a,b,c)=M_t(1,0,0)$. 
Weight reasons and Equations \eqref{ev}, \eqref{ez}, \eqref{brsv} and \eqref{brsz} imply that the singular vector $w$ is necessarily the one displayed in the statement. Indeed, $z_1$ is a nonzero vector in the standard module $W_t(1,0,0)$ of weight $(-1,0,0)$ and is therefore $\de_1$, up to scalar. Equation \eqref{ez} provides $z_i=-(x_1 \de_i) z_1=\de_i$ and Equation \eqref{brsz} provides $-2v_i=b_{ii} z_i=[b_{ii},\de_i]=-2d_i$.

Let us check that $w$ is a $\phat$-singular vector if and only if $t=1$.
Conditions $(x_i\de_{i+1})w=0$ are easy to be verified for every $t\in \C$.
We also have 
\[
Ew=3(x_4 \de_2)\otimes \de_1-3(x_4 \de_1)\otimes \de_2+\frac{1}{2}b_{44}\otimes d_3+(-\frac{1}{2}b_{34}+\frac{3}{2}a_{34})\otimes d_4=0
\]
for every $t\in \C$.
Finally we compute 
\begin{align*}
(x_4 C) w&= (-\frac{1}{4}b_{14}+\frac{5}{4}a_{14}])\otimes \de_1+(-\frac{1}{4}b_{24}+\frac{5}{4}a_{24}])\otimes \de_2+(-\frac{1}{4}b_{34}+\frac{5}{4}a_{34}])\otimes \de_3-\frac{1}{4}b_{44}\otimes \de_4\\
&-x_4\de_1\otimes d_1-x_4 \de_2 \otimes d_2 -x_4 \de_3 \otimes d_3+\frac{1}{4}(x_1 \de_1+x_2\de_2+x_3\de_3-3x_4\de_4-5t)\otimes d_4\\
&= \frac{1}{4}d_4+\frac{5}{4}td_4+ \frac{1}{4}d_4+\frac{5}{4}td_4+ \frac{1}{4}d_4+\frac{5}{4}td_4+\frac{1}{2}d_4-d_4-d_4-d_4-\frac{3}{4}d_4-\frac{5}{4}td_4\\
&=(-\frac{5}{2}+\frac{5}{2}t)d_4.
\end{align*}
In particular, $(x_4C) w=0$ if and only if $t=1$ and the proof is complete.
\end{proof}
\begin{proposition}\label{typef}
Assume we are in case (f) in Lemma \ref{lemmagrado1} with $W=W_t(a,b,c)$. Then $b=c=0$, and either $a=t=0$ or $a\geq 1$. Moreover, up to a scalar factor, $w=d_1\otimes z_1$, where $z_1$ is the $\phat $-singular vector of weight $(a,0,0)$ in $W_t(a,0,0)$, is an $E(4,4)$-singular vector.
\end{proposition}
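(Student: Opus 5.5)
In case (f) of Lemma \ref{lemmagrado1} we already know that $w=d_1\otimes z_1$, that $\lambda(w)=(\alpha,0,0)$, and that $z_1$ is a $\phat$-singular vector of weight $(\alpha-1,0,0)$. Note that $d_1$ has weight $(1,0,0)$. The plan has three steps. First, identify which $W_t(a,b,c)$ can contain such a $z_1$. Second, check directly that $d_1\otimes z_1$ satisfies the conditions of the singularity lemma. Third, determine when $t$ is forced to vanish.

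\textbf{Identifying the module.} Since $z_1$ is $\phat$-singular of weight $(\alpha-1,0,0)$, it gives a nonzero morphism $K_t(\alpha-1,0,0)\to W_t(a,b,c)$, which is surjective because the target is irreducible. Hence $W_t(a,b,c)$ is the irreducible quotient of $K_t(\alpha-1,0,0)$ or of one of its composition factors. For $t\neq 0$, Proposition \ref{unique} gives a unique irreducible quotient of $K_t(r,0,0)$ (with the exceptional case $K_t(0,0,0)$ handled via Remark \ref{a12} and Proposition \ref{quozienti}(4)). So $W_t(a,b,c)\cong W_t(\alpha-1,0,0)$, or $W_t(0,0,0)\cong W_t(0,1,0)$ when $\alpha=1$. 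Using the uniqueness of $\phat$-singular vectors in the irreducibles, and the isomorphism $W_t(0,0,0)\cong W_t(0,1,0)$, I would normalize to $b=c=0$ and $a=\alpha-1$. For $t=0$, $K_0(r,0,0)$ has a unique irreducible quotient, namely $W_0(r,0,0)$, which again gives $b=c=0$.

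\textbf{Verifying singularity.} Write $z=z_1$. Since $d_1$ is a highest weight vector of $L_{-1}$ and $z$ is $\phat$-singular, we get $(x_i\de_{i+1})w=0$. For $b_{44}$ we have $[b_{44},d_1]=0$ by \eqref{brsz}, since the bracket lands in the $\de$'s with index $4$, and $b_{44}z=0$, so $b_{44}w=0$. Next, $Ew=[E,d_1]z$ with $[E,d_1]\in L_0$. From \eqref{Ew}, with only $z_1$ present, this reduces to $-x_4\de_1z_2+x_4\de_2z_1$, which is zero because $z_2=0$ and $x_4\de_2$ kills a vector of weight $(\alpha-1,0,0)$ that is lowest in the relevant direction. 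I would check this carefully using \eqref{Ew} in place of a direct bracket computation. Finally, \eqref{x4Cw} reduces to $\tfrac54 a_{14}z_1$, so the singularity is equivalent to $a_{14}z=0$ in $W_t(a,0,0)$.

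\textbf{Main obstacle: showing $a_{14}z=0$.} For $t\neq 0$ (and $a\ge1$) I would argue via Example \ref{Wtd+2} and Remark \ref{defor}. There $W_t(a,0,0)$ is a deformation of $E(4,4)_{a-2}$ with highest vector $x_1^{a-1}d_1$ (or $d_1$ for $a=1$), and $a_{14}$ acts by a multiple of the bracket. A direct computation gives $[a_{14},x_1^{a-1}d_1]=0$; this is a closed 2-form argument, since the bracket produces $dx_1\wedge dx_1$-type terms.

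For $t=0$ and $a\ge2$, I would realize $W_0(a,0,0)$ as the image of $\varphi_a$ inside $K_0(a-2,0,0)$. Then $z=a_{12}a_{13}a_{14}v$, and $a_{14}z=0$ because $a_{14}^2=\tfrac12[a_{14},a_{14}]=0$ and the $a$'s anticommute when $t=0$. For $t=0$ and $a=1$, Example \ref{standard} gives $a_{14}\cdot\de_1=-t[a_{14},\de_1]=0$.

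The remaining case is $a=0$. Here $z$ has weight $(0,0,0)$, i.e.\ it is the trivial vector, so $\alpha=1$. For $t=0$ this is the trivial module, and $a_{14}z=0$ holds. For $t\neq0$, by Remark \ref{a12}, $a_{14}v\ne0$ in $W_t(0,0,0)$: indeed $a_{14}v=0$ would force $a_{23}$ to kill $v$ as well, and then $2tv=(a_{14}a_{23}+a_{23}a_{14})v=0$. This is the point that forces $a=t=0$ when $a=0$, and it is the most delicate step.
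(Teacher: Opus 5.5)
Your overall reduction matches the paper's. You show that $x_i\de_{i+1}$, $b_{44}$ and $E$ kill $w=d_1\otimes z_1$, and \eqref{x4Cw} then reduces singularity to $a_{14}z_1=0$. Your treatment of $a=0$ is the content of Remark \ref{a12}. A minor point: $[b_{44},d_1]=0$ holds because both are closed $1$-forms, not because of \eqref{brsz}, which is a relation among coefficients.

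There is a gap in your case analysis for $a\geq 1$.

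\textbf{The missing case $a=2$, $t\neq 0$.} For $t\neq 0$ you invoke Example \ref{Wtd+2}. That example only covers $d=-1$ and $d>0$, i.e.\ $a=1$ and $a\geq 3$. For $a=2$ the relevant $E(4,4)_0$ is the adjoint module, which is not irreducible, so this case is left open. You can close it in either of two ways.
\begin{itemize}
\item Via Example \ref{Wt200}: there $z_1=b_{11}$, and $db_{11}=0$ kills the correction terms, so $a_{14}\cdot b_{11}=[a_{14},b_{11}]=0$.
\item More simply, your $\varphi_a$ argument is independent of $t$. Since $\epsilon_{1412}=\epsilon_{1413}=\epsilon_{1414}=0$, $a_{14}$ anticommutes with $a_{12},a_{13}$ and squares to zero for every $t$. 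Since $K_t(a,0,0)$ has a unique irreducible quotient (Proposition \ref{unique}), $W_t(a,0,0)$ is a quotient of $\mathrm{Im}(\varphi_a)$ for every $t$, with $z_1$ the image of $a_{12}a_{13}a_{14}x_1^{a-2}$. Note it is a quotient, not the image itself: for $a=2$, $t=0$ the image has a one-dimensional submodule.
\end{itemize}

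\textbf{A slip for $a=1$, $t=0$.} You computed with $\de_1$, which has weight $(-1,0,0)$. The singular vector is $d_1$, as you said earlier. Then $a_{14}\cdot d_1=[a_{14},d_1]=da_{14}\wedge d_1=0$ by your own closedness computation, and this holds for all $t$ by Example \ref{standard}.

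\textbf{The paper's route.} The paper avoids explicit models altogether. For all $a\geq 1$ and all $t$, $a_{12}z_1=0$ in $W_t(a,0,0)$, because $a_{12}x_1^a$ lies in a proper submodule of $K_t(a,0,0)$: by Proposition \ref{quozienti} for $t\neq 0$, and by degree for $t=0$. Since $[x_4\de_2,a_{12}]=a_{14}$ and $(x_4\de_2)z_1=0$, this gives $a_{14}z_1=(x_4\de_2)a_{12}z_1=0$ uniformly.
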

\begin{proof} By Lemma \ref{lemmagrado1}  we have $b=c=0$ and $w=d_1\otimes z_1$, where $z_1=x_1^a\in F_t(a,0,0)$. We show that the vector $d_1\otimes z_1$ is a $E(4,4)$-singular vector if and only if $a=t=0$ or  $a\geq 1$. The vector $w$ is clearly annihilated by $x_i \de_{i+1}$. Also $Ew=0$ by \eqref{Ew}. By \eqref{x4Cw} we also have
\[
(x_4C)w=\frac{5}{4}a_{14}z_1.
\]
If $a=0$ and $t\neq 0$,  $a_{14}z_1$ is a non-zero vector of  $W_t(0,0,0)$ by Remark \ref{a12}. In all other cases $a_{14}z_1=0$ in $W_t(a,0,0)$. Indeed, if $a=t=0$ this is clear because $W_0(0,0,0)$ is the trivial module. If $a\geq 1$,  by Proposition \ref{quozienti}, $W_t(a,0,0)=K_t(a,0,0)/Im(\theta_{a,1})$ and in particular $a_{12}z_1=0$ in $W_t(a,0,0)$ since $a_{12}z_1$ lies in $Im(\theta_{a,1})$; it follows that  $a_{14}z_1=(x_4\de_2)a_{12}z_1=0$, hence $w$ is a singular vector for $E(4,4)$. 
\end{proof}
\begin{corollary}\label{morphtypea}
	The  $E(4,4)$-singular vector described in Proposition \ref{typef} gives rise to a non-zero morphism
	\[
	M_{t-1}(a+1,0,0)\rightarrow M_t(a,0,0)
	\]
if and only if $a=t=0$, or $a=1$ and $t\neq 1$, or $a\geq 2$.
\end{corollary}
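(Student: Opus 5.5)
Proposition \ref{typef} gives, for $a=t=0$ or $a\geq 1$, the singular vector $w=d_1\otimes z_1\in M_t(a,0,0)$, where $z_1$ is the $\phat$-singular vector of $W_t(a,0,0)$. The plan has three steps. First, compute the weight and central charge of $w$. Second, decide whether $w$ generates, under $E(4,4)_0=\phat$, a copy of $W_{t-1}(a+1,0,0)$. Third, use the universal property of induction.

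\emph{Weight and charge.} The weight of $w$ is $\lambda(z_1)+(1,0,0)=(a+1,0,0)$. Since $d_1\in E(4,4)_{-1}$ and $C$ acts as the grading element, $Cw=(t-1)w$. So $w$ is a $\phat$-singular vector of weight $(a+1,0,0)$ and charge $t-1$ in the degree-one component $E(4,4)_{-1}\otimes W_t(a,0,0)$. This gives a nonzero map $K_{t-1}(a+1,0,0)\to \mathcal U(\phat)w$.

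\emph{When the map factors through $W_{t-1}(a+1,0,0)$.} By the universal property, a morphism $M_{t-1}(a+1,0,0)\to M_t(a,0,0)$ sending the highest weight vector to $w$ exists exactly when the kernel of $K_{t-1}(a+1,0,0)\to W_{t-1}(a+1,0,0)$ kills $w$. Equivalently, $\mathcal U(\phat)w$ must be isomorphic to $W_{t-1}(a+1,0,0)$ and not a larger quotient of the Kac module. Let $s=t-1$.

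\begin{itemize}
\item If $a+1\geq 2$ and $s\neq 0$, Proposition \ref{quozienti}(1) gives $W_s(a+1,0,0)=K_s(a+1,0,0)/\mathrm{Im}(\theta_{a+1,1})$. It therefore suffices to check $a_{12}w=0$. Now
\[
a_{12}(d_1\otimes z_1)=[a_{12},d_1]\otimes z_1-d_1\otimes a_{12}z_1 .
\]
The second term vanishes because $a_{12}z_1=0$ in $W_t(a,0,0)$ for $a\geq1$, as shown in Proposition \ref{typef}. The first term vanishes because $[a_{12},d_1]$ lies in $E(4,4)_{-1}$, and one checks from the bracket of one-forms that it is $0$: the forms $x_1d_2-x_2d_1$ and $d_1$ bracket to a multiple of $d(x_1d_2)\wedge d_1=dx_1\wedge dx_2\wedge dx_1=0$.
\item If $s=0$ and $a+1\geq 2$, the same vanishing of $a_{12}w$, together with the known structure of $K_0$ (the quotient by the image of $\theta$ is irreducible except in the special case $W_0(2,0,0)$, cf.\ Proposition \ref{sl4modules}), should identify $\mathcal U(\phat)w$.
\item For $a=t=0$ the vector is $d_1\otimes 1$ with $s=-1$. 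Here one can argue directly: $\phat\cdot(d_1\otimes 1)=E(4,4)_{-1}\otimes 1\cong W_{-1}(1,0,0)$ by Example \ref{homcomp}.
\end{itemize}

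\emph{The exceptional case $a=1$, $t=1$.} Here the target charge is $s=0$ and the weight is $(2,0,0)$. The irreducible $W_0(2,0,0)$ is special: by Proposition \ref{sl4modules} it lacks the degree-$3$ piece $F(0,0,0)q^3$ that a generic $W_0(a,0,0)$ has. I expect $\mathcal U(\phat)w\cong E(4,4)_{-1}\otimes W_1(1,0,0)$ restricted to the submodule generated by $w$ to be a non-simple quotient of $K_0(2,0,0)$. To show this, I would exhibit a nonzero vector of weight $(0,0,0)$ in degree $3$, namely $a_{12}a_{13}a_{14}$ applied to the lowest piece, or equivalently $\varphi_2$-type applied to $w$. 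Concretely, I would compute $a_{12}a_{13}a_{14}\cdot w$ and check that it is nonzero exactly when $t=1$. It is this computation with the explicit standard action of Example \ref{standard} that distinguishes $t=1$ and is the main obstacle. Conversely, for $a=1$ and $t\neq 1$ the same vector vanishes, so $w$ generates $W_{t-1}(2,0,0)$ (for $t\neq1$ the quotient description of Proposition \ref{quozienti} applies).

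\emph{Nonzero-ness.} Finally, the morphism is nonzero whenever it exists, since it sends the generator to $w\neq0$.
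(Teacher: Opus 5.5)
Your generic cases follow the paper's argument. The vector $w=d_1\otimes z_1$ has weight $(a+1,0,0)$ and charge $t-1$. The induced map $K_{t-1}(a+1,0,0)\to M_t(a,0,0)$ factors through $W_{t-1}(a+1,0,0)$ iff it kills the kernel. When that kernel is generated by $a_{12}x_1^{a+1}$, it suffices that $a_{12}w=[a_{12},d_1]\otimes z_1-d_1\otimes a_{12}z_1=0$. Your bullet for $s=0$, $a\geq 2$ is only asserted, but the paper is no more explicit there. The genuine gap is the exceptional case $a=t=1$, which is the only part of the statement needing a new argument. The vector you propose to compute, $a_{12}a_{13}a_{14}\cdot w$, has weight $(2,0,0)+(2,0,0)=(4,0,0)$, not $(0,0,0)$. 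Moreover, $[a_{12},a_{13}]=[a_{12},a_{14}]=0$, so $a_{12}a_{13}a_{14}=a_{13}a_{14}a_{12}$ in $\mathcal U(\phat)$. Hence $a_{12}a_{13}a_{14}\cdot w=0$ for \emph{every} $t$, by your own computation $a_{12}w=0$. It cannot detect anything, and your expectation that it is nonzero exactly when $t=1$ is false. Likewise ``$\varphi_2$ applied to $w$'' has no meaning: $\varphi_2\colon K_0(2,0,0)\to K_0(0,0,0)$ describes the source Kac module and is not an operator on $M_1(1,0,0)$.

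What you need is an explicit element of $\ker(K_0(2,0,0)\to W_0(2,0,0))$ whose image in $M_1(1,0,0)$ is nonzero. A convenient one is the weight-$(0,0,0)$, degree-$3$ element $a_{23}a_{24}a_{34}x_1^2$. Your observation that $W_0(2,0,0)$ has no degree-$3$ part (Proposition \ref{sl4modules}) correctly places it in the kernel. This is a valid alternative to the paper's justification, which notes that $\varphi_2$ sends it into the one-dimensional $M_3$ of Remark \ref{K0(000)}, with $M_2/M_3\cong W_0(2,0,0)$. The remaining step, which is the real content of the paper's proof, is to show that its image $a_{23}a_{24}a_{34}d_1\otimes d_1$ is nonzero (here $z_1=d_1$ as in Example \ref{standard}). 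To do this, commute $d_1$ to the left in $\mathcal U(E(4,4))$ using $[a_{ij},d_k]$ and $[a_{ij},\de_k]$, then apply the action of Example \ref{standard}. In $M_t(1,0,0)$ this gives
\[
a_{23}a_{24}a_{34}d_1\otimes d_1=4\,\de_1\otimes d_1+4t\sum_{i\geq 2}\de_i\otimes d_i+4\sum_{i\geq 2}d_i\otimes\de_i+4t\,d_1\otimes\de_1 ,
\]
which equals $4w[1_E]\neq 0$ at $t=1$. This vector is nonzero for every $t$. The special role of $t=1$ comes from the source: only for $s=t-1=0$ does $a_{23}a_{24}a_{34}x_1^2$ lie in the maximal submodule of $K_s(2,0,0)$, since for $s\neq 0$ that submodule is generated by $a_{12}x_1^2$. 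So your ``converse'' sentence is also wrongly justified. It is harmless, because your first bullet already covers $a=1$, $t\neq 1$.
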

\begin{proof}
	The existence of the singular vector $w=d_1\otimes z_1$ gives rise to a morphism
	\[
	K_{t-1}(a+1,0,0)\rightarrow M_t(a,0,0).
	\]
If $t\neq 1$ or $a\neq 1$ this map factors through the quotient $W_{t-1}(a+1,0,0)$ if and only if $a_{12} x_1^{a+1}$ is mapped to 0 (see Proposition \ref{quozienti}). But the image of $a_{12}x_1^{a+1}$ is $a_{12}d_1\otimes z_1=-d_1\otimes a_{12}z_1=0$.

If $t=1$ and $a=1$ the maximal submodule of $K_0(2,0,0)$ is not generated by $a_{12}x_1^2$. In this case we have a map $\phi:K_0(2,0,0)\rightarrow K_0(0,0,0)$ given by $x_1^2\mapsto a_{12}a_{13}a_{14}$. $Im(\phi)$ has a submodule $M$ of dimension 1 spanned by $a_{12}a_{13}a_{14}a_{23}a_{24}a_{34}\otimes 1$ whose quotient is isomorphic to $W_0(2,0,0)$. We deduce that the original map factors through $W_0(2,0,0)$ if and only if $a_{23}a_{24}a_{34}d_1\otimes d_1$ is 0 in $M_1(1,0,0)$. Indeed one can compute in $\mathcal U(E(4,4)_{<0})$
\[
a_{23}a_{24}a_{34}d_1=2d_3a_{34}-2\de_2a_{23}a_{34}-2d_2a_{34}-2\de_3a_{23}a_{34}-2\de_4a_{24}a_{34}-d_1a_{23}a_{24}a_{34},
\]
and conclude that $a_{23}a_{24}a_{34}d_1\otimes d_1\neq 0$ since, for example, $2d_3\otimes a_{34}d_1\neq 0$.
\end{proof}
\begin{proposition}
	Assume we are in case (b) in Lemma \ref{lemmagrado1} with $W=W_t(a,b,c)$. Then $t\neq 0$. Moreover, if we denote by 1 the $\phat$-singular vector in $M_t(0,1,0)\cong M_t(0,0,0)$ of weight $(0,0,0)$, we have
	\[w=4\sum_{i=2}^4 \de_i\otimes a_{1i}-2(d_4\otimes a_{12}a_{13}+d_3\otimes a_{14}a_{12}+d_2\otimes a_{13}a_{14})-d_1\otimes(a_{12}a_{34}+a_{24}a_{13}+a_{14}a_{23}-4)\]
	is an $E(4,4)$-singular vector. It is unique up to a scalar factor.
\end{proposition}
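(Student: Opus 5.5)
In case (b) of Lemma \ref{lemmagrado1} we have $v_1=0$, $v_2\neq 0$, $\lambda(w)=(1,0,0)$, and $v_2$ is a $\phat$-singular vector of weight $(0,1,0)$. The plan has three steps: identify the module, determine the coefficients $v_i,z_i$ up to a scalar, and verify the singularity conditions. The vanishing of $t$ will be excluded along the way.

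\textbf{Identifying the module.} By Proposition \ref{unique} and the remark following Proposition \ref{hmodules}, a $\phat$-singular vector of weight $(0,1,0)$ in an irreducible $W_t(a,b,c)$ forces one of two situations:
\begin{itemize}
\item[(i)] $W_t(a,b,c)=W_t(0,1,0)$;
\item[(ii)] $t\neq 0$ and $W_t(a,b,c)=W_t(0,0,0)\cong W_t(0,1,0)$, where the extra singular vector is $a_{12}\cdot 1$ by Remark \ref{a12}.
\end{itemize}
In both cases the module is $W_t(0,1,0)$, so we may realize it as $W_t(0,0,0)$ when $t\neq0$. For $t\neq 0$ we use the $\mathfrak h$-decomposition $W_t(0,0,0)=\tilde F_t(0,0,1)$ from Proposition \ref{hmodules}. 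As a vector space this is $\C\cdot1\oplus\phat_{-1}\cdot1\oplus\cdots$, and it is spanned by monomials in the $a_{ij}$ applied to $1$, subject to $a_{ij}a_{lm}+a_{lm}a_{ij}=2\epsilon_{ijlm}t$.

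\textbf{Determining the coefficients.} Starting from $v_2=a_{12}\cdot1$, apply \eqref{ev} to get $v_3=(x_2\de_3)v_2$ and $v_4=(x_3\de_4)v_3$, giving the $a_{1i}$ terms. Relation \eqref{brsz}, together with \eqref{Ew}, \eqref{x1de4(8)} and \eqref{b23x1de4(8)}, then pins down $z_4$. Solving $b_{44}z_4=-2v_4$ inside $W_t(0,0,0)$, using the bracket rules $[b_{ij},a_{lm}]$, yields $z_4\propto a_{12}a_{13}\cdot 1$. Next \eqref{ez} gives $z_3,z_2,z_1$ by lowering with $x_i\de_j$. The constant $-4$ in $z_1$ will come from the anticommutator $2t$ combined with the normalization fixed by $x_4C$. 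This also proves uniqueness up to scalar, since every coefficient is determined by $v_2$.

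\textbf{Excluding $t=0$.} I would try two routes. The first is weight and dimension: by Proposition \ref{sl4modules}, $W_0(0,1,0)=F(0,1,0)+F(1,0,1)q+F(0,0,2)q^2$. Its unique vector of weight $(0,1,0)$ lies in degree $0$, and then $z_4$ of weight $(1,0,1)$ must come from $v_4$ through $b_{44}z_4=-2v_4$. This would be a degree mismatch, because $b_{44}$ raises degree while the $q$-grading forces $z_4$ to lie in $q^1$ and $v_4$ in $q^0$. That is consistent, so this route alone does not decide the case. The second route, which I expect to work, is to evaluate \eqref{x4Cw} with the forced coefficients and show that it yields a nonzero multiple of $v_4$ unless $t\neq0$; in the $t\neq0$ model the same computation should close exactly. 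The case (i), $t=0$, then contradicts this.

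\textbf{Verification and main obstacle.} With the explicit $w$, check directly that $x_i\de_{i+1}w=0$, $b_{44}w=0$, $Ew=0$ and $(x_4C)w=0$. The first three reduce to \eqref{ev}--\eqref{Ew}, which hold by construction. The main obstacle is $(x_4C)w=0$: it requires reordering products of three $a$'s acting on $1$ using $[a_{ij},a_{lm}]=2\epsilon_{ijlm}C$ with $C=t$, and tracking signs carefully. I would organize this by writing everything in the basis $\{a_{I}\cdot1\}$ of $\tilde F_t(0,0,1)$, with $I$ running over ordered products of length at most $3$ of $a_{12},a_{13},a_{14}$. The value of $t$ will be fixed by this final identity, exactly as the case (a) computation fixed $t=1$.
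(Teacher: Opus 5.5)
\textbf{Gap 1: the exclusion of $t=0$.} This is the substantive part of the statement, and your proposal does not prove it. The mechanism you anticipate is also not the one at work.

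Carry out the $(x_4C)w$ computation in $W_t(0,0,0)$ and the $t$-dependence cancels identically:
\begin{itemize}
\item the terms $-(x_4\de_i)v_i$ and $\frac14(h_{14}+h_{24}+h_{34}-5t+5)v_4$ give $(-5-5t)a_{14}$;
\item the cubic terms give $+5t\,a_{14}$, via $a_{12}a_{34}+a_{34}a_{12}=2t$;
\item the constant $4$ in $z_1$ supplies the remaining $+5a_{14}$.
\end{itemize}
So no value of $t$ is ``fixed''. Unlike case (a), the vector exists for every $t\neq 0$, and specializing this computation to $t=0$ yields no contradiction.

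The real obstruction is structural. For $t=0$ the module is $W_0(0,1,0)\cong M_1/M_2$ (Remark \ref{K0(000)}), which is not a specialization of $W_t(0,0,0)$. Its weight-zero space is spanned by $a_{12}a_{34},a_{13}a_{24},a_{14}a_{23}$ in $q$-degree $1$, and it contains no analogue of $1$. So nothing can absorb the term $-5a_{14}$.

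Your direct route can be completed, but only by redoing the coefficient analysis inside $W_0(0,1,0)$:
\begin{itemize}
\item kill the $F(0,0,2)q^2$ components of $v_3,v_4$ using $(x_2\de_3)v_3=v_2$ and $(x_3\de_4)v_4=v_3$;
\item show that $b_{1j}z_1=-v_j$ forces $z_1=-(a_{12}a_{34}+a_{24}a_{13}+a_{14}a_{23})$;
\item then obtain $(x_4C)w=-5a_{14}\neq 0$, using $a_{12}a_{13}a_{14}\in M_2$.
\end{itemize}
That would be a legitimate, more hands-on alternative to the paper. The paper argues by duality instead. Such a vector gives $\tilde M_{-1}(1,0,0)\to M_0(0,1,0)$, hence $M_0(2,0,0)\to\tilde M_1(0,0,1)$. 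By Lemma \ref{lemmagrado1} the latter must come from $d_1\otimes\eta_{1,0}(x_1)$. Factoring through $W_0(2,0,0)$ then fails because $a_{23}a_{24}a_{34}d_1\otimes\eta_{1,0}(x_1)\neq 0$.

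\textbf{Gap 2: the existence verification.} The identity $(x_4C)w=0$ is false in $K_t(0,0,0)$. It holds in $W_t(0,0,0)$ only because $a_{12}a_{13}a_{14}\cdot 1=0$ there, since $W_t(0,0,0)=K_t(0,0,0)/\mathrm{Im}(\varphi_2)$ (Proposition \ref{quozienti}). One also needs the consequence $a_{12}a_{34}a_{14}=-a_{24}a_{13}a_{14}$, obtained by applying $x_4\de_1$.

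The only relations you list are the anticommutation relations, and those describe $K_t(0,0,0)$, not its quotient. Your proposed basis of monomials in $a_{12},a_{13},a_{14}$ spans at most $7$ dimensions of a $32$-dimensional module. It also cannot express the $a_{34},a_{24},a_{23}$ terms that occur in $w$. Note that the paper obtains existence for all $t\neq 0$ first by dualizing $\varphi_{[1_A]}\colon M_{t-1}(2,0,0)\to M_t(1,0,0)$, and only then checks the explicit formula.

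\textbf{Coefficients and uniqueness.} \eqref{ev} says $(x_2\de_3)v_3=v_2$, not $v_3=(x_2\de_3)v_2$; the latter is $0$ because $v_2$ is $\phat$-singular. So the coefficients are not obtained by lowering from $v_2$. Your claim that uniqueness follows because everything is determined by $v_2$ therefore still needs an actual weight-space argument.
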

\begin{proof}
	By Corollary \ref{morphtypea}, for $a=1$ and $t\neq 1$ we have a morphism
	\[
	M_{t-1}(2,0,0)\rightarrow M_t(1,0,0).
	\]
	By Theorem \ref{duality} and Corollary \ref{duali} we have a morphism
	\[
	M_{-t}(1,0,0)\rightarrow M_{1-t}(0,1,0).
	\]
Therefore the singular vectors described in case (b) exist for all $t\neq 0$. 
 One can also describe explicitly this family of singular vectors. Indeed, consider the vector $w$ displayed in the statement. 
We recall that  $W_t(0,0,0)$ is defined as the quotient of  $K_t(0,0,0)$ modulo the maximal submodule $Im(\varphi_{2})$. In particular, for $t\neq 0$, we have $a_{12}a_{13}a_{14}=0$ in $W_t(0,0,0)$. We have
\begin{align*}
(x_1 \de_2) w&= -2d_1a_{13}a_{14}+d_1(a_{13}a_{14}-a_{14}a_{13})=0,
\\
(x_2 \de_3) w&= -4\de_3 a_{12}+4\de_3 a_{12}+2d_2a_{12}a_{14}-2d_2a_{12}a_{14}+d_1(-a_{12}a_{24}+a_{12}a_{24})=0,
\\
(x_3 \de_4) w&=-4\de_4 a_{13}+4\de_4 a_{13}-2d_3a_{12}a_{13}+2d_3a_{12}a_{13}+d_1(a_{13}a_{23}-a_{13}a_{23})=0,
\\
	b_{44}w&=-8d_4\otimes a_{14}+8d_4\otimes a_{14}=0.
\end{align*}
By \eqref{Ew} we also have
\begin{align*}
	\frac{1}{3} Ew=&\frac{1}{2}a_{34}(4a_{14})-(x_4\de_1)(-2a_{13}a_{14})-(x_4\de_2)(a_{12}a_{34}+a_{24}a_{13}+a_{14}a_{23}-4)\\&=2a_{34}a_{14}+2a_{43}a_{14}-a_{14}a_{34}-a_{14}a_{43}=0,
\end{align*}
and finally, by \eqref{x4Cw}, we have
\begin{align*}
	(x_4C) w&=-4(x_4 \de_2)a_{12} -4(x_4 \de_3) a_{13} +(h_{14}+h_{24}+h_{34}-5t+5)a_{14}\\
	&-\frac{5}{4}\Big(a_{14}(a_{12}a_{34}+a_{24}a_{13}+a_{14}a_{23}-4)+2a_{24}a_{13}a_{14}+2a_{34}a_{14}a_{12}\Big).
\end{align*}
Now we observe that in $W_t(0,0,0)$ we have $(x_4 \de_1)(a_{12}a_{13}a_{14})=0$ and therefore $a_{12}a_{34}a_{14}=-a_{24}a_{13}a_{14}$. Making this substitution we obtain
\begin{align*}
	(x_4C)w&=(-5-5t)a_{14}+5a_{14}+\frac{5}{2}a_{14}a_{12}a_{34}-\frac{5}{2}a_{34}a_{14}a_{12}=0,
\end{align*}
recalling that $a_{12}a_{34}+a_{34}a_{12}=2C$.

It remains to show that the case $t=0$ doesn't occur. This can be done as follows. Assume by contradiction that we have a singular vector in $M_0(0,1,0)$ of degree 1 and weight $(1,0,0)$; then we have a morphism
\[
\tilde M_{-1}(1,0,0)\rightarrow M_0(0,1,0),
\]
so that, by duality (see Corollary \ref{duali}), we also have a morphism
\[
M_0(2,0,0)\rightarrow \tilde M_1(0,0,1).
\]
By Lemma \ref{lemmagrado1}, this morphism necessarily belongs to case (f) of this lemma, because it implies the existence of a singular vector of degree 1 and weight $(2,0,0)$ in $\tilde M_1(0,0,1)$. In particular there exists a $\phat$-singular vector $z_1\in K_1(0,0,1)$ of weight $(1,0,0)$ such that $w=d_1\otimes z_1$. In fact, up to a costant factor,
\[
z_1=a_{14}x_4^*+a_{13}x_3^*+a_{12}x_2^*=\eta_{1,0}(x_1),
\]
%and
%\[
%z_1'=a_{12}a_{13}a_{14}(a_{24}a_{34}x_4^*+a_{23}a_{34}x_3^*+a_{23}a_{24}x_2^*).
%\]
and this is the only  $\phat$-singular vector of weight $(1,0,0)$ in $K_1(0,0,1)$ by Proposition \ref{decomposizioniK}.
Proceeding as in the proof of Corollary \ref{morphtypea}, we also need to use the further condition $a_{23}a_{24}a_{34}d_1\otimes z_1=0$, 
%(or $a_{23}a_{24}a_{34}d_1\otimes z_1'=0$). 
and one can check that in fact $a_{23}a_{24}a_{34}d_1\otimes z_1\neq0$.
% and $a_{23}a_{24}a_{34}d_1\otimes z_1'\neq 0$.

	\end{proof}
	
	\begin{proposition}
		Assume we are in case (c) in Lemma \ref{lemmagrado1} with $W=W_t(a,b,c)$. Then $W_t(a,b,c)=W_t(0,0,1)$ and, up to a scalar factor, 
		\[
		w=2\de_3\otimes x_4^*-2\de_4\otimes x_3^*-d_4\otimes  a_{12}x_4^*-d_3\otimes a_{12}x_3^*-d_2\otimes a_{12}x_2^*-d_1\otimes a_{12}x_1^*.
		\] 
	The vector $w$	is indeed an $E(4,4)$-singular vector.
	\end{proposition}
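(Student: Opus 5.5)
In case (c) of Lemma \ref{lemmagrado1} we have $\lambda(w)=(0,\gamma+1,\gamma)$, and $v_3$ is a $\phat$-singular vector of weight $(0,\gamma,\gamma+1)$. It is the first nonzero vector in the chain $v_1,\dots,z_1$, and all vectors to its right are nonzero. The plan is to first pin down $(a,b,c)$, then the vector $w$, and finally verify singularity.

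\textbf{Pinning down $(a,b,c)$.} Since $v_3$ is $\phat$-singular in the irreducible module $W_t(a,b,c)$, it generates $W_t(a,b,c)$. Hence $W_t(a,b,c)$ is a quotient of $K_t(0,\gamma,\gamma+1)$. Apart from the exception $W_t(0,0,0)\cong W_t(0,1,0)$, each $W_t$ has a unique $\phat$-singular vector, so $(a,b,c)=(0,\gamma,\gamma+1)$.

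Next I use the weight of the lowest term $z_1$: it has weight $(-1,\gamma+1,\gamma)$ and lies in $W_t(0,\gamma,\gamma+1)$. I combine this with the remaining equations not yet exploited in the proof of Lemma \ref{lemmagrado1}.
\begin{itemize}
\item Applying $b_{34}$ and $x_2\de_4$ to \eqref{b23x1de4(8)} and \eqref{x4Cw} gives relations among the $v_i$ and $z_i$.
\item Applying $b_{13}$ and $b_{14}$ to \eqref{Ew} does the same.
\end{itemize}
I expect these to produce an equation of the form $c'\gamma\, v_3=0$ with $c'\neq 0$, which forces $\gamma=0$. Then $W=W_t(0,0,1)$.

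\textbf{Determining $w$.} With $W=W_t(0,0,1)$, the vector $v_3$ is a multiple of $x_4^*$. Relation \eqref{ev} gives $v_4=(x_3\de_4)$-preimage data, and weight considerations force $v_4\propto x_3^*$ with $v_3=(x_2\de_3)$-compatibility, so $v_4=-x_3^*$ up to the chosen normalization. Then \eqref{brsz} gives $b_{44}z_4=-2v_4$. The $z_j$ have weights $\lambda(z_j)=\lambda(w)-\epsilon$-shifts, which are exactly the weights of $a_{12}x_j^*$ in degree 1 of $W_t(0,0,1)$. Using the multiplicity-one decomposition from Proposition \ref{sl4modules}/Proposition \ref{hmodules}, each $z_j$ is determined up to scalar as $a_{12}x_j^*$. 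Relations \eqref{ez} and \eqref{brsz} fix the scalars uniformly, giving the displayed $w$; this also yields uniqueness.

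\textbf{Verifying singularity.} I check directly that $w$ is annihilated by $x_i\de_{i+1}$, $b_{44}$, $E$ and $x_4C$, using \eqref{Ew} and \eqref{x4Cw}. For the condition $(x_4C)w=0$, I expect the $t$-dependent terms from $-5t$ and from $a_{i4}a_{12}x_i^*$ (via $[a_{12},a_{34}]=2C$) to cancel. If they fail to cancel for general $t$, the statement implicitly requires $t$ to be determined by weights, and I would recompute the action using $a_{12}x_4^*$-relations in $W_t(0,0,1)$, such as $a_{12}a_{13}x_4^*$-type vanishing coming from $Im(\eta)$.

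\textbf{Main obstacle.} The main obstacle is the elimination of $\gamma\ge 1$, together with the $t$-consistency check in $(x_4C)w$. For the former I would try duality first: Theorem \ref{duality} with Corollary \ref{duali} converts a morphism into $M_t(0,\gamma,\gamma+1)$ into a morphism from a module of type $M(\gamma+?,\cdot,0)$ landing in a case already excluded by (a), (b) or (f).
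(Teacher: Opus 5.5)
\textbf{Gap: the exclusion of $\gamma\ge 1$ is not proved.} This is the step you yourself flag as the main obstacle. Your primary route is only an expectation: you hope that applying $b_{34}$, $x_2\de_4$, $b_{13}$, $b_{14}$ to \eqref{Ew}, \eqref{x4Cw}, \eqref{b23x1de4(8)} produces $c'\gamma v_3=0$. You give no computation. The first natural combination does not see $\gamma$ at all: \eqref{b23x1de4(8)} gives $z_4=-\frac{1}{2(\gamma+1)}a_{12}v_3$, then $b_{44}z_4=-2v_4$ gives $v_4=\frac{1}{\gamma+1}(x_4\de_3)v_3$, and $(x_3\de_4)v_4=v_3$ reduces to the identity $h_{34}v_3=(\gamma+1)v_3$.

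Your duality fallback is the right idea, and it is the paper's argument. However, the target ``$M(\gamma+?,\cdot,0)$'' is left unidentified, and the mechanism is not ``landing in case (a), (b) or (f)''. The argument runs as follows.
\begin{itemize}
\item The singular vector gives a nonzero morphism $\tilde M_{t-1}(0,\gamma+1,\gamma)\to M_t(0,\gamma,\gamma+1)$.
\item For $\gamma\ge 1$, Corollary \ref{duali} gives $W_t(0,\gamma,\gamma+1)^*\cong W_{-t}(\gamma+1,\gamma+1,0)$, and $K_{t-1}(0,\gamma+1,\gamma)^*\cong K_{1-t}(\gamma,\gamma+1,0)$.
\item Theorem \ref{duality} then yields a nonzero degree-one morphism $M_{-t}(\gamma+1,\gamma+1,0)\to\tilde M_{1-t}(\gamma,\gamma+1,0)$. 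This is a degree-one singular vector of weight $(\gamma+1,\gamma+1,0)$.
\item Lemma \ref{lemmagrado1} applies to any finite-dimensional $W$. Every nonzero singular vector it allows has weight with first coordinate $0$, or with second and third coordinates $0$. So $(\gamma+1,\gamma+1,0)$ fits no case at all, a contradiction.
\end{itemize}
For $\gamma=0$ the dual is instead $W_{-t}(3,0,0)$, which is consistent with case (f).

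\textbf{The later steps also need repair.}
\begin{itemize}
\item \emph{Multiplicity one is false.} We have $W_t(0,0,1)\cong F(0,0,1)\oplus F(0,1,1)\oplus F(1,0,2)\oplus F(0,0,3)$ as an $\slq$-module. The weight $(-1,1,0)$ of $z_1$ occurs three times in it, so weight considerations do not fix the $z_j$, and your uniqueness claim is unsupported. You must use the equations instead; for example, the relations above already give $z_4=-\frac12a_{12}v_3$ and $v_4=(x_4\de_3)v_3$.
\item \emph{A needed relation is missing from the verification.} For the displayed $w$ one finds $\frac13Ew=-(a_{14}x_1^*+a_{24}x_2^*+a_{34}x_3^*)$, which is nonzero in $K_t(0,0,1)$. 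It vanishes in $W_t(0,0,1)$ only because $a_{14}x_4^*+a_{13}x_3^*+a_{12}x_2^*$ generates $\mathrm{Im}(\eta_{1,0})$, so it is zero in the irreducible quotient; applying $x_4\de_1$ gives the needed relation.
\item \emph{The same relation handles $(x_4C)w$.} Together with $a_{12}a_{34}+a_{34}a_{12}=2C$, it cancels the $t$-terms in $(x_4C)w$.
\end{itemize}
So this relation is an essential input, not a contingency. The ``$a_{12}a_{13}x_4^*$-type'' relation you guess is not the right one.
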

	\begin{proof}
		By Proposition \ref{lemmagrado1} we have a singular vector of degree 1 and weight $(0,\gamma+1,\gamma)$ in $M_t(0,\gamma,\gamma+1)$ and in particular we have a morphism
		\[
		\tilde M_{t-1}(0,\gamma+1,\gamma)\rightarrow M_t(0,\gamma,\gamma+1).
		\]
		If $\gamma\geq 1$, by duality, we have a morphism
		\[
		M_{-t}(\gamma+1,\gamma+1,0)\rightarrow \tilde M_{1-t}(\gamma,\gamma+1,0),
		\]
		which forces the existence of a singular vector of weight $(\gamma+1,\gamma+1,0)$ contradicting Lemma \ref{lemmagrado1}.
		Therefore we necessarily have $\gamma=0$. By Corollary \ref{morphtypea} we have a morphism
		\[
		M_{-t}(3,0,0)\rightarrow M_{-t+1}(2,0,0)
		\]
		for all $t\in \C$ and hence by duality we have a morphism
		\[
		M_{t-1}(0,1,0)\rightarrow M_t(0,0,1),
		\]
		which implies the existence of the $E(4,4)$ singular vector $w\in M_t(0,0,1)$. Also in this case we can check that the displayed vector $w$ is such singular vector. Note that
$a_{14} x_4^*+a_{13} x_3^*+a_{12} x_2^*=0$ in $W_t(0,0,1)$ because 
$a_{14} x_4^*+a_{13} x_3^*+a_{12} x_2^*$
 generates $Im(\eta_{1,0})$.		
%		 $W_t(0,0,1)$ is the quotient of $K_t(0,0,1)$ with respect to $Im(\eta_{1,0})$ therefore we have . 
We have
	\begin{align*}
		(x_1\de_2)w&=-d_1\otimes a_{12}x_2^*+d_1\otimes a_{12}x_2^*=0  \\
		(x_2\de_3)w&= -d_2\otimes a_{12}x_3^*+d_2\otimes a_{12}x_3^*=0 \\
		(x_3\de_4)w&= -2\de_4\otimes x_4^*+2\de_4\otimes x_4^*-d_3\otimes a_{12}x_4^*+d_3\otimes a_{12}x_4^*=0, \\
		b_{44} w &=4d_4\otimes x_3^*+4d_4\otimes (x_4 \de_3)x_4^*+4d_3\otimes (x_4 \de_3)x_3^*+4d_2\otimes (x_4 \de_3)x_2^*+4d_1\otimes (x_4 \de_3)x_1^*=0.   \\
	\end{align*}
By \eqref{Ew} we have
\[
\frac{1}{3}Ew=-a_{34}x_3^*+(x_4\de_1)a_{12}x_2^*-(x_4\de_2)a_{12}x_1^*=-(a_{14}x_1^*+a_{24}x_2^*+a_{34}x_3^*)=0,
\]
and finally, by \eqref{x4Cw}, we have
\begin{align*}
(x_4C)w&=-2(x_4 \de_3)x_4^*-\frac{1}{2}(h_{14}+h_{24}+h_{34}-5t+5)x_3^*-\frac{5}{4}(a_{14}a_{12}x_1^*+a_{24}a_{12}x_2^*+a_{34}a_{12}x_3^*)\\
&=\frac{5}{2}tx_3^*+\frac{5}{4}(a_{12}(a_{14}x_1^*+a_{24}x_2^*+a_{34}x_3^*)-a_{12}a_{34}x_3^*-a_{34}a_{12}x_3^*)=0,
\end{align*}
since $a_{14}x_1^*+a_{24}x_2^*+a_{34}x_3^*=-(x_4\de_1)(a_{14} x_4^*+a_{13} x_3^*+a_{12} x_2^*)=0$ and $a_{12}a_{34}+a_{34}a_{12}=C$.

	\end{proof}
	\begin{proposition}
		Case (d) in Lemma \ref{lemmagrado1} with $W=W_t(a,b,c)$ does not occur.
	\end{proposition}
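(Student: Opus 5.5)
The plan is to first pin down the module and then use the duality of Theorem \ref{duality} to turn a case (d) vector into a degree-one singular vector in a module induced from a Kac module, where a direct computation gives a contradiction.

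\textbf{Identifying the module.} In case (d) of Lemma \ref{lemmagrado1} we have $\lambda(w)=(0,0,\gamma)$, and $v_4$ is a $\phat$-singular vector of weight $(0,0,\gamma-1)$. In particular $\gamma\geq 1$. Put $c=\gamma-1\geq 0$. Since $W_t(a,b,c)$ is a quotient of $K_t(a,b,c)$, its $\phat$-singular vectors are unique up to the single exception noted after Proposition \ref{hmodules}, and that exception occurs at weight $(0,1,0)$, not at weights of the form $(0,0,c)$. Hence $W=W_t(0,0,c)$, and $w$ is a degree one singular vector of weight $(0,0,c+1)$ in $M_t(0,0,c)$. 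It therefore yields a nonzero morphism
\[
\tilde M_{t-1}(0,0,c+1)\rightarrow M_t(0,0,c).
\]

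\textbf{Degenerate case $c=t=0$.} I would dispose of this first, since then $W_0(0,0,0)$ is the trivial module. Every $v_i$ and $z_i$ then has weight $0$. But $\lambda(z_4)=(0,0,2)\neq 0$, so $z_4=0$, and the relation $b_{44}z_4=-2v_4$ forces $v_4=0$, a contradiction.

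\textbf{Main case: the dual morphism.} For the remaining cases I apply Theorem \ref{duality}. The dual of $K_{t-1}(0,0,c+1)$ is $K_{1-t}(c+1,0,0)$. For the target, Corollary \ref{duali} gives:
\begin{itemize}
\item $W_t(0,0,c)^*\cong W_{-t}(c+2,0,0)$ for $c\geq 1$;
\item for $c=0$, $t\neq 0$, $W_t(0,0,0)^*\cong W_t(0,1,0)^*$ by Remark \ref{a12}, and this is again $W_{-t}(2,0,0)$.
\end{itemize}
We thus obtain a nonzero morphism
\[
M_{-t}(c+2,0,0)\rightarrow \tilde M_{1-t}(c+1,0,0)
\]
of degree one. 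This produces an $E(4,4)$-singular vector $w'$ of degree $1$ and weight $(c+2,0,0)$ in $\tilde M_{1-t}(c+1,0,0)$.

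Lemma \ref{lemmagrado1} holds for an arbitrary finite-dimensional $\phat$-module, so it applies to $w'$. The weight $(c+2,0,0)$ with $c\geq 0$ is incompatible with cases (a)–(e), so $w'$ is of type (f). Hence $w'=d_1\otimes z_1$, where $z_1$ is a $\phat$-singular vector of weight $(c+1,0,0)$ in $K_{1-t}(c+1,0,0)$. By Proposition \ref{decomposizioniK} (and weight considerations in $\mathcal U(\phat_{-1})\otimes F$), $z_1$ is a multiple of the highest weight vector $x_1^{c+1}$.

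\textbf{The contradiction.} By \eqref{x4Cw},
\[
(x_4C)w'=\tfrac54\, a_{14}x_1^{c+1}.
\]
By \eqref{uno}, $K_{1-t}(c+1,0,0)\cong\mathcal U(\phat_{-1})\otimes F_{1-t}(c+1,0,0)$ is free over $\mathcal U(\phat_{-1})$. So $a_{14}x_1^{c+1}\neq 0$, and hence $(x_4C)w'\neq 0$ in $\tilde M_{1-t}(c+1,0,0)$, a contradiction.

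\textbf{Expected obstacle.} The delicate point is the bookkeeping of duality: checking that the dual of the morphism really lands in the module induced from the full Kac module $K_{1-t}(c+1,0,0)$, and not in some quotient where $a_{14}x_1^{c+1}$ could vanish, and that it is still given by a degree one singular vector of the stated weight. This is handled exactly as in the proofs of cases (b) and (c) above. If that step caused trouble, the fallback would be a direct computation: apply $b_{rs}$ and $x_i\de_j$ to \eqref{Ew} and \eqref{x4Cw} to derive scalar identities forcing $v_4=0$, in the style of Lemma \ref{lemmagrado1}.
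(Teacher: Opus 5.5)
Your core argument is the paper's: dualize $\tilde M_{t-1}(0,0,c+1)\to M_t(0,0,c)$, land in case (f) of Lemma~\ref{lemmagrado1} inside $\tilde M_{1-t}(c+1,0,0)$, and contradict $(x_4C)w'=\frac54 a_{14}z_1$ using freeness of the Kac module.

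You differ from the paper in how you treat $c=0$ (the paper's $\gamma=1$). The paper handles all of $\gamma=1$ by asserting that $\de_4\otimes 1$ is the only degree-one vector of weight $(0,0,1)$ in $M_t(0,0,0)$. That is only accurate when $W_t(0,0,0)$ is trivial, i.e.\ $t=0$. For $t\neq 0$, $W_t(0,0,0)\cong\tilde F_t(0,0,1)$ is $32$-dimensional and contains the weight $(0,0,2)$ of $z_4$. So treating only $c=t=0$ directly, and sending $c=0$, $t\neq0$ through duality via $W_t(0,0,0)^*\cong W_{-t}(2,0,0)$, is the better choice. It does, however, leave one case uncovered.

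\textbf{The gap} is the claim that $z_1$ is a multiple of $x_1^{c+1}$. Proposition~\ref{decomposizioniK} only concerns nonzero central charge, so it says nothing when $t=1$, where the Kac module is $K_0(c+1,0,0)$.

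For $c\geq 1$ this is only a citation fix. Remark~\ref{decomposizioniK0} gives multiplicity one for $W_0(c+1,0,0)$, and $K_0(c+1,0,0)$ has a unique maximal submodule. A nonzero non-surjective endomorphism would then produce a second copy of the top, so $\mathrm{End}_{\phat}K_0(c+1,0,0)=\C$ and the singular vector of that weight is unique.

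For $(c,t)=(0,1)$, i.e.\ the module $M_1(0,0,0)$, it is a genuine hole. By the corrected Eq.~15, $[K_0(1,0,0)]=2[W_0(1,0,0)]+[W_0(1,1,0)]+[W_0(3,0,0)]$, so the multiplicity argument collapses. Weights do not help either, since $F(1,0,0)$ occurs in $\inlinewedge^k(\phat_{-1})\otimes\C^4$ for $k=0,2,4,6$. A positive-degree singular vector $z_1'$ of weight $(1,0,0)$ could satisfy $a_{14}z_1'=0$ (the degree-$6$ candidate is killed by all of $\phat_{-1}$), and then $d_1\otimes z_1'$ gives no contradiction.

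\textbf{A fix:} prove $\mathrm{End}_{\phat}K_0(1,0,0)=\C$ using the socle rather than the top.
\begin{itemize}
\item Since $C$ acts by $0$, $K_0(1,0,0)=\inlinewedge(\phat_{-1})\otimes\C^4$ is free over the exterior algebra. Hence every nonzero submodule contains $\inlinewedge^6(\phat_{-1})\otimes\C^4$.
\item So the socle $S$ is simple, graded, and contained in both the kernel and the image of any nonzero non-injective endomorphism. This contradicts $[K_0(1,0,0):S]=1$.
\item That multiplicity is one because $S\not\cong W_0(1,0,0)$: a graded copy $F(1,0,0)q^s+F(0,0,1)q^{s+1}$ of the standard module containing the degree-$6$ piece would need a component in degree $7$.
\item Hence every nonzero endomorphism is invertible, and $\mathrm{End}=\C$.
\item Weight $(1,0,0)$ occurs only in even degrees, so every $\phat$-singular vector of that weight is even and defines an endomorphism. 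Therefore $x_1$ is the only one.
\end{itemize}
Alternatively, check directly that $b_{44}u_k\neq0$ for the $\slq$-highest weight vectors $u_k$ of weight $(1,0,0)$ in degrees $k=2,4,6$. For instance, $b_{44}(a_{12}a_{13}a_{14}a_{23}a_{24}a_{34}x_1)=\pm4\,a_{12}a_{13}a_{14}a_{24}a_{34}x_4\neq 0$. With this added, your proof covers all cases.
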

\begin{proof}
If $\gamma=1$ then $M_t(0,0,0)$ contains a unique vector, up to a scalar factor, of degree 1 and weight $(0,0,1)$, namely the vector $\de_4\otimes 1$, but this is not annihilated by $b_{44}$, hence it is not singular.

If $\gamma\neq 1$ and 
	 such singular vector exists, then we have a morphism
	\[
	\tilde M_{t-1}(0,0,\gamma)\rightarrow M_t(0,0,\gamma-1).
	\]
By duality we would then have a morphism
\[
M_{-t}(\gamma+1,0,0)\rightarrow \tilde M_{1-t}(\gamma,0,0)
\]
and so we would have a singular vector as in case (f), i.e. of the form $d_1\otimes z_1$, where $z_1=x_1^\gamma$. This would be the only possibility due to Proposition \ref{decomposizioniK} and Remark \ref{decomposizioniK0}. Proceeding as in the proof of Proposition \ref{typef}, we have that $d_1\otimes z_1$ is a singular vector if and only if $a_{14}z_1=0$, and this is clearly not the case in $K_{1-t}(\gamma,0,0)$. 
\end{proof}
\begin{proposition}
	Assume we are in case (e) in Lemma \ref{lemmagrado1} and $W=W_t(a,b,c)$. Then $a=b=0$ and $c\geq 1$. % and if $c=1$ then $t\neq 1$. 
	In this case the vector 
	\[
	w=d_4\otimes (x_4^*)^c+d_3\otimes x_3^*(x_4^*)^{c-1}+d_2\otimes x_2^*(x_4^*)^{c-1}+d_1\otimes x_1^*(x_4^*)^{c-1}
	\]
	is an $E(4,4)$-singular vector. This singular vector induces a morphism $M_{t-1}(0,0,c-1)\rightarrow M_t(0,0,c)$ if and only if $(c,t)\neq (1,1)$.
\end{proposition}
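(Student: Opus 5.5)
First we pin down the module. In case (e) Lemma \ref{lemmagrado1} gives $\lambda(w)=(0,0,\gamma)$, and $z_4$ is a $\phat$-singular vector of weight $(0,0,\gamma+1)$. Since all $v_i$ vanish, $z_4$ is the first nonzero vector in the chain, so it is the highest weight vector of $W_t(a,b,c)$ and $(a,b,c)=(0,0,c)$ with $c=\gamma+1\ge 1$. We then use \eqref{ez} to get $z_3=-(x_3\de_4)z_4$, $z_2=(x_2\de_4)z_4$ up to sign, and $z_1$ similarly. Realizing $F_t(0,0,c)=S^c(\C^{4*})$ with highest weight vector $(x_4^*)^c$, these are, up to the normalization $1/c$, exactly $x_j^*(x_4^*)^{c-1}$. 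This gives uniqueness and the displayed form of $w$.

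Next we verify singularity via the Lemma preceding \eqref{(2a)}.
\begin{itemize}
\item The conditions $(x_i\de_{i+1})w=0$ hold by the usual cancellation, exactly as in the case (c) computation.
\item $b_{44}w=0$ holds since $b_{44}$ kills $W_t(0,0,c)$ in degree $0$ and $[b_{44},d_j]$ only produces $\de$-terms with coefficients $v_j=0$, by \eqref{brsz}.
\item \eqref{Ew} reduces to $-(x_4\de_1)z_2+(x_4\de_2)z_1$. Both terms are $x_4^*$-shifted monomials that cancel.
\item The remaining condition, \eqref{x4Cw}, reduces to $\frac54(a_{14}z_1+a_{24}z_2+a_{34}z_3)=0$. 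The combination $a_{14}x_1^*+a_{24}x_2^*+a_{34}x_3^*$, times $(x_4^*)^{c-1}$, is, up to applying $x_4\de_1$, the image of the singular vector of $\eta_{1,c-1}$, i.e.\ a generator of $Im(\eta_{1,c-1})$.
\end{itemize}
We therefore need this image to vanish in $W_t(0,0,c)$. For $t\neq0$ this is Proposition \ref{quozienti}(5) with $a=0$. For $t=0$ we use that $W_0(0,0,c)$ has no degree-$1$ component of weight $(1,0,c)$, by Proposition \ref{sl4modules}: its $q$-part is $F(0,1,c)$ only. That weight vector is therefore zero.

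Finally we decide whether the induced map $K_{t-1}(0,0,c-1)\to M_t(0,0,c)$ factors through $W_{t-1}(0,0,c-1)$.
\begin{itemize}
\item For $c-1\ge1$, and $t\ne1$ or generically, the maximal submodule is generated by the image of $\eta_{1,c-2}$ (Proposition \ref{quozienti}(5)), or, for $t-1=0$, by the degree-$1$ components dictated by Proposition \ref{sl4modules}. We check that the generator maps to zero: it becomes $\pm d$-type combinations multiplying $w$, which vanish by the same identity as above.
\item For $c=1$ the source is $K_{t-1}(0,0,0)$. If $t\neq1$ its maximal submodule is $Im(\varphi_2)$, generated by $a_{12}a_{13}a_{14}$, or equivalently (Remark \ref{a12}) we must check $a_{12}w=0$ modulo the relations. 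If $t=1$ the source is $K_0(0,0,0)$, whose maximal submodule is $M_1$, generated by $a_{12}$ (Remark \ref{K0(000)}).
\end{itemize}
The main obstacle is this last dichotomy. For $t=1$, $c=1$ we must show $a_{12}w\ne0$ in $M_1(0,0,1)$. We expand $a_{12}w$ in $\mathcal U(E(4,4)_{<0})\otimes W_1(0,0,1)$, and exhibit a surviving term such as $d_3\otimes a_{12}x_3^*$, or a $\de$-term arising from $[a_{12},d_j]$. For $t\ne1$ we instead verify that $a_{12}a_{13}a_{14}w=0$, possibly by duality with the $(f)$ case, Corollary \ref{morphtypea}: $M_{t-1}(0,0,c-1)\to M_t(0,0,c)$ is dual to $M_{-t}(c+1,0,0)\to M_{1-t}(c,0,0)$. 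Corollary \ref{morphtypea} then gives the nonvanishing precisely when $(c,t)\ne(1,1)$, which is the cleanest route and the one we would try first.
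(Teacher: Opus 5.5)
Your overall plan has the same structure as the paper's. You verify that $w$ is singular, get the morphism for $(c,t)\neq(1,1)$ by duality with Corollary~\ref{morphtypea}, and rule out $(1,1)$ by showing $a_{12}w\neq0$ in $M_1(0,0,1)$. The decisive duality step, however, is set up incorrectly.

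\textbf{The duality step.} By Corollary~\ref{duali}, $W_t(0,0,c)^*=W_{-t}(c+2,0,0)$, and $W_{t-1}(0,0,c-1)^*=W_{1-t}(c+1,0,0)$ for $c\ge2$. For $c=1$, $W_{1-t}(2,0,0)$ is dual to $W_{t-1}(0,1,0)$. So the map to dualize is $M_{-t}(c+2,0,0)\to M_{1-t}(c+1,0,0)$, not $M_{-t}(c+1,0,0)\to M_{1-t}(c,0,0)$.
\begin{itemize}
\item With your indexing, Corollary~\ref{morphtypea} (with $a=c$ and central charge $1-t$) would single out $(c,t)=(1,0)$ and would give a map at $(1,1)$. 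That contradicts the statement.
\item With the correct indexing, $a=c+1\ge2$, so Corollary~\ref{morphtypea} gives the dual map for every $(c,t)$.
\item The exclusion of $(1,1)$ does not come from that corollary. It comes from the fact that for $c=1$ duality yields a map out of $M_{t-1}(0,1,0)$, which coincides with $M_{t-1}(0,0,0)$ only when $t\neq1$ (Remark~\ref{a12}).
\end{itemize}
So duality gives only the ``if'' direction, and the ``only if'' rests entirely on $a_{12}w\neq0$. That part of your plan is fine: up to sign $a_{12}w$ is $w[1_C]$, which is visibly nonzero because of its $\de_3\otimes x_4^*$ term.

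Your fallback direct route does not repair this. For $c=1$, $t\neq1$ the test is $a_{12}a_{13}a_{14}w=0$, and it is \emph{not} equivalent to $a_{12}w=0$, which fails for every $t$. You also assert that at central charge $0$ the maximal submodule of $K_0(0,0,c-1)$ is generated by its degree-one part. That is unsupported, and the proof of Corollary~\ref{morphtypea} shows such generation fails for $K_0(2,0,0)$.

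\textbf{The check $(x_4C)w=0$ for $c\ge2$.} Put $u_{ij}=\sum_k a_{ik}x_k^*x_j^*(x_4^*)^{c-2}$. Your combination is $-u_{44}$ and the $\eta_{1,c-1}$-generator is $u_{14}$. But $(x_4\de_1)u_{14}=u_{44}-(c-1)u_{11}$, not $u_{44}$, so you need an extra step. For example, also use $(x_4\de_2)(x_2\de_1)u_{14}=u_{44}-(c-1)u_{22}$, $(x_4\de_3)(x_3\de_1)u_{14}=u_{44}-(c-1)u_{33}$ and $u_{11}+u_{22}+u_{33}+u_{44}=0$. Summing gives $(c+2)u_{44}\in Im(\eta_{1,c-1})$.

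\textbf{The case $t=0$.} Your weight argument should be replaced. The relevant weight is $(1,0,c-1)$, not $(1,0,c)$, and the weight $(0,0,c-2)$ of $u_{44}$ also occurs in $F(0,1,c)$, so a weight count does not kill it. Instead note that $W_0(0,0,c)$ is the unique irreducible quotient of $K_0(0,0,c)$, so the proper submodule $Im(\eta_{1,c-1})$ vanishes there.

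\textbf{Reconstructing $z_1,z_2,z_3$.} Your reconstruction from $z_4$ via \eqref{ez} runs backwards. \eqref{ez} gives $(x_3\de_4)z_3=-z_4$, whereas $(x_3\de_4)z_4=0$, so it neither produces the displayed $w$ nor proves uniqueness. This is harmless for the claim that $w$ is singular. It matters if you want to identify the singular vector of the dual morphism with $w$ in the ``if'' direction.
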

\begin{proof} By Lemma \ref{lemmagrado1}, $a=b=0$ and  $c\geq 1$. By Corollary \ref{morphtypea}, for every $c\geq 1$ and every $t$ we have a morphism
	\[
	 M_{-t}(c+2,0,0)\rightarrow M_{-t+1}(c+1,0,0),
	\]
hence,  by duality, for $(c, t)\neq (1,1)$, we have a morphism
\[
M_{t-1}(0,0,c-1)\rightarrow M_{t}(0,0,c),
\]
since for every $s\neq 0$, $M_s(0,0,0)\cong M_s(0,1,0)$.
It follows that in $M_t(0,0,c)$ with $(c,t)\neq (1,1)$ there exists a singular vector of weight $(0,0,c-1)$  and degree 1. 
We prove below that the vector $w$ in the statement is a singular vector for all $c\geq 1$ and $t\in \C$. 

Indeed, it is easy to verify that $w$ is annihilated by $x_1\de_2,x_2\de_3,x_3\de_4,b_{44}$. Also $Ew=0$ follows immediately by Equation \eqref{Ew}. So we are left to verify that $(x_4C)w=0$. By Equation \eqref{x4Cw} we only have to show that
\[
a_{14}x_1^*(x_4^*)^{c-1}+a_{24}x_2^*(x_4^*)^{c-1}+a_{34}x_3^*(x_4^*)^{c-1}=0 \,\, \mbox{in}\, W_t(0,0,c).
\]
If $c=1$ this is an immediate consequence of $a_{14}x_4^*+a_{13}x_3^*+a_{12}x_2^*=0$ in $W_t(0,0,1)$, that holds by definition of $W_t(0,0,1)$,  by applying $x_4\de_1$ to this equality. 

So let $c\geq 2$. Let $u_{ij}=\sum_{k}a_{ik}x_k^*x_j^*(x_4^*)^{c-2}$ for all $i,j=1,\ldots,4$ and note that $u_{11}+u_{22}+u_{33}+u_{44}=0$ and that we have to prove $u_{44}=0$.
Recall that $u_{14}$ is a $\phat$-singular vector in the image of $\eta_{1,c-1}$ and so $u_{14}=0$ in $W_t(0,0,c)$. Notice that
\[
(x_4\de_1)u_{14}=u_{44}-(c-1)u_{11},
\] 
and similarly $(x_4 \de_2)(x_2 \de_1)u_{14}=u_{44}-(c-1)u_{22}$ and  $(x_4\de_3)(x_3 \de_1)u_{14}=u_{44}-(c-1)u_{33}$; therefore $(c+2)u_{44}=0$ in $W_t(0,0,c)$.

It remains to show that the induced morphism
\[
f:\tilde M_0(0,0,0)\rightarrow M_1(0,0,1)
\]
does not factor through the quotient $M_0(0,0,0)$. Notice that, by Remark \ref{K0(000)}, $a_{12}=0$ in
$M_0(0,0,0)$ since $W_0(0,0,0)$ is the trivial module, and so we should have $f(a_{12})=a_{12}w=0$. One can check by direct computation that in fact $a_{12}w\neq 0$.
\end{proof}

We summarize the results of this section in the following theorem.
\begin{theorem}\label{listagrado1}
	The following is, up to scalar factors, a complete list of $E(4,4)$-singular vectors  of degree 1 in Verma modules $M_t(a,b,c)$:
\begin{itemize}
	\item $w[1_A]=d_1\otimes x_1^a\in M_t(a,0,0)$ with $a=t=0$ or $a\geq 1$;
	\item $w[1_B]=\sum_{i=1}^4 d_i\otimes x_i^*(x_4^*)^{c-1}\in M_t(0,0,c)$ for all $c\geq 1$;
	\item $w[1_C]=2\de_3\otimes x_4^*-2\de_4 \otimes x_3^*-\sum_{i=1}^4 d_i\otimes a_{12}(x_i^*)\in M_t(0,0,1)$ for all $t$;
	\item $w[1_D]=4\sum_{i=2}^4 \de_i\otimes a_{1i}-2(d_4\otimes a_{12}a_{13}+d_3\otimes a_{14}a_{12}+d_2\otimes a_{13}a_{14})-d_1\otimes(a_{12}a_{34}+a_{24}a_{13}+a_{14}a_{23}-4)\in M_t(0,0,0)$ for all $t\neq 0$;
	\item $w[1_E]=\sum_{i=1}^4 (\de_i\otimes d_i+d_i\otimes \de_i )\in M_1(1,0,0)$.	
	
\end{itemize}
\end{theorem}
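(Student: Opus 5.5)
The theorem collects the results of this section, so the proof is an assembly of the preceding propositions along the case split of Lemma \ref{lemmagrado1}. Let $w$ be a singular vector of degree $1$ in $M_t(a,b,c)$, written as in \eqref{(2a)}. By the chain relations \eqref{ev}--\eqref{brsz}, the first nonzero vector in the ordered list $v_1,v_2,v_3,v_4,z_4,z_3,z_2,z_1$ is a $\phat$-singular vector of $W_t(a,b,c)$. Hence exactly one of the cases (a)--(f) of Lemma \ref{lemmagrado1} occurs. In each case the lemma fixes the weight of $w$ and the weight of that leading $\phat$-singular vector.

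Since $W_t(a,b,c)$ is the irreducible quotient of $K_t(a,b,c)$, its $\phat$-singular vectors have weight $(a,b,c)$. The only exception is $W_t(0,0,0)\cong W_t(0,1,0)$ for $t\neq0$, as noted after Proposition \ref{hmodules} and in Remark \ref{a12}. So the lemma pins down $(a,b,c)$ in each case. I would then go through the cases and cite the corresponding result.

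\begin{itemize}
\item Case (a): Proposition \ref{w8} forces $M_1(1,0,0)$ and gives $w[1_E]$.
\item Case (b): the proposition on case (b) gives $t\neq0$, the module $M_t(0,0,0)$, and $w[1_D]$.
\item Case (c): the corresponding proposition reduces to $\gamma=0$, i.e.\ $W_t(0,0,1)$, and gives $w[1_C]$ for all $t$.
\item Case (d): this case is excluded.
\item Case (e): we get $a=b=0$, $c\geq1$, and $w[1_B]$.
\item Case (f): by Proposition \ref{typef} we get $b=c=0$ with $a=t=0$ or $a\geq1$, and $w=w[1_A]$.
\end{itemize}

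Uniqueness up to scalar within each case also has to be justified. The leading vector is determined up to scalar as the (unique) $\phat$-singular vector of the prescribed weight. Once it is fixed, the remaining components are forced by \eqref{ev}--\eqref{brsz}: the $z_j$ are obtained from $z_1$ or the leading vector by applying the lowering operators $x_i\de_j$, and the $v_j$ by applying the $b_{rs}$. In cases (b), (c) and (e) the singular vector cannot be a sum of two independent ones, because its weight determines the leading component uniquely and everything else is generated from it. The nonvanishing of the displayed vectors has already been established case by case in the propositions.

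The delicate point is the exceptional isomorphism $W_t(0,0,0)\cong W_t(0,1,0)$. Here the lemma allows the leading vector in case (b) to have weight $(0,1,0)$ inside $M_t(0,0,0)$. This produces $w[1_D]$, listed under $M_t(0,0,0)$ as in the paper's convention. One must make sure that this vector is not double counted and that no weight-$(0,0,0)$ variant arises in another case. I expect this bookkeeping, together with the $t=0$ subcases where $W_0$ has a different structure (Remark \ref{decomposizioniK0}), to be the only real obstacle. I would handle it by checking each case against Proposition \ref{decomposizioniK} and Remark \ref{decomposizioniK0} to confirm the multiplicity of the relevant $\phat$-singular weights.
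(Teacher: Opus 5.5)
Your reduction matches the paper's. There the theorem is stated as a summary of Lemma \ref{lemmagrado1} and the case-by-case propositions, and your case list with its resulting modules and vectors is correct.

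The step that fails as written is the uniqueness argument. Relations \eqref{ev}--\eqref{brsz} run opposite to how you use them. Each entry of the chain $v_1,v_2,v_3,v_4,z_4,z_3,z_2,z_1$ is, up to a nonzero scalar, the image of the entry to its right under an element of $N$; for example $z_j=-(x_1\de_j)z_1$ and $v_j=-\frac12 b_{jj}z_j$. So the leading $\phat$-singular component is the \emph{end} of the chain, not its source. Hence $w$ is determined by $z_1$, but fixing the leading vector does not fix $z_1$ unless the weight space containing $z_1$ is one-dimensional. That is exactly how the paper argues in case (a), where the weight $(-1,0,0)$ space of $W_1(1,0,0)$ is spanned by $\de_1$. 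It fails in general: in case (b), $z_1$ lies in the zero-weight space of $W_t(0,0,0)\cong\tilde F_t(0,0,1)$, which is $4$-dimensional, and $z_1$ is pinned down only by $Ew=0$ and $(x_4C)w=0$. So ``everything else is generated from the leading component'' does not prove uniqueness in cases (b), (c), (e). For (e) the paper's proposition does not assert uniqueness either, so citing it is not enough.

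The repair uses only tools you already have. Suppose $w,w'$ are singular of the same weight in the same module and fall in the same case. Their leading components are proportional, by uniqueness of the $\phat$-singular vector of a given weight. So a suitable $w-\lambda w'$ vanishes in that slot and in all slots to its left. It is therefore $0$ or a singular vector of the same weight in a strictly later case of Lemma \ref{lemmagrado1}. The weight constraints rule this out:
\begin{itemize}
\item From case (c), of weight $(0,1,0)$, no later case admits that weight.
\item From case (e), of weight $(0,0,c-1)$ in $M_t(0,0,c)$, the only later case (f) would need $c=1$ and a non-dominant weight $(-1,0,0)$ for $z_1$.
\item From case (b), the only candidate is case (f) with $z_1$ the weight-$(0,0,0)$ singular vector of $W_t(0,0,0)$, $t\neq0$. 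Proposition \ref{typef} excludes this since $a_{14}z_1\neq0$.
\end{itemize}
With this substitution your argument is complete.
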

Morphisms induced by singular vectors of degree 1 between $E(4,4)$-Verma modules $M_t(a,b,c)$, are described in Figure \ref{degree1}. In this and all other figures, only modules corresponding to integer values of $t$ are drawn, and duality $M(W)\mapsto M(W^*)$ is given by the symmetry with respect to the origin. Note also that in Figure \ref{degree1} Verma modules $M_t(1,0,0)$ induced by the standard modules appear twice for graphical convenience.

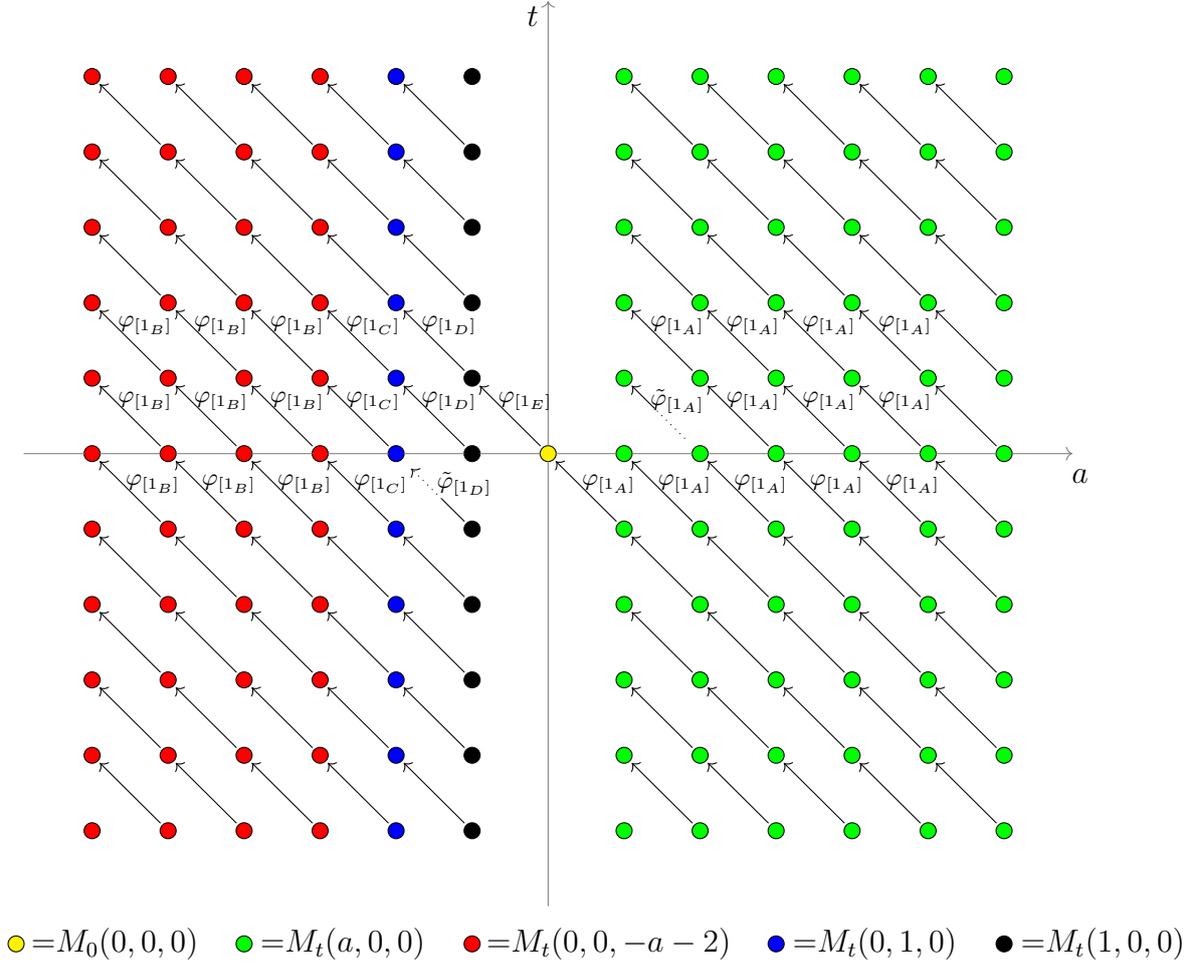
\begin{figure}[h]	
	\begin{center}
		\scalebox{1}{	$$
			\begin{tikzpicture} 
				\draw[fill=green]{(1,0) circle(3pt)}; 
				\draw[fill=green]{(2,0) circle(3pt)};
				\draw[fill=green]{(3,0) circle(3pt)};
				\draw[fill=green]{(4,0) circle(3pt)};
				\draw[fill=green]{(5,0) circle(3pt)};
				\draw[fill=green]{(6,0) circle(3pt)};
				\draw[fill=green]{(1,1) circle(3pt)};
				\draw[fill=green]{(2,1) circle(3pt)};
				\draw[fill=green]{(3,1) circle(3pt)};
				\draw[fill=green]{(4,1) circle(3pt)};
				\draw[fill=green]{(5,1) circle(3pt)};
				\draw[fill=green]{(6,1) circle(3pt)};
				\draw[fill=green]{(1,2) circle(3pt)};
				\draw[fill=green]{(2,2) circle(3pt)};
				\draw[fill=green]{(3,2) circle(3pt)};
				\draw[fill=green]{(4,2) circle(3pt)};
				\draw[fill=green]{(5,2) circle(3pt)};
				\draw[fill=green]{(6,2) circle(3pt)};
				\draw[fill=green]{(1,3) circle(3pt)};
				\draw[fill=green]{(2,3) circle(3pt)};
				\draw[fill=green]{(3,3) circle(3pt)};
				\draw[fill=green]{(4,3) circle(3pt)};
				\draw[fill=green]{(5,3) circle(3pt)};
				\draw[fill=green]{(6,3) circle(3pt)};
				\draw[fill=green]{(1,4) circle(3pt)};
				\draw[fill=green]{(2,4) circle(3pt)};
				\draw[fill=green]{(3,4) circle(3pt)};
				\draw[fill=green]{(4,4) circle(3pt)};
				\draw[fill=green]{(5,4) circle(3pt)};
				\draw[fill=green]{(6,4) circle(3pt)};
				\draw[fill=green]{(1,5) circle(3pt)};
				\draw[fill=green]{(2,5) circle(3pt)};
				\draw[fill=green]{(3,5) circle(3pt)};
				\draw[fill=green]{(4,5) circle(3pt)};
				\draw[fill=green]{(5,5) circle(3pt)};
				\draw[fill=green]{(6,5) circle(3pt)};
				
				\draw[fill=green]{(1,-1) circle(3pt)};
				\draw[fill=green]{(2,-1) circle(3pt)};
				\draw[fill=green]{(3,-1) circle(3pt)};
				\draw[fill=green]{(4,-1) circle(3pt)};
				\draw[fill=green]{(5,-1) circle(3pt)};
				\draw[fill=green]{(6,-1) circle(3pt)};
				\draw[fill=green]{(1,-2) circle(3pt)};
				\draw[fill=green]{(2,-2) circle(3pt)};
				\draw[fill=green]{(3,-2) circle(3pt)};
				\draw[fill=green]{(4,-2) circle(3pt)};
				\draw[fill=green]{(5,-2) circle(3pt)};
				\draw[fill=green]{(6,-2) circle(3pt)};
				\draw[fill=green]{(1,-3) circle(3pt)};
				\draw[fill=green]{(2,-3) circle(3pt)};
				\draw[fill=green]{(3,-3) circle(3pt)};
				\draw[fill=green]{(4,-3) circle(3pt)};
				\draw[fill=green]{(5,-3) circle(3pt)};
				\draw[fill=green]{(6,-3) circle(3pt)};
				\draw[fill=green]{(1,-4) circle(3pt)};
				\draw[fill=green]{(2,-4) circle(3pt)};
				\draw[fill=green]{(3,-4) circle(3pt)};
				\draw[fill=green]{(4,-4) circle(3pt)};
				\draw[fill=green]{(5,-4) circle(3pt)};
				\draw[fill=green]{(6,-4) circle(3pt)};
				\draw[fill=green]{(1,-5) circle(3pt)};
				\draw[fill=green]{(2,-5) circle(3pt)};
				\draw[fill=green]{(3,-5) circle(3pt)};
				\draw[fill=green]{(4,-5) circle(3pt)};
				\draw[fill=green]{(5,-5) circle(3pt)};
				\draw[fill=green]{(6,-5) circle(3pt)};
				\draw[fill=yellow]{(0,0) circle(3pt)};
				
				\draw[fill=black]{(-1,5) circle(3pt)};
				\draw[fill=black]{(-1,4) circle(3pt)};
				\draw[fill=black]{(-1,3) circle(3pt)};
				\draw[fill=black]{(-1,2) circle(3pt)};
				\draw[fill=black]{(-1,1) circle(3pt)};
				\draw[fill=black]{(-1,0) circle(3pt)};
				\draw[fill=black]{(-1,-1) circle(3pt)};
				\draw[fill=black]{(-1,-2) circle(3pt)};
				\draw[fill=black]{(-1,-3) circle(3pt)};
				\draw[fill=black]{(-1,-4) circle(3pt)};
				\draw[fill=black]{(-1,-5) circle(3pt)};
				
				\draw[fill=blue]{(-2,5) circle(3pt)};
				\draw[fill=blue]{(-2,4) circle(3pt)};
				\draw[fill=blue]{(-2,3) circle(3pt)};
				\draw[fill=blue]{(-2,2) circle(3pt)};
				\draw[fill=blue]{(-2,1) circle(3pt)};
				\draw[fill=blue]{(-2,0) circle(3pt)};
				\draw[fill=blue]{(-2,-1) circle(3pt)};
				\draw[fill=blue]{(-2,-2) circle(3pt)};
				\draw[fill=blue]{(-2,-3) circle(3pt)};
				\draw[fill=blue]{(-2,-4) circle(3pt)};
				\draw[fill=blue]{(-2,-5) circle(3pt)};
				
				\draw[fill=red]{(-6,5) circle(3pt)};
				\draw[fill=red]{(-5,5) circle(3pt)};
				\draw[fill=red]{(-4,5) circle(3pt)};
				\draw[fill=red]{(-3,5) circle(3pt)};
				\draw[fill=red]{(-6,4) circle(3pt)};
				\draw[fill=red]{(-5,4) circle(3pt)};
				\draw[fill=red]{(-4,4) circle(3pt)};
				\draw[fill=red]{(-3,4) circle(3pt)};
				\draw[fill=red]{(-6,3) circle(3pt)};
				\draw[fill=red]{(-5,3) circle(3pt)};
				\draw[fill=red]{(-4,3) circle(3pt)};
				\draw[fill=red]{(-3,3) circle(3pt)};
				\draw[fill=red]{(-6,2) circle(3pt)};
				\draw[fill=red]{(-5,2) circle(3pt)};
				\draw[fill=red]{(-4,2) circle(3pt)};
				\draw[fill=red]{(-3,2) circle(3pt)};
				\draw[fill=red]{(-6,1) circle(3pt)};
				\draw[fill=red]{(-5,1) circle(3pt)};
				\draw[fill=red]{(-4,1) circle(3pt)};
				\draw[fill=red]{(-3,1) circle(3pt)};
				\draw[fill=red]{(-6,0) circle(3pt)};
				\draw[fill=red]{(-5,0) circle(3pt)};
				\draw[fill=red]{(-4,0) circle(3pt)};
				\draw[fill=red]{(-3,0) circle(3pt)};
				\draw[fill=red]{(-6,-1) circle(3pt)};
				\draw[fill=red]{(-5,-1) circle(3pt)};
				\draw[fill=red]{(-4,-1) circle(3pt)};
				\draw[fill=red]{(-3,-1) circle(3pt)};
				\draw[fill=red]{(-6,-2) circle(3pt)};
				\draw[fill=red]{(-5,-2) circle(3pt)};
				\draw[fill=red]{(-4,-2) circle(3pt)};
				\draw[fill=red]{(-3,-2) circle(3pt)};
				\draw[fill=red]{(-6,-3) circle(3pt)};
				\draw[fill=red]{(-5,-3) circle(3pt)};
				\draw[fill=red]{(-4,-3) circle(3pt)};
				\draw[fill=red]{(-3,-3) circle(3pt)};
				\draw[fill=red]{(-6,-4) circle(3pt)};
				\draw[fill=red]{(-5,-4) circle(3pt)};
				\draw[fill=red]{(-4,-4) circle(3pt)};
				\draw[fill=red]{(-3,-4) circle(3pt)};
				\draw[fill=red]{(-6,-5) circle(3pt)};
				\draw[fill=red]{(-5,-5) circle(3pt)};
				\draw[fill=red]{(-4,-5) circle(3pt)};
				\draw[fill=red]{(-3,-5) circle(3pt)};

				%assi
				\draw[gray](-6.9,0)--(-6.1,0);
				\draw[gray](-5.9,0)--(-5.1,0);
				\draw[gray](-4.9,0)--(-4.1,0);
				\draw[gray](-3.9,0)--(-3.1,0);
				\draw[gray](-2.9,0)--(-2.1,0);
				\draw[gray](-1.9,0)--(-1.1,0);
				\draw[gray](-0.9,0)--(-0.1,0);
				\draw[gray](0.1,0)--(0.9,0);
				\draw[gray](1.1,0)--(1.9,0);
				\draw[gray](2.1,0)--(2.9,0);
				\draw[gray](3.1,0)--(3.9,0);
				\draw[gray](4.1,0)--(4.9,0);
				\draw[gray](5.1,0)--(5.9,0);
				\draw[->,gray](6.1,0)--(6.9,0);
				\draw[gray](0,-6)--(0,-0.1);
				\draw[->,gray](0,0.1)--(0,6);

				%quadrante 1
				\draw[->,black](1.4,0.6)--(1.1,0.9);
				\draw[black, dotted](1.8,0.2)--(1.4,0.6);
				\draw[->,black](2.9,0.1)--(2.1,0.9);
				\draw[->,black](3.9,0.1)--(3.1,0.9);
				\draw[->,black](4.9,0.1)--(4.1,0.9);
				\draw[->,black](5.9,0.1)--(5.1,0.9);
				\draw[->,black](1.9,1.1)--(1.1,1.9);
				\draw[->,black](2.9,1.1)--(2.1,1.9);
				\draw[->,black](3.9,1.1)--(3.1,1.9);
				\draw[->,black](4.9,1.1)--(4.1,1.9);
				\draw[->,black](5.9,1.1)--(5.1,1.9);
				\draw[->,black](1.9,2.1)--(1.1,2.9);
				\draw[->,black](2.9,2.1)--(2.1,2.9);
				\draw[->,black](3.9,2.1)--(3.1,2.9);
				\draw[->,black](4.9,2.1)--(4.1,2.9);
				\draw[->,black](5.9,2.1)--(5.1,2.9);
				\draw[->,black](1.9,3.1)--(1.1,3.9);
				\draw[->,black](2.9,3.1)--(2.1,3.9);
				\draw[->,black](3.9,3.1)--(3.1,3.9);
				\draw[->,black](4.9,3.1)--(4.1,3.9);
				\draw[->,black](5.9,3.1)--(5.1,3.9);
				\draw[->,black](1.9,4.1)--(1.1,4.9);
				\draw[->,black](2.9,4.1)--(2.1,4.9);
				\draw[->,black](3.9,4.1)--(3.1,4.9);
				\draw[->,black](4.9,4.1)--(4.1,4.9);
				\draw[->,black](5.9,4.1)--(5.1,4.9);
				%quadrante 4
				\draw[->,black](1.9,-0.9)--(1.1,-0.1);
				\draw[->,black](2.9,-0.9)--(2.1,-0.1);
				\draw[->,black](3.9,-0.9)--(3.1,-0.1);
				\draw[->,black](4.9,-0.9)--(4.1,-0.1);
				\draw[->,black](5.9,-0.9)--(5.1,-0.1);
				\draw[->,black](1.9,-1.9)--(1.1,-1.1);
				\draw[->,black](2.9,-1.9)--(2.1,-1.1);
				\draw[->,black](3.9,-1.9)--(3.1,-1.1);
				\draw[->,black](4.9,-1.9)--(4.1,-1.1);
				\draw[->,black](5.9,-1.9)--(5.1,-1.1);
				\draw[->,black](1.9,-2.9)--(1.1,-2.1);
				\draw[->,black](2.9,-2.9)--(2.1,-2.1);
				\draw[->,black](3.9,-2.9)--(3.1,-2.1);
				\draw[->,black](4.9,-2.9)--(4.1,-2.1);
				\draw[->,black](5.9,-2.9)--(5.1,-2.1);
				\draw[->,black](1.9,-3.9)--(1.1,-3.1);
				\draw[->,black](2.9,-3.9)--(2.1,-3.1);
				\draw[->,black](3.9,-3.9)--(3.1,-3.1);
				\draw[->,black](4.9,-3.9)--(4.1,-3.1);
				\draw[->,black](5.9,-3.9)--(5.1,-3.1);
				\draw[->,black](1.9,-4.9)--(1.1,-4.1);
				\draw[->,black](2.9,-4.9)--(2.1,-4.1);
				\draw[->,black](3.9,-4.9)--(3.1,-4.1);
				\draw[->,black](4.9,-4.9)--(4.1,-4.1);
				\draw[->,black](5.9,-4.9)--(5.1,-4.1);
				
				%quadrante 3
				\draw[black](-1.1,-0.9)--(-1.4,-0.6);
				\draw[->,dotted,black](-1.4,-0.6)--(-1.8,-0.2);
				\draw[->,black](-2.1,-0.9)--(-2.9,-0.1);
				\draw[->,black](-3.1,-0.9)--(-3.9,-0.1);
				\draw[->,black](-4.1,-0.9)--(-4.9,-0.1);
				\draw[->,black](-5.1,-0.9)--(-5.9,-0.1);
				\draw[->,black](-1.1,-1.9)--(-1.9,-1.1);
				\draw[->,black](-2.1,-1.9)--(-2.9,-1.1);
				\draw[->,black](-3.1,-1.9)--(-3.9,-1.1);
				\draw[->,black](-4.1,-1.9)--(-4.9,-1.1);
				\draw[->,black](-5.1,-1.9)--(-5.9,-1.1);
				\draw[->,black](-1.1,-2.9)--(-1.9,-2.1);
				\draw[->,black](-2.1,-2.9)--(-2.9,-2.1);
				\draw[->,black](-3.1,-2.9)--(-3.9,-2.1);
				\draw[->,black](-4.1,-2.9)--(-4.9,-2.1);
				\draw[->,black](-5.1,-2.9)--(-5.9,-2.1);
				\draw[->,black](-1.1,-3.9)--(-1.9,-3.1);
				\draw[->,black](-2.1,-3.9)--(-2.9,-3.1);
				\draw[->,black](-3.1,-3.9)--(-3.9,-3.1);
				\draw[->,black](-4.1,-3.9)--(-4.9,-3.1);
				\draw[->,black](-5.1,-3.9)--(-5.9,-3.1);
				\draw[->,black](-1.1,-4.9)--(-1.9,-4.1);
				\draw[->,black](-2.1,-4.9)--(-2.9,-4.1);
				\draw[->,black](-3.1,-4.9)--(-3.9,-4.1);
				\draw[->,black](-4.1,-4.9)--(-4.9,-4.1);
				\draw[->,black](-5.1,-4.9)--(-5.9,-4.1);
				%quadrante 2
				\draw[->,black](-1.1,0.1)--(-1.9,0.9);
				\draw[->,black](-2.1,0.1)--(-2.9,0.9);
				\draw[->,black](-3.1,0.1)--(-3.9,0.9);
				\draw[->,black](-4.1,0.1)--(-4.9,0.9);
				\draw[->,black](-5.1,0.1)--(-5.9,0.9);
				\draw[->,black](-1.1,1.1)--(-1.9,1.9);
				\draw[->,black](-2.1,1.1)--(-2.9,1.9);
				\draw[->,black](-3.1,1.1)--(-3.9,1.9);
				\draw[->,black](-4.1,1.1)--(-4.9,1.9);
				\draw[->,black](-5.1,1.1)--(-5.9,1.9);
				\draw[->,black](-1.1,2.1)--(-1.9,2.9);
				\draw[->,black](-2.1,2.1)--(-2.9,2.9);
				\draw[->,black](-3.1,2.1)--(-3.9,2.9);
				\draw[->,black](-4.1,2.1)--(-4.9,2.9);
				\draw[->,black](-5.1,2.1)--(-5.9,2.9);
				\draw[->,black](-1.1,3.1)--(-1.9,3.9);
				\draw[->,black](-2.1,3.1)--(-2.9,3.9);
				\draw[->,black](-3.1,3.1)--(-3.9,3.9);
				\draw[->,black](-4.1,3.1)--(-4.9,3.9);
				\draw[->,black](-5.1,3.1)--(-5.9,3.9);
				\draw[->,black](-1.1,4.1)--(-1.9,4.9);
				\draw[->,black](-2.1,4.1)--(-2.9,4.9);
				\draw[->,black](-3.1,4.1)--(-3.9,4.9);
				\draw[->,black](-4.1,4.1)--(-4.9,4.9);
				\draw[->,black](-5.1,4.1)--(-5.9,4.9);
				
				\draw[->,black](0.9,-0.9)--(0.1,-0.1);
				\draw[->,black](-0.1,0.1)--(-0.9,0.9);
				
				\draw[fill=yellow]{(-7,-6.5) circle(3pt)};
				\node at (-5.7,-6.5){=$M_0(0,0,0)$};
				\draw[fill=green]{(-4,-6.5) circle(3pt)};
				\node at (-2.7,-6.5){=$M_t(a,0,0)$};
				\draw[fill=red]{(-1,-6.5) circle(3pt)};
				\node at (0.8,-6.5){=$M_t(0,0,-a-2)$};
				\draw[fill=blue]{(3,-6.5) circle(3pt)};
				\node at (4.3,-6.5){=$M_t(0,1,0)$};
				\draw[fill=black]{(6,-6.5) circle(3pt)};
				\node at (7.3,-6.5){=$M_t(1,0,0)$};
				
				\node at (7,-0.3){$a$};
				\node at (-0.2,5.8){$t$};

				\node at (0.8,-0.4){\scriptsize$\varphi_{[1_A]}$};
				\node at (-2.2,-0.4){\scriptsize$\varphi_{{[1_{C}]}}$};
				\node at (-3.2,-0.4){\scriptsize$\varphi_{[1_B]}$};
				\node at (-4.2,-0.4){\scriptsize$\varphi_{[1_B]}$};
				\node at (-5.2,-0.4){\scriptsize$\varphi_{[1_B]}$};
					\node at (1.8,-0.4){\scriptsize$\varphi_{[1_A]}$};
				\node at (2.8,-0.4){\scriptsize$\varphi_{[1_A]}$};
				\node at (3.8,-0.4){\scriptsize$\varphi_{[1_A]}$};
				\node at (4.8,-0.4){\scriptsize$\varphi_{[1_A]}$};
				
				\node at (-0.3,0.7){\scriptsize$\varphi_{[1_E]}$};
				\node at (-1.3,0.7){\scriptsize$\varphi_{[1_D]}$};
				\node at (-2.3,0.7){\scriptsize$\varphi_{[1_C]}$};
				\node at (-3.3,0.7){\scriptsize$\varphi_{[1_B]}$};
				\node at (-4.3,0.7){\scriptsize$\varphi_{[1_B]}$};
				\node at (-5.3,0.7){\scriptsize$\varphi_{[1_B]}$};
				\node at (1.7,0.7){\scriptsize$\tilde \varphi_{[1_A]}$};
				\node at (2.7,0.7){\scriptsize$\varphi_{[1_A]}$};
				\node at (3.7,0.7){\scriptsize$\varphi_{[1_A]}$};
				\node at (4.7,0.7){\scriptsize$\varphi_{[1_A]}$};

				\node at (-1.3,1.7){\scriptsize$\varphi_{[1_D]}$};
				\node at (-1.1,-0.4){\scriptsize$\tilde \varphi_{[1_D]}$};
				\node at (-2.3,1.7){\scriptsize$\varphi_{{[1_{C}]}}$};
				\node at (-3.3,1.7){\scriptsize$\varphi_{[1_B]}$};
				\node at (-4.3,1.7){\scriptsize$\varphi_{[1_B]}$};
				\node at (-5.3,1.7){\scriptsize$\varphi_{[1_B]}$};
				\node at (1.7,1.7){\scriptsize$\varphi_{[1_A]}$};
				\node at (2.7,1.7){\scriptsize$\varphi_{[1_A]}$};
				\node at (3.7,1.7){\scriptsize$\varphi_{[1_A]}$};
				\node at (4.7,1.7){\scriptsize$\varphi_{[1_A]}$};
				
					\end{tikzpicture}$$}
				
			\end{center}
		
\caption{Nonzero morphisms of degree 1 between Verma modules.} \label{degree1}
\end{figure}

\begin{corollary}
	We have the following list of non-zero morphisms, up to scalar factors:
	\begin{itemize}
		\item $\varphi_{[1_A]}: M_{t-1}(a+1,0,0)\rightarrow M_t(a,0,0)$ with $a=t=0$, or $a=1$ and $t\neq 1$, or $a\geq 2$ and all $t$;
		\item $\tilde \varphi_{[1_A]}:\tilde M_0(2,0,0)\rightarrow M_1(1,0,0)$;
		\item $\varphi_{[1_B]}: M_{t-1}(0,0,c-1)\rightarrow M_t(0,0,c)$ for all $c\geq 1$ and $(c,t)\neq (1,1)$;
		\item $\varphi_{[1_C]}:M_{t-1}(0,1,0)\rightarrow M_t(0,0,1)$ for all $t$.
		\item $\varphi_{[1_D]}:M_{t-1}(1,0,0)\rightarrow M_t(0,0,0)$ for all $t\neq 0$;
		\item $\tilde \varphi_{[1_D]}: M_{-1}(1,0,0)\rightarrow \tilde M_0(0,1,0)$;
		\item $\varphi_{[1_E]}: M_0(0,0,0)\rightarrow M_1(1,0,0)$
	\end{itemize}
\end{corollary}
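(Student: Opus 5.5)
The plan is to derive each item of the Corollary from Theorem \ref{listagrado1}. The basic principle is that a singular vector $w$ of weight $\lambda$ and central charge $s$ in $M_t(a,b,c)$ determines a morphism $\tilde M_s(\lambda)\to M_t(a,b,c)$ out of the Verma module induced by the Kac module $K_s(\lambda)$. This is just Frobenius reciprocity applied to the $\phat$-morphism $K_s(\lambda)\to M_t(a,b,c)$, $v_\lambda\mapsto w$. The question is then whether this morphism factors through $M_s(\lambda)$, i.e. whether the generators of the kernel of $K_s(\lambda)\to W_s(\lambda)$ are killed. Proposition \ref{quozienti} describes these kernels explicitly for $t\neq0$, and Remark \ref{K0(000)} together with \cite{S} handles $t=0$. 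In each case the central charge of the source is $t-1$, because $w$ has degree 1 and $C$ acts as a grading element. Each $\varphi_{[1_X]}$ is the map so obtained. The items marked with a tilde record exactly the cases where the factorization fails, and for these only the Kac-induced morphism survives.

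I will go through the five families in turn.

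\textbf{Type $1_A$.} This is exactly Corollary \ref{morphtypea}. In the exceptional case $a=t=1$ the computation there shows that $a_{23}a_{24}a_{34}d_1\otimes d_1\neq0$. Hence the map is defined only on $\tilde M_0(2,0,0)$, which gives $\tilde\varphi_{[1_A]}$.

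\textbf{Type $1_B$.} Its factorization for $(c,t)\neq(1,1)$ was obtained by duality from $1_A$ in the proof of case (e), using $M_s(0,0,0)\cong M_s(0,1,0)$ for $s\neq0$. For $(c,t)=(1,1)$ it was shown that $a_{12}w\neq0$, so the map does not factor. Since $\tilde M_0(0,0,0)$ is not listed, I also need that the resulting map is not a new morphism between modules $M_s$. I will observe this from the weights: the image of the source through $W_0(0,1,0)$ already appears as $\varphi_{[1_C]}$ at $t=1$.

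\textbf{Type $1_C$.} The existence of $\varphi_{[1_C]}$ for all $t$ was obtained in the proof of case (c) by dualizing $\varphi_{[1_A]}:M_{-t}(3,0,0)\to M_{1-t}(2,0,0)$. Since $W_s(3,0,0)^*=W_{-s}(0,0,1)$ and $W_s(2,0,0)^*=W_{-s}(0,1,0)$ by Corollary \ref{duali}, this gives $M_{t-1}(0,1,0)\to M_t(0,0,1)$ with nonzero image.

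\textbf{Type $1_D$.} I will apply Theorem \ref{duality} to the $1_A$ map with $a=1$, $t\neq1$, which yields $M_{-t}(1,0,0)\to M_{1-t}(0,1,0)\cong M_{1-t}(0,0,0)$. Reindexing gives $\varphi_{[1_D]}$ for $t\neq0$. The tilded map $\tilde\varphi_{[1_D]}$ is the dual of $\tilde\varphi_{[1_A]}$, since $K_0(2,0,0)^*=K_0(0,0,2)$; it corresponds to the singular vector in $\tilde M_0(0,1,0)$ coming from this duality.

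\textbf{Type $1_E$.} Here $w[1_E]$ has weight $(0,0,0)$ and charge $0$. The Kac module $K_0(0,0,0)$ has maximal submodule $M_1$, generated by $a_{12}$ (Remark \ref{K0(000)}). So the map factors through $M_0(0,0,0)$ if and only if $a_{12}w[1_E]=0$ in $M_1(1,0,0)$. I expect this to follow from a short computation, because the map is the dual of the $1_A$ morphism $M_0(1,0,0)\to M_1(0,0,0)$ under Corollary \ref{duali}, namely $W_1(1,0,0)^*=W_{-1}(1,0,0)$.

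Finally I will argue that the list is complete and every listed map is nonzero. Nonvanishing holds because $w$ is nonzero and Verma modules are free over $\mathcal U(E(4,4)_{<0})$. Completeness holds because every degree-1 morphism between Verma modules comes from one of the singular vectors of Theorem \ref{listagrado1}.

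The main obstacle is the factorization checks in the two boundary cases, $1_E$ and the tilded $1_D$. There the Kac modules at $t=0$ have non-simple radicals, so one must confirm by explicit computation in $\mathcal U(E(4,4)_{<0})\otimes W$ whether the radical generators act as zero. My first attempt will be to reduce these checks to the already computed $1_A$ cases via Theorem \ref{duality}.
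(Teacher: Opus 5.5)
Your overall plan is the paper's own. The corollary simply collects Corollary \ref{morphtypea} and the propositions on cases (b), (c), (e) of Lemma \ref{lemmagrado1}. There $\varphi_{[1_B]}$, $\varphi_{[1_C]}$ and $\varphi_{[1_D]}$ are obtained by dualizing $\varphi_{[1_A]}$ exactly as you describe, and the boundary cases are settled by the explicit computations you quote. Two of your steps, however, do not work as written.

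\textbf{The dual you use for $\varphi_{[1_E]}$ is the wrong map.} There is no $1_A$ morphism $M_0(1,0,0)\to M_1(0,0,0)$, because for $a=0$ Corollary \ref{morphtypea} forces $t=0$. The morphism $M_0(1,0,0)\to M_1(0,0,0)$ that does exist is $\varphi_{[1_D]}$ at $t=1$, and its dual is $\varphi_{[1_A]}:M_{-1}(2,0,0)\to M_0(1,0,0)$, not $\varphi_{[1_E]}$.

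The right input is the case $a=t=0$, namely $\varphi_{[1_A]}:M_{-1}(1,0,0)\to M_0(0,0,0)$. Since $W_{-1}(1,0,0)^*\cong W_1(1,0,0)$ and $W_0(0,0,0)$ is trivial, Theorem \ref{duality} turns it into a nonzero morphism $M_0(0,0,0)\to M_1(1,0,0)$. The image of $1$ is a singular vector of degree $1$ and weight $(0,0,0)$, so it is $w[1_E]$ by Proposition \ref{w8}. This is the pairing the paper records in the remark listing the duals of $\varphi_{[1_A]}$.

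Your other route, the direct check, also works: $a_{12}w[1_E]=0$ in $M_1(1,0,0)$.
\begin{itemize}
\item The terms from $[a_{12},\de_1]=-d_2$ and $[a_{12},\de_2]=d_1$ cancel against those from $a_{12}\cdot\de_i=-[a_{12},\de_i]$ in $W_1(1,0,0)$.
\item The terms from $[a_{12},d_3]$ and $[a_{12},d_4]$, which are multiples of $\de_4$ and $\de_3$, cancel pairwise.
\end{itemize}

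\textbf{Your justification of $\tilde\varphi_{[1_D]}$ contradicts itself.} Because $K_0(2,0,0)^*\cong K_0(0,0,2)$, applying Theorem \ref{duality} to $\tilde\varphi_{[1_A]}$ gives a morphism $M_{-1}(1,0,0)\to\tilde M_0(0,0,2)$. It does not give a morphism into the module induced by $K_0(0,1,0)$. So the sentence ``it corresponds to the singular vector in $\tilde M_0(0,1,0)$'' has nothing behind it. The paper is terse here too: it only draws $\tilde\varphi_{[1_D]}$ as the mirror image of $\tilde\varphi_{[1_A]}$ in Figure \ref{degree1}. You therefore have to say what the target is and produce the singular vector.

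A consistent choice is the module induced by $K_0(0,0,0)/M_2$, in the notation of Remark \ref{K0(000)}.
\begin{itemize}
\item This module contains $M_1/M_2\cong W_0(0,1,0)$, the identification used in the remark after Theorem \ref{listagrado2}.
\item In it, the formula for $w[1_D]$ at $t=0$ makes sense and is singular. The verification in the proposition on case (b) of Lemma \ref{lemmagrado1} uses only $a_{12}a_{13}a_{14}=0$ and $a_{12}a_{34}+a_{34}a_{12}=2C$, and both hold there.
\item To have $M_{-1}(1,0,0)$ as source you must still check $a_{12}w[1_D]=0$ in this module.
\item The $t=0$ part of that same proposition explains why the target cannot be $M_0(0,1,0)$ itself.
\end{itemize}
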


Morphisms $\tilde \varphi_{[1_A]}$ and $\tilde \varphi_{[1_D]}$ appear in Figure \ref{degree1} as dotted arrows.

\begin{remark}
	For $c=1$ and $t\neq 1$ we have, up to scalar factors: 
	$\varphi_{[1_B]}=\varphi_{[1_C]}$. Indeed, by Remark \ref{a12}, $W_{t-1}(0,0,0)\cong W_{t-1}(0,1,0)$, and  $\varphi_{[1_B]}(a_{12})=a_{12}\varphi_{[1_B]}(1)=a_{12}w[1_B]=w[1_C]$.
\end{remark}
\newpage
\begin{remark}
	We notice that the morphism $\varphi_{[1_A]}:M_{t-1}(a+1,0,0)\rightarrow M_t(a,0,0)$ is the dual to
	\begin{itemize}
		\item $\varphi_{[1_B]}:M_{-t}(0,0,a-1)\rightarrow M_{-t+1}(0,0,a)$ for $a\geq 3$;
		\item $\varphi_{[1_C]}:M_{-t}(0,1,0)\rightarrow M_{-t+1}(0,0,1)$ for $a=2$;
		\item $\varphi_{[1_D]}:M_{-t}(1,0,0)\rightarrow M_{-t+1}(0,0,0)$ for $a=1$ and $t\neq 1$;
		\item $\varphi_{[1_E]}:M_0(0,0,0)\rightarrow M_1(1,0,0)$ for $a=t=0$.
	\end{itemize}
\end{remark}

It is natural to look for an explicit expression of these morphisms. In these expression we prefer to consider the singular vector generating $K_t(\lambda)$ to be even, so that all morphisms $\varphi_{[1_A]}, \dots,\varphi_{[1_E]}$ are odd.
\color{black}
An odd linear map $\varphi: M_{t-1}(\lambda)\rightarrow M_{t}(\sigma)$ of degree one can always be associated to an odd element $\Phi\in L_{-1}\otimes \Hom(W_{t-1}(\lambda),W_{t}(\sigma))$ as follows: for $u\in {\mathcal U}(L_{-1})$ and $v\in W_{t-1}(\lambda)$ we let
$$\varphi(u\otimes v)=(-1)^{p(u)}u\varphi(v)$$
where, if $\Phi=\sum_id_i\otimes \theta_i+
\sum_i\de_i\otimes \psi_i$ with
$\theta_i, \psi_i\in\Hom(W_{t-1}(\lambda),W_{t}(\sigma))$, we let $\varphi(v)=\sum_id_i\otimes \theta_i(v)+
\sum_i\de_i\otimes \psi_i(v)$. Here $p(u)$ denotes the parity of the element $u$.
%We will say that $\varphi$ (or $\Phi$) is a morphism of degree $d$ if $u_i\in (U_-)_d$ for every $i$.

\medskip

One can easily check the following characterization of morphisms between Verma modules.

\begin{proposition} (Cf.\ \cite[Proposition 2.1]{KR4}) \label{morphisms}
Let $\varphi: M_{t-1}(\lambda)\rightarrow M_{t}(\sigma)$ be the linear map associated with the element $\Phi\in L_{-1}\otimes \Hom(W_{t-1}(\lambda),W_{t}(\sigma))$.
Then $\varphi$ is a morphism of $L$-modules if and only if the following conditions
hold:
\begin{itemize}
\item[(a)] $L_0.\Phi=0$;
\item[(b)] $x\varphi(v)=0$ for every $x\in L_1$ and for every $v\in W_{t-1}(\lambda)$.
\end{itemize} 
\end{proposition}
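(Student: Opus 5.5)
The statement is the analogue of \cite[Proposition 2.1]{KR4}. The plan is to reduce the module-morphism property to conditions on the generators $\varphi(v)$, $v\in W_{t-1}(\lambda)$, using the universal property of induction. By construction $\varphi$ is the unique $\mathcal U(L_{-1})$-linear extension (with the sign $(-1)^{p(u)}$ making it a left module map for an odd map) of its restriction $v\mapsto\varphi(v)$ to $W_{t-1}(\lambda)\subset M_{t-1}(\lambda)$. Since $M_{t-1}(\lambda)=\Ind_{L_{\geq 0}}^L W_{t-1}(\lambda)$, an $L$-morphism out of it is the same as an $L_{\geq 0}$-morphism from $W_{t-1}(\lambda)$ into $M_t(\sigma)$, with $L_{>0}$ acting trivially on the source. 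Such a restriction extends to an $L$-morphism, and the extension agrees with $\varphi$ by uniqueness on $\mathcal U(L)\otimes_{\mathcal U(L_{\geq0})}W=\mathcal U(L_{-1})\otimes W$. Hence $\varphi$ is a morphism if and only if two things hold: (i) $v\mapsto \varphi(v)$ is $L_0$-equivariant, and (ii) $x\varphi(v)=0$ for all $x\in L_j$, $j>0$.

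For (i), identify $L_{-1}\otimes\Hom(W_{t-1}(\lambda),W_t(\sigma))$ with the space of linear maps $W_{t-1}(\lambda)\to L_{-1}\otimes W_t(\sigma)=M_t(\sigma)_{-1}$ via $v\mapsto\varphi(v)$. This identification is $L_0$-equivariant, where $L_0=\phat$ acts on $L_{-1}$ by the adjoint action and on $\Hom$ by the usual super action. The action of $y\in L_0$ on $M_t(\sigma)_{-1}$ is $y(d_i\otimes\theta_i(v))=[y,d_i]\otimes\theta_i(v)+(-1)^{p(y)p(d_i)}d_i\otimes y\theta_i(v)$, which is exactly the tensor product action. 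Therefore $L_0$-equivariance of $v\mapsto\varphi(v)$ is the statement that $\Phi$ is an $L_0$-invariant, that is, $L_0.\Phi=0$, which is (a). The only care needed here is bookkeeping the signs coming from $\Phi$ being odd.

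For (ii), use that $L_{>0}$ is generated by $L_1$, as recalled in the proof of the lemma characterizing $E(4,4)$-singular vectors. If $x\varphi(v)=0$ for all $x\in L_1$, then inductively every bracket $[x,x']$ with $x,x'$ in the subalgebra generated so far also kills $\varphi(v)$, so all of $L_{>0}$ does. This gives (b). Conversely, (a) and (b) are clearly necessary, because a morphism commutes with $L_0$ and $L_1$ and these annihilate appropriately in the source.

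The main obstacle, though a mild one, is the sign convention in step (i). One must verify that the definition $\varphi(u\otimes v)=(-1)^{p(u)}u\varphi(v)$ is exactly what makes an odd $\mathcal U(L)$-module map well defined on $\mathcal U(L)\otimes_{\mathcal U(L_{\ge0})}W$. One must also check that it matches the super-Hom action defining $L_0.\Phi$. I would check this first on the generators $u=d_i$ and $u=\de_i$ before invoking the universal property.
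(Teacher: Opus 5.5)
Your proposal is correct. It is essentially the standard Frobenius-reciprocity argument that the paper leaves to the reader ("one can easily check", citing \cite[Proposition 2.1]{KR4}): $L_0$-equivariance of $v\mapsto\varphi(v)$ is precisely $L_0.\Phi=0$, and $L_{\geq 0}$-equivariance then reduces to $L_{>0}\varphi(v)=0$. One small simplification: you do not need that $L_{>0}$ is generated by $L_1$, because $\varphi(v)\in M_t(\sigma)_{-1}$ gives $L_j\varphi(v)\subset M_t(\sigma)_{j-1}=0$ for all $j\geq 2$ by degree alone.
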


In order to construct the maps $\theta_i$ and $\psi_i$ in our cases we proceed as follows. We  first define an even morphism of $E(4,4)_0=\phat$-modules
\[\rho: E(4,4)^*_{-1}\rightarrow \Hom(K_{t-1}(\lambda),K_{t}(\sigma))\]
 (where $E(4,4)_{-1}^*\cong W_1(1,0,0)$ is identified with the standard module with $t=1$ as in Example \ref{standard}).

In turn, to construct such map $\rho$ it is enough to determine a map 
\[
\rho:E(4,4)_{-1}^*\rightarrow \Hom(F_{t-1}(\lambda),K_{t}(\sigma))\]
which is a morphism of $\frak p^+$-modules, where we recall that $\mathfrak p^+=\phat_{-2}\oplus \phat_0\oplus \phat_1$, i.e.,  for every $b\in \frak p^+$ we have
\begin{equation}\label{check1}
	(-1)^{p(b)p(x)}(-(b.x)(v)+bx(v))=x(bv).
\end{equation}

The extension of $\rho(x)$ to $K_{t-1}(\lambda)$ is then uniquely determined as follows.
\color{black}
In order to simplify the notation we will identify an element $x$ of $E(4,4)_{-1}^*$ with its image $\rho(x)$ and for 
$x\in E(4,4)_{-1}^*$ and $v\in F_{t-1}(\lambda)$ we define inductively
\[x(a\otimes v)=(-1)^{p(a)p(x)}(-(a.x)(v)+
ax(v))\]
for all $a\in \phat$, and
for $k\geq 2$,
$a_1, \dots, a_k\in \phat$, we let
\[x(a_1\dots a_k\otimes v)=(-1)^{p(a_1)p(x)}(-(a_1.x)(a_2\dots a_k\otimes v)+
a_1x(a_2\dots a_k\otimes v).\]

Note that \eqref{check1} is equivalent to the fact that $\rho(x)$ is a balanced map, i.e. $x(ub\otimes v)=x(u\otimes bv)$ for all $u\in \mathcal U(\phat)$, $b\in \frak p^+$, $x\in E(4,4)_{-1}^*$ and $v\in F_{t-1}(\lambda)$. Note also that the recursive definition of $\rho(x)$ immediately implies that $\rho:E(4,4)_{-1}^*\rightarrow \Hom(K_{t-1}(\lambda),K_{t}(\sigma))$ is indeed $\phat$-equivariant.

We will then show in each case that $\rho(x)$ induces a map $W_{t-1}(\lambda)\rightarrow W_{t}(\sigma)$ that will be still denoted by $x$ and we will set $\theta_i=\rho(\de_i)$ and $\psi_i=\rho(d_i)$, so that condition (a) in Proposition \ref{morphisms} will be automatically satisfied.

In what follows we will denote by $\{x_1,\dots, x_4\}$ the standard basis of $\C^4$, by
$\{x_1^*,\dots, x_4^*\}$ the corresponding dual basis of $(\C^4)^*$ and by $\{x_i\wedge x_j\,|\,1\leq i<j\leq 4\}$ the standard basis of $\inlinewedge^2(\C^4)$. Also, as above, $d_i$ stands for $dx_i$, $\de_i$ for $\de/\de x_i$, and $a_{ij}$ for $x_idx_j-x_jdx_i$. We have $F_{t}(a,0,0)\cong S^a\C^4$, 
$F_{t}(0,0,c)\cong S^c\C^{4^*}$. In order to define a morphism $\varphi: M_{t-1}(\lambda)\rightarrow M_{t}(\sigma)$ it is sufficient to define its restriction to $F_{t-1}(\lambda)$.

 \begin{proposition}\label{explicitmorphisms} The following is an explicit description of the morphisms of degree 1 between Verma modules over the Lie superalgebra $E(4,4)$, restricted to the corresponding $F_{t-1}(\lambda)$:
 \begin{itemize}
 \item $\varphi_{[1_A]}: F_{t-1}(a,0,0)\rightarrow M_{t}(a-1,0,0)$ is given by
 \[\varphi_{[1_A]}(f)=\sum_id_i\otimes \de_i(f);\]
  \item $\varphi_{[1_B]}: F_{t-1}(0,0,c-1)\rightarrow M_t(0,0,c)$ is given by
   \[\varphi_{[1_B]}(f)=\sum_id_i\otimes x_i^*f;\]
    \item $\varphi_{[1_C]}: F_{t-1}(0,1,0)\rightarrow M_{t}(0,0,1)$ is given by 
    \[\varphi_{[1_C]}(x_h\wedge x_k)=\sum_{i}(d_i \otimes a_{hk}(x_i^*)+2\de_i\otimes x_i\wedge x_h\wedge x_k),\]
    where, as usual, $\inlinewedge ^3(\C^4)$ is identified with $\C^{4^*}$ by contraction with the standard volume form;
    \item $\varphi_{[1_D]}: F_{t-1}(1,0,0)\rightarrow M_{t}(0,0,0)$ is  given by
    \[\varphi_{[1_D]}(x_r)=\sum_i(d_i\otimes \theta_i(x_r)+
    4\de_i\otimes a_{ri}),\]
where   \[\theta_i(x_r)=\begin{cases}-\varepsilon_{ijhk}(a_{ij}a_{hk}+a_{ih}a_{kj}+a_{ik}a_{jh})+4+2t& \textrm{if }r=i,\\
    	2\varepsilon_{irhk}a_{rh}a_{rk} & \textrm{if }r\neq i 
    \end{cases}
    \]
    for any choice of $j,h,k$ such that the indices of $\varepsilon$ are distinct;
    %where $ijhk$ is an even permutation of $1234$;
   \item $\varphi_{[1_E]}: F_0(0,0,0)\rightarrow M_1(1,0,0)$ is given by 
\[\varphi_{[1_E]}(1)=\sum_i(d_i\otimes \de_i+\de_i\otimes d_i).\]
 \end{itemize}
  \end{proposition}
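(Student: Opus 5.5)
The plan is to use the recipe laid out just before the statement, one morphism at a time. For each of the five cases, the formula for $\varphi(f)$ defines, through Proposition \ref{morphisms}, an element $\Phi=\sum_i d_i\otimes\theta_i+\sum_i\de_i\otimes\psi_i$. Equivalently, it defines an assignment $\rho:E(4,4)_{-1}^*\to\Hom(F_{t-1}(\lambda),K_t(\sigma))$. We have to check three things:
\begin{itemize}
\item[(i)] $\rho$ is a morphism of $\mathfrak p^+$-modules, i.e. it satisfies \eqref{check1};
\item[(ii)] the extension of $\rho(x)$ to $K_{t-1}(\lambda)$ descends to a map $W_{t-1}(\lambda)\to W_t(\sigma)$;
\item[(iii)] the resulting $\varphi$ is nonzero and agrees, on the highest weight vector, with the singular vector of Theorem \ref{listagrado1}.
\end{itemize}
Once (i) and (ii) hold, condition (a) of Proposition \ref{morphisms} is automatic. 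By (iii), $\varphi$ is then the morphism induced by the singular vector, so condition (b) need not be checked separately.

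Step (iii) is a direct evaluation at the highest weight vector, and I would do it first to fix normalizations:
\begin{itemize}
\item for $\varphi_{[1_A]}$, at $f=x_1^a$ we get $\sum_i d_i\otimes\de_i(x_1^a)=a\,d_1\otimes x_1^{a-1}$, which is $w[1_A]$;
\item for $\varphi_{[1_B]}$, at $f=(x_4^*)^{c-1}$ we get $w[1_B]$;
\item for $\varphi_{[1_C]}$, at $x_1\wedge x_2$ the $\de$-terms give $2\de_3\otimes x_3\wedge x_1\wedge x_2+2\de_4\otimes x_4\wedge x_1\wedge x_2$, which under the volume-form identification is $\pm 2(\de_3\otimes x_4^*-\de_4\otimes x_3^*)$, matching $w[1_C]$ up to sign conventions;
\item for $\varphi_{[1_D]}$, at $x_1$ the $\de$-terms give $4\sum_{i\ge2}\de_i\otimes a_{1i}$, and the $\theta$-terms reproduce the $d$-part of $w[1_D]$, with the constant $4+2t$ adjusted via $a_{12}a_{34}+a_{34}a_{12}=2C$ and the relation $a_{12}a_{13}a_{14}=0$ in $W_t(0,0,0)$;
\item $\varphi_{[1_E]}$ is literally $w[1_E]$.
\end{itemize}

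For step (i), the $\mathfrak{sl}_4$-equivariance in each case is visible from the formulas: they are built from contractions $\de_i$, multiplications by $x_i^*$, wedge products, and sums over dual bases. The $C$-action is compatible because the shift $t-1\mapsto t$ matches $C$ acting by $-1$ on $E(4,4)_{-1}$. The content of (i) is the $\phat_1$-part: we must check $b_{rs}x(v)=(b_{rs}.x)(v)$, since $b_{rs}v=0$ on $F_{t-1}(\lambda)$. Here $b_{rs}.d_i=0$ and $b_{rs}.\de_i$ is a combination of $d_r,d_s$ computed from $[b_{rs},\de_i]$ with the sign and deformation conventions of Example \ref{standard} at $t=1$. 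On the other side, $b_{rs}$ acts on $\phat_{-1}$-words by commutators producing elements of $\mathfrak{gl}_4$, via the listed brackets $[b_{ij},a_{lm}]$. In cases A, B, E, and for the $d$-terms of C, the images lie in $F_t(\sigma)$ or involve a single $a_{hk}$, so this is a short computation. For case D the $\theta_i$ are quadratic in the $a$'s, so $b_{rs}\theta_i(x_r)$ requires two commutators. The term $4+2t$ is fixed exactly so that the central contributions from $[a_{ij},a_{lm}]=2\epsilon_{ijlm}C$ cancel against $b_{rs}.\de_i$ acting through $4\,a_{ri}$. By $\mathfrak{sl}_4$-equivariance it suffices to check one lowest-weight element, $b_{44}$, on a single vector $x_r$.

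For step (ii), descent follows by weight and uniqueness arguments. The map $\varphi$ is determined by the singular vector, and Theorem \ref{listagrado1} together with the corollary following it already guarantees that a morphism $M_{t-1}(\lambda)\to M_t(\sigma)$ exists in the stated ranges. Since both that morphism and our $\rho$ are $\phat$-equivariant extensions from the same image of the highest weight vector, they coincide on $K_{t-1}(\lambda)$, and the existing morphism factors through $W_{t-1}(\lambda)$. So descent needs no separate check.

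The main obstacle is the $\phat_1$-compatibility computation for $\varphi_{[1_D]}$, where sign bookkeeping in $\varepsilon_{ijhk}$ and the parity conventions are error-prone.
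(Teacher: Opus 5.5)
Your proposal is correct. Steps (i) and (iii) match what the paper does for $\varphi_{[1_A]}$: it checks \eqref{check1} on $F_{t-1}(\lambda)$, then observes that the value at the highest weight vector is the singular vector of Theorem~\ref{listagrado1}, which gives condition (b) of Proposition~\ref{morphisms}.

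The real difference is step (ii), descent to $W_{t-1}(\lambda)$.
\begin{itemize}
\item The paper proves descent by hand. It takes the generator $a_{12}x_1^a$ of the kernel $S_{t-1,a}$ of $K_{t-1}(a,0,0)\to W_{t-1}(a,0,0)$. It computes $\theta_1(a_{12}x_1^a)=a\,a_{12}x_1^{a-1}\in S_{t,a-1}$ and notes that all other $\theta_i,\psi_j$ vanish on it. The recursive definition of $\rho$ then shows that $\{v : xv\in S_{t,a-1}\text{ for all }x\}$ is $\phat$-stable. This does not depend on how existence of the morphisms was obtained. It does need generators of $\ker(K_{t-1}(\lambda)\to W_{t-1}(\lambda))$ in each case, and for the other cases (e.g.\ $\lambda=(0,1,0)$, where Proposition~\ref{quozienti} describes $W_{t-1}(0,1,0)$ only as a quotient by the kernel of a composite map) the paper just says ``similar''.
\item You instead deduce descent from the existence of the morphisms (the corollary following Theorem~\ref{listagrado1}, which for B--E rests on the duality Theorem~\ref{duality}). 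You combine this with the fact that two $\phat$-maps out of the cyclic module $K_{t-1}(\lambda)$ that agree on $v_\lambda$ coincide. This is valid and avoids any kernel computation, at the price of importing duality.
\end{itemize}
Once you grant existence, however, most of your work is redundant for the statement as phrased. The restriction of an existing morphism to the irreducible $\mathfrak{gl}_4$-module $F_{t-1}(\lambda)$ is a $\mathfrak{gl}_4$-equivariant map into $L_{-1}\otimes W_t(\sigma)$. Such a map is determined by the image of the highest weight vector. So in your framework the $\phat_1$-check that you single out as the main obstacle can be dropped: $\mathfrak{gl}_4$-equivariance of the formulas plus step (iii) already proves the proposition. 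Step (i) is genuinely needed only in the paper's route, which constructs the morphisms and lifts them to Kac modules rather than identifying morphisms already known to exist.

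A correction for case D: $t$ does not enter the $\phat_1$-check at all. The element $b_{rs}$ kills the constant term, and $[b_{rs},a_{lm}]\in\slq$ has no central component. The $2t$ appears in step (iii) instead. The $d_1$-coefficient of $w[1_D]$ contains $-a_{24}a_{13}$, and the relation $a_{24}a_{13}+a_{13}a_{24}=2\epsilon_{2413}C=-2C$ rewrites this as $a_{13}a_{24}+2t$, which gives exactly $\theta_1(x_1)$. The relation $a_{12}a_{13}a_{14}=0$ plays no role there.
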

 \begin{proof}

 	First note that condition \eqref{check1} is always automatically satisfied for $b=C$ since $C.x=x$, $bx(v)=tx(v)$ and $bv=(t-1)v$.
 	
 Let us verify condition \eqref{check1} for the morphism $\varphi_{[1_A]}$ and $b\in \phat_{\geq 0}$. If $b\in \phat_{>0}$ then the three terms in \eqref{check1} vanish. Indeed $\phat_{>0}$ acts trivially on $F_{t-1}(a,0,0)$ and $F_{t}(a-1,0,0)$,
 moreover if $x$ is even then $b.x$ is an odd element in $E(4,4)_{-1}^*$, hence it vanishes on
 $F_{t-1}(a,0,0)$; if $x$ is odd then $b.x=0$. If $b\in\phat_0=\slq$ and $x$ is even, then
 $-[b,x]+bx=xb$; if $x$ is odd then it acts trivially on $F_{t-1}(a,0,0)$ and $b.x$ is an odd element in $E(4,4)_{-1}^*$, hence it vanishes on $F_{t-1}(a,0,0)$.
 
 Now recall that for every $t$ and $a\geq 1$ or $t=a=0$, the module $W_t(a,0,0)$ is the quotient of $K_t(a,0,0)$ with respect to the submodule $S_{t,a}$  generated by the singular vector $w=a_{12}x_1^a$. In order to show that $x$ induces a map from $W_{t-1}(a,0,0)$ to $W_{t}(a-1,0,0)$, we need to prove that $xS_{t-1,a}\subset S_{t,a-1}$. We have: $\theta_1(a_{12}x_1^a)=aa_{12}x_1^{a-1}$ in $S_{t,a-1}$. Moreover, $\theta_i(a_{12}x_1^a)=0$ for every $i\neq 1$ and $\psi_j(a_{12}x_1^a)=0$ for every $j=1,2,3,4$. Now notice that if $v\in K_{t-1}(a,0,0)$ is such that $xv\in S_{t,a-1}$ for every $x\in E(4,4)_{-1}^*$, then for every $u\in \phat$ we also have $xuv\in S_{t,a-1}$. It follows that $x: W_{t-1}(a,0,0)\rightarrow W_{t}(a-1,0,0)$ is well defined.

Finally notice that $\varphi_{[1_A]}(x_1^a)=d_1\otimes ax_1^{a-1}$, so condition (b) in Proposition \ref{morphisms} is satisfied by Theorem \ref{listagrado1}.
 
For  the other morphisms the proof is similar.
%$\varphi=\varphi_{1_B}$ the proof is similar. 
%Notice that $\sum_{i=1}^4(d_i\otimes\theta_i
%+ \de_i\otimes \psi_i)((x_4^*)^{c-1})=\sum_i d_i\otimes x_i^* %(x_4^*)^{c-1}$, so condition (b) in Proposition \ref{morphisms} is again satisfied by Theorem \ref{listagrado1}.
 
 \end{proof}
% \begin{example} To illustrate Proposition \ref{morphisms} we compute explicitly $\varphi_{[1C]}(d_1\de_2\otimes a_{13} x_{12})$, where $x_{ij}$ is a shortcut for $x_{i}\wedge x_j$. We first have $\varphi_{[1C]}(d_1\de_2\otimes a_{13} x_{12})=-d_1\de_2 \otimes \varphi_{[1C]}( a_{13} x_{12})$ so it is enough to compute 
 %	\begin{align*}
 %	\varphi_{[1C]}( a_{13} x_{12})= &\sum_i d_i\otimes (-(a_{13}.\theta_i)(x_{12})+a_{13}\theta_i(x_{12}))+\de_i((a_{13}.\psi_i)(x_{12})-a_{13}\psi_i(x_{12})))\\
 %	=&d_1\otimes (-\psi_3(x_{12})+a_{13}a_{12}x_1^*)+\sum_{i=2}^4 d_i \otimes a_{13}a_{12}x_i^*\\
 %	&+ \de_2\otimes 2 \theta_4(x_{12})+2\de_3\otimes a_{13}x_4^*+2\de_4\otimes (-\theta_2(x_{12})-a_{13}x_3^*)\\
 %	&= 2d_1\otimes x_4^*+\sum_{i=1}^4d_i\otimes a_{13}a_{12}x_i^*+2\de_2\otimes a_{12}x_4^*-2\de_3\otimes a_{13}x_4^*+2\de_4\otimes (-a_{12}x_2^*+a_{13}x_3^*)
 %	\end{align*}
 %and so  \[\varphi_{[1C]}(d_1\de_2\otimes a_{13} x_{12})= -\sum_{i=2}^4 d_1d_i\de_2\otimes a_{13}a_{12}x_i^*-2d_1\de_2^2 \otimes a_{12}x_4^*+2d_1\de_2\de_3\otimes a_{13}x_4^*+2d_1\de_2\de_4\otimes(a_{12}x_2^*-a_{13}x_3^*).\]
 %Note that one can obtain the same result by using the relation
 %\[
 %\varphi_{[1C]}(d_1\de_2\otimes a_{13}x_{12})=d_1\de_2a_{13}\varphi_{[1C]}(x_{12}).
% \]
%\end{example}
 
 \color{black}
\section{Singular vectors of degree 2}\label{degree2}
In this section we classify singular vectors of degree 2 in Verma modules $M_t(a,b,c)$. We first observe that the unique possible non zero compositions of morphisms of degree 1 between finite Verma modules are $\varphi_{[1_D]}\circ \varphi_{[1_A]}:M_{t-2}(2,0,0)\rightarrow M_t(0,0,0)$ (for $t\neq 0$) and $\varphi_{[1_E]}\circ \varphi_{[1_A]}:M_{-1}(1,0,0)\rightarrow M_1(1,0,0)$. Indeed the other possible compositions are $\varphi_{[1_A]}\circ \varphi_{[1_A]} =0$ (since $\varphi_{[1_A]}$ is induced by the singular vector $d_1\otimes x_1^a$ and  $d_1^2=0$) or their duals.
Moreover, the following result shows that $\varphi_{[1_D]}\circ \varphi_{[1_A]}$ and $\varphi_{[1_E]}\circ \varphi_{[1_A]}$ are nonzero.
\begin{proposition}
	The vector $w[2_{EA}]:=d_1w{[1_E]}\in M_1(1,0,0)$ is singular of degree 2 and weight $(1,0,0)$.
	The vector $w[2_{DA}]:=d_1w{[1_D]}\in M_t(0,0,0)$
	is singular of degree 2 and weight $(2,0,0)$ for all $t\neq 0$.
\end{proposition}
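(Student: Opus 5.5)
The plan is to get both vectors as images of degree-one singular vectors under the degree-one morphisms already constructed. A morphism of $E(4,4)$-modules sends an $E(4,4)$-singular vector either to $0$ or to an $E(4,4)$-singular vector of the same weight. So singularity comes for free, and what remains is to show the images are non-zero and to read off degree and weight.

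\emph{Identifying the vectors as images.} By Proposition \ref{typef}, with $a=1$ and arbitrary $t$, the vector $d_1\otimes x_1\in M_{t-1}(1,0,0)$ is $E(4,4)$-singular of degree 1 and weight $(2,0,0)$. Likewise, for $a=t=0$, the vector $d_1\otimes 1\in M_0(0,0,0)$ is singular of degree 1 and weight $(1,0,0)$. This holds for every $t$, so the case $t=2$, where $\varphi_{[1_A]}$ fails to descend, causes no trouble: I never need the composite morphism, only the vector $d_1\otimes x_1$. Now apply $\varphi_{[1_D]}:M_{t-1}(1,0,0)\to M_t(0,0,0)$ for $t\neq0$, and $\varphi_{[1_E]}:M_0(0,0,0)\to M_1(1,0,0)$. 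Since these maps are $\mathcal U(L_{-1})$-linear up to the parity sign of the convention preceding Proposition \ref{morphisms}, we get
\[
\varphi_{[1_D]}(d_1\otimes x_1)=\pm d_1\,\varphi_{[1_D]}(x_1)=\pm d_1 w[1_D],\qquad \varphi_{[1_E]}(d_1\otimes 1)=\pm d_1 w[1_E].
\]
Here $\varphi_{[1_D]}(x_1)=w[1_D]$ up to a scalar, by Proposition \ref{explicitmorphisms} and Theorem \ref{listagrado1}, and similarly for $\varphi_{[1_E]}$. Hence $w[2_{DA}]$ and $w[2_{EA}]$ are singular provided they are non-zero. Their weights are those of $d_1\otimes x_1$ and $d_1\otimes 1$, namely $(2,0,0)$ and $(1,0,0)$. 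Their degree is 2, since each is $d_1$ times a degree-one vector.

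\emph{Non-vanishing, the main point.} Use the fact that $M_t(a,b,c)\cong \mathcal U(L_{-1})\otimes W_t(a,b,c)$, with $\mathcal U(L_{-1})=\C[\de_1,\dots,\de_4]\otimes\inlinewedge(d_1,\dots,d_4)$ free, and look at the coefficient of a single PBW monomial.

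For $w[2_{DA}]$, the terms $d_1d_i\otimes(\cdot)$ contain no $\de$'s, and the term $d_1\cdot d_1$ vanishes. The component along $d_1\de_2$ is therefore exactly $\pm4\,d_1\de_2\otimes a_{12}$. This is non-zero because $a_{12}\neq0$ in $W_t(0,0,0)$ for $t\ne0$ by Remark \ref{a12}.

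For $w[2_{EA}]$, the component along $d_1\de_2$ is $\pm d_1\de_2\otimes d_2$, which is non-zero in the standard module $W_1(1,0,0)$.

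The only care needed is to make sure no other term of $d_1 w$ produces the same monomial $d_1\de_2$. Since $d_1$ supercommutes with the $\de_i$ in the abelian superalgebra $L_{-1}$, the only candidates are $d_1\de_2\otimes(\cdot)$ itself, so this check is immediate.
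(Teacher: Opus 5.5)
Your proposal is correct and is essentially the paper's proof. The paper also gets singularity by writing $d_1w[1_E]=\varphi_{[1_E]}(\varphi_{[1_A]}(x_1))$ and $d_1w[1_D]=\varphi_{[1_D]}(\varphi_{[1_A]}(x_1^2))$, where $\varphi_{[1_A]}(x_1^2)$ is a multiple of your singular vector $d_1\otimes x_1$. The paper leaves non-vanishing as "one can check", and your $d_1\de_2$-coefficient argument, using $a_{12}\neq 0$ in $W_t(0,0,0)$ for $t\neq 0$ and $d_2\neq 0$ in $W_1(1,0,0)$, supplies that check correctly; your remark that only the vector $d_1\otimes x_1$ is needed, not the composite morphism, cleanly avoids the $t=2$ issue with $\varphi_{[1_A]}$.
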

\begin{proof}
	Using the expressions of $w{[1_D]}$ and $w{[1_E]}$ given in Theorem \ref{listagrado1}, one can check that $d_1w{[1_D]}\neq 0$ and $d_1w{[1_E]}\neq 0$. Now the result immediately follows by noticing that $d_1w{[1_E]}=\varphi_{[1_E]}(\varphi_{[1_A]}(x_1))$ and $d_1w{[1_D]}=\varphi_{[1_D]} (\varphi_{[1_A]}(x_1^2))$.
\end{proof}

\begin{remark}
Let us write the vector $w[2_{DA}]$ explicitly:
\[w[2_{DA}]=4\sum_{i=2}^4 d_1\de_i\otimes a_{1i}-2d_1(d_4\otimes a_{12}a_{13}+d_3\otimes a_{14}a_{12}+d_2\otimes a_{13}a_{14}).\]
This vector lies in $M_0(0,1,0)$, where, by Remark \ref{K0(000)}, we use $W_0(0,1,0)\cong M_1/M_2$, and 
one can check that it is indeed singular also for $t=0$.
\end{remark}

We show below that there are no other $E(4,4)$ singular vectors of degree 2.

In Section \ref{sec:sing} we introduced notation for vectors of degree $k$ in $M_t(a,b,c)$. For the sake of simplicity when $k=2$ we write
\begin{align}
	w&=\frac{1}{2}\sum_{i=1}^4 \de_i^2\otimes v_{ii}+\sum_{i<j}\de_i \de_j \otimes v_{ij}+\sum _{i,j}\de_i d_j \otimes y_{ij}+\sum_{i<j}d_id_j\otimes z_{ij}.
	\label{12a}
\end{align}
We assume that $w$ is an $E(4,4)$ singular vector of weight $(\alpha,\beta,\gamma)$ and for the reader's convenience we list the corresponding weights of the vector $v_{ij}$, $y_{ij}$ and $z_{ij}$ in Table \ref{tabwei}. 
\begin{table}
	\caption{Weights of vectors $v_{ij},y_{ij},z_{ij}$. \label{tabwei}}
\begin{tabular}{|c|c|c|c|c|c|}
	\hline
	$v_{11}$& $(\alpha+2,\beta,\gamma)$ & 
	$y_{11}$& $(\alpha,\beta,\gamma)$&
	$z_{12} $& $(\alpha,\beta-1,\gamma)$\\
	\hline 
	$v_{12}$& $(\alpha,\beta+1,\gamma)$ & 
	$y_{12}$& $(\alpha+2,\beta-1,\gamma)$&
	$z_{13} $& $(\alpha-1,\beta+1,\gamma-1)$\\
	\hline
	$v_{13}$& $(\alpha+1,\beta-1,\gamma+1)$ & 
	$y_{13}$& $(\alpha+1,\beta+1,\gamma-1)$&
	$z_{14} $& $(\alpha-1,\beta,\gamma+1)$\\
	\hline
	$v_{14}$& $(\alpha+1,\beta,\gamma-1)$ & 
	$y_{14}$& $(\alpha+1,\beta,\gamma+1)$&
	$z_{23} $& $(\alpha+1,\beta,\gamma-1)$\\
	\hline
	$v_{22}$& $(\alpha-2,\beta+2,\gamma)$ & 
	$y_{21}$& $(\alpha-2,\beta+1,\gamma)$&
	$z_{24} $& $(\alpha+1,\beta-1,\gamma+1)$\\
	\hline
	$v_{23}$& $(\alpha-1,\beta,\gamma+1)$ & 
	$y_{22}$& $(\alpha,\beta,\gamma)$&
	$z_{34} $& $(\alpha,\beta+1,\gamma)$\\
	\hline
	$v_{24}$& $(\alpha-1,\beta+1,\gamma-1)$ & 
	$y_{23}$& $(\alpha-1,\beta+2,\gamma-1)$&
	& \\
	\hline
	$v_{33}$& $(\alpha,\beta-2,\gamma+2)$ & 
	$y_{24}$& $(\alpha-1,\beta+1,\gamma+1)$&
	& \\
	\hline
	$v_{34}$& $(\alpha,\beta-1,\gamma)$ & 
	$y_{31}$& $(\alpha-1,\beta-1,\gamma+1)$&
	& \\
	\hline
	$v_{44}$& $(\alpha,\beta,\gamma-2)$ & 
	$y_{32}$& $(\alpha+1,\beta-2,\gamma+1)$&
	& \\
	\hline
	& & 
	$y_{33}$& $(\alpha,\beta,\gamma)$&
	& \\
	\hline
	& & 
	$y_{34}$& $(\alpha,\beta-1,\gamma+2)$&
	& \\
	\hline
	& & 
	$y_{41}$& $(\alpha-1,\beta,\gamma-1)$&
	& \\
	\hline
	& & 
	$y_{42}$& $(\alpha+1,\beta-1,\gamma-1)$&
	& \\
	\hline
	& & 
	$y_{43}$& $(\alpha,\beta+1,\gamma-2)$&
	& \\
	\hline
	& & 
	$y_{44}$& $(\alpha,\beta,\gamma)$&
	& \\
	\hline
\end{tabular}

\label{TableWeights}
\end{table}
As explained in Section \ref{sec:sing}, the action of the subalgebra $N$ of $\phat$ on vectors $v_{ij},y_{ij}$ and $z_{ij}$ is as follows:
\[
(x_i \de_j)v_{hk}=\delta_{jh}v_{ik}+\delta_{jk}v_{ih},
\]
\[
(x_i \de_j)y_{hk}=\delta_{jh}y_{ik}-\delta_{ik}y_{hj},
\] 
\[
(x_i \de_j) z_{hk}=-\delta_{ih}z_{jk}-\delta_{ik}z_{hj},
\] 
where it is meant that if $i>j$ then $v_{ij}=v_{ji}$, $z_{ij}=-z_{ji}$, and $z_{ii}=0$ for all $i$. Besides,
\begin{align*}
b_{ij}v_{hk}&=0,\\
b_{ij}y_{hk}&=-\delta_{ik}v_{hj}-\delta_{jk}v_{ik},\\
b_{ij}z_{hk}&=-\delta_{ik}y_{jh}-\delta_{jk}y_{ih}+\delta_{ih}y_{jk}+\delta_{jh}y_{ik}.
\end{align*}
\begin{lemma}
	If $w$ is as in \eqref{12a} then $v_{ij}=0$ for all $i,j$.
\end{lemma}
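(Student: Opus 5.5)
The plan is to invoke Lemma \ref{barw}. If $w$ is singular of degree $2$, then $\bar w=\frac12\sum_i\de_i^2\otimes v_{ii}+\sum_{i<j}\de_i\de_j\otimes v_{ij}$ is a singular vector of degree $2$ for $S(4)$. By Theorem \ref{S4sing}, the only $S(4)$-singular vector of degree $2$ is
\[
\de_4\Big(\sum_i\de_i\otimes x_i\Big)\in \Ind F(1,0,0),
\]
which has weight $(0,0,1)$. So either $\bar w=0$ and we are done, or $\bar w$ is (up to scalar) this vector. In the second case $\lambda(w)=(0,0,1)$. Moreover the $v_{ij}$ span an $\slq$-submodule of $W_t(a,b,c)$ isomorphic to $F(1,0,0)$, with $v_{i4}$ proportional to the vector of weight $\epsilon_i$ and $v_{ij}=0$ for $i,j\neq 4$. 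It then remains to exclude $\lambda(w)=(0,0,1)$ with $v_{44}\neq 0$, or equivalently $v_{14}\neq 0$.

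To exclude this case I would use the remaining conditions $b_{44}w=0$, $Ew=0$ and $(x_4C)w=0$, written out in degree $2$ as in \eqref{Ew}--\eqref{x4Cw}, and extract their components on the $\de_i\otimes(\cdot)$ part of degree $1$. The relations $b_{ij}y_{hk}=-\delta_{ik}v_{hj}-\delta_{jk}v_{ik}$ tie the $v$'s to the $y$'s. For instance $b_{44}y_{h4}=-2v_{h4}$, so the $y_{h4}$ are nonzero, and $b_{14}y_{44}=-v_{41}$. I would then apply suitable $b_{rs}$ and $x_i\de_j$ to these component equations, as in the proof of Lemma \ref{lemmagrado1}, to obtain scalar identities such as $(\text{affine function of }\alpha,\beta,\gamma,t)\,v_{14}=0$. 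With $(\alpha,\beta,\gamma)=(0,0,1)$ these would force $v_{14}=0$.

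A shortcut, if the direct computation gets messy, is to use duality. A singular vector of this type would give a morphism $\tilde M_{t-2}(0,0,1)\to M_t(a,b,c)$, where $W_t(a,b,c)$ contains $F(1,0,0)$ in a controlled position. Theorem \ref{duality} would then produce a degree-$2$ morphism whose source has highest weight $(1,0,0)$. I would compare this with the same reduction applied on the dual side, and with the list of degree-$1$ morphisms: the induced $S(4)$-singular vector on the $W(4)$ side must also be compatible with Rudakov's diagonal morphism $\Ind F(1,0,0)\to\Ind F(0,0,1)$.

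The main obstacle is the exclusion step. We must show that no consistent choice of the $y_{ij}$ and $z_{ij}$ extends the nonzero $S(4)$ part. I expect this to come down to computing the coefficient of $\de_4\otimes(\cdot)$ in $(x_4C)w$ and applying $x_1\de_4$, which should produce a relation of the form $(\text{nonzero constant})\,v_{14}=0$, analogous to the step $(2\alpha+\beta+\gamma)v_1=0$ in Lemma \ref{lemmagrado1}.
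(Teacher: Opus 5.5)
Your reduction via Lemma \ref{barw} and Theorem \ref{S4sing} is exactly how the paper begins. If $\bar w\neq 0$, then $\lambda(w)=(0,0,1)$, the $v_{i4}$ span a copy of $F(1,0,0)$ with highest weight vector $v_{14}$, and everything hinges on showing $v_{14}=0$. But that exclusion step is the whole content of the lemma, and you do not carry it out.

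The computation you propose is not justified as stated. The $\de_4$-component of $(x_4C)w$ is not a relation among the $v$'s alone. Since $[x_4C,d_j]=-\frac14 b_{j4}+\frac54 a_{j4}$, it also contains $\frac54\sum_{j<4}a_{j4}y_{4j}$. Applying $x_1\de_4$ then produces further terms in the $y$'s, not a scalar multiple of $v_{14}$. You give no argument that some combination of these relations isolates $v_{14}$ with a nonzero coefficient.

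The duality shortcut does not help either. Dualizing a morphism $\tilde M_{t-2}(0,0,1)\to M_t(1,0,0)$ gives a degree-$2$ singular vector of weight $(1,0,0)$ in a Verma module induced from $K_{2-t}(1,0,0)$. Such vectors are not a priori forbidden ($w[2_{EA}]$ is a degree-2 singular vector of weight $(1,0,0)$). Ruling this one out would need the degree-$2$ classification, which this lemma is part of proving.

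The missing idea is to identify the module. Since $b_{ij}v_{hk}=0$ for all indices and $v_{14}$ is an $\slq$-highest weight vector, $v_{14}$ is a $\phat$-singular vector of weight $(1,0,0)$. By irreducibility of $W_t(a,b,c)$ and uniqueness of the irreducible quotient of $K_t(1,0,0)$, this forces $W_t(a,b,c)\cong W_t(1,0,0)$, the standard module.

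Now the $N$-equivariance relations give $v_{14}=-b_{24}y_{12}$ and $y_{12}=b_{31}z_{32}$, so $v_{14}=-b_{24}b_{31}z_{32}$. In the standard module, $\phat_1$ acts by the bracket of $E(4,4)$. It sends each $\de_k$ to a combination of the $d_i$ and kills every $d_i$, because $db_{ij}=0$. Hence $b_{24}b_{31}$ acts as zero, $v_{14}=0$, and this contradicts $v_{14}\neq 0$.

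The paper closes the argument with this module-specific vanishing, and your plan never brings it in.
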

\begin{proof}
	By Lemma \ref{barw} we have that 
	\[\bar w =\frac{1}{2}\sum_{i=1}^4 \de_i^2\otimes v_{ii}+\sum_{i<j}\de_i \de_j \otimes v_{ij}\]
	is a singular vector of degree 2 for $S(4)$. If $\bar w\neq 0$, this implies, by Theorem \ref{S4sing}, that $\bar w= \de_4(\de_1 v_{14}+\de_2 v_{24}+\de_3 v_{34}+\de_4v_{44})$ with all $v_{i4}\neq 0$ and $v_{14}$ is a $\slq$-highest weight vector of weight $(1,0,0)$. Therefore $v_{14}$ is a $\phat$ singular vector and therefore $w$ lies in $M_t(1,0,0)$. This is impossible because $v_{14}=-b_{24}y_{12}=-b_{24}b_{31}z_{32}=0$, since in $W_t(1,0,0)$ (the standard module) the operator $b_{24}b_{31}$ is identically zero as one can easily check.
\end{proof}
\begin{lemma}
	Let $w$ be as in \eqref{12a}. Then $y_{ij}=0$ for all $i<j$. Moreover $y_{42}=y_{43}=y_{32}=0$ and $y_{22}=y_{33}=y_{44}$.
\end{lemma}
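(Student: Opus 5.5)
We use the previous lemma, so all $v_{hk}=0$. The relations listed before it then simplify to $b_{ij}y_{hk}=0$ for all $i,j,h,k$. Moreover, the $y_{hk}$ span an $N$-submodule on which $N(\slq)$ acts by $(x_i\de_j)y_{hk}=\delta_{jh}y_{ik}-\delta_{ik}y_{hj}$, i.e.\ as on $\C^4\otimes\C^{4*}$ twisted by the $N$-equivariant map \eqref{vIJaction}. So the $y$'s form the image of an $N$-module map from $\mathfrak{gl}_4$ (the dual of $\C^4\otimes\C^{4*}$) into $W_t(a,b,c)$. The plan is to exploit this structure together with the conditions $b_{44}w=0$, $Ew=0$ and $(x_4C)w=0$ after computing them explicitly in degree 2.

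First compute $b_{44}w$. Since $[b_{44},d_j]=-2\delta_{4j}\de_4$ up to the normalization of \eqref{brsz}, and $b_{44}$ kills $\de_i$ modulo $\phat$-terms, the coefficient of $\de_i\de_4$ in $b_{44}w$ involves $y_{i4}$ plus contributions $b_{44}y_{i4}$ (which vanish). The coefficient of $d_j\de_4$ gives relations between $z_{j4}$ and $y_{4j}$-type terms. From the $\de_i\de_4$ coefficients one should read off $y_{i4}=0$ for $i=1,2,3$, or at least $y_{44}$-relations.

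Next apply the lowering structure of $N$. The relations $(x_i\de_j)y_{hk}=\delta_{jh}y_{ik}-\delta_{ik}y_{hj}$ propagate vanishing. For example $(x_1\de_4)y_{42}=y_{12}$ and $(x_3\de_4)y_{42}=y_{32}$, while $(x_i\de_j)y_{jj}=y_{ij}$ for $i<j$ and $(x_i\de_j)y_{ii}=-y_{ij}$, so $y_{ij}=(x_i\de_j)y_{jj}=-(x_i\de_j)y_{ii}$. The condition $y_{22}=y_{33}=y_{44}$ should come out of the equations obtained by applying $x_i\de_{j}$ and $b_{ij}$ to $Ew$, as in the proof of Lemma~\ref{lemmagrado1}: write $Ew$ in degree 1 and extract its coefficients. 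These are linear equations in $y_{hk}$, $z_{hk}$ with $a_{ij}$ and $x_i\de_j$ acting. Then apply $b_{12},b_{23},b_{24},b_{34}$ (which kill the $y$'s and map $z$'s to $y$'s). This yields scalar relations of the form $(\text{linear form in }\alpha,\beta,\gamma)\,y_{hk}=0$ with positive coefficients, exactly as $(2\alpha+\beta+\gamma)v_1=0$ arose in degree 1.

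Concretely, I would first show $y_{12}=0$ (the vector of highest weight $(\alpha+2,\beta-1,\gamma)$ among the $y_{ij}$ with $i<j$) by extracting from $b_{12}Ew$ an equation $(2\alpha+\beta+\gamma+c)\,y_{12}=0$ with $c>0$. Then I would descend: if $y_{12}=0$ then $y_{13}$ and $y_{14}$ satisfy analogous equations, and so on through $y_{23},y_{24},y_{34}$. Each $y_{ij}$ with $i<j$ is obtained from $y_{jj}$ or $y_{ii}$ by $N$, which forces the diagonal relations. The vanishing of $y_{42},y_{43},y_{32}$ follows the same way, using $(x_4C)w$ with $x_1\de_4$ applied to it. The main obstacle is the bookkeeping in computing $Ew$ and $(x_4C)w$ in degree 2, in particular the brackets $[E,d_id_j]$ and $[x_4C,\de_i d_j]$, which produce $\phat_{-1}$-terms $a_{ij}$ acting on $z$'s and $y$'s. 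I would handle these by computing only the coefficients needed for each targeted equation, and by using the $N$-equivariance \eqref{vIJaction} to reduce everything to the top component before applying the $b_{ij}$.
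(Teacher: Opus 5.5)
There is a genuine gap. Your preliminary observations are correct: $b_{ij}y_{hk}=0$ once $v_{hk}=0$, and your $N$-action formulas hold. Beyond that, however, the proposal does not derive any of the asserted vanishings, and the concrete steps it describes are wrong or misdirected.

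First, the condition $b_{44}w=0$ gives nothing new. Since $b_{44}\in N$, it is already encoded in the relations for $b_{ij}y_{hk}$ and $b_{ij}z_{hk}$. Moreover $[b_{44},d_j]=0$, because both are closed one-forms; the only nonzero bracket is $[b_{44},\de_4]=-2d_4$. So $b_{44}w$ has no $\de_i\de_4$-component from which $y_{i4}=0$ could be read off.

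Second, the equation $(2\alpha+\beta+\gamma+c)\,y_{12}=0$ is only postulated, and the natural $E$-equations do not give it directly. Up to a factor $3$, the $\de_1$-coefficient of $Ew$ is $(x_4\de_2)y_{11}-(x_4\de_1)y_{12}+y_{42}=0$. Applying $x_1\de_4$ yields $(\alpha+\beta+\gamma+1)y_{12}+(x_4\de_2)y_{14}=0$, which couples $y_{12}$ with $y_{14}$.

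Third, your descent is misdirected. $y_{12}=0$ does force $y_{13}=-(x_2\de_3)y_{12}$ and $y_{14}=-(x_3\de_4)y_{13}$ to vanish. But $y_{23},y_{24},y_{34}$ are not in the $N$-span of $y_{12}$ and need their own argument.

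Fourth, the vanishing of the $y_{ij}$ with $i<j$ does not force the diagonal equalities. The relation $y_{ij}=(x_i\de_j)y_{jj}=0$ only says that $x_i\de_j$ kills $y_{jj}$. The equalities come from entries below the diagonal, via $(x_2\de_3)y_{32}=y_{22}-y_{33}$ and $(x_3\de_4)y_{43}=y_{33}-y_{44}$. So $y_{32}=y_{43}=0$ must be proved first.

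The missing idea is to use the even part of $E(4,4)_1$ directly, instead of working only with $E$ and $x_4C$. For $i<j$ the vector field $x_i^2\de_j$ lies in $E(4,4)_1$. Since $[x_i^2\de_j,\de_h]=-2\delta_{ih}x_i\de_j$, $[x_i^2\de_j,d_k]=\delta_{jk}b_{ii}$ and all $v_{hk}=0$, one finds $(x_i^2\de_j)w=2d_i\otimes y_{ij}$. This kills all six $y_{ij}$ with $i<j$ at once, with no weight arithmetic.

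For $y_{42}$, the paper takes the $d_1$-coefficient of $(x_4^2d_3)w$, namely $a_{34}y_{41}-2(x_4\de_1)z_{12}=0$. It then applies $b_{11}$, which is not among the $b_{12},b_{23},b_{24},b_{34}$ you propose. The needed facts are:
- $[b_{11},a_{34}]=-4x_1\de_2$ and $[b_{11},x_4\de_1]=-2b_{14}$;
- $b_{11}y_{41}=0$ and $b_{14}z_{12}=y_{42}$;
- $b_{11}z_{12}=2y_{12}=0$, which is where the first part is needed.

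Together these give $8y_{42}=0$. Then $y_{32}=(x_3\de_4)y_{42}=0$ and $y_{43}=-(x_2\de_3)y_{42}=0$, and the diagonal equalities follow as above.

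Restricting to $E$ and $x_4C$ is not wrong in principle, since they generate $E(4,4)_1$ over $B$. But you would then have to recover the effect of $x_i^2\de_j$ and of $b_{11}$ from them, and the proposal does not do this.
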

\begin{proof}
	For $i<j$ we compute $(x_i^2\de_j)w= 2d_i\otimes y_{ij}$ since $v_{hk}=0$ for all $h,k$, and the first part follows. Considering the coefficient of $d_1$ in $(x_4^2 d_3)w$ we obtain the condition
	\[
	a_{34}y_{41}-2(x_4 \de_1)z_{12}=0.
	\]
	Applying $b_{11}$ to this equation and recalling that $y_{12}=0$ by the first part, we obtain $y_{42}=0$. The result follows since $y_{32}=(x_3 \de_4)y_{42}$ and $y_{43}=-(x_2 \de_3)y_{42}$. Finally, we have $(x_2\de_3)y_{32}=y_{22}-y_{33}$ hence $y_{22}=y_{33}$. Similarly $y_{33}=y_{44}$.
\end{proof}
\begin{lemma}\label{Ewgrado2} Let $w$ be as in \eqref{12a}. Then one of the following cases occurs:
\begin{itemize}
	\item [a)] $(\alpha,\beta,\gamma)=(1,0,0)$ and  $y_{22}=y_{33}=y_{44}=0$;
	\item [b)] $(\alpha,\beta,\gamma)=(0,0,0)$ and $y_{11}=y_{22}=y_{33}=y_{44}$; 
	\item [c)] $(\alpha,\beta,\gamma)\neq (0,0,0), (1,0,0)$ and  $y_{11}=y_{22}=y_{33}=y_{44}=0$.
\end{itemize}
\end{lemma}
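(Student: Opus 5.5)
We use the two previous lemmas: $v_{ij}=0$ for all $i,j$, $y_{ij}=0$ for $i<j$, $y_{42}=y_{43}=y_{32}=0$, and $y_{22}=y_{33}=y_{44}$. The diagonal vectors $y_{11},\dots,y_{44}$ all have weight $(\alpha,\beta,\gamma)$, equal to that of $w$. The plan is to derive linear relations among them from the $N$-equivariance formulas and then extract scalar conditions from the singularity conditions $Ew=0$ and $(x_4C)w=0$.

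\emph{Relations from $N$-equivariance.} Using $(x_i\de_j)y_{hk}=\delta_{jh}y_{ik}-\delta_{ik}y_{hj}$ with $i<j$:
\begin{itemize}
\item $(x_1\de_2)y_{21}=y_{11}-y_{22}$;
\item $(x_1\de_2)y_{22}=-y_{12}=0$;
\item $(x_1\de_3)y_{31}=y_{11}-y_{33}$.
\end{itemize}
So $y_{11}-y_{22}$ is determined by $y_{21}$, and $y_{11}=y_{22}$ if and only if $(x_1\de_2)y_{21}=0$. Similarly, $b_{ij}y_{hk}$ is a combination of $v$'s and hence vanishes. Consequently each $y_{kk}$ is annihilated by $\phat_1$, and $y_{22}$ is annihilated by all $x_i\de_j$ with $i<j$. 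Therefore $y_{22}=y_{33}=y_{44}$ is either zero or a $\phat$-singular vector of weight $(\alpha,\beta,\gamma)$.

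\emph{Scalar equations from $Ew=0$ and $(x_4C)w=0$.} Next I compute $Ew$ and $(x_4C)w$ on the degree-1 coefficients. Since all $v_{ij}=0$, $E$ and $x_4C$ act on $\de_id_j\otimes y_{ij}$ and $d_id_j\otimes z_{ij}$ via the brackets $[E,\de_i]$, $[E,d_j]$, which land in $L_0$. I collect the coefficient of a suitable basis element, for instance $\de_1$ or $d_4$ (the analogue of \eqref{Ew} and \eqref{x4Cw}). This gives an equation linking $y_{41}$, $y_{44}$, $y_{11}$ and $a_{ij}z_{hk}$.

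I then apply raising operators $x_i\de_4$ and $b_{ij}$ to reduce these to scalar equations, exactly as in the proof of Lemma \ref{lemmagrado1}. Every $b$ applied to $z_{hk}$ returns $y$'s, and $x_i\de_j$ applied to $y_{k4}$ returns diagonal terms. The expected outcome is relations of the form
\[
(\alpha+\beta+\gamma)\,y_{44}=0,\qquad \alpha\,(y_{11}-y_{44})=0\ \text{(or a similar one)},\qquad (\beta+\gamma)\,y_{11}=0,\qquad (\alpha-1)\,y_{11}=\ldots,
\]
with coefficients that are nonnegative combinations of $\alpha,\beta,\gamma$ plus constants. The case analysis then reads off as follows:
\begin{itemize}
\item if $y_{44}\ne0$, we get $\alpha=\beta=\gamma=0$, and the relation forces $y_{11}=y_{44}$, which is case b);
\item if $y_{44}=0$ but $y_{11}\neq0$, the relations force $\beta=\gamma=0$ and $\alpha=1$, which is case a);
\item otherwise all diagonal $y$'s vanish, which is case c).
\end{itemize}

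\emph{Main obstacle.} The hard part is choosing which coefficient of $Ew$ or $(x_4C)w$ to extract, and which sequence of raising operators to apply, so that the $z$-terms cancel and clean scalar multiples of $y_{kk}$ remain. My first attempt will be the $d_1$-coefficient of $(x_4^2d_3)w$ already used above, namely $a_{34}y_{41}-2(x_4\de_1)z_{12}=0$. I will apply $b_{14}$ and $b_{44}$ to it, using $b_{ij}z_{hk}$ to convert the $z$-term into diagonal $y$'s and $[b,a_{34}]\in\slq$ acting on $y_{41}$ by weights. In parallel I will take the $d_4$- and $\de_4$-coefficients of $(x_4C)w$ to bring in the $t$-independent part. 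Finally, nonnegativity of $\alpha,\beta,\gamma$ will be used to kill the remaining coefficients.
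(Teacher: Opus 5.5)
Your setup (the vanishing results from the two previous lemmas, and the observation that $y_{22}=y_{33}=y_{44}$ is either zero or $\phat$-singular of weight $(\alpha,\beta,\gamma)$) is fine. The gap is that the proof stops exactly where the work is: the scalar relations are only ``expected'', and the one relation that decides the case split is missing and wrongly guessed. Two of your guesses are actually right. Applying $x_1\de_4$ to the $\de_2$-coefficient of $Ew$, namely $(x_4\de_2)y_{21}-(x_4\de_1)y_{22}-y_{41}=0$, gives $(\alpha+\beta+\gamma)y_{22}=0$. Applying $x_2\de_4$ to the $\de_1$-coefficient $(x_4\de_2)y_{11}=0$ gives $(\beta+\gamma)y_{11}=0$. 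These two alone cannot give the conclusions of a) and b). You also need a relation coupling $y_{11}$ with $y_{22}$ whose coefficient is nonzero at $\alpha=0$ and vanishes at $\alpha=1$. Your candidate $\alpha(y_{11}-y_{44})=0$ says nothing precisely in case b), where $\alpha=0$, so ``the relation forces $y_{11}=y_{44}$'' is unsupported. Likewise, ``$(\alpha-1)y_{11}=\ldots$'' with an unspecified right-hand side does not give $\alpha=1$ in case a).

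The route you propose for finding this relation also fails. Start from the $d_1$-coefficient $a_{34}y_{41}-2(x_4\de_1)z_{12}=0$, which is the right equation. Applying $b_{44}$ yields $0=0$, since $[b_{44},a_{34}]=0$, $[b_{44},x_4\de_1]=0$ and $b_{44}z_{12}=0$. Applying $b_{14}$ yields only $(x_4\de_2)y_{41}=0$, because $b_{14}z_{12}=y_{42}=0$. Pushing that down with $x_2\de_4$ and $x_1\de_4$ gives $(\beta+\gamma)(y_{11}-y_{22})=0$, with no $\alpha$ anywhere.

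The missing step is to apply $b_{12}$ instead. Using $[b_{12},a_{34}]=2h_{12}$, $[b_{12},x_4\de_1]=-b_{24}$, $b_{24}z_{12}=-y_{41}$ and $b_{12}z_{12}=y_{22}-y_{11}$, you get $(2-\alpha)y_{41}+(x_4\de_1)(y_{22}-y_{11})=0$. Then apply $x_1\de_4$, with $y_{14}=0$ and $y_{44}=y_{22}$, to get $(2-2\alpha-\beta-\gamma)(y_{11}-y_{22})=0$. Combined with $(\beta+\gamma)y_{11}=(\beta+\gamma)y_{22}=0$, this gives $(1-\alpha)(y_{11}-y_{22})=0$. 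With this relation and the two above, the trichotomy follows at once. Note that only $Ew=0$ is needed, not $(x_4C)w=0$.
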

\begin{proof}
Let us consider the condition $Ew=0$. We have: %Andiamo ora ad applicare $E$. Faccio un termine per volta sennò mi sbaglio solo quelli che contengono $\de_3,\de_4,d_1,d_2$, altrimenti è 0.
\begin{itemize}
	%\item $E\de_1 \de_3 v_{13}=\de_1(\frac{1}{2}b_{44})v_{13}=0$;
	%\item $E\de_1 \de_4 v_{14}=\de_1 (-\frac{1}{2}b_{34}+\frac{3}{2}a_{34})v_{14}=\de_1\frac{3}{2}a_{34}v_{14}$;
	%\item $E\de_2\de_3 v_{23}=\de_2(\frac{1}{2}b_{44})v_{23}=0$;
	%\item $E\de_2\de_4 v_{24}= \de_2 (-\frac{1}{2}b_{34}+\frac{3}{2}a_{34})v_{24}=\de_2 \frac{3}{2}a_{34}v_{24}$;
	%\item $E\frac{1}{2}\de_3^2v_{33}=\frac{1}{2}(\frac{1}{2}b_{44}\de_3 v_{33}+\frac{1}{2}\de_3b_{44}v_{33})=0$;
	%\item $E\de_3\de_4v_{34}=\frac{1}{2}b_{44}\de_4 v_{34}+\de_3(-\frac{1}{2}b_{34}+\frac{3}{2}a_{34})v_{34}=-d_4v_{34}+\de_3\frac{3}{2}a_{34}v_{34}$;
	%\item $E\frac{1}{2}\de_4^2 v_{44}=\frac{1}{2}((-\frac{1}{2}b_{34}+\frac{3}{2}a_{34})\de_4 v_{44}+\de_4 (-\frac{1}{2}b_{34}+\frac{3}{2}a_{34})v_{44})=d_3v_{44}+\frac{3}{2}\de_4 a_{34}v_{44}$
	\item $E\de_1d_1\otimes y_{11}=3\de_1 \otimes (x_4\de_2)y_{11}$;
	%\item $E\de_1 d_2 y_{12}=\de_1 (-3x_4 \de_1 y_{12})$;
	\item $E\de_2 d_1\otimes y_{21}=3\de_2 \otimes (x_4 \de_2)y_{21}$;
	\item $E\de_2 d_2\otimes y_{22}=-3\de_2\otimes (x_4\de_1)y_{22}$;
	\item $E\de_3 d_1\otimes y_{31}=3\de_3 \otimes (x_4\de_2)y_{31}$;
	%\item $E\de_3 d_2 y_{32}=\frac{1}{2}b_{44}d_2 y_{32}+\de_3 (-3x_4 \de_1) y_{32}=\de_3 (-3x_4 \de_1) y_{32}$;
	\item $E\de_3 d_3\otimes y_{33}=0$;
	%\item $E\de_3 d_4 y_{34}=\frac{1}{2}b_{44} d_4 y_{34}=-\frac{1}{2}d_4 b_{44}y_{34}=d_4 v_{34}$;
	\item $E\de_4 d_1 \otimes y_{41}=-3\de_2\otimes y_{41}+d_1 \otimes(- \frac{3}{2}a_{34}y_{41})+3\de_4 \otimes (x_4 \de_2) y_{41}$
	%\item $E\de_4 d_2 y_{42}=(-\frac{1}{2}b_{34}+\frac{3}{2}a_{34})d_2y_{42}+\de_4 (-3x_4\de_1)y_{42}=\de_1 3 y_{42}+d_2(-\frac{3}{2}a_{34}y_{42})+\de_4 (-3x_4\de_1y_{42})$;
	%\item $E\de_4 d_3 y_{43}=(-\frac{1}{2}b_{34}+\frac{3}{2}a_{34})d_3 y_{43}=-\frac{3}{2}d_3 a_{34}y_{43}$%-\frac{1}{2}d_3v_{44}$
	\item $E\de_4 d_4 \otimes y_{44}=-\frac{3}{2}d_4 \otimes a_{34}y_{44}$%-\frac{1}{2}d_4 v_{43}
	\item $Ed_1d_2z_{12}=d_4 \otimes 3 z_{12}+3d_2 \otimes (x_4 \de_2)\otimes z_{12}+3d_1 \otimes (x_4 \de_1)z_{12}$;
	\item $Ed_1d_3\otimes z_{13}=3d_3 \otimes (x_4\de_2) z_{13}$;
	\item $Ed_1d_4\otimes z_{14}=3d_4\otimes  (x_4 \de_2)z_{14}$;
	\item $Ed_2d_3\otimes z_{23}=-3d_3\otimes (x_4 \de_1)z_{23}$;
	\item $Ed_2d_4\otimes z_{24}=-3d_4\otimes (x_4\de_1)z_{24}$.
\end{itemize}
Looking at the coefficients of $\de_1, \de_2, d_1, d_4$, respectively, in $Ew$ we obtain the following four conditions
\begin{equation}\label{de1} (x_4\de_2)y_{11}=0 \end{equation}
\begin{equation}\label{de2} (x_4\de_2)y_{21}-(x_4\de_1)y_{22}-y_{41}=0 \end{equation}
%\begin{equation}\label{de3} x_4 \de_2 y_{31}-x_4 \de_1 y_{32}=0 \end{equation}
%\begin{equation}\label{de4} x_4 \de_2 y_{41}-x_4 \de_1 y_{42}=0 \end{equation}
\begin{equation}\label{d1} -\frac{1}{2}a_{34}y_{41}+(x_4\de_1)z_{12}=0 \end{equation}
%\begin{equation}\label{d2} x_4 \de_2 z_{12}=0 \end{equation}
%\begin{equation}\label{d3}	-\frac{1}{2}a_{34}y_{43}+x_4 \de_2 z_{13}-x_4 \de_1 z_{23}=0\end{equation}
\begin{equation}\label{d4} -\frac{1}{2}a_{34}y_{22}+z_{12}+(x_4\de_2) z_{14}-(x_4\de_1)z_{24}=0. \end{equation}
%
%Coefficiente di $\de_1$
%\begin{equation}\label{de1}
%	\frac{3}{2}a_{34}v_{14}+3x_4 \de_2 y_{11}-3x_4 \de_1 y_{12}+3y_{42}=0
%\end{equation}
%Coefficiente di $\de_2$
%\begin{equation}\label{de2}
%	\frac{3}{2}a_{34}v_{24}+3x_4 \de_2 y_{21}-3x_4 \de_1 y_{22}-3y_{41}=0
%\end{equation}
%Coefficiente di $\de_3$
%\begin{equation}\label{de3}
%	\frac{3}{2}a_{34}v_{34}+3x_4 \de_2 y_{31}-3x_4 \de_1 y_{32}=0
%\end{equation}
%Coefficiente di $\de_4$
%\begin{equation}\label{de4}
%	\frac{3}{2}a_{34}v_{44}+3x_4 \de_2 y_{41}-3x_4 \de_1 y_{42}=0
%\end{equation}
%Coefficiente di $d_1$
%\begin{equation}\label{d1}
%	-\frac{3}{2}a_{34}y_{41}+3x_4 \de_1 z_{12}=0
%\end{equation}
%Coefficiente di $d_2$
%\begin{equation}\label{d2}
%	-\frac{3}{2}a_{34}y_{42}+3x_4 \de_2 z_{12}=0
%\end{equation}
%Coefficiente di $d_3$
%\begin{equation}\label{d3}
%	\frac{1}{2}v_{44}-\frac{3}{2}a_{34}y_{43}+3x_4 \de_2 z_{13}-3x_4 \de_1 z_{23}=0
%\end{equation}
%Coefficiente di $d_4$
%\begin{equation}\label{d4}
%	-\frac{1}{2}v_{34}-\frac{3}{2}a_{34}y_{44}+3z_{12}+3x_4 \de_2 z_{14}-3x_4 \de_1 z_{24}=0.
%\end{equation}
%
%Le prime 4 equazioni, applicando $b_{12}$ diventano, per ogni $i=1,2,3,4$:
%\begin{equation}\label{compact}
%	h_{12}(v_{i4})-x_4 \de_2 v_{i2}+x_4 \de_1 v_{i1}+(-\delta_{i1}+\delta_{i2})v_{i4}=0
%\end{equation}
%Applico $x_1 \de_4$ a \eqref{compact} con $i=1$ e abbiamo $(2\alpha+\beta+\gamma+1)v_{11}=0$ per cui $v_{11}=0.$
%
%\medskip

If we apply $b_{12}$ to equation (\ref{d4}) we obtain:

%\[-h_{12}y_{22}+y_{22}-y_{11}-b_{14}z_{14}+x_4\de_2b_{12}z_{14}+b_{24}z_{24}-x_4\de_1b_{12}z_{24}=0\]
%\[-\alpha y_{44}+y_{22}-y_{11}-y_{44}+y_{11}+y_{44}-y_{22}=0\]
\begin{equation}\label{i}
\alpha y_{22}=0.
\end{equation}

%If  we apply $x_1\de_4$, we get:
%
%\[-\alpha y_{14}+x_1\de_2y_{24}-h_{14}y_{14}=0\]
%\[(-\alpha+1-\alpha-\beta-\gamma-2)y_{14}=0\]
%hence $y_{14}=0$, since $\alpha,\beta,\gamma$ are nonnegative integers.

Now if we apply $x_1\de_4$ to (\ref{de2}) we obtain:
\[(x_1\de_2)y_{21}-h_{14}y_{22}-y_{11}+y_{22}=-(\alpha+\beta+\gamma)y_{22}=0,\]
since $y_{24}=0$, i.e.\
\begin{equation}\label{ii}
(\beta+\gamma)y_{22}=0.
\end{equation}

If we apply $x_2\de_4$ to  (\ref{de1})  we get
\[h_{24}y_{11}=0\]
i.e.,
\begin{equation}
\label{yii4}
(\beta+\gamma)y_{11}=0.
\end{equation}

Now we apply $b_{12}$ to equation (\ref{d1}):

\[-h_{12}y_{41}-b_{24}z_{12}+(x_4\de_1)(y_{22}-y_{11})=0\]
\[-(\alpha-1)y_{41}+y_{41}+(x_4\de_1)(y_{22}-y_{11})=0.\]

If we apply $x_1\de_4$ we get:

\[-(\alpha-2)(y_{11}-y_{44})+h_{14}(y_{22}-y_{11})+(x_4\de_1)y_{14}=0\]

hence
\begin{equation}\label{yii1}
(1-\alpha)(y_{11}-y_{22})=0.
%(2-\alpha)(y_{11}-y_{22})+(\alpha+\beta+\gamma)(y_{22}-y_{11})=0
\end{equation}
since $y_{14}=0$.

%Now if we apply $x_2\de_4$, we get:
%\[(\alpha-2-\alpha-\beta-\gamma)y_{24}=0\]
%hence $y_{24}=0$.

%Now we apply $b_{12}$ to equation (\ref{d2}):
%
%\[-h_{12}y_{42}-b_{14}z_{12}+x_4\de_2(y_{22}-y_{11})=0\]
%\[-(\alpha+1)y_{42}-y_{42}+x_4\de_2(y_{22}-y_{11})=0.\]
%
%If we apply $x_2\de_4$ we get:
%
%\[-(\alpha+2)(y_{22}-y_{44})+h_{24}(y_{22}-y_{11})-x_4\de_2y_{24}=0\]
%
%hence
%\begin{equation}\label{yii2}
%-(2+\alpha)(y_{22}-y_{44})+(\beta+\gamma)(y_{22}-y_{11})=0
%\end{equation}
%since $y_{24}=0$. 
%
%Now we apply $x_1\de_4$ to equation (\ref{de1}):
%
%\[x_1\de_2y_{11}-x_4\de_2y_{14}-h_{14}y_{12}+y_{12}=0\]
%\[-y_{12}-x_4\de_2y_{14}-h_{14}y_{12}+y_{12}=0\]
%i.e. \[h_{14}y_{12}=(\alpha+\beta+\gamma+1)y_{12}=0\]
%since $y_{14}=0$. Therefore $y_{12}=0$.
%By applying $x_2\de_3$ we thus get $y_{13}=0$.
%
%
%
%Now we apply $x_1\de_4$ to equation (\ref{de3}):
%
%\[x_1\de_2y_{31}-x_4\de_2y_{34}-h_{14}y_{32}=0\]
%i.e. \[-y_{32}-(\alpha+\beta+\gamma)y_{32}=0\]
%since $y_{34}=0$. Therefore $y_{32}=0$.
%By applying $x_2\de_3$ we thus get $y_{22}=y_{33}$.
%
%Now we apply $b_{11}$ to equation (\ref{d1}) and we get:
%
%\[2x_1\de_2y_{41}-2b_{14}z_{12}+2x_4\de_1y_{12}=0\]
%i.e., $-y_{42}-y_{42}=0$ since $y_{12}=0$. It follows that $y_{42}=0$ and therefore, by applying $x_2\de_3$, $y_{43}=0$. Applying $x_3 \de_4$ we also obtain $y_{33}=y_{44}$.
%
%
%\color{black}Recalling that $y_{22}=y_{33}=y_{44}$, this relation, together with (\ref{yii1}) and (\ref{yii2})  and (\ref{yii4}) provides:
We have thus obtained the following conditions:
\[
\begin{cases}
(1-\alpha)(y_{11}-y_{22})=0\\
(\beta+\gamma)y_{22}=0\\
(\beta+\gamma)y_{11}=0\\
\alpha y_{22}=0.
\end{cases}
\]
The result follows.
\end{proof}

\begin{theorem}\label{listagrado2}
	The following is, up to multiplication by scalars, a complete list of $E(4,4)$-singular vectors  of degree 2 in Verma modules $M_t(a,b,c)$:
\begin{itemize}
		\item $w{[2_{DA}]}=4\sum_{i=2}^4 d_1\de_i\otimes a_{1i}-2d_1(d_4\otimes a_{12}a_{13}+d_3\otimes a_{14}a_{12}+d_2\otimes a_{13}a_{14})\in M_t(0,1,0)$ where for $t\neq 0$ it is meant that $M_t(0,1,0)\cong M_t(0,0,0)$;
	\item $w{[2_{EA}]}=d_1w[1_E]\in M_1(1,0,0)$
where $w[1_E]$ is as in Theorem \ref{listagrado1}.	
\end{itemize}
\end{theorem}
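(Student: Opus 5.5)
The theorem has two parts. Existence of $w[2_{DA}]$ and $w[2_{EA}]$ is already proved: they are the compositions $\varphi_{[1_D]}\circ\varphi_{[1_A]}$ and $\varphi_{[1_E]}\circ\varphi_{[1_A]}$, and for $t=0$ one checks directly in $M_0(0,1,0)$. What remains is completeness. So let $w$ be a singular vector of degree $2$ written as in \eqref{12a}, with weight $(\alpha,\beta,\gamma)$. By the two preceding lemmas, all $v_{ij}=0$, $y_{ij}=0$ for $i<j$, $y_{42}=y_{43}=y_{32}=0$ and $y_{22}=y_{33}=y_{44}$. The only unknowns left are therefore $y_{11}$, $y_{22}$, $y_{21},y_{31},y_{41}$ and the $z_{ij}$. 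I would then split according to the three cases of Lemma \ref{Ewgrado2}, always using the same mechanism as in degree 1.

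\textbf{Mechanism.} By the $N$-equivariance of \eqref{vIJaction} and the formulas for $x_i\de_j$ and $b_{ij}$ on $y_{hk}$ and $z_{hk}$, the first nonzero coefficient in a suitable order must be a $\phat$-singular vector. This identifies the inducing module among the $W_t(a,b,c)$ via Proposition \ref{hmodules}, Proposition \ref{sl4modules} and the uniqueness of $\phat$-singular vectors (with the exception of Remark \ref{a12}). Concretely:
\begin{itemize}
\item every $z_{ij}$ is obtained from $z_{12}$ by lowering operators $x_i\de_j$;
\item $b_{ij}$ maps the $z$'s to the $y$'s;
\item $y_{31}$ and $y_{41}$ are obtained from $y_{21}$ by $x_2\de_3$ and $x_3\de_4$.
\end{itemize}
Hence, once all $y$'s vanish, \eqref{d4} reduces to $z_{12}+(x_4\de_2)z_{14}-(x_4\de_1)z_{24}=0$. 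Applying $x_1\de_4$ and $x_2\de_4$ then gives a relation of the form (positive integer combination of $\alpha,\beta,\gamma$ plus a positive constant)$\,z_{12}=0$, as for $z_2$ in Lemma \ref{lemmagrado1}. So $z_{12}=0$, and with it every $z_{ij}=0$. The same trick with \eqref{de2} and \eqref{d1} expresses $y_{41}$, and hence $y_{21}$, through the $y_{ii}$ and $z_{12}$.

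\textbf{Case a).} Here the weight is $(1,0,0)$ and only $y_{11}$ (together with the $y_{i1}$ and $z$'s) survives. The vector $y_{11}$, when nonzero, is $\phat$-singular of weight $(1,0,0)$, so the module is $W_t(1,0,0)$. Weight considerations in the standard module (Example \ref{standard}) determine all coefficients up to a scalar, and they coincide with those of $d_1w[1_E]$. Evaluating the condition $(x_4C)w=0$, exactly as in Proposition \ref{w8}, should force $t=1$, giving $w[2_{EA}]$. If instead $y_{11}=0$, the mechanism above kills everything.

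\textbf{Case b).} Here the weight is $(0,0,0)$ and $y_{11}=y_{22}=y_{33}=y_{44}=:y$, a $\phat$-singular vector of weight $(0,0,0)$. So the module is $W_t(0,0,0)$ or, for $t=0$, its trivial variants. I would show that the $(x_4C)$ condition together with \eqref{d1} forces $y=0$. If that is awkward, I would instead use duality (Theorem \ref{duality} with Corollary \ref{duali}) to turn such a vector into a degree-2 singular vector of weight $(1,0,0)$ or $(2,0,0)$ in a dual Kac-induced module. There it must be one of the vectors already classified, which leads to a contradiction as in the degree-1 arguments.

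\textbf{Case c).} Here all $y_{ii}=0$, and this is the step I expect to be the main obstacle. If $y_{21}\neq0$, it is $\phat$-singular of weight $(\alpha-2,\beta+1,\gamma)$. Applying $b_{12}$, $x_1\de_4$ and $x_2\de_4$ to \eqref{de2}, \eqref{d1} and \eqref{d4} should give integer relations forcing $(\alpha,\beta,\gamma)=(2,0,0)$ and $\lambda(y_{21})=(0,1,0)$. The module is then $W_t(0,1,0)$ (with $W_t(0,1,0)\cong W_t(0,0,0)$ for $t\neq0$), and equivariance pins $w$ down to $w[2_{DA}]$. If $y_{21}=0$, then $y_{31}=y_{41}=0$, so all $y$'s vanish, the $z$'s vanish by the mechanism above, and $w=0$.

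The delicate points in case c) are two. First, excluding spurious solutions coming from the non-irreducible Kac quotients at $t=0$ (Remark \ref{K0(000)}). Second, checking that the $\phat$-singular vector of the required weight is unique in the relevant $W_t$. For both I would rely on Propositions \ref{decomposizioniK} and \ref{sl4modules} and, where needed, on the duality trick.
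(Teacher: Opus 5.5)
\textbf{Gap: the $z_{ij}$ cannot be killed using only $N$, $b_{44}$ and $E$.} Once all $y$'s vanish, \eqref{d4} reads $z_{12}+(x_4\de_2)z_{14}-(x_4\de_1)z_{24}=0$.
\begin{itemize}
\item Applying $x_1\de_4$ gives $(\alpha+\beta+\gamma+1)z_{24}=0$, so $z_{24}=z_{34}=0$.
\item Applying $x_2\de_4$ then gives only $(\beta+\gamma)z_{14}=0$, with no positive constant.
\end{itemize}
So when $\beta=\gamma=0$ nothing forces $z_{14}$, and hence $z_{12}$, to vanish.

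This is not a bookkeeping slip. Look at your case a) with $y_{11}=0$. There all $y$'s do vanish, since $b_{12}$ applied to \eqref{d1} gives $(2-\alpha)y_{41}=0$. Yet there is an honest counterexample there: the vector
\[
d_1w[1_B]=\sum_{i=2}^4 d_1d_i\otimes x_i^*\in M_t(0,0,1)\quad(c=1),
\]
of weight $(1,0,0)$, with $z_{1i}=x_i^*$ and all other coefficients zero. It has the following properties.
\begin{itemize}
\item It is annihilated by $x_i\de_{i+1}$ and by $\phat_1$.
\item It satisfies $Ew=0$, since $E(d_1w[1_B])=\pm3(x_4\de_2)w[1_B]=0$. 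For instance, \eqref{d4} becomes $x_2^*+(x_4\de_2)x_4^*=0$, which holds.
\end{itemize}
So no argument using only $N$, $b_{44}$ and $E$ can give $w=0$ in this case. What excludes the vector is $(x_4C)w=0$. Its $\de_3$-coefficient is proportional to $z_{12}$; in general this is \eqref{Cde3}, $a_{14}y_{31}-2z_{12}=0$.

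\textbf{How the paper proceeds.}
\begin{itemize}
\item It first treats $y_{11}\neq0$. In case a) the unique $E$-solution is singular only for $t=1$, which gives $w[2_{EA}]$. In case b) one has $y_{21}=y_{31}=y_{41}=0$ by dominance, and the unique $E$-solution fails $(x_4C)w=0$.
\item It then sets all $y_{ii}=0$ at once, uniformly across the three cases.
\item The $\de_1$-coefficient of $(x_4C)w$ gives $z_{23}=0$, hence $z_{24}=z_{34}=0$.
\item \eqref{Cde3} gives $z_{12}=0$ once $y_{31}=0$.
\item The remaining $(x_4C)$-equations give $\beta y_{21}=0$ and $(\alpha-2)z_{14}=0$.
\end{itemize}
You never apply $(x_4C)$ to the $z$-part, so your case a) is incomplete as written.

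\textbf{Case c) is repairable within $E$.} With $y_{41}=0$, \eqref{d1} gives $(x_4\de_1)z_{12}=0$, and applying $x_1\de_4$ yields $(\alpha+\beta+\gamma-1)z_{12}=0$. At the weights $(0,1,0)$ and $(0,0,1)$, $z_{14}$ is a highest weight vector of non-dominant weight, hence zero, and then \eqref{d4} gives $z_{12}=0$.

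\textbf{Direction of the $y$-chain.} One has $y_{21}=(x_2\de_3)y_{31}$ and $y_{31}=(x_3\de_4)y_{41}$, not the converse. So $y_{21}=0$ does not propagate to $y_{31},y_{41}$ by equivariance. It is \eqref{de2}, with $y_{22}=0$, that gives $y_{41}=0$, and then $y_{31}=0$ follows.

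\textbf{Case b).} This is only a plan in your proposal. The duality fallback would need care, since you would be invoking the very degree-2 classification you are proving.
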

\begin{proof}
Let us first assume $y_{11}\neq 0$, so that we are necessarily in case a) or b) of Lemma \ref{Ewgrado2}.
In case a)  there is a unique (up to a scalar factor) vector $w$ with $y_{11}\neq 0$ that satisfies $Ew=0$ and it is 
\[
w=\de_1 d_1 \otimes d_1+\de_2 d_1\otimes d_2+\de_3d_1\otimes d_3+\de_4d_1\otimes d_4+d_1d_2\otimes \de_2+d_1d_3\otimes \de_3+d_1d_4\otimes\de_4.
\]
This vector is indeed singular if and only if $t=1$ and in this case it is $d_1w[1_E]$.
In case b) of Lemma \ref{Ewgrado2}, $y_{11}\neq 0$ implies  $y_{21}=0$ (otherwise it would be a highest weight vector with non dominant weight). Similarly also $y_{31}=0$ and $y_{41}=0$. Under these conditions there is  a unique (up to a scalar factor) such vector that satisfies $Ew=0$. This vector is
\[
w=(\de_1 d_1+\de_2 d_2+\de_3 d_3+\de_4 d_4)\otimes 1
\]
if $t=0$ and
\[
w=2(\de_1 d_1+\de_2 d_2+\de_3 d_3+\de_4 d_4)1-d_1d_2 a_{34}+d_1d_3a_{24}-d_1d_3a_{23}-d_2d_3a_{14}+d_2d_4a_{13}-d_3d_4a_{12}
\]
if $t\neq 0$. Such vectors are not singular as they do not satisfy $(x_4C)w=0$.

 So we can assume $y_{11}=0$, i.e., $y_{ii}=0$ by Lemma \ref{Ewgrado2}.
Hence we have:

\begin{align*}
	w&=\de_2 d_1 y_{21}+	\de_3 d_1 y_{31}+	\de_4 d_1 y_{41}\\
	&+d_1 d_2 z_{12}+ d_1 d_3 z_{13}+d_1 d_4 z_{14}+d_2d_3 z_{23}+d_2d_4z_{24}+d_3d_4z_{34}.
\end{align*}
Let us compute 

\begin{itemize}
\item $4(x_4C)\de_2 d_1 \otimes y_{21}=
-4d_1\otimes (x_4\de_2)y_{21}+5\de_2\otimes a_{14}y_{21}$
\item $4(x_4C)\de_3 d_1 \otimes y_{31}=
-4d_1\otimes (x_4\de_3)y_{31}+5\de_3\otimes a_{14}y_{31}$

\item $4(x_4C)\de_4 d_1 \otimes y_{41}=(\alpha+2\beta+3\gamma-5t+2)d_1\otimes y_{41}+5\de_4\otimes a_{14}y_{41}$

\item $4(x_4C)d_1d_2\otimes z_{12}=
d_2\otimes y_{42}-10\de_3\otimes z_{12}-5d_2\otimes a_{14}z_{12}+d_1\otimes y_{41}+5d_1\otimes a_{24}z_{12}$

\item $4(x_4C)d_1d_3\otimes z_{13}=
d_3\otimes y_{43}+10\de_2\otimes z_{13}-5d_3\otimes a_{14}z_{13}+d_1\otimes y_{41}+5d_1\otimes a_{34}z_{13}$

\item $4(x_4C)d_1d_4\otimes z_{14}=
d_4\otimes (y_{44}-y_{11})-5d_4\otimes a_{14}z_{14}+2d_1\otimes y_{41}$

\item $4(x_4C)d_2d_3\otimes z_{23}=
d_3\otimes y_{43}-10\de_1\otimes z_{23}-5d_3\otimes a_{24}z_{23}+d_2\otimes y_{42}+5d_2\otimes a_{34}z_{23}$

\item $4(x_4C)d_2d_4\otimes z_{24}=
d_4\otimes (y_{44}-y_{22})-5d_4\otimes a_{24}z_{24}+2d_2\otimes y_{42}$

\item $4(x_4C)d_3d_4\otimes z_{34}=
d_4\otimes (y_{44}-y_{33})-5d_4\otimes a_{34}z_{34}+2d_3\otimes y_{43}$
\end{itemize}
Looking at the coefficient of $\de_1$ in $(x_4C)w$ one immediately gets $z_{23}=0$ and applying $x_3 \de_4$ and $x_2 \de_3$ we also have $z_{24}=z_{34}=0$.

Coefficients of $\de_2,\de_3,\de_4,d_1,d_2,d_3,d_4$ in $(x_4C)w$ provide the following equations

\begin{equation}\label{Cde2}
	a_{14}y_{21}+2z_{13}=0
\end{equation}
\begin{equation}\label{Cde3}
	a_{14}y_{31}-2z_{12}=0,
\end{equation}
\begin{equation}\label{Cde4}
	a_{14}y_{41}=0,
\end{equation}
\begin{equation}\label{Cd1}
	-4(x_4\de_2)y_{21}-4(x_4\de_3)y_{31}+(\alpha+2\beta+3\gamma-5t+6)y_{41}+5a_{24}z_{12}+5a_{34}z_{13}=0,
\end{equation}
\begin{equation}\label{Cd2}
	a_{14}z_{12}=0,
\end{equation}
\begin{equation}\label{Cd3}
	a_{14}z_{13}=0,
\end{equation}
\begin{equation}\label{Cd4}
	a_{14}z_{14}=0.
\end{equation}

\bigskip
Applying $b_{23}$ to \eqref{Cde2} we obtain 
\begin{equation}\label{1}
\beta y_{21}=0.
\end{equation}

Applying $b_{23}$ to \eqref{Cd4} we obtain
$h_{23}(z_{14})=\beta z_{14}=0$ and similarly using \eqref{Cde4} we obtain 
\begin{equation}\label{2}
\beta y_{41}=0.
\end{equation}

Now apply $x_2\de_4$ to \eqref{Cd1} to obtain
\[
-4(\gamma+1)y_{21}-4 y_{21}+(\alpha+3\gamma-5t+6)y_{21}-5a_{24}z_{14}+5a_{32}z_{13}=0
\]
\[
(-\gamma-2+\alpha-5t)y_{21}-5a_{24}z_{14}+5a_{32}z_{13}=0.
\]
Applying $b_{13}$ we have
\[
10h_{13}z_{14}+10x_3\de_4 z_{13}=0,
\]
i.e.
\begin{equation}\label{3}
(\alpha-2)z_{14}=0.
\end{equation}
Applying $x_2\de_4$ to \eqref{de2} we deduce 
\begin{equation}\label{4}
\gamma y_{21}=0.
\end{equation}
From  formulas \eqref{1}--\eqref{4} we deduce that if $y_{21}\neq 0$, then $(\alpha, \beta,\gamma)=(2,0,0)$ (noting that $y_{21}=-b_{24}z_{14})$. This case indeed provides the singular vector $w_{1,7}$ (using that $y_{21}$ is a $\phat$-singular vector of weight $(0,1,0)$ and that $z_{12}$ has weight $(2,-1,0)$). 

If $y_{21}=0$, then we have necessarily $y_{31}=0$. Indeed if $y_{31}\neq 0$, then it is a highest weight vector of weight $(\alpha-1,\beta-1, \gamma+1)$ (see Table \ref{TableWeights}). But $y_{31}\neq 0$ implies  $y_{41}\neq 0$, hence $\beta=0$ but this is a contradiction since the weight of $y_{31}$ is not dominant.
If $y_{21}=y_{31}=0$, then $y_{41}= 0$ by \eqref{de2}. 

Therefore we can assume that $y_{ij}=0$ for all $i,j$ and also $z_{23}=z_{24}=z_{34}=0$. Equation \eqref{Cde3} provides $z_{12}=0$, and hence applying $x_2\de_3$ and $x_2 \de_4$ we conclude that also $z_{13}=z_{14}=0$. 
\end{proof}

\begin{remark} The singular vector $w[2_{DA}]$ defines the following morphism of $E(4,4)$-modules for every $t\in\C$:
\[\varphi_{[2_{DA}]}: M_{t-2}{(2,0,0)}\rightarrow M_t(0,1,0).\]
Indeed if $t\neq 0,2$ we have $\varphi_{[2_{DA}]}=\varphi_{[1_D]}\circ\varphi_{[1_A]}$.

If $t=0$ the vector $w[2_{DA}]$ induces a morphism
\[\tilde{\varphi}_{[2_{DA}]}: \tilde{M}_{-2}{(2,0,0)}\rightarrow M_0(0,1,0).\]
An explicit computation shows that $a_{12}w[2_{DA}]=0$ in $M_0(0,1,0)$ hence $\tilde{\varphi}_{[2_{DA}]}$ induces the morphism 
\[M_{-2}{(2,0,0)}\rightarrow M_0(0,1,0).\]
By duality we also get the morphism
\[M_0(2,0,0) \rightarrow M_2(0,1,0).\]
\end{remark}

\section{Singular vectors of degree 3}\label{degree3}

In this section we classify $E(4,4)$ singular vectors of degree 3 in Verma modules $M_t(a,b,c)$. So let $w$ be such a singular vector of weight $(\alpha, \beta, \gamma)$ and note that, by Lemma \ref{barw}, it can be written as follows:
\begin{equation}
w=\sum_{i\leq j,k}\frac{1}{m_{ij}}\de_i\de_jd_k\otimes v_{ij,k}+\sum_{i,j<k}\de_id_jd_k\otimes y_{i,jk}+\sum_{i<j<k}d_id_jd_k\otimes z_{ijk},\label{31a}
\end{equation}
where
$v_{ij,k}=v_{ji,k}$ and $m_{ij}=1$ if $i\neq j$, $m_{ii}=2$, $y_{i,jk}=-y_{i,kj}$, and $z_{ijk}$ is alternating with respect to $i,j,k$. By \eqref{vIJaction} we have that by applying the elements $x_i\de_{j}$ with $i<j$ and the elements $b_{ij}$ we get the following relations:
\begin{align*}
(x_i\de_j)v_{rs,u}&=\delta_{jr}v_{is,u}+\delta_{js}v_{ri,u}-\delta_{iu}v_{rs,j};\\
(x_i\de_j)y_{r,su}&=\delta_{jr}y_{i,su}-\delta_{is}y_{r,ju}-\delta_{iu}y_{r,sj};\\
(x_i\de_j)z_{rsu}&=-\delta_{ir}z_{jsu}-\delta_{is}z_{rju}-\delta_{iu}z_{rsj};\\
b_{ij}v_{rs,u}&=0;\\
b_{ij}y_{r,hk}&=-\delta_{ik}v_{rj,h}-\delta_{jk}v_{ri,h}+\delta_{ih}v_{rj,k}+\delta_{jh}v_{ri,k};\\
b_{ij}z_{rhk}&=-\delta_{ir}y_{j,hk}-\delta_{jr}y_{i,hk}-\delta_{ik}y_{j,rh}-\delta_{jk}y_{i,rh}+\delta_{ih}y_{j,rk}+\delta_{jh}y_{i,rk}.
\end{align*}
\begin{lemma}\label{lemmadeg3} \begin{itemize}
\item[(a)]	Let $i,j,k$ be distinct with $k=\max\{i,j,k\}$. Then
	\[
	y_{i,jk}=0 \textrm{ and }y_{i,ik}=y_{j,jk}.\]
	\item[(b)] Let $r,s,u$ be such that $u\neq r$ and $u\neq s$ with $(r,s)\neq (4,4)$. Then
	\[
	v_{rs,u}=0.\]
\item[(c)]	Let $\{i,j,h,k\}=\{1,2,3,4\}$ with $i\neq 4$. Then
	\[\frac{1}{2}v_{ii,i}=v_{ij,j}=v_{ih,h}=v_{ik,k}.
	\]
	\end{itemize}
\end{lemma}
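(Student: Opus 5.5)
Parts (a) and (b) will come from applying suitable elements of $E(4,4)_1$ of the form $x_i^2\de_j$ ($i<j$) to $w$, as in the degree-2 case, while (c) is a consequence of (a) and (b) via the $N$-relations listed just before the lemma. The key point is that $x_i^2\de_j$ brackets with $\de_r$ to give $-2\delta_{ir}x_i\de_j$, which lies in $\phat_0$, and brackets with $d_r$ into $\phat_1$ or $\phat_{-1}$ pieces. Since the $\de\de\de$ part of $w$ vanishes by Lemma \ref{barw}, the terms of $(x_i^2\de_j)w$ in $U_2$ with two $d$'s (and no $\de$) can only arise from $\de_i d_s d_u\otimes y_{i,su}$ where the $\de_i$ is removed. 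Reading off the coefficient of $d_s d_u$ with $s,u\neq i$ then gives linear relations among the $y_{i,\cdot\cdot}$ and the $N$-images of the $z$'s. The result should be $y_{i,jk}=0$ for $i<j<k$, in exact analogy with $(x_i^2\de_j)w=2d_i\otimes y_{ij}$ in degree 2.

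The plan for (a) is as follows. For $i<j<k$, I would apply $x_i^2\de_j$, or more conveniently the lowest-weight-type element $E=x_4^2d_3-x_3x_4d_4$ together with the relations $(x_i\de_j)$, to isolate $y_{i,jk}$ when $k=\max$. I then transport the vanishing to all index orders with $k$ maximal using the $N$-action formula
\[(x_i\de_j)y_{r,su}=\delta_{jr}y_{i,su}-\delta_{is}y_{r,ju}-\delta_{iu}y_{r,sj}.\]
For instance, $(x_i\de_j)y_{j,ik}=y_{i,ik}-y_{j,jk}$, and the left side vanishes because $y_{j,ik}=0$ by the first half (since $k$ is maximal, the pair $j,i$ with $i,j<k$ is covered). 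This gives $y_{i,ik}=y_{j,jk}$.

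For (b), I would use the $b_{ij}$-relations. Since $b_{ij}y_{r,hk}$ is an explicit combination of the $v$'s, choosing indices suitably expresses $v_{rs,u}$ (with $u\notin\{r,s\}$) as $\pm b_{\cdot\cdot}y_{r,hk}$ with $y$ of the type killed in (a). Concretely, I take $b_{su}$ applied to $y_{r,hu}$ with $h$ distinct from $r,s,u$; this yields $v_{rs,h}$ up to sign plus terms vanishing by (a). The exclusion $(r,s)=(4,4)$ appears exactly because for $r=s=4$ no admissible $y$ with maximal last index exists. Part (c) then follows by applying $(x_i\de_j)$, $i<j$, to $v_{ij,j}$-type vectors: for example,
\[(x_i\de_j)v_{jj,i}=2v_{ij,i}-v_{jj,j}\]
relates the diagonal entries, and the left side is zero by (b). Combined with $b_{ij}y_{i,ij}$-type identities and the equality $y_{i,ik}=y_{j,jk}$ from (a), this yields the chain $\tfrac12 v_{ii,i}=v_{ij,j}=v_{ih,h}=v_{ik,k}$.

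I expect the main obstacle to be the first half of (a): finding which $E(4,4)_1$ element and which coefficient cleanly isolates $y_{i,jk}$. Cross terms from $z_{ijk}$ via $[x_i^2\de_j,d_r]$ may contaminate that coefficient. I would try $x_1^2\de_2$ acting on $w$ and look at the $d_1d_3d_4$-free, $d_2$-containing components first, then fall back on $E$ combined with $b_{12}$, $b_{11}$ as was done for $y_{42}$ in degree 2.
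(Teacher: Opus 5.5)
There is a genuine gap in the first half of (a), which is the only non-formal step of the lemma; you flag it yourself as the main obstacle, and the coefficient you actually propose does not work.

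\textbf{The coefficient $d_sd_u$ with $s,u\neq i$ gives nothing.} In $(x_i^2\de_j)w$ this coefficient receives only $-2\,d_sd_u\otimes(x_i\de_j)y_{i,su}$. The $z$'s enter solely through $[x_i^2\de_j,d_j]=b_{ii}$, and $b_{ii}z_{jxy}=0$ unless $i\in\{x,y\}$. Moreover $(x_i\de_j)y_{i,su}=0$ identically by the $N$-relations, so this is a trivial identity.

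\textbf{The coefficient that works is $d_id_\ell$ with $\ell\neq i,j$.} As your own degree-2 analogy suggests, it must contain $d_i$. Exactly three summands contribute:
\begin{itemize}
\item[(i)] $\de_id_jd_\ell\otimes y_{i,j\ell}$ gives $-2y_{i,j\ell}$, via $[x_i^2\de_j,\de_i]=-2x_i\de_j$ and $[x_i\de_j,d_j]=d_i$;
\item[(ii)] $\de_id_id_\ell\otimes y_{i,i\ell}$ gives $+2y_{i,j\ell}$, via $(x_i\de_j)y_{i,i\ell}=-y_{i,j\ell}$;
\item[(iii)] $d_id_jd_\ell\otimes z_{ij\ell}$ gives $+2y_{i,j\ell}$, via $[x_i^2\de_j,d_j]=b_{ii}$ and $b_{ii}z_{ij\ell}=-2y_{i,j\ell}$.
\end{itemize}
The two $y$-contributions cancel. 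So the $z$-term you feared as a contamination is precisely what forces $y_{i,j\ell}=0$.

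\textbf{Proving only $i<j<k$ and then transporting does not reach all cases.} The operators $x_a\de_b$ ($a<b$) map the family $\{y_{r,su}: r<s,\ r<u\}$ into itself: each $N$-relation applied to such a vector yields $0$ or a single vector of the same family. The $b$'s only produce $v$'s. Hence vectors such as $y_{2,13}$, $y_{2,14}$, $y_{3,14}$, $y_{3,24}$ (all with $k$ maximal) are never reached. Your derivation of $y_{i,ik}=y_{j,jk}$ via $(x_i\de_j)y_{j,ik}$ needs $y_{j,ik}=0$ with $i<j<k$, which is exactly of this unreached type. As written, that step is unsupported.

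\textbf{The fix.} Run the $d_id_\ell$ computation for every pair $i<j$ and every $\ell\neq i,j$. This gives $y_{i,j\ell}=0$ whenever the first index is not the largest of the three, which is the whole first half of (a) at once. The relation $y_{i,ik}=y_{j,jk}$ then follows from your $x$-transport.

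\textbf{Parts (b) and (c).} Once (a) is in hand, your treatment of (b), applying $b$'s to the vanishing $y$'s, is the paper's argument. Your treatment of (c) is also fine. In particular, applying $b_{ik}$ to $y_{i,ik}-y_{j,jk}=0$ gives $v_{ii,i}=v_{ij,j}+v_{ik,k}$. This is what handles $i=1$, where no $x_h\de_1$ with $h<1$ is available to relate $v_{11,1}$ to $v_{1j,j}$.
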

\begin{proof}
We apply the vector field $P=x_i^2\de_j$, with $i<j$, to  the summands of $w$, and obtain: for $k\neq j$, $h\neq i$, $k'\neq k,i,j$,
\begin{align*}
P \frac{1}{2}\de_i^2d_k\otimes v_{ii,k}&=\de_jd_k\otimes v_{ii,k}+2\delta_{ik}\de_id_k\otimes v_{ii,j},\\
P \frac{1}{2}\de_i^2d_j\otimes v_{ii,j}&=\de_jd_j\otimes v_{ii,j}-2\de_id_i\otimes v_{ii,j},\\
P \de_h\de_id_k\otimes v_{hi,k}&=-2\de_hd_k\otimes (\delta_{jh}v_{ii,k}-\delta_{ik}v_{hi,j}),\\
P \de_h\de_id_j\otimes v_{hi,j}&=-2\de_h(d_i\otimes v_{hi,j}+\delta_{jh}d_j\otimes v_{ii,j}),\\
P \de_id_kd_{k'}\otimes y_{i,kk'}&=2\delta_{ik}d_kd_{k'}\otimes y_{i,jk'},\\
P \de_id_kd_j\otimes y_{i,kj}&=-2d_kd_i\otimes y_{i,kj}+2\delta_{ik}\de_id_k\otimes v_{ii,j},\\
P \de_hd_kd_j\otimes y_{h,kj}&=2\delta_{ik}\de_hd_k\otimes v_{hi,j},\\
P d_kd_{k'}d_j\otimes z_{kk'j}&=-2\delta_{ik}d_kd_{k'}\otimes y_{i,k'j},
\end{align*}
and we consider the coefficient of $d_id_\ell$ with $\ell\neq i,j$. There are exactly 3 summands which contribute ($\de_i d_i d_\ell y_{i,i\ell}$, $ d_id_jd_\ell z_{ij\ell}$ and $\de_id_jd_\ell y_{i,j\ell}$) and we can deduce $y_{i,j\ell}=0$. By inverting the roles of $j$ and $\ell$, if necessary, we obtain $y_{i,jk}=0$. 

If $j<i<k$ we can apply $x_j \de_i$ to $y_{i,jk}=0$ to obtain $y_{i,ik}=y_{j,jk}$.
Applying $b_{jj},b_{kk},b_{ij},b_{ik}$ to $y_{i,jk}=0$ we obtain $v_{ij,k}=v_{ik,j}=v_{ii,k}=v_{ii,j}=0$.

If $i\neq 4$, $k>i$ and $j<k$ we can apply $b_{jk}$ to $y_{i,jk}=0$ to obtain $v_{ik,k}=v_{ij,j}$. Finally, for $h<i<4$, applying $x_h \de_i$ to $v_{ii,h}=0$ we have $2v_{ih,h}=v_{ii,i}$.

\end{proof}
\begin{lemma}\label{lemmadeg3bis}
	We have 
	\[
	v_{44,1}=v_{44,2}=v_{44,3}=0,
	\]
and 
	\[\frac{1}{2}v_{44,4}=v_{41,1}=v_{42,2}=v_{43,3}.
	\]
If $v_{14,4}= 0$, then  $v_{ij,k}=0$ for all $i,j,k$. If $v_{14,4}\neq 0$, then we have $w\in M_t(1,0,0)$ and, up to a scalar factor,
\[
w=\sum_{i, j} \de_i \de_j d_i\otimes d_j+\de_i d_i d_j \otimes \de_j+\rho(d_1d_2d_3\otimes d_4-d_1d_2d_4\otimes d_3+d_1d_3d_4\otimes d_2-d_2d_3d_4\otimes d_1)
\]
for some scalar $\rho$.
\end{lemma}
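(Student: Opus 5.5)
We continue the analysis begun in Lemma \ref{lemmadeg3}, which already settles all $v_{rs,u}$ except those with $r=s=4$ or with $u\in\{r,s\}$. The plan has three steps: first pin down the remaining $v$'s, then split according to whether $v_{14,4}$ vanishes, and finally identify the singular vector in the nonvanishing case.

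\emph{Step 1: the remaining $v$'s.} The first relations in the statement come from applying suitable elements of $N$ to the identities of Lemma \ref{lemmadeg3}. Take the vanishing $y_{i,jk}=0$ with $k=4$ and $i,j<4$, and apply $b_{44}$. From the displayed formula for $b_{ij}y_{r,hk}$ we get
\[
b_{44}y_{i,j4}=-2v_{i4,j}.
\]
The statement's relations are instead obtained by applying $x_i\de_4$ to the vanishing $v_{i4,j}$ (and to $v_{ii,4}$) via the rule for $(x_i\de_j)v_{rs,u}$. These yield $v_{44,j}=0$ for $j<4$ and the chain $v_{4i,i}=\frac12 v_{44,4}$, mirroring the proof of part (c) of Lemma \ref{lemmadeg3}. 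Combining this with Lemma \ref{lemmadeg3}(c), all nonzero $v$'s are determined by a single vector $v_{14,4}$. Indeed $v_{14,4}=v_{12,2}=v_{13,3}=\frac12 v_{11,1}$, and the chain $x_1\de_2$, $x_2\de_3$, $x_3\de_4$ links $v_{1k,k}$ to $v_{2k,k}$, $v_{3k,k}$ and $v_{4k,k}$ with the same coefficient. Hence if $v_{14,4}=0$ then every $v_{ij,k}=0$.

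\emph{Step 2: the case $v_{14,4}\neq 0$.} Following the strategy of Lemma \ref{lemmagrado1}, $v_{14,4}$ is the leftmost nonzero coefficient. By \eqref{vIJaction} it is annihilated by $N$, so it is a $\phat$-singular vector of weight $\lambda(w)+(1,0,0)$; by the bookkeeping of weights, $\lambda(v_{11,1})=\lambda(w)+(1,0,0)$ as well. Next apply $E$ and $x_4C$ and extract the coefficients of $\de_1\de_4$ and similar monomials. Then apply $x_1\de_4$, $x_2\de_4$ and $b_{12}$, exactly as in the proofs of Lemma \ref{lemmagrado1} and Lemma \ref{Ewgrado2}, to obtain linear relations of the form $(2\alpha+\beta+\gamma)v_{14,4}=0$ and $(\beta+\gamma)v_{14,4}=0$. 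These should force $\lambda(w)=(0,0,0)$, so that $v_{14,4}$ has weight $(1,0,0)$. The module is then $W_t(1,0,0)$, since by Proposition \ref{hmodules}, Proposition \ref{sl4modules} and the remark after Proposition \ref{hmodules} only $W_t(1,0,0)$ has a $\phat$-singular vector of weight $(1,0,0)$.

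\emph{Step 3: determining $w$ in the standard module.} Work in the explicit standard module of Example \ref{standard}. There $v_{14,4}$ is a multiple of $d_1$, and the $N$-equivariance \eqref{vIJaction} propagates it. The $v$'s become $v_{ij,i}\propto d_j$. The $y$'s are recovered from the $b$-relations: $b_{rs}$ applied to $y_{i,ij}$ must return $v$'s, which forces $y_{i,ij}\propto\de_j$, and the weight count leaves only these. The $z_{ijk}$ has weight $\lambda(w)$ minus three units, so by weight it is proportional to the remaining vector of the complementary index, giving the $\rho$-term. One must check that the $b_{ij}z$ relations are compatible with an arbitrary $\rho$ or else fix it. The statement keeps $\rho$ free, so presumably $N$-invariance alone leaves $\rho$ undetermined, and that is all the lemma claims.

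The main obstacle is Step 2: extracting clean scalar equations from $Ew=0$ and $(x_4C)w=0$ at degree 3 requires bookkeeping of many terms. I would isolate the coefficients of $\de_1\de_j$ with $j\ne 1$ in $Ew$ first, where only $v$- and $y$-terms contribute, and reduce them to the identities above.
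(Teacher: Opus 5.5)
There is a genuine gap in Step 1, and Step 2 inherits it. Neither $v_{44,j}=0$ for $j<4$ nor the implication ``$v_{14,4}=0\Rightarrow$ all $v_{ij,k}=0$'' follows from the $N$-relations you invoke. Take the rule $(x_i\de_4)v_{rs,u}=\delta_{4r}v_{is,u}+\delta_{4s}v_{ri,u}-\delta_{iu}v_{rs,4}$ with $i<4$. Every output either contains the index $i<4$ among its first two entries or has last entry $4$, so $v_{44,j}$ never appears. More generally, $v_{44,1}$ never occurs on the right-hand side of the rule for any $(x_a\de_b)v_{rs,u}$ with $a<b$, and the $b_{ij}$ kill all the $v$'s. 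So raising operators applied to the zeros of Lemma \ref{lemmadeg3} only reproduce that lemma.

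Your propagation argument also runs in the wrong direction. Granting the displayed relations, all $v$'s are multiples of $u_i:=\frac12 v_{ii,i}$, where $u_1=v_{14,4}$ is the highest, $u_4=v_{41,1}$ is the lowest, and $(x_a\de_b)u_b=u_a$ for $a<b$. Zeros therefore propagate upward: in degree one, a zero forces the entries to its \emph{left} to vanish. So $u_1=0$ says nothing about $u_2,u_3,u_4$.

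The missing idea is to use the action of degree-one elements of $E(4,4)$, not just of $N$. The paper applies $x_4^2\de_1$ to $w$. The coefficient of $\de_1 d_r$ ($r\neq 1,4$) isolates $\frac12\de_4^2 d_r\otimes v_{44,r}$, giving $v_{44,r}=0$. Applying $x_4^2\de_2$ and taking the coefficient of $\de_2 d_1$ gives $v_{44,1}=0$. Only then does $x_i\de_4$ applied to $v_{44,i}=0$ yield $\frac12 v_{44,4}=v_{4i,i}$.

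The coefficient of $\de_1 d_4$ in $(x_4^2\de_1)w$ gives the key relation $(x_4\de_1)v_{14,4}=v_{41,1}$, which expresses the lowest vector as a lowering of the highest one. It settles the dichotomy at once: $v_{14,4}=0$ forces $v_{41,1}=0$, hence $v_{r1,1}=0$ by applying $x_r\de_4$, and these span all the $v$'s. Applying $x_1\de_4$ to the same relation, using $v_{11,4}=0$ and $v_{11,1}=2v_{14,4}$, gives $(\alpha+\beta+\gamma)v_{14,4}=0$, hence $\lambda(w)=(0,0,0)$.

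This replaces your Step 2, whose relations $(2\alpha+\beta+\gamma)v_{14,4}=0$ and $(\beta+\gamma)v_{14,4}=0$ are transplanted from degree one and never derived from $Ew=0$ or $(x_4C)w=0$. Your Step 3 is fine. In the standard module the weight spaces (spanned by the individual $\de_l$ and $d_l$) are one-dimensional, so weights together with the $x_i\de_j$- and $b_{ij}$-relations determine $w$ up to the free parameter $\rho$.
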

\begin{proof}
We apply $x_4^2 \de_1$ to $w$ and we consider the coefficient of $\de_1 d_r$, with $r\neq 1,4$. There is only one term which contributes  which is $\frac{1}{2}\de_4^2d_r\otimes v_{44,r}$ and it provides $v_{44,r}=0$. We can similarly obtain $v_{44,1}=0$ by considering the action of $x_4^2\de_2$ and the coefficient of $\de_2 d_1$. By application of $x_1\de_4, x_2 \de_4, x_3 \de_4$ to  $v_{44,1}$, $v_{44,2}$, $v_{44,3}$, respectively, we obtain
\[\frac{1}{2}v_{44,4}=v_{41,1}=v_{42,2}=v_{43,3}.
\]
Note that the space spanned by all $v_{ij,k}$ has dimension at most 4 and it is spanned by $v_{11,1}, v_{21,1},v_{31,1},v_{41,1}$.
Now we consider the coefficient of $\de_1 d_4$ in $(x_4^2 \de_1)w$: this provides $(x_4 \de_1)v_{14,4}=v_{41,1}$. Therefore, if $v_{14,4}=0$ then $v_{41,1}=0$ and we obtain $v_{r1,1}=0$ by applying $x_r\de_4$. 

So we can assume that $v_{14,4}\neq 0$ and so it is a $\phat$-singular vector. Apply $x_1 \de_4$ to $(x_4 \de_1)v_{14,4}-v_{41,1}=0$; we obtain
\[
h_{14}v_{14,4}-v_{11,1}+v_{14,4}=(\alpha+\beta+\gamma+1)v_{14,4}-v_{14,4}=(\alpha+\beta+\gamma)v_{14,4}=0,
\]
since $v_{11,4}=0$ and $v_{11,1}=2v_{14,4}$ and $v_{14,4}$ has weight $(\alpha,\beta,\gamma+1)$. Therefore we have $\alpha=\beta=\gamma=0$ and $v_{14,4}$ has weight $(1,0,0)$, i.e. we are in the standard module.

In particular, up to a nonzero scalar factor, $v_{14,4}=d_1$. It follows that 
	\[\frac{1}{2}v_{11,1}=v_{12,2}=v_{13,3}=v_{14,4}=d_1,
\] 
and the compatibility with the action of $x_i\de_j$ and $b_{ij}$ forces
\begin{equation}\label{43} 
w=\sum_{i, j} \de_i \de_j d_i\otimes d_j+\de_i d_i d_j \otimes \de_j+\rho(d_1d_2d_3\otimes d_4-d_1d_2d_4\otimes d_3+d_1d_3d_4\otimes d_2-d_2d_3d_4\otimes d_1).\end{equation}
\end{proof}
\begin{proposition}
	The vector $w\in M_t(1,0,0)$ given in  \eqref{43}
	%\[w=\sum_{i, j} \de_i \de_j d_i\otimes d_j+\de_i d_i d_j \otimes \de_j+\rho(d_1d_2d_3\otimes d_4-d_1d_2d_4\otimes d_3+d_1d_3d_4\otimes d_2-d_2d_3d_4\otimes d_1)\in M_t(1,0,0)\]
is a singular vector if and only if $\rho=1$ and $t=3$.
\end{proposition}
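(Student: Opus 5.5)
By the Lemma characterizing singular vectors, $w$ is singular if and only if it is annihilated by $x_i\de_{i+1}$ ($i=1,2,3$), by $b_{44}$, by $E=x_4^2d_3-x_3x_4d_4$ and by $x_4C$. The plan is to check these conditions one at a time, computing in the standard module $W_t(1,0,0)$ with the explicit action of Example \ref{standard}.

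\textbf{Step 1: the conditions that hold for every $\rho$ and $t$.} The $\mathfrak{sl}_4$-part of $w$ is $\sum_{i,j}\de_i\de_jd_i\otimes d_j+\de_id_id_j\otimes\de_j$ together with a $\rho$-multiple of the natural alternating element $\sum\pm d_{ijk}\otimes d_l$. Both are built from invariant contractions, so both have weight $(1,0,0)$ and are killed by $x_i\de_{i+1}$. Moreover $w$ was constructed in Lemma \ref{lemmadeg3bis} to be compatible with \eqref{vIJaction}, so $b_{44}w=0$ as well. These facts are independent of $\rho$ and $t$, and I would record this at once.

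\textbf{Step 2: the condition $Ew=0$.} I expect this condition to fix $\rho$. I would move $E$ through the monomials using
\[
[E,\de_3]=\tfrac12 b_{44},\quad [E,\de_4]=-\tfrac12 b_{34}+\tfrac32 a_{34},\quad [E,d_1]=3x_4\de_2,\quad [E,d_2]=-3x_4\de_1,
\]
as in the proof of Lemma \ref{Ewgrado2}. Then I would act with the resulting elements of $\phat$ on $d_j$ and $\de_j$ in $W_t(1,0,0)$. Only the $a_{34}$ terms carry a factor of $t$, and only when they act on even vectors. I would then choose one specific coefficient, for instance that of $d_1d_2$ or of $\de_1d_2$ in $Ew$. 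In that coefficient the $\rho$-part (from $3x_4\de_2$ acting on $d_1d_2d_3\otimes d_4$, and similar terms) must cancel the contribution of the $\de\de d$ and $\de dd$ parts. I expect this to force $\rho=1$, and then I would check that all remaining coefficients of $Ew$ vanish.

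\textbf{Step 3: the condition $(x_4C)w=0$.} I expect this condition to fix $t$. Following the degree-1 computation in Proposition \ref{w8}, $x_4C$ acts on $\de_i$ through $-\tfrac14 b_{i4}+\tfrac54 a_{i4}$, and the diagonal terms $\tfrac14(h_{14}+h_{24}+h_{34}-5t+\dots)$ contribute scalars. Everything should collapse to a multiple of a single vector, which by weight considerations I expect to be $d_1d_4\otimes d_4$-like or $\de_1 d_1\otimes d_4$-like. Its coefficient should be an affine function of $t$, presumably a multiple of $(t-3)$, in analogy with the factor $(t-1)$ found in degree 1. Setting it to zero gives $t=3$.

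For the converse, substituting $\rho=1$ and $t=3$ into the computations of Steps 2 and 3 makes every coefficient vanish. The main obstacle is the bookkeeping of signs in Step 3: super-commutation signs arise when $x_4C$ passes through two or three odd $d$'s, and there are the $t$-dependent signs $a\cdot v=-t[a,v]$ on even vectors. To control these errors I would first compute only the coefficient of one well-chosen monomial to extract the condition on $t$. I would verify the vanishing of the other coefficients afterwards, possibly by computer as the paper does for degree 5.
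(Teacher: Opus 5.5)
Your framework is sound: by the characterization lemma it suffices to check $x_i\de_{i+1}$, $b_{44}$, $E$ and $x_4C$. Your Step~1 is also correct for all $\rho,t$, although $w$ is $\slq$-invariant, of weight $(0,0,0)$, not $(1,0,0)$. The gap lies in what Steps~2 and~3 actually give: the chain ``$E$ fixes $\rho=1$, then $x_4C$ fixes $t=3$'' breaks at Step~2.

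Write $w=\Theta\, w[1_E]+\rho\,\Omega$, with $\Theta=\sum_i\de_id_i$ and $\Omega=d_1d_2d_3\otimes d_4-d_1d_2d_4\otimes d_3+d_1d_3d_4\otimes d_2-d_2d_3d_4\otimes d_1$. For every $t$, $Ew[1_E]=0$, and $w[1_E]$ is killed by $b_{44},b_{34},x_4\de_1,x_4\de_2$. Hence only the $\tfrac{3}{2}a_{34}$ part of $[E,\de_4]$ survives, and $E(\Theta w[1_E])=-\tfrac{3}{2}\,d_4\,a_{34}w[1_E]$. Moreover $a_{34}w[1_E]=(1-t)(d_3\otimes d_4-d_4\otimes d_3)$, precisely because $a_{34}$ acts on the even vectors $\de_3,\de_4$ with the factor $t$. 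Combined with $E\Omega=3\,d_3d_4\otimes d_4$, this gives
\[
Ew=-\tfrac{3}{2}\,(t-2\rho-1)\,d_3d_4\otimes d_4 .
\]
So the $E$-condition yields only the relation $t=2\rho+1$, not $\rho=1$. The coefficients of $d_1d_2$ or $\de_1d_2$ that you propose to inspect vanish identically for weight reasons.

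Likewise, $(x_4C)w$ does not collapse to a single vector. Keeping $\rho$ free, one finds
\[
(x_4C)w=\tfrac{5}{4}\big[(t-2\rho-1)A+(2\rho-2)B+(t-3)\,\de_4d_4\otimes d_4+(4\rho-t-1)D\big],
\]
where $A=\sum_{i\le 3}\de_id_i\otimes d_4$, $B=\sum_{i\le 3}d_4\de_i\otimes d_i$ and $D=\sum_{i\le 3}d_4d_i\otimes\de_i$. This becomes a multiple of $t-3$ only after substituting $\rho=1$, which Step~2 does not justify.

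The repair stays inside your framework. Since $A$, $B$, $\de_4d_4\otimes d_4$ and $D$ are linearly independent, $(x_4C)w=0$ alone forces $\rho=1$ and $t=3$, which proves the ``only if'' part. For the ``if'' part, checking all four conditions at $\rho=1$, $t=3$ (where both displayed expressions vanish) is exactly what is needed. It is also more explicit than the paper's appeal to ``an explicit computation''.

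In fact this is the paper's own computation in disguise. The paper applies $x_4^2\de_4$, and $x_4^2\de_4-\tfrac{2}{5}x_4C$ is divergence-free, so it lies in $S_1(4)$. Now $S_1(4)$ annihilates $w$: its lowest weight vector is $x_4^2\de_1=-\tfrac{1}{3}[b_{24},E]$, $b_{24}w=0$, and $b_{24}$ kills $Ew\in\C\, d_3d_4\otimes d_4$. Therefore $2(x_4^2\de_4)w=\tfrac{4}{5}(x_4C)w$, which reproduces exactly the paper's four coefficients $t-2\rho-1$, $2\rho-2$, $t-3$, $4\rho-t-1$.
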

\begin{proof} If we apply  $2x_4^2 \de_4$ to \eqref{43}, we obtain
	\begin{align*}
	2(x_4^2 \de_4)w=&(t-2\rho- 1) (\de_1 d_1+\de_2 d_2+\de_3 d_3)\otimes d_4+(2\rho-2)d_4(\de_1 \otimes d_1+\de_2 \otimes d_2 +\de_3\otimes d_3)\\
	&+(t-3)\de_4 d_4\otimes d_4+(4\rho-t-1)d_4(d_3\otimes \de_3+d_2\otimes \de_2+d_1\otimes \de_1).
	\end{align*}
It follows that we necessarily have $\rho=1$ and $t=3$.	
An explicit computation shows that under these hypotheses $w$ is indeed singular.
\end{proof}

Now we can assume that all $v_{ij,k}=0$ and that the $y_{i,jk}$ satisfy the conditions in Lemma \ref{lemmadeg3}. The action of $x_4^2 \de_1$ on $w$ gives us further constraints.
\begin{lemma}\label{lemmadeg3ter}
Let $v_{ij,k}=0$ for all $i,j,k$ and $\alpha+\beta+\gamma\neq 0,1$. Then $y_{i,ij}=0$ for all $j\neq 1$.
\end{lemma}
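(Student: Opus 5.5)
We follow the pattern of Lemma \ref{lemmadeg3} and Lemma \ref{lemmadeg3bis}: apply a suitable element of $E(4,4)_1$, here $x_4^2\de_1$ (and its companions obtained by conjugating with $x_i\de_j$), to $w$ in the form \eqref{31a}, and read off single coefficients. Since all $v_{ij,k}=0$, the only summands of $w$ surviving the action of a quadratic vector field that contribute to degree-2 terms of type $\de_r d_s$ are the $\de_i d_j d_k\otimes y_{i,jk}$. The contributions of the $z_{ijk}$ land in $d_rd_s$ components. We therefore look at the coefficient of $\de_r d_s$ in $(x_4^2\de_1)w$.

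First, using $[x_4^2\de_1,\de_i]=-2\delta_{i4}x_4\de_1$ and $[x_4^2\de_1,d_j]=\delta_{j1}b_{44}$, we compute the $\de_rd_s$ coefficient. There are two sources. The first is the term $-2x_4\de_1$ acting on the $\de_4 d_jd_k\otimes y_{4,jk}$ summands, which moves a $d$ past and produces $x_4\de_1 y_{4,jk}$ or a replacement of $d_4$ by $d_1$. The second is $b_{44}$ hitting $d_1$ in $\de_i d_1 d_k\otimes y_{i,1k}$, which yields $\de_i\de_4$-type terms and therefore irrelevant ones. We expect equations of the form $(x_4\de_1)y_{4,4s}+c\,y_{1,1s}=0$ (for $s\neq 1,4$), using Lemma \ref{lemmadeg3}(a) to replace all $y_{i,ik}$ with a common value $Y_k:=y_{i,ik}$ for $i<k$, and $y_{i,jk}=0$ for distinct indices with $k$ maximal.

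Next, we apply raising operators $x_1\de_4$ (and $x_2\de_4$, $x_3\de_4$) to these relations. Using that the $y$'s form an $N$-equivariant image as in \eqref{vIJaction}, $h_{14}$ acts on $Y_j$ by a scalar determined by Table-type weights, namely a shift of $\alpha+\beta+\gamma$. This should give $(\alpha+\beta+\gamma+\kappa)Y_j=0$ for some small integer $\kappa$, presumably $\kappa\in\{0,-1\}$, hence the hypothesis $\alpha+\beta+\gamma\neq 0,1$. For $j=4$ we use the coefficient of $\de_1d_4$. For $j=2,3$ we use $\de_1d_j$, or we derive them from $Y_4$ via $x_2\de_4$ and $x_3\de_4$ (which map $y_{i,i4}$ to $y_{i,i2}$ and $y_{i,i3}$ up to sign). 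Vanishing of all $y_{i,ij}$ with $j\ge 2$ then follows from Lemma \ref{lemmadeg3}(a), the relations $y_{i,ij}=-y_{i,ji}$ covering $i>j$.

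The main obstacle is the bookkeeping of signs and of the exact integer $\kappa$ in the eigenvalue of $h_{14}$. This must come out so that the excluded values are precisely $0$ and $1$. If a single coefficient does not isolate $Y_4$, we will combine the $\de_1d_4$ and $\de_4d_4$ coefficients of $(x_4^2\de_1)w$, or additionally use $b_{12}$ applied to the resulting relation, as in the proof of Lemma \ref{Ewgrado2}.
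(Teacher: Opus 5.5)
There is a genuine gap. Your overall plan matches the paper's: apply $x_4^2\de_1$ to $w$, read off coefficients, then raise with $x_1\de_4$ so that $h_{14}$ produces the factors $\alpha+\beta+\gamma$ and $\alpha+\beta+\gamma-1$. But you read off the wrong coefficients, so as written the argument yields no equations.

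\textbf{The $\de_rd_s$ coefficients are identically zero.} Once all $v_{ij,k}=0$, every $\de_rd_s$ coefficient of $(x_4^2\de_1)w$ vanishes.
\begin{itemize}
\item For $i\neq 4$, the factor $\de_i$ survives only if $d_1$ is hit. This gives $\de_i b_{44}d_k\otimes y_{i,1k}=-\de_id_k\otimes b_{44}y_{i,1k}$, because $[b_{44},d_k]=0$ (both forms are closed). Here $b_{44}\in L_0$, so no $\de_4$ is created, and $b_{44}y_{i,1k}$ is a combination of $v$'s.
\item For $i=4$, $\de_4$ is converted into $-2x_4\de_1\in L_0$, so every resulting term is of type $d_jd_k$.
\end{itemize}
The term $2\de_1d_4\otimes y_{14,1}$ in the paper's display is really $2\de_1d_4\otimes v_{14,1}=0$.

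\textbf{Where the information actually is.} It sits in the $d_rd_s$ components you set aside. There the $z$-contribution $d_jd_k\otimes b_{44}z_{1jk}=-2\delta_{k4}\,d_jd_4\otimes y_{4,1j}$ exactly cancels the term $-2d_4d_k\otimes y_{4,1k}$ coming from $[x_4\de_1,d_1]=d_4$. One obtains
\[
(x_4^2\de_1)w=-2\sum_{j<k}d_jd_k\otimes (x_4\de_1)y_{4,jk},
\]
so $(x_4\de_1)y_{4,jk}=0$ for all $j<k$. These six equations are the starting point of the paper's proof.

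\textbf{Your final step also fails.} The antisymmetry $y_{i,ij}=-y_{i,ji}$ only swaps the last two indices. So $y_{4,43}$, $y_{4,42}$, $y_{3,32}$ are independent unknowns, not expressible through your $Y_k$. Lemma \ref{lemmadeg3}(a) only gives $y_{1,14}=y_{2,24}=y_{3,34}$ and $y_{1,13}=y_{2,23}$. Moreover, the raising operators go the other way: $(x_2\de_4)y_{1,12}=-y_{1,14}$ and $(x_3\de_4)y_{1,13}=-y_{1,14}$. Hence $Y_4=0$ says nothing about $Y_2$ or $Y_3$.

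\textbf{How the paper finishes.} It uses all six equations, with $y_{1,34}=y_{1,24}=y_{1,23}=0$ from Lemma \ref{lemmadeg3}(a) to kill extra terms.
\begin{itemize}
\item One application of $x_1\de_4$ to the $d_3d_4$ and $d_2d_4$ equations gives $(\alpha+\beta+\gamma)y_{4,43}=(\alpha+\beta+\gamma)y_{4,42}=0$.
\item Two applications to the $d_1d_4$ equation give $(\alpha+\beta+\gamma)y_{1,14}=0$.
\item The $d_2d_3$ equation (followed by $x_3\de_4$) and the $d_1d_3$, $d_1d_2$ equations (with $x_1\de_4$ twice) give relations with the factor $\alpha+\beta+\gamma-1$. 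These tie $y_{3,32}$, $y_{1,13}$, $y_{1,12}$ to $y_{4,42}$ and $y_{4,43}$.
\end{itemize}
Together with Lemma \ref{lemmadeg3}(a), these relations kill all nine $y_{i,ij}$ with $j\neq 1$, provided $\alpha+\beta+\gamma\neq 0,1$.
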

\begin{proof}
Let us compute $(x_4^2\de_1)w$. We have
\begin{align*}
(x_4^2 \de_1)w=&2\de_1d_4\otimes y_{14,1}
-2d_1d_2 \otimes (x_4\de_1)y_{4,12}
 -2 d_1d_3\otimes (x_4\de_1)y_{4,13}\\
 & 
   -2 d_1d_4\otimes (x_4\de_1)y_{4,14} 
 -2d_2d_3\otimes (x_4\de_1)y_{4,23} 
 -2 d_2d_4\otimes (x_4\de_1)y_{4,24} 
 -2 d_3d_4\otimes (x_4\de_1)y_{4,34}.
\end{align*}

The coefficient of $d_3d_4$ in $(x_4^2 \de_1)w$ provides $(x_4 \de_1) y_{4,43}=0$ and applying $x_1 \de_4$ we have 
\[(\alpha+\beta+\gamma)y_{4,43}=0,\] since $y_{4,43}$ has weight $(\alpha,\beta+1,\gamma-1)$ and $y_{1,34}=0$ by Lemma \ref{lemmadeg3}.

The coefficient of $d_2d_4 $ in $(x_4^2 \de_1)w$ provides $(x_4 \de_1) y_{4,42}=0$ and applying $x_1 \de_4$ we have 
\[(\alpha+\beta+\gamma)y_{4,42}=0,\] since $y_{4,42}$ has weight $(\alpha+1,\beta-1,\gamma)$ and $y_{1,24}=0$ by Lemma \ref{lemmadeg3}.

The coefficient of $d_2d_3 $ in $(x_4^2 \de_1)w$ provides $(x_4 \de_1) y_{4,23}=0$ and applying $x_1 \de_4$ we have 
\[(\alpha+\beta+\gamma-1)y_{4,23}=0,\] since $y_{4,23}$ has weight $(\alpha+1,\beta,\gamma-2)$ and $y_{1,23}=0$ by Lemma \ref{lemmadeg3}. Applying $x_3 \de_4$ to the last equation we have
\[
0=(\alpha+\beta+\gamma-1)(y_{3,32}-y_{4,42})=(\alpha+\beta+\gamma-1)y_{3,32}+y_{4,42}.
\]

The coefficient of $d_1d_3 $ in $(x_4^2 \de_1) w$ provides $(x_4 \de_1) y_{4,13}=0$ and applying $x_1 \de_4$ twice we have 
\[(\alpha+\beta+\gamma-1)y_{1,13}=y_{4,43},\] since $y_{4,13}$ has weight $(\alpha-1,\beta+1,\gamma-2)$, $y_{1,34}=0$ by Lemma \ref{lemmadeg3}, and $y_{1,13}$ has weight $(\alpha,\beta+1,\gamma-1)$. 
Recall also that $y_{2,23}=y_{1,13}$ by Lemma \ref{lemmadeg3}.

The coefficient of $d_1d_4 $ in $(x_4^2 \de_1)w$ provides $(x_4 \de_1) y_{4,41}=0$, and applying $x_1 \de_4$ twice we have 
\[(\alpha+\beta+\gamma)y_{1,14}=0,\] since $y_{4,41}$ has weight $(\alpha-1,\beta,\gamma)$,  $y_{1,14}$ has weight $(\alpha,\beta,\gamma+1)$. 
By Lemma \ref{lemmadeg3} we have $y_{1,14}=y_{2,24}=y_{3,34}$.

Finally, the coefficient of $d_1d_2 $ in $(x_4^2 \de_1)w$ provides $(x_4 \de_1)y_{4,12}=0$ and applying $x_1 \de_4$ twice we have 
\[(\alpha+\beta+\gamma-1)y_{1,12}=y_{4,42},\] since $y_{4,12}$ has weight $(\alpha,\beta-1,\gamma-1)$, $y_{1,12}-y_{4,42}$ has weight $(\alpha+1,\beta-1,\gamma)$ and $y_{1,24}=0$ by Lemma \ref{lemmadeg3}. Using all these equations the result follows.
\end{proof}
\begin{lemma}\label{lemmadeg3quater} Assume that $v_{ij,k}=0$ in \eqref{31a}
for all $i,j,k$ and $\alpha+\beta+\gamma\neq 0,1$. Then $w=0$. 
\end{lemma}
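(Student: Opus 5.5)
Under these hypotheses, Lemmas \ref{lemmadeg3} and \ref{lemmadeg3ter} give
\[
w=\sum_{i=2}^4\de_i d_1 d_i\otimes y_{i,1i}+\sum_{i<j<k}d_id_jd_k\otimes z_{ijk}.
\]
Indeed, the $v$'s vanish; $y_{i,jk}=0$ whenever $i,j,k$ are distinct; and $y_{i,ij}=0$ for $j\neq 1$. So the only surviving $y$-components are $y_{i,i1}=-y_{i,1i}$ with $i=2,3,4$. The plan is to kill these three vectors first, and then the $z_{ijk}$, using the conditions $Ew=0$ and $(x_4C)w=0$ together with the $N$-module relations listed after \eqref{31a}. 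The approach is the same as in the degree 1 and degree 2 cases: read off specific coefficients, then apply $x_i\de_j$ and $b_{ij}$ to obtain scalar multiples of weights.

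For the $y$-part, I would first use the relations $(x_i\de_j)y_{r,su}$. Applying $x_2\de_3$ to $y_{3,31}$ gives $y_{2,21}$ up to a sign, and $x_3\de_4$ maps $y_{4,41}$ to $y_{3,31}$. Hence all three are nonzero as soon as $y_{4,41}$ is. In that case $y_{4,41}$ is the top vector and has weight $(\alpha-1,\beta,\gamma)$. Next I would compute the coefficient of $\de_2$ (or $\de_4$) in $Ew$, exactly as in \eqref{de2}; it should give a relation of the form $(x_4\de_2)y_{2,21}-\ldots=0$ involving $z$-terms. Applying $x_1\de_4$ or $x_2\de_4$ to this relation should produce a factor like $(\alpha+\beta+\gamma-1)y_{4,41}$ or $(\beta+\gamma)y$. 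Together with $b_{ij}$-images, which express $y$'s as $b$-images of $z$'s (for instance $b_{11}z_{1ij}$), this should force the $y$'s to vanish when $\alpha+\beta+\gamma\neq 0,1$.

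Once all $y=0$, the relations $b_{ij}z_{rhk}=0$ say that every $z_{ijk}$ is annihilated by $\phat_1$. Moreover, $z_{234}$ is the lowest vector in the chain generated by $x_i\de_j$, since $(x_1\de_2)z_{234}=-z_{134}$, and so on. I would then take the coefficients of $\de_h$ in $(x_4C)w$. The terms $d_id_jd_k\otimes z_{ijk}$ contribute $\de$-terms via $[x_4C,d_i]\ni 10\,x_4\de$-type brackets, just as $z_{23}$ was killed in degree 2. This should give $z_{234}=0$, hence all $z_{ijk}=0$ by applying $x_1\de_2$ and its relatives. If a scalar coefficient appears instead, I would pin it down using the weight of $z_{234}$ together with $\alpha+\beta+\gamma\neq 0,1$.

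The main obstacle is the $y$-step. The needed relation mixes the $y_{i,i1}$ with the $z$'s through $a_{34}$ and $x_4\de_1$ actions. I would choose carefully which coefficient of $Ew$ to use (likely that of $d_1d_4$ or $\de_2 d_1$), so that after applying $b_{12}$ the $z$-contributions cancel via the $b_{ij}z$ formulas, leaving a weight scalar times $y_{4,41}$.
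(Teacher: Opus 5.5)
There is a genuine gap: your reduced form of $w$ is not justified, and in both the $y$-step and the $z$-step you orient the weight chains incorrectly, so even granting the form, the propagation arguments fail.

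\textbf{The starting form.} Lemma~\ref{lemmadeg3}(a) kills $y_{i,jk}$ with $i,j,k$ distinct only when the largest index is $k$, i.e.\ (using antisymmetry) when $i$ is not the largest. So $y_{4,12}$, $y_{4,13}$, $y_{4,23}$, $y_{3,12}$ survive both Lemma~\ref{lemmadeg3} and Lemma~\ref{lemmadeg3ter}; they appear in Figure~\ref{weights}. They must be removed before $w$ takes your form. The paper does this as its first step. Since $y_{1,12}=0$ by Lemma~\ref{lemmadeg3ter}, the coefficient of $\de_1d_1$ in $Ew$ gives $y_{4,12}=0$. Then $y_{4,13}=-(x_2\de_3)y_{4,12}$, $y_{4,23}=-(x_1\de_2)y_{4,13}$ and $y_{3,12}=(x_3\de_4)y_{4,12}$ also vanish.

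\textbf{The $y$-step.} You claim $y_{2,21},y_{3,31},y_{4,41}$ form a chain, but they all have the same weight $(\alpha-1,\beta,\gamma)$. By the $N$-relations, $(x_2\de_3)y_{3,31}=y_{2,31}$ and $(x_3\de_4)y_{4,41}=y_{3,41}$, and both vanish by Lemma~\ref{lemmadeg3}(a). So nothing propagates, and each must be killed separately; your sketch leaves this as ``should''. The paper proceeds as follows:
\begin{itemize}
\item the $\de_2d_1$-coefficient of $Ew$ gives $(x_4\de_1)y_{2,21}=0$;
\item applying $x_1\de_4$ and using $y_{2,24}=0$ yields $(\alpha+\beta+\gamma-1)y_{2,21}=0$;
\item the $\de_2d_4$-coefficient gives $y_{4,41}=y_{2,21}$;
\item $y_{3,31}$ is obtained only at the very end, as $-b_{23}z_{123}$.
\end{itemize}
So ``all $y$'s first, then the $z$'s'' is not how these equations close.

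\textbf{The $z$-step.} The orientation is reversed. One has $(x_1\de_2)z_{234}=0$ and $(x_1\de_2)z_{134}=-z_{234}$. Hence $z_{234}$, of weight $(\alpha+1,\beta,\gamma)$, is the \emph{top} of the chain $z_{123}\to z_{124}\to z_{134}\to z_{234}$, and $z_{123}$, of weight $(\alpha,\beta,\gamma-1)$, is the bottom. Knowing $z_{234}=0$ says nothing about the others under raising operators; what you need is $z_{123}=0$.

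The paper first gets $z_{234}=0$ from the $d_3d_4$-coefficient of $Ew$ by applying $x_1\de_4$, which produces the factor $\alpha+\beta+\gamma+1$. It then uses the $d_1d_3$-coefficient, $(x_4\de_1)z_{123}=0$, and applies $x_1\de_4$. Since $(x_1\de_4)z_{123}=-z_{234}=0$, this gives $(\alpha+\beta+\gamma-1)z_{123}=0$. Only after this do all $z_{ijk}$ vanish, and with them $y_{3,31}$.
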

\begin{proof}
	We consider the coefficient of $\de_1 d_1$ in $Ew$: since, by Lemma \ref{lemmadeg3ter}, $y_{1,12}=0$,  it provides $y_{4,12}=0$, and hence also $y_{4,13}=y_{4,23}=y_{3,12}=0$ by applying suitable $x_i \de_j$ with $i<j$. Therefore, by Lemma \ref{lemmadeg3ter}, we have
	\begin{align*}	
	w&=\de_2 d_2d_1\otimes y_{2,21}+\de_3d_3d_1\otimes y_{3,31}+\de_4d_4d_1\otimes y_{4,41}\\
	&+d_1d_2d_3\otimes z_{123}+d_1d_2d_4\otimes z_{124}+d_1d_3d_4\otimes z_{134}+d_2d_3d_4\otimes z_{234}.
	\end{align*}
	The coefficient of $\de_2 d_1$ in $Ew$ provides $(x_4 \de_1)y_{2,21}=0$ and applying $x_1 \de_4$ one gets $(\alpha+\beta+\gamma-1)y_{2,21}=0$ (since $y_{2,24}=0$) and hence $y_{2,21}=0$. 
	
	The coefficient of $\de_2 d_4$ provides $y_{4,41}=y_{2,21}$ and hence $y_{4,41}=0$.
	%The coefficient of $\de_3 d_3$ provides $x_4 \de_2 y_{3,31}=0$ which implies $(\beta+\gamma)y_{3,31}=0$.
	
	The coefficient of $d_3d_4$ provides $-z_{123}+(x_4\de_2)z_{134}-(x_4 \de_1)z_{234}=0$. Applying $x_1 \de_4$ we obtain $(\alpha+\beta+\gamma+1)z_{234}=0$ so that $z_{234}=0$.
	
	%Applying $x_2 \de_4$ to the same equation $-z_{123}+x_4\de_2 z_{134}-x_4 \de_1 z_{234}=0$ we obtain $(\beta+\gamma)z_{134}=0$, since the weight of $z_{134}$ is $(\alpha-1,\beta+1,\gamma)$.
	
	The coefficient of $d_1d_3$ provides $(x_4 \de_1)z_{123}=0$ and applying $x_1 \de_4$ we obtain
	$(\alpha+\beta+\gamma-1)z_{123}=0$ hence $z_{123}=0$, and hence all $z_{ijk}=0$ and also $y_{3,31}=-b_{23}z_{123}=0$.
\end{proof}

\begin{lemma} If $v_{ijk}=0$ in \eqref{31a} for every $i,j,k$, then $\alpha=1$, $\beta=\gamma=0$, $t=0$, and 
$w=(\sum_{i=1}^4\de_id_i)d_11$ is, up to a scalar factor, the unique $E(4,4)$-singular vector of degree 3.
%.weight $(1,0,0)$ in $M_0(0,0,0)$.
\end{lemma}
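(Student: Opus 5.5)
By Lemma \ref{lemmadeg3quater}, if all $v_{ij,k}=0$ and $w\neq 0$, then $\alpha+\beta+\gamma\in\{0,1\}$. So $(\alpha,\beta,\gamma)$ is one of $(0,0,0)$, $(1,0,0)$, $(0,1,0)$, $(0,0,1)$, and I will treat these four weights one at a time.

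Throughout I will reuse the equations obtained in the proofs of Lemmas \ref{lemmadeg3ter} and \ref{lemmadeg3quater}, keeping the factors $\alpha+\beta+\gamma$ and $\alpha+\beta+\gamma-1$ that were discarded there. These are the coefficients of $d_rd_s$ in $(x_4^2\de_1)w$ and of $\de_1d_1,\de_2d_1,\de_2d_4,d_3d_4,d_1d_3$ in $Ew$. To them I add the conditions coming from $b_{44}w=0$ and $(x_4C)w=0$, written in the style of the degree-2 computation, that is, coefficient by coefficient, followed by suitable $x_i\de_j$ and $b_{ij}$.

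\emph{Weight $(0,0,1)$.} Here $y_{1,14}=y_{2,24}=y_{3,34}$ has weight $(0,0,2)$, and the other $y$'s and $z$'s have non-dominant weights from Table-type bookkeeping. I expect them to be forced to vanish. The relation $y_{1,14}=-b_{..}z_{..}$ should then kill the rest, in the same way as in the proof of Lemma \ref{lemmadeg3quater}.

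\emph{Weight $(0,1,0)$.} I plan to use duality (Theorem \ref{duality}, Corollary \ref{duali}) as in degree 1. A singular vector here gives a morphism $\tilde M_{t-3}(\lambda)\to M_t(a,b,c)$. Dualising produces a degree-3 singular vector in a module of the form $\tilde M(\ldots)$ whose weight has $\alpha+\beta+\gamma\geq 2$. By the previous lemmas, the only such vector is the one with $v_{14,4}\neq 0$, which lives in $M_3(1,0,0)$. Comparing weights should rule this out.

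\emph{Weight $(0,0,0)$.} I will argue in the same way: by weights, $w$ reduces to $\sum\de_id_id_1$-type terms with $y$'s of weight $(1,0,0)$ that is not dominant-compatible. Alternatively, I will use the duality argument.

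\emph{Weight $(1,0,0)$.} The main case is $\alpha=1$, $\beta=\gamma=0$. From the surviving equations, for instance $(\alpha+\beta+\gamma-1)y_{1,13}=y_{4,43}$, we get $y_{4,43}=0$, and similarly $y_{4,42}=0$ and $y_{1,14}=0$. The remaining unknowns are then $y_{1,12}$, $y_{1,13}$ and $y_{4,12},\dots$, together with the $z$'s. The $Ew$ coefficients should force $y_{2,21}=y_{3,31}=y_{4,41}=:y$, a $\phat$-singular vector of weight $(0,0,0)$. The $z_{ijk}$ are then determined by the $b_{ij}$-relations, so $z_{ijk}$ is a multiple of $\epsilon$-contracted $a_{..}y$. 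Comparing with the sign in \eqref{43}, I expect the $z$'s to vanish when $y$ spans a trivial-type module. The module must then be $M_t(0,0,0)$ (or $M_t(0,1,0)$), with $w=(\sum_i\de_id_i)d_1\otimes 1$ up to scalar.

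\emph{Main obstacle.} The hardest step is the final condition $(x_4C)w=0$. It yields a scalar condition of the form $(\text{const})\,t\cdot y=0$ together with $a_{14}y=0$. The term $a_{14}y=0$ holds in $W_t(0,0,0)$ only when $t=0$, by Remark \ref{a12}, which forces $t=0$. Conversely, I will check directly that $(\sum\de_id_i)d_1\otimes 1$ is singular in the trivial module $M_0(0,0,0)$. This follows by analogy with the computation showing that $w=(\sum\de_id_i)\otimes1$ fails only through $(x_4C)$: the extra factor $d_1$ is exactly what makes $(x_4C)w$ vanish. Uniqueness follows from the reductions above.
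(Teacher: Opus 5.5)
Your starting point ($\alpha+\beta+\gamma\in\{0,1\}$ by Lemma \ref{lemmadeg3quater}) and your end point match the paper: a $\phat$-singular vector $y=y_{2,21}=y_{3,31}=y_{4,41}$ of weight $(0,0,0)$, then $t=0$ via Remark \ref{a12}. The reductions in between, however, are only announced (``should force'', ``I expect''), or they rest on incorrect weight bookkeeping. Weights alone do not eliminate the relevant components:
\begin{itemize}
\item For $(\alpha,\beta,\gamma)=(0,0,0)$, the top vectors $y_{1,14}$ and $z_{234}$ have the dominant weights $(0,0,1)$ and $(1,0,0)$. No $y$ has weight $(1,0,0)$, and that weight is dominant anyway.
\item For $(0,0,1)$, once $y_{1,14}=0$, the vector $y_{1,13}$ has dominant weight $(0,1,0)$.
\item For $(1,0,0)$, the element $y_{2,21}-y_{3,31}$ of the block of Figure \ref{weights} generated by $y_{4,12}$ has dominant weight $(0,0,0)$.
\item $z_{234}$, of weight $(\alpha+1,\beta,\gamma)$, is never excluded by weights.
\end{itemize}
The factors you keep from Lemma \ref{lemmadeg3ter} give only $y_{1,14}=y_{4,42}=y_{4,43}=0$ when the sum is $1$, and nothing on $y_{1,14}$ when the sum is $0$.

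The missing ingredient is the specific coefficients of $Ew=0$ that the paper uses:
\begin{itemize}
\item The coefficient of $\de_1d_4$ gives $(x_4\de_2)y_{1,14}=0$ when the sum is $0$, hence $y_{1,14}=0$ after applying $x_2\de_4$. When the sum is $1$ it gives $y_{1,12}=0$, hence $y_{1,13}=y_{1,14}=0$.
\item The coefficient of $\de_1d_1$ then gives $y_{4,12}=0$. This kills the whole block and yields $y_{2,21}=y_{3,31}=y_{4,41}$, whose weight $(\alpha-1,\beta,\gamma)$ forces $(\alpha,\beta,\gamma)=(1,0,0)$ if it is nonzero.
\item The coefficient of $d_1d_4$ gives $a_{34}y=0$, hence $t=0$.
\item The coefficient of $d_3d_4$ is $-z_{123}+(x_4\de_2)z_{134}-(x_4\de_1)z_{234}=0$. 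Applying $x_1\de_4$ and $x_2\de_4$ disposes of the $z$'s, which is exactly what your ``I expect the $z$'s to vanish'' requires.
\end{itemize}

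The duality argument for weight $(0,1,0)$ does not work. There $y_{1,12}$ has the dominant weight $(1,0,0)$, so a priori $W=W_t(1,0,0)$. Dualizing $\tilde M_{t-3}(0,1,0)\to M_t(1,0,0)$ gives $M_{-t}(1,0,0)\to\tilde M_{3-t}(0,1,0)$. This is a degree-$3$ singular vector of weight $(1,0,0)$, with weight sum $1$, not $\geq 2$. By the computations of Lemma \ref{lemmadeg3bis} it has all $v_{ij,k}=0$. That is precisely the situation of the statement you are proving, now over a reducible (Kac) inducing module, so the argument is circular. The paper avoids this by treating all weights of sum $1$ uniformly through the $Ew$ coefficients above.

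Finally, obtaining $t=0$ from a term $a_{14}y$ in $(x_4C)w$ is a plausible substitute for the paper's $d_1d_4$ coefficient of $Ew$, but you have not computed it. Likewise, the fact that $(\sum_i\de_id_i)d_1\otimes 1$ is singular in $M_0(0,0,0)$ must be verified directly, not by analogy with the degree-$2$ vector $(\sum_i\de_id_i)\otimes 1$.
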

\begin{proof}
If $\alpha+\beta+\gamma=0$, by weight reasons which are evident in Figure \ref{weights},  we have $y_{i,ij}=y_{k,kj}$ for every $j, k$ and $y_{i,jk}=0$ for every $i,j,k$ distinct. 
We thus have
\[w= \sum_{i\neq j}\de_id_id_jy_{i,ij}+\sum_{i<j<k}d_id_jd_kz_{ijk}\]
with $y_{i,ij}=y_{k,kj}$. If $y_{1,14}$ is not zero, then it is a $\phat$-singular vector of weight $(0,0,1)$. We then compute $Ew=0$ and consider the coefficient of $\de_1d_4$. We obtain:
\[(x_4\de_2)y_{1,14}=0,\]
which implies $y_{1,14}=0$ by applying $x_2\de_4$.
Again by weight reasons $y_{1,14}=0$ implies $y_{i,jk}=0$ for every $i,j,k$. 

Now suppose $\alpha+\beta+\gamma=1$. We compute $Ew=0$.
The coefficient of $\de_1d_4$ gives $y_{1,12}=0$ which implies that $y_{1,13}=0$ and $y_{1,14}=0$.
Now the coefficient of $\de_1d_1$ in $Ew=0$ gives $y_{4,12}=0$. It follows, by Table \ref{weights}, that 
$y_{2,21}$ is a $\phat$-singular vector of weight $(0,0,0)$ and that $y_{2,21}=y_{3,31}=y_{4,41}$.
Now the coefficient of $d_1d_4$ in $Ew=0$ gives $a_{34}y_{2,21}=0$ which implies $t=0$.
The vector
\[w=(\sum_{i=1}^4\de_id_i)d_11\]
is indeed a singular vector of degree 3 and weight $(1,0,0)$ in $M_0(0,0,0)$.

We are left to show that there are no $E(4,4)$-singular vectors of degree 3 such that $y_{i,jk}=0$ for every $i,j,k$.
Suppose that $w$ is such vector. Then,
as in the proof of Lemma \ref{lemmadeg3quater}, the coefficient of $d_3d_4$ in $Ew=0$ provides equation
\begin{equation}\label{zeta} 
-z_{123}+x_4\de_2 z_{134}-x_4\de_1 z_{234}=0,
\end{equation}
from which we deduce that  $z_{234}=0$ by applying $x_1\de_4$.
It follows by weight reasons that if $\alpha+\beta+\gamma=0$ then $z_{ijk}=0$ for every $i,j,k$. Now let $\alpha+\beta+\gamma=1$.
If $z_{134}$ is not zero then it is a $\phat$-singular vector of weight $(1,0,0)$.
By applying $x_2\de_4$ to Equation \eqref{zeta}, we obtain
$z_{134}=0$, hence $z_{123}=0$. The result follows.   
\end{proof}
We summarize the result of this section in the following theorem.
\begin{theorem}\label{listagrado3}
A complete list of $E(4,4)$-singular vectors of degree 3 in Verma modules $M_t(a,b,c)$ is, up to a scalar factor, as follows:
\begin{itemize}
\item[i)] $w[3_F]=\sum_{i, j} \de_i \de_j d_i\otimes d_j+\de_i d_i d_j \otimes \de_j+d_1d_2d_3\otimes d_4-d_1d_2d_4\otimes d_3+d_1d_3d_4\otimes d_2-d_2d_3d_4\otimes d_1\in M_3(1,0,0)$;
\item[ii)] $w[3_G]=(\sum_{i=1}^4\de_id_i)d_1\otimes 1\in M_0(0,0,0)$.
\end{itemize}
\end{theorem}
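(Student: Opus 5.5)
The theorem is essentially a summary of the case analysis carried out in this section, so the plan is to assemble the preceding lemmas into an exhaustive dichotomy. Let $w$ be a singular vector of degree 3 and weight $(\alpha,\beta,\gamma)$ in some $M_t(a,b,c)$. By Lemma \ref{barw} the pure $\de$-part $\sum_{|I|=3}\de_I\otimes v_{I,\emptyset}$ is an $S(4)$-singular vector of degree 3. Theorem \ref{S4sing} has no such vectors, so $w$ has the form \eqref{31a}. We then split according to whether the coefficients $v_{ij,k}$ vanish identically.

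\emph{First case: some $v_{ij,k}\neq 0$.} By Lemma \ref{lemmadeg3bis} this forces $v_{14,4}\neq0$. It also forces $w\in M_t(1,0,0)$, with $w$ given by \eqref{43} for some scalar $\rho$. The subsequent proposition shows that \eqref{43} is singular exactly when $\rho=1$ and $t=3$, and this is the vector $w[3_F]$. The only remaining check is the explicit one that $w[3_F]$ is annihilated by $x_i\de_{i+1}$, $b_{44}$, $E$ and $x_4C$. I would organize it using \eqref{vIJaction} for the $\phat$-part and then compute $E$ and $x_4C$ term by term, as in the degree-one proofs. Alternatively, I would try to recognize $w[3_F]$ as the image of a composition of known morphisms, which would make the verification automatic.

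\emph{Second case: all $v_{ij,k}=0$.} Lemmas \ref{lemmadeg3ter} and \ref{lemmadeg3quater} show that $w=0$ unless $\alpha+\beta+\gamma\in\{0,1\}$. The final lemma before the statement treats these two small-weight cases.
\begin{itemize}
\item For $\alpha+\beta+\gamma=0$, the coefficient of $\de_1d_4$ in $Ew$ kills all $y$'s. Equation \eqref{zeta} together with weight considerations then kills the $z$'s.
\item For $\alpha+\beta+\gamma=1$, the coefficients of $\de_1d_4$, $\de_1d_1$ and $d_1d_4$ in $Ew$ force $y_{2,21}=y_{3,31}=y_{4,41}$ to be a $\phat$-singular vector of weight $0$ annihilated by $a_{34}$. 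This gives $t=0$ and $w=(\sum_i\de_id_i)d_1\otimes1=w[3_G]$.
\end{itemize}
One still has to check that $w[3_G]$ is singular. I would do this by noting that in $M_0(0,0,0)$ the vector $\sum_i\de_id_i\otimes 1$ satisfies every condition except that $x_4C$ produces a multiple of $d_4$. Multiplying on the left by $d_1$ and using $d_1d_1=0$ should then leave a vanishing expression. This is a short direct computation.

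Putting the two cases together gives exactly the two vectors listed in the theorem, and uniqueness up to scalars follows from the uniqueness statements in those lemmas. I expect the main obstacle to be the completeness bookkeeping in the small-weight case $\alpha+\beta+\gamma\le1$. There the generic arguments of Lemmas \ref{lemmadeg3ter}–\ref{lemmadeg3quater} fail, and one must track by hand which $y_{i,jk}$ and $z_{ijk}$ can carry dominant weights. I would handle this by listing the weights of all components, analogously to Table \ref{tabwei}, and discarding every component whose weight is non-dominant before imposing $Ew=0$.
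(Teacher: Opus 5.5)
Your proposal is correct and follows the paper's own argument: Lemma \ref{barw}; then Lemma \ref{lemmadeg3bis} and the proposition on \eqref{43} when some $v_{ij,k}\neq0$; then Lemmas \ref{lemmadeg3ter}, \ref{lemmadeg3quater} and the final lemma, whose last step (killing the $z_{ijk}$ via \eqref{zeta} once all $y_{i,jk}=0$) is also needed when $\alpha+\beta+\gamma=1$, both for the subcase $y_{2,21}=0$ and for uniqueness of the $z$-part of $w[3_G]$, which your second bullet omits. Two side remarks need correcting: in checking $w[3_G]$, the term $d_1(x_4C)(\sum_i\de_id_i\otimes1)=-\frac{5}{2}d_1d_4\otimes1$ is not removed by $d_1^2=0$ but cancels against $[x_4C,d_1](\sum_i\de_id_i\otimes1)=\frac{5}{4}a_{14}(\sum_i\de_id_i\otimes1)=\frac{5}{2}d_1d_4\otimes1$ (here $[x_4C,d_1]=\frac{5}{4}a_{14}-\frac{1}{4}b_{14}$ and $b_{14}$ kills $\sum_i\de_id_i\otimes1$), and there is no chain of lower-degree morphisms from $M_0(0,0,0)$ to $M_3(1,0,0)$, so the available shortcut for $w[3_F]$ is duality applied to $\varphi_{[3_G]}$ rather than a composition.
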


\begin{remark} The singular vector $w[3_G]=(\sum_{i=1}^4\de_id_i)d_1\otimes 1\in M_0(0,0,0)$ provides a morphism
	\[
	K_{-3}(1,0,0)\rightarrow M_0(0,0,0).
	\]
	sending $x_1$ to $w$. 
Recalling that $W_{-3}(1,0,0)\cong K_{-3}(1,0,0)/M$, where $M$ is the submodule generated by $a_{12}x_1$, and observing that $a_{12}w=0$, we conclude that the above morphism factors through $W_{-3}(1,0,0)$ hence we have a morphism
\[
\varphi_{[3_G]}:	M_{-3}(1,0,0)\rightarrow M_0(0,0,0).
\]
By duality the singular vector \[w[3_F]=\sum_{i, j} \de_i \de_j d_i\otimes d_j+\de_i d_i d_j \otimes \de_j+d_1d_2d_3\otimes d_4-d_1d_2d_4\otimes d_3+d_1d_3d_4\otimes d_2-d_2d_3d_4\otimes d_1\] in $M_3(1,0,0)$ induces a morphism
\[
\varphi_{[3_F]}: M_0(0,0,0)\rightarrow 	M_{3}(1,0,0).
\]
\end{remark}

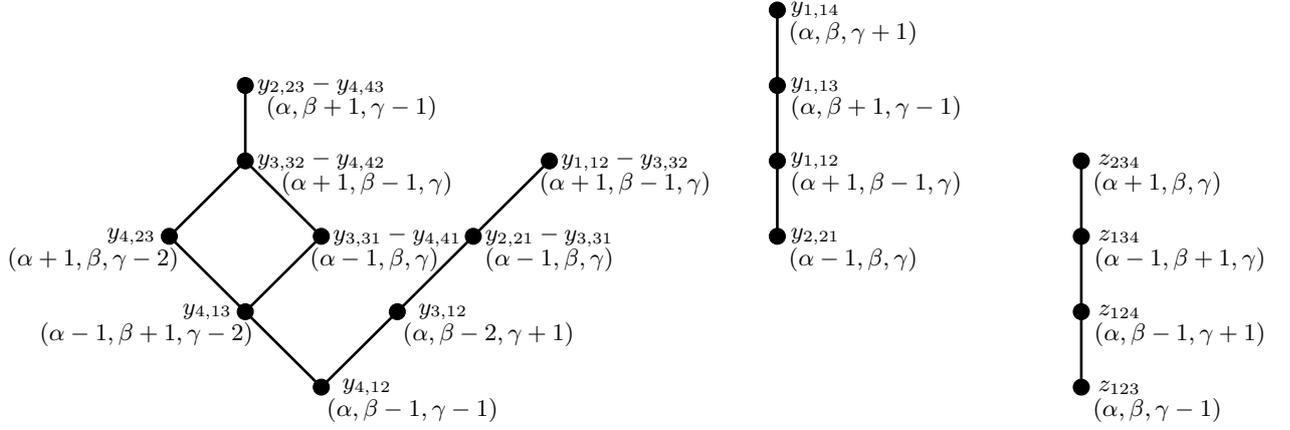
\begin{figure}[h]
	\begin{center}
		\scalebox{1}{	$$
			\begin{tikzpicture} 
				
				\draw[line width=1pt](0,0)--(3,3);
				\draw[line width=1pt](0,0)--(-2,2);
				\draw[line width=1pt](-1,1)--(0,2);
				\draw[line width=1pt](-2,2)--(-1,3);
				\draw[line width=1pt](0,2)--(-1,3);
				\draw[line width=1pt](-1,3)--(-1,4);
				\draw[line width=1pt](6,2)--(6,5);
				\draw[line width=1pt](10,0)--(10,3);
				\draw[fill=black]{(0,0) circle(3pt)};
				\draw[fill=black]{(1,1) circle(3pt)};
				\draw[fill=black]{(2,2) circle(3pt)};
				\draw[fill=black]{(3,3) circle(3pt)};
				\draw[fill=black]{(-1,1) circle(3pt)};
				\draw[fill=black]{(-2,2) circle(3pt)};
				\draw[fill=black]{(0,2) circle(3pt)};
				\draw[fill=black]{(-1,3) circle(3pt)};
				\draw[fill=black]{(-1,4) circle(3pt)};
				\draw[fill=black]{(6,2) circle(3pt)};
				\draw[fill=black]{(6,3) circle(3pt)};
				\draw[fill=black]{(6,4) circle(3pt)};
				\draw[fill=black]{(6,5) circle(3pt)};
				\draw[fill=black]{(10,0) circle(3pt)};
				\draw[fill=black]{(10,1) circle(3pt)};
				\draw[fill=black]{(10,2) circle(3pt)};
				\draw[fill=black]{(10,3) circle(3pt)};
				\node at (0.6,0) {\scriptsize $y_{4,12}$};
				\node at (1.2,-0.3) {\scriptsize $(\alpha,\beta-1,\gamma-1)$};
				\node at (1.6,1) {\scriptsize$y_{3,12}$};
				\node at (2.2,0.7) {\scriptsize$(\alpha,\beta-2,\gamma+1)$};
				\node at (3,2) {\scriptsize $y_{2,21}-y_{3,31}$};
				\node at (3,1.7) {\scriptsize $(\alpha-1,\beta,\gamma)$};
				\node at (4,3) {\scriptsize $y_{1,12}-y_{3,32}$};
				\node at (4,2.7) {\scriptsize $(\alpha+1,\beta-1,\gamma)$};
				\node at (-1.5,1) {\scriptsize $y_{4,13}$};
				\node at (-2.3,0.7) {\scriptsize $(\alpha-1,\beta+1,\gamma-2)$};
				\node at (-2.5,2) {\scriptsize $y_{4,23}$};
				\node at (-3,1.7) {\scriptsize $(\alpha+1,\beta,\gamma-2)$};
				\node at (1,2) {\scriptsize $y_{3,31}-y_{4,41}$};
				\node at (0.7,1.7) {\scriptsize $(\alpha-1,\beta,\gamma)$};
				\node at (0,3) {\scriptsize $y_{3,32}-y_{4,42}$};
				\node at (0.6,2.7) {\scriptsize $(\alpha+1,\beta-1,\gamma)$};
				\node at (0,4) {\scriptsize $y_{2,23}-y_{4,43}$};
				\node at (0.4,3.7) {\scriptsize $(\alpha,\beta+1,\gamma-1)$};
				\node at (6.5,2) {\scriptsize $y_{2,21}$};
				\node at (7,1.7) {\scriptsize $(\alpha-1,\beta,\gamma)$};
				\node at (6.5,3) {\scriptsize $y_{1,12}$};
				\node at (7.3,2.7) {\scriptsize $(\alpha+1,\beta-1,\gamma)$};
				\node at (6.5,4) {\scriptsize $y_{1,13}$};
				\node at (7.3,3.7) {\scriptsize $(\alpha,\beta+1,\gamma-1)$};
				\node at (6.5,5) {\scriptsize $y_{1,14}$};
				\node at (7,4.7) {\scriptsize $(\alpha,\beta,\gamma+1)$};
				\node at (10.5,3) {\scriptsize $z_{234}$};
				\node at (11,2.7) {\scriptsize $(\alpha+1,\beta,\gamma)$};
				\node at (10.5,2) {\scriptsize $z_{134}$};
				\node at (11.3,1.7) {\scriptsize $(\alpha-1,\beta+1,\gamma)$};
				\node at (10.5,1) {\scriptsize $z_{124}$};
				\node at (11.3,0.7) {\scriptsize $(\alpha,\beta-1,\gamma+1)$};
				\node at (10.5,0) {\scriptsize $z_{123}$};
				\node at (11,-0.3) {\scriptsize $(\alpha,\beta,\gamma-1)$};
			\end{tikzpicture}$$}
		
	\end{center}

\caption{Elements $y_{i,jk}$ and $z_{ijk}$ and their weights} 
\label{weights}
\end{figure}

\section{Singular vectors of degree 4 and 5}
Consider a singular vector $w$ of degree 4. By Lemma \ref{barw} such vector can be written in the following form
\[
w=\sum_{i\leq j\leq k, j}\frac{1}{m_{ijk}}\de_i\de_j\de_k d_l\otimes v_{ijk,l}+\sum_{i\leq j, k<l}\frac{1}{m_{ij}}\de_i \de_j d_k d_l\otimes y_{ij,kl}+\sum_{i,j<k<l}\de_i d_jd_kd_l\otimes z_{i,jkl}+d_1d_2d_3d_4\otimes u,
\]
where $m_{ijk}$ is the order of the stabilizer in $S_3$ of the triple $\{i,j,k\}$,
%$m_{ijk}=6$ if $i=j=k$, $m_{ijk}=1$ if $i,j,k$ are distinct and $m_{ijk}=2$ if $|\{i,j,k\}|=2$
and, similarly, $m_{ij}=1$ if $i\neq j$ and $m_{ij}=2$ if $i=j$. We only sketch the proof since most of the computations have been done using a computer. The computation of the action of $x_i \de_j $, $x_i^2 \de_j$ (with $i<j$), $b_{rs}$ (for all $r,s$) $x_4^2\de_4$ and $x_i^2 d_j$ (with $i\neq j$) on $w$ provides a system of homogeneous linear equations on the variables $v_{ijk,l},  y_{ij,kl}, z_{i,jkl}$. This system has the following solution:
\begin{itemize}
	\item $v_{ijk,l}=0$ for all $i,j,k,l$;
	\item $y_{ij,kl}=0$ unless $\{k,l\}=\{1,i\}$ or $\{k,l\}=\{1,j\}$;
	\item $z_{i,jkl}\neq 0$ only if $1\neq i$ and $1,i\in \{j,k,l\}$. 
	\item $y_{12,12}=y_{13,13}=y_{14,14}=:y_1$;
	\item $\frac{1}{2}y_{22,12}=y_{23,13}=y_{24,14}=:y_2$;
	\item $\frac{1}{2}y_{33,13}=y_{23,12}=y_{34,14}=:y_3$;
	\item $\frac{1}{2}y_{44,14}=y_{24,12}=y_{34,13}=:y_4$;
	\item $-z_{3,123}=-z_{4,124}=:z_2$;
	\item $z_{2,123}=-z_{4,134}=:z_3$;
	\item $z_{2,124}=z_{3,134}=:z_4$.
\end{itemize}
It follows that the vector $w$ must have the following form
\[
w=\sum_{i=1}^4 \de_i d_1(\de_2d_2+\de_3d_3+\de_4d_4)\otimes y_i+\sum_{i=2}^4(\de_2d_2+\de_3d_3+\de_4d_4)d_id_1\otimes z_i+d_1d_2d_3d_4\otimes u
\]
for some $y_1,y_2,y_3,y_4,z_2,z_3,z_4,u \in W_t(a,b,c)$. Note that if we let $\lambda(w)=(\alpha,\beta,\gamma)$ we have the following weights
\begin{itemize}
	\item $\lambda(y_1)=(\alpha,\beta,\gamma)$; 
	\item $\lambda(y_2)=(\alpha-2,\beta+1,\gamma+1)$; 
	\item $\lambda(y_3)=(\alpha-1,\beta-1,\gamma+1)$; 
	\item $\lambda(y_4)=(\alpha-1,\beta,\gamma-1)$; 
	\item $\lambda(z_4)=(\alpha-1,\beta,\gamma+1)$; 
	\item $\lambda(z_3)=(\alpha-1,\beta+1,\gamma-1)$; 
	\item $\lambda(z_2)=(\alpha,\beta-1,\gamma)$.
\end{itemize}
Observe that, applying suitable elements $x_i\de_j$ with $i<j$ or $b_{44}$ if any of the elements $y_1,y_2,y_3,y_4,z_4,z_3,z_2$ is zero, then the other elements to the left of it are also zero.
Hence we may assume that $z_2\neq 0$. Applying $x_4^2 \de_3$ to $w$, we obtain (coefficient of $d_1d_2d_4$) $(x_4 \de_3)z_2=0$, which forces $\gamma=0$. Similarly, applying $x_3^2 \de_1$ and considering the coefficient of $d_1d_2d_3$, we obtain $(x_3\de_1) z_2=0$ which forces $\alpha+\beta=1$. If also $z_3\neq 0$, we can apply $x_2^2 \de_1$ and looking at the coefficient of $d_1d_2d_3$ we obtain $(x_2\de_1)z_3=0$, which forces $\alpha=1$.
So we have the following 3 possibilities:\\
1) $z_3=0$ and the only nonzero element is $z_2$ (and $u$). In this case $z_2$ is a $\phat$-singular vector of weight $(0,0,0)$ and $w$ has weight $(0,1,0)$;\\
2) $z_2=0$;\\
3) $z_3\neq 0$ and $(\alpha,\beta, \gamma)=(1,0,0)$;\\

Case 1) does not occur. Indeed applying $x_4^2 d_3$ and considering the coefficient of $\de_3 d_3d_4$ we get $z_2=0$. Case 2) also does not occur since the equation $(x_4^2 \de_4)w$ would imply $u=0$.

We are therefore left to consider case 3). If $y_1\neq 0$, it is a $\phat$-singular vector of weight $(1,0,0)$ and therefore $w\in M_t(1,0,0)$, the Verma module induced by the standard module. Weight reasons and the compatibility of the action of the subalgebra  $N$ of $\phat$ forces $w$ to be of the following form
\[
w=\sum_{i=1}^4 \de_i d_1(\de_2d_2+\de_3d_3+\de_4d_4)\otimes d_i+\sum_{i=2}^4(\de_2d_2+\de_3d_3+\de_4d_4)d_id_1\otimes \de_i+d_1d_2d_3d_4\otimes \rho d_1
\]
for some $\rho \in \C$. Applying $x_4^2\de_4$ one deduce that $\rho=\frac{1}{2}(1-t)$ and one can show that with this choice we actually obtain a singular vector of every $t\in \C$. We have therefore proved the following.
\begin{theorem}\label{listagrado4} The following is a complete list of $E(4,4)$-singular vectors of degree 4, up to a scalar factor:
	\[w[4_H]=
	\sum_{i=1}^4 \de_i d_1(\de_2d_2+\de_3d_3+\de_4d_4)\otimes d_i+\sum_{i=2}^4(\de_2d_2+\de_3d_3+\de_4d_4)d_id_1\otimes \de_i+d_1d_2d_3d_4\otimes \frac{1}{2}(1-t) d_1 \]
	lying in $W_t(1,0,0)$
	for every $t\in \C$.
\end{theorem}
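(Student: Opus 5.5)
Theorem \ref{listagrado4} has two halves: the list is complete, and $w[4_H]$ is singular for every $t$. The reduction carried out above already handles most of completeness. By Lemma \ref{barw} every singular vector of degree 4 has no component in $\de_I\otimes W$ with $|I|=4$. The linear system coming from $x_i\de_j$, $x_i^2\de_j$, $b_{rs}$, $x_4^2\de_4$ and $x_i^2d_j$ then forces $w$ into the compressed form
\[w=\sum_i\de_id_1D\otimes y_i+\sum_{i\ge2}Dd_id_1\otimes z_i+d_1d_2d_3d_4\otimes u,\qquad D=\de_2d_2+\de_3d_3+\de_4d_4.\]
I take this form, and the chain property of $y_1,\dots,y_4,z_4,z_3,z_2$ (if one of these vanishes, so do all those to its left), as given.

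The case analysis proceeds as follows.
\begin{itemize}
\item If $z_2=0$, then all $y_i$ and $z_i$ vanish. The coefficient computation of $(x_4^2\de_4)w$ then makes $u$ the only contributor to a certain monomial of degree 3, so $u=0$.
\item If $z_2\ne0$, the coefficients above give $\gamma=0$ and $\alpha+\beta=1$.
\item If moreover $z_3=0$, then $z_2$ is the first nonzero element of the chain, so it is $\phat$-singular of weight $(0,0,0)$. Computing the coefficient of $\de_3d_3d_4$ in $(x_4^2d_3)w$ shows it is a nonzero multiple of $z_2$, which kills this case.
\item In the remaining case $z_3\ne0$ and $\lambda(w)=(1,0,0)$.
\end{itemize}

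For this last case, the chain property shows that the first nonzero element among the $y_i,z_i$ is $\phat$-singular. I split by which element that is.

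If $y_1\ne0$, it has weight $(1,0,0)$, so $W_t(a,b,c)=W_t(1,0,0)$ is the standard module. Since the $N$-action transports $y_1=d_1$ (up to scalar) to the other entries, as in Proposition \ref{w8}, we get $y_i=d_i$ and $z_i=\de_i$. The only free parameter is $u=\rho d_1$, because $d_1$ is the unique vector of the right weight in $W_t(1,0,0)$.

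If $y_1=0$, the first nonzero element is one of $y_2,y_3,y_4,z_4,z_3$, and its weight must be dominant. With $\lambda(w)=(1,0,0)$ these weights are $(-1,1,1)$, $(0,-1,1)$, $(0,0,-1)$, $(0,0,1)$ and $(0,1,-1)$ respectively. Only $z_4$, of weight $(0,0,1)$, is dominant. So I must rule out $y_1=\dots=y_4=0$ with $z_4\ne0$ a $\phat$-singular vector of weight $(0,0,1)$ in $W_t(0,0,1)$. I would do this like case 1, by applying $x_4^2\de_3$ (or $E$) and reading off a coefficient proportional to $(x_4\de_2)z_4$ or $a_{34}z_4$; applying $x_2\de_4$ then forces $z_4=0$. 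This contradicts $z_3\ne0$.

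Returning to the standard module, the coefficient computation of $(x_4^2\de_4)w$ should yield a linear relation between $\rho$ and $t$. I expect $\rho=\tfrac12(1-t)$, which gives uniqueness.

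For existence I would check the criterion of the lemma in Section \ref{sec:sing} directly, using the deformed action from Example \ref{standard}. Annihilation by $x_i\de_{i+1}$ and by $b_{44}$ is formal: $D$ is $\slq$-invariant, and $[b_{44},\cdot]$ pairs $\de_4$ with $d_4$ compatibly. The hard part is $Ew=0$ and $(x_4C)w=0$, where $t$ enters through $C$ and through the terms of the form $a\cdot v=-t[a,v]$. To organise this computation, I would first compute the brackets of $E$ and $x_4C$ with $D$ and with $d_1$ inside $\mathcal U(L_{-1}\oplus L_0)$. Then I would reuse the known singularity of $w[3_F]$ and of $d_1w[1_E]$: $w[4_H]$ looks like $d_1$ times a degree-3 expression built from $D$, so many terms should cancel as they did in degree 3. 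The terms left over should be exactly those depending on $u$, and they should cancel precisely when $\rho=\tfrac12(1-t)$.

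The main obstacle is this last verification, together with the degree-5 exclusion, which the paper does by computer. If a hand check becomes unmanageable, I would fall back on the same computer linear-algebra check used for the reduction.
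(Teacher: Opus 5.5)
Your outline follows the paper's proof essentially step by step:
\begin{itemize}
\item the computer-derived compressed form and the chain property;
\item $u=0$ from $x_4^2\de_4$ when $z_2=0$;
\item $\gamma=0$ and $\alpha+\beta=1$ when $z_2\neq 0$;
\item the case $z_3=0$ killed by the $\de_3d_3d_4$-coefficient;
\item in the standard module, $y_i=d_i$, $z_i=\de_i$, $u=\rho d_1$, with $\rho=\frac12(1-t)$ from $x_4^2\de_4$.
\end{itemize}
Existence is left to a direct check in both. For that check, note that $D$ itself is not $\slq$-invariant. What you can use instead is that $D$ only occurs next to $d_1$, and $d_1D=d_1\Theta$, $Dd_id_1=\Theta d_id_1$, where $\Theta=\sum_{i=1}^4\de_id_i$ is $\mathfrak{gl}_4$-invariant and commutes with $\phat_1$.

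The one place where you go beyond the paper is the subcase $y_1=0$ of the last case, which the paper's sketch passes over. Your dominance argument there is correct: it forces $y_1=\dots=y_4=0$ and $z_4$ singular of weight $(0,0,1)$. But the way you propose to finish does not work as stated.
\begin{itemize}
\item Since $[x_4^2\de_3,\de_k]=-2\delta_{k4}x_4\de_3$ and $[x_4^2\de_3,d_k]=\delta_{k3}b_{44}$, the $z_4$-terms $(\de_2d_2+\de_3d_3)d_4d_1\otimes z_4$ enter $(x_4^2\de_3)w$ only through $b_{44}z_4$. That is a multiple of $y_4=0$.
\item No $a_{34}z_4$ appears in $Ew$, because those terms contain no $\de_4$.
\item The coefficients of $\de_2d_2d_4$ and $\de_3d_3d_4$ in $Ew$ give $(x_4\de_2)z_4+z_2=0$. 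Applying $x_2\de_4$ only returns the identity $z_4-z_4=0$, since $(x_2\de_4)z_2=-z_4$. So no contradiction arises.
\end{itemize}

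What does close the subcase is the coefficient of $\de_1d_1d_4$ in $Ew$. Use
\[
[E,\de_1]=[E,\de_2]=0,\quad [E,\de_3]=\tfrac12b_{44},\quad [E,\de_4]=-\tfrac12b_{34}+\tfrac32a_{34},
\]
\[
[E,d_1]=3x_4\de_2,\quad [E,d_2]=-3x_4\de_1,\quad [E,d_3]=[E,d_4]=0.
\]
Once all $y_i$ vanish, the only source of this monomial is $\frac32a_{34}$ acting on $\de_4d_4d_2d_1\otimes z_2$ through $[a_{34},d_2]=2\de_1$. Hence the coefficient is a nonzero multiple of $z_2$, so $z_2=0$, contradicting $z_2\neq 0$.
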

One can verify that $a_{12}w=0$ which implies that for all $t\in\C$ we have a morphism
\[
\varphi_{[4_H]}: M_{t-4}(1,0,0)\rightarrow M_t(1,0,0).
\]

Singular vectors of degree 5 can be treated as we did for singular vectors of degree 4. In this case, however,  the corresponding system of homogeneous linear equations has only the trivial solution and we conclude that there are no singular vectors of degree 5.

\medskip

We can now state a complete classification of singular vectors in Verma modules over $E(4,4)$.

\begin{theorem}
The following is a complete classification, up to a scalar factor, of singular vectors in  $E(4,4)$-Verma modules and the corresponding morphisms.
\begin{enumerate}
\item In degree one we have
\begin{itemize}
	\item $w[1_A]=d_1\otimes x_1^a\in M_t(a,0,0)$ with $a=t=0$ or $a\in\Z_{\geq 1}$, giving morphisms
	\[\varphi_{[1_A]}: M_{t-1}(a+1,0,0)\rightarrow M_t(a,0,0)\] 
	for $a=t=0$, or $a=1$ and $t\in\C$, $t\neq 1$, or $a\in\Z_{\geq 2}$ and all $t\in\C$, and
		\[\tilde \varphi_{[1_A]}:\tilde M_0(2,0,0)\rightarrow M_1(1,0,0);\]
	\item $w[1_B]=\sum_{i=1}^4 d_i\otimes x_i^*(x_4^*)^{c-1}\in M_t(0,0,c)$ for all $c\in\Z_{\geq 1}$, giving the morphism
\[\varphi_{[1_B]}: M_{t-1}(0,0,c-1)\rightarrow M_t(0,0,c)\]
 for all $c\in\Z_{\geq 1}$, $t\in\C$ and $(c,t)\neq (1,1)$;	
	\item $w[1_C]=2\de_3\otimes x_4^*-2\de_4 \otimes x_3^*-\sum_{i=1}^4 d_i\otimes a_{12}x_i^*\in M_t(0,0,1)$ for all $t\in\C$, giving the morphism
	\[\varphi_{[1_C]}:M_{t-1}(0,1,0)\rightarrow M_t(0,0,1)\] 
	for all $t\in\C$;
	\item $w[1_D]=4\sum_{i=2}^4 \de_i\otimes a_{1i}-2(d_4\otimes a_{12}a_{13}+d_3\otimes a_{14}a_{12}+d_2\otimes a_{13}a_{14})-d_1\otimes(a_{12}a_{34}+a_{24}a_{13}+a_{14}a_{23}-4)\in M_t(0,0,0)$ for all $t\in\C$, $t\neq 0$, giving the morphism
	\[\varphi_{[1_D]}:M_{t-1}(1,0,0)\rightarrow M_t(0,0,0)\]
	for all $t\in\C\setminus\{0\}$;
	\item $w[1_E]=\sum_{i=1}^4 (\de_i\otimes d_i+d_i\otimes \de_i )\in M_1(1,0,0)$ giving the morphism
\[\varphi_{[1_E]}: M_0(0,0,0)\rightarrow M_1(1,0,0).\]
	\end{itemize}
	We also have the morphism
	\[\tilde \varphi_{[1_D]}: M_{-1}(1,0,0)\rightarrow \tilde M_0(0,1,0).\]	\item In degree two we have
\begin{itemize}
		\item $w{[2_{DA}]}=4\sum_{i=2}^4 d_1\de_i\otimes a_{1i}-2d_1(d_4\otimes a_{12}a_{13}+d_3\otimes a_{14}a_{12}+d_2\otimes a_{13}a_{14})\in M_t(0,1,0)$ (where for $t\neq 0$ it is meant that $M_t(0,1,0)\cong M_t(0,0,0)$), giving the morphism
		\[\varphi_{[2_{DA}]}: M_{t-2}{(2,0,0)}\rightarrow M_t(0,1,0)\]
		for every $t\in\C$;
	\item $w{[2_{EA}]}=d_1w[1_E]\in M_1(1,0,0)$, giving the morphism
\[\varphi_{[2_{EA}]}:M_{-1}(1,0,0)\rightarrow M_1(1,0,0).\]	
\end{itemize}
\item
In degree three we have
\begin{itemize}
\item $w[3_F]=\sum_{i, j} \de_i \de_j d_i\otimes d_j+\de_i d_i d_j \otimes \de_j+d_1d_2d_3\otimes d_4-d_1d_2d_4\otimes d_3+d_1d_3d_4\otimes d_2-d_2d_3d_4\otimes d_1\in M_3(1,0,0)$,
giving the morphism
\[\varphi_{[3_F]}: M_0(0,0,0) \rightarrow M_3(1,0,0);\]
\item $w[3_G]=(\sum_{i=1}^4\de_id_i)d_1\otimes 1\in M_0(0,0,0)$,
giving the morphism
\[\varphi_{[3_G]}: M_{-3}(1,0,0) \rightarrow M_0(0,0,0).\]
\end{itemize}
\item
In degree four we have 
\begin{itemize}
\item $w[4_H]=
	\sum_{i=1}^4 \de_i d_1(\de_2d_2+\de_3d_3+\de_4d_4)\otimes d_i+\sum_{i=2}^4(\de_2d_2+\de_3d_3+\de_4d_4)d_id_1\otimes \de_i+d_1d_2d_3d_4\otimes \frac{1}{2}(1-t) d_1$ in $W_t(1,0,0)$
	for every $t\in \C$, giving the morphism
\[\varphi_{[4_H]}: M_{t-4}(1,0,0)\rightarrow M_t(1,0,0)\]
for every $t\in\C$.	
	\end{itemize}
	\end{enumerate}
\end{theorem}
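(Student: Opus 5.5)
This theorem collects the results already established in the paper, so the plan is to assemble them rather than redo computations. First, Proposition \ref{bound} reduces the problem to degrees $1,\dots,5$. Recall that every homogeneous component of a singular vector is singular, so it suffices to classify homogeneous singular vectors that are weight vectors. Degree 1 is then handled by Lemma \ref{lemmagrado1}, which splits a candidate $w$ as in \eqref{(2a)} into cases (a)--(f) according to the first nonzero coefficient among $v_1,\dots,v_4,z_4,\dots,z_1$. Each case has been settled by the subsequent propositions:
\begin{itemize}
\item case (a) gives $w[1_E]$ with $t=1$;
\item case (b) gives $w[1_D]$ for $t\neq0$, with $t=0$ excluded via duality;
\item case (c) gives $w[1_C]$;
\item case (d) does not occur;
\item case (e) gives $w[1_B]$;
\item case (f) gives $w[1_A]$.
\end{itemize}
This yields Theorem \ref{listagrado1}.

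Second, for the morphisms I would pass from a singular vector in $M_t(\sigma)$ of weight $\lambda$ to a map $\tilde M_{t-k}(\lambda)\to M_t(\sigma)$. I then check whether it factors through $M_{t-k}(\lambda)$, using the description of the maximal submodules of Kac modules in Proposition \ref{quozienti} and Remark \ref{K0(000)}. In practice this means checking that the generators of the kernel, typically $a_{12}$ applied to the highest weight vector, kill $w$. The factoring conditions were verified in Corollary \ref{morphtypea} and in the propositions for cases (d) and (e). Duality (Theorem \ref{duality} with Corollary \ref{duali}) then supplies the dual morphisms together with the exceptional ones $\tilde\varphi_{[1_A]}$ and $\tilde\varphi_{[1_D]}$.

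Third, degrees 2, 3 and 4 come from Theorems \ref{listagrado2}, \ref{listagrado3} and \ref{listagrado4}, together with the factorization remarks following each of them:
\begin{itemize}
\item in degree 2, $a_{12}w[2_{DA}]=0$ handles the case $t=0$, while for other $t$ the morphism is the composition $\varphi_{[1_D]}\circ\varphi_{[1_A]}$;
\item in degree 3, $a_{12}w[3_G]=0$, and duality gives $\varphi_{[3_F]}$;
\item in degree 4, $a_{12}w[4_H]=0$.
\end{itemize}
Degree 5 is excluded by solving the same linear system as in degree 4, which has only the trivial solution.

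The main obstacle is the completeness in degrees 4 and 5. There the argument rests on a large linear system obtained from the action of $x_i\de_j$, $x_i^2\de_j$, $b_{rs}$, $x_4^2\de_4$ and $x_i^2 d_j$, which the paper solves by computer. I would try to make this step more transparent by first using Lemma \ref{barw} and the $W(4)$-filtration $N(k)$ from the proof of Proposition \ref{bound} to force the top component of $w$ into the $d_1d_2d_3d_4$ direction. I would then propagate the vanishing of coefficients by highest-weight arguments, as in Lemmas \ref{lemmadeg3}--\ref{lemmadeg3quater}. Failing that, I would accept the computer verification.
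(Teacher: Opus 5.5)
Your outline follows the paper's own argument. The final theorem is just the assembly of Proposition \ref{bound}, Theorems \ref{listagrado1}--\ref{listagrado4}, the computer check excluding degree 5, and the factorization remarks after each list (quotients of Kac modules from Proposition \ref{quozienti}, plus duality).

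One step is misstated. In degree 2 you say that for $t\neq 0$ the morphism $\varphi_{[2_{DA}]}:M_{t-2}(2,0,0)\to M_t(0,1,0)$ is the composition $\varphi_{[1_D]}\circ\varphi_{[1_A]}$. This fails at $t=2$, because there is no morphism $\varphi_{[1_A]}:M_0(2,0,0)\to M_1(1,0,0)$. By Corollary \ref{morphtypea}, the singular vector $d_1\otimes x_1\in M_1(1,0,0)$ yields only $\tilde\varphi_{[1_A]}$, defined on $\tilde M_0(2,0,0)$. So $\varphi_{[1_D]}\circ\tilde\varphi_{[1_A]}$ is defined only on $\tilde M_0(2,0,0)$, and its factorization through $M_0(2,0,0)$ needs a separate argument. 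The paper gets the $t=2$ morphism by dualizing the $t=0$ one, $M_{-2}(2,0,0)\to M_0(0,1,0)$, which you do obtain from $a_{12}w[2_{DA}]=0$. The dualization uses $W_0(0,1,0)^*\cong W_0(2,0,0)$ and $W_{-2}(2,0,0)^*\cong W_2(0,1,0)$.

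Two attributions should also be corrected:
\begin{itemize}
\item $\tilde\varphi_{[1_A]}$ is not supplied by duality. It comes directly from the failure of factorization at $a=t=1$ in Corollary \ref{morphtypea}; duality then produces its partner $\tilde\varphi_{[1_D]}$.
\item The proposition for case (d) is a non-existence result. The existence and factorization arguments you need for $\varphi_{[1_B]}$, $\varphi_{[1_C]}$ and $\varphi_{[1_D]}$ are in the propositions for cases (e), (c) and (b) respectively.
\end{itemize}
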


\begin{corollary} A complete list of degenerate Verma modules for the Lie superalgebra $E(4,4)$ is the following:
\begin{itemize}
\item $M_t(a,0,0)$ for every $t\in\C$ and every $a\in\Z_{\geq 0}$;
\item $M_t(0,0,c)$ for every $t\in\C$ and every $c\in\Z_{\geq 0}$;
\item $M_t(0,1,0)$ for every $t\in\C$.
\end{itemize}
\end{corollary}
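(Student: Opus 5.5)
The plan is to derive the corollary from the complete classification of singular vectors in the preceding theorem, using that $M_t(a,b,c)$ is degenerate if and only if it contains a singular vector of positive degree (as recalled in the Introduction). By Proposition \ref{bound} such a vector has degree at most 5, and degree 5 has been excluded. So degenerate modules are exactly those $M_t(a,b,c)$ containing one of the vectors $w[1_A],\dots,w[4_H]$.

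The first step is to read off the target modules from the list. The vector $w[1_A]$ lies in $M_t(a,0,0)$ for $a\geq 1$ and all $t$, and also for $a=t=0$. The vector $w[1_B]$ lies in $M_t(0,0,c)$ for all $c\geq 1$ and all $t$. The vector $w[1_C]$ lies in $M_t(0,0,1)$. The vector $w[1_D]$ lies in $M_t(0,0,0)\cong M_t(0,1,0)$ for $t\neq 0$. The vector $w[1_E]$ lies in $M_1(1,0,0)$. The vector $w[2_{DA}]$ lies in $M_t(0,1,0)$ for every $t$, including $t=0$. The vectors $w[3_F]$ and $w[4_H]$ lie in modules $M_t(1,0,0)$, and $w[3_G]$ lies in $M_0(0,0,0)$. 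Together these give $M_t(a,0,0)$ for all $a\geq 1$ and all $t$, $M_t(0,0,c)$ for all $c\geq 1$, and $M_t(0,1,0)$ for all $t$.

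The remaining case, and the only delicate point, is $M_t(0,0,0)$ for $t\neq 0$. It is listed both as $M_t(a,0,0)$ with $a=0$ and as $M_t(0,0,c)$ with $c=0$. Here I would use Remark \ref{a12}: $W_t(0,0,0)\cong W_t(0,1,0)$ for $t\neq 0$, so $M_t(0,0,0)=M_t(0,1,0)$, which carries $w[1_D]$ (and $w[2_{DA}]$). For $t=0$, $M_0(0,0,0)$ carries $w[1_A]$ (with $a=t=0$) and $w[3_G]$. Hence $M_t(0,0,0)$ is degenerate for every $t$.

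Conversely, every singular vector in the theorem lies in one of the listed modules, so no other $M_t(a,b,c)$ is degenerate. I expect the main care to be needed in the identification $W_t(0,0,0)\cong W_t(0,1,0)$: there I have to make sure that the parameter ranges are stated consistently, and that nothing is missed in $M_0(0,1,0)$, where $W_0(0,1,0)$ is a subquotient and not isomorphic to $W_0(0,0,0)$.
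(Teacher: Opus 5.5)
Your proposal is correct and matches how the paper obtains this corollary: it is read off directly from the complete classification theorem. The paper's ingredients are the ones you use — Proposition \ref{bound}, the computer exclusion of degree 5, and the criterion that degeneracy means having a singular vector of positive degree — with $M_0(0,0,0)$ covered by $w[1_A]$ and $w[3_G]$, $M_t(0,0,0)=M_t(0,1,0)$ for $t\neq 0$ covered by $w[1_D]$, and $M_0(0,1,0)$ covered by $w[2_{DA}]$. The one step you leave implicit, as the paper also does, is that a singular vector of positive degree can be taken to be an $\slq$-weight vector annihilated by $N(\phat)$. This holds because the vectors of a fixed degree killed by $E(4,4)_{>0}$ form a finite-dimensional $\phat$-module.
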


\begin{remark}
Notice that  in Figure \ref{all} as well as in Figure \ref{deRham2}, modules $M_t(1,0,0)$ appear twice: in green with $a=1$ and in black. So Figure \ref{all} shows that two arrows enter $M_1(1,0,0)$, namely, morphisms 
$\varphi[1_E]: M_0(0,0,0) \rightarrow M_1(1,0,0)$ and $\varphi[4_H]: 
M_{-3}(1,0,0) \rightarrow M_1(1,0,0)$.
\end{remark}

\begin{remark}
Duality reflects Figure \ref{all} with respect to the origin.
\end{remark}

\begin{figure}[h]\label{tuttiimorfismi}
	\begin{center}
		\scalebox{1}{	$$
			\begin{tikzpicture} 
				\draw[fill=green]{(1,0) circle(3pt)}; 
				\draw[fill=green]{(2,0) circle(3pt)};
				\draw[fill=green]{(3,0) circle(3pt)};
				\draw[fill=green]{(4,0) circle(3pt)};
				\draw[fill=green]{(5,0) circle(3pt)};
				\draw[fill=green]{(6,0) circle(3pt)};
				\draw[fill=green]{(1,1) circle(3pt)};
				\draw[fill=green]{(2,1) circle(3pt)};
				\draw[fill=green]{(3,1) circle(3pt)};
				\draw[fill=green]{(4,1) circle(3pt)};
				\draw[fill=green]{(5,1) circle(3pt)};
				\draw[fill=green]{(6,1) circle(3pt)};
				\draw[fill=green]{(1,2) circle(3pt)};
				\draw[fill=green]{(2,2) circle(3pt)};
				\draw[fill=green]{(3,2) circle(3pt)};
				\draw[fill=green]{(4,2) circle(3pt)};
				\draw[fill=green]{(5,2) circle(3pt)};
				\draw[fill=green]{(6,2) circle(3pt)};
				\draw[fill=green]{(1,3) circle(3pt)};
				\draw[fill=green]{(2,3) circle(3pt)};
				\draw[fill=green]{(3,3) circle(3pt)};
				\draw[fill=green]{(4,3) circle(3pt)};
				\draw[fill=green]{(5,3) circle(3pt)};
				\draw[fill=green]{(6,3) circle(3pt)};
				\draw[fill=green]{(1,4) circle(3pt)};
				\draw[fill=green]{(2,4) circle(3pt)};
				\draw[fill=green]{(3,4) circle(3pt)};
				\draw[fill=green]{(4,4) circle(3pt)};
				\draw[fill=green]{(5,4) circle(3pt)};
				\draw[fill=green]{(6,4) circle(3pt)};
				\draw[fill=green]{(1,5) circle(3pt)};
				\draw[fill=green]{(2,5) circle(3pt)};
				\draw[fill=green]{(3,5) circle(3pt)};
				\draw[fill=green]{(4,5) circle(3pt)};
				\draw[fill=green]{(5,5) circle(3pt)};
				\draw[fill=green]{(6,5) circle(3pt)};
				
				\draw[fill=green]{(1,-1) circle(3pt)};
				\draw[fill=green]{(2,-1) circle(3pt)};
				\draw[fill=green]{(3,-1) circle(3pt)};
				\draw[fill=green]{(4,-1) circle(3pt)};
				\draw[fill=green]{(5,-1) circle(3pt)};
				\draw[fill=green]{(6,-1) circle(3pt)};
				\draw[fill=green]{(1,-2) circle(3pt)};
				\draw[fill=green]{(2,-2) circle(3pt)};
				\draw[fill=green]{(3,-2) circle(3pt)};
				\draw[fill=green]{(4,-2) circle(3pt)};
				\draw[fill=green]{(5,-2) circle(3pt)};
				\draw[fill=green]{(6,-2) circle(3pt)};
				\draw[fill=green]{(1,-3) circle(3pt)};
				\draw[fill=green]{(2,-3) circle(3pt)};
				\draw[fill=green]{(3,-3) circle(3pt)};
				\draw[fill=green]{(4,-3) circle(3pt)};
				\draw[fill=green]{(5,-3) circle(3pt)};
				\draw[fill=green]{(6,-3) circle(3pt)};
				\draw[fill=green]{(1,-4) circle(3pt)};
				\draw[fill=green]{(2,-4) circle(3pt)};
				\draw[fill=green]{(3,-4) circle(3pt)};
				\draw[fill=green]{(4,-4) circle(3pt)};
				\draw[fill=green]{(5,-4) circle(3pt)};
				\draw[fill=green]{(6,-4) circle(3pt)};
				\draw[fill=green]{(1,-5) circle(3pt)};
				\draw[fill=green]{(2,-5) circle(3pt)};
				\draw[fill=green]{(3,-5) circle(3pt)};
				\draw[fill=green]{(4,-5) circle(3pt)};
				\draw[fill=green]{(5,-5) circle(3pt)};
				\draw[fill=green]{(6,-5) circle(3pt)};
				\draw[fill=yellow]{(0,0) circle(3pt)};
				
				\draw[fill=black]{(-1,5) circle(3pt)};
				\draw[fill=black]{(-1,4) circle(3pt)};
				\draw[fill=black]{(-1,3) circle(3pt)};
				\draw[fill=black]{(-1,2) circle(3pt)};
				\draw[fill=black]{(-1,1) circle(3pt)};
				\draw[fill=black]{(-1,0) circle(3pt)};
				\draw[fill=black]{(-1,-1) circle(3pt)};
				\draw[fill=black]{(-1,-2) circle(3pt)};
				\draw[fill=black]{(-1,-3) circle(3pt)};
				\draw[fill=black]{(-1,-4) circle(3pt)};
				\draw[fill=black]{(-1,-5) circle(3pt)};
				
				\draw[fill=blue]{(-2,5) circle(3pt)};
				\draw[fill=blue]{(-2,4) circle(3pt)};
				\draw[fill=blue]{(-2,3) circle(3pt)};
				\draw[fill=blue]{(-2,2) circle(3pt)};
				\draw[fill=blue]{(-2,1) circle(3pt)};
				\draw[fill=blue]{(-2,0) circle(3pt)};
				\draw[fill=blue]{(-2,-1) circle(3pt)};
				\draw[fill=blue]{(-2,-2) circle(3pt)};
				\draw[fill=blue]{(-2,-3) circle(3pt)};
				\draw[fill=blue]{(-2,-4) circle(3pt)};
				\draw[fill=blue]{(-2,-5) circle(3pt)};
				
				\draw[fill=red]{(-6,5) circle(3pt)};
				\draw[fill=red]{(-5,5) circle(3pt)};
				\draw[fill=red]{(-4,5) circle(3pt)};
				\draw[fill=red]{(-3,5) circle(3pt)};
				\draw[fill=red]{(-6,4) circle(3pt)};
				\draw[fill=red]{(-5,4) circle(3pt)};
				\draw[fill=red]{(-4,4) circle(3pt)};
				\draw[fill=red]{(-3,4) circle(3pt)};
				\draw[fill=red]{(-6,3) circle(3pt)};
				\draw[fill=red]{(-5,3) circle(3pt)};
				\draw[fill=red]{(-4,3) circle(3pt)};
				\draw[fill=red]{(-3,3) circle(3pt)};
				\draw[fill=red]{(-6,2) circle(3pt)};
				\draw[fill=red]{(-5,2) circle(3pt)};
				\draw[fill=red]{(-4,2) circle(3pt)};
				\draw[fill=red]{(-3,2) circle(3pt)};
				\draw[fill=red]{(-6,1) circle(3pt)};
				\draw[fill=red]{(-5,1) circle(3pt)};
				\draw[fill=red]{(-4,1) circle(3pt)};
				\draw[fill=red]{(-3,1) circle(3pt)};
				\draw[fill=red]{(-6,0) circle(3pt)};
				\draw[fill=red]{(-5,0) circle(3pt)};
				\draw[fill=red]{(-4,0) circle(3pt)};
				\draw[fill=red]{(-3,0) circle(3pt)};
				\draw[fill=red]{(-6,-1) circle(3pt)};
				\draw[fill=red]{(-5,-1) circle(3pt)};
				\draw[fill=red]{(-4,-1) circle(3pt)};
				\draw[fill=red]{(-3,-1) circle(3pt)};
				\draw[fill=red]{(-6,-2) circle(3pt)};
				\draw[fill=red]{(-5,-2) circle(3pt)};
				\draw[fill=red]{(-4,-2) circle(3pt)};
				\draw[fill=red]{(-3,-2) circle(3pt)};
				\draw[fill=red]{(-6,-3) circle(3pt)};
				\draw[fill=red]{(-5,-3) circle(3pt)};
				\draw[fill=red]{(-4,-3) circle(3pt)};
				\draw[fill=red]{(-3,-3) circle(3pt)};
				\draw[fill=red]{(-6,-4) circle(3pt)};
				\draw[fill=red]{(-5,-4) circle(3pt)};
				\draw[fill=red]{(-4,-4) circle(3pt)};
				\draw[fill=red]{(-3,-4) circle(3pt)};
				\draw[fill=red]{(-6,-5) circle(3pt)};
				\draw[fill=red]{(-5,-5) circle(3pt)};
				\draw[fill=red]{(-4,-5) circle(3pt)};
				\draw[fill=red]{(-3,-5) circle(3pt)};

				%assi
				\draw[gray](-6.9,0)--(-6.1,0);
				\draw[gray](-5.9,0)--(-5.1,0);
				\draw[gray](-4.9,0)--(-4.1,0);
				\draw[gray](-3.9,0)--(-3.1,0);
				\draw[gray](-2.9,0)--(-2.1,0);
				\draw[gray](-1.9,0)--(-1.1,0);
				\draw[gray](-0.9,0)--(-0.1,0);
				\draw[gray](0.1,0)--(0.9,0);
				\draw[gray](1.1,0)--(1.9,0);
				\draw[gray](2.1,0)--(2.9,0);
				\draw[gray](3.1,0)--(3.9,0);
				\draw[gray](4.1,0)--(4.9,0);
				\draw[gray](5.1,0)--(5.9,0);
				\draw[->,gray](6.1,0)--(6.9,0);
				\draw[gray](0,-6)--(0,-0.1);
				\draw[->,gray](0,0.1)--(0,6);

				%quadrante 1
				%\draw[->,gray](1.5,0.5)--(1.1,0.9);
				\draw[->,black](2.9,0.1)--(2.1,0.9);
				\draw[->,black](3.9,0.1)--(3.1,0.9);
				\draw[->,black](4.9,0.1)--(4.1,0.9);
				\draw[->,black](5.9,0.1)--(5.1,0.9);
				\draw[->,black](1.9,1.1)--(1.1,1.9);
				\draw[->,black](2.9,1.1)--(2.1,1.9);
				\draw[->,black](3.9,1.1)--(3.1,1.9);
				\draw[->,black](4.9,1.1)--(4.1,1.9);
				\draw[->,black](5.9,1.1)--(5.1,1.9);
				\draw[->,black](1.9,2.1)--(1.1,2.9);
				\draw[->,black](2.9,2.1)--(2.1,2.9);
				\draw[->,black](3.9,2.1)--(3.1,2.9);
				\draw[->,black](4.9,2.1)--(4.1,2.9);
				\draw[->,black](5.9,2.1)--(5.1,2.9);
				\draw[->,black](1.9,3.1)--(1.1,3.9);
				\draw[->,black](2.9,3.1)--(2.1,3.9);
				\draw[->,black](3.9,3.1)--(3.1,3.9);
				\draw[->,black](4.9,3.1)--(4.1,3.9);
				\draw[->,black](5.9,3.1)--(5.1,3.9);
				\draw[->,black](1.9,4.1)--(1.1,4.9);
				\draw[->,black](2.9,4.1)--(2.1,4.9);
				\draw[->,black](3.9,4.1)--(3.1,4.9);
				\draw[->,black](4.9,4.1)--(4.1,4.9);
				\draw[->,black](5.9,4.1)--(5.1,4.9);
				%quadrante 4
				\draw[->,black](1.9,-0.9)--(1.1,-0.1);
				\draw[->,black](2.9,-0.9)--(2.1,-0.1);
				\draw[->,black](3.9,-0.9)--(3.1,-0.1);
				\draw[->,black](4.9,-0.9)--(4.1,-0.1);
				\draw[->,black](5.9,-0.9)--(5.1,-0.1);
				\draw[->,black](1.9,-1.9)--(1.1,-1.1);
				\draw[->,black](2.9,-1.9)--(2.1,-1.1);
				\draw[->,black](3.9,-1.9)--(3.1,-1.1);
				\draw[->,black](4.9,-1.9)--(4.1,-1.1);
				\draw[->,black](5.9,-1.9)--(5.1,-1.1);
				\draw[->,black](1.9,-2.9)--(1.1,-2.1);
				\draw[->,black](2.9,-2.9)--(2.1,-2.1);
				\draw[->,black](3.9,-2.9)--(3.1,-2.1);
				\draw[->,black](4.9,-2.9)--(4.1,-2.1);
				\draw[->,black](5.9,-2.9)--(5.1,-2.1);
				\draw[->,black](1.9,-3.9)--(1.1,-3.1);
				\draw[->,black](2.9,-3.9)--(2.1,-3.1);
				\draw[->,black](3.9,-3.9)--(3.1,-3.1);
				\draw[->,black](4.9,-3.9)--(4.1,-3.1);
				\draw[->,black](5.9,-3.9)--(5.1,-3.1);
				\draw[->,black](1.9,-4.9)--(1.1,-4.1);
				\draw[->,black](2.9,-4.9)--(2.1,-4.1);
				\draw[->,black](3.9,-4.9)--(3.1,-4.1);
				\draw[->,black](4.9,-4.9)--(4.1,-4.1);
				\draw[->,black](5.9,-4.9)--(5.1,-4.1);
				
				%quadrante 3
				%\draw[->,gray](-1.1,-0.9)--(-1.5,-0.5);
				\draw[->,black](-2.1,-0.9)--(-2.9,-0.1);
				\draw[->,black](-3.1,-0.9)--(-3.9,-0.1);
				\draw[->,black](-4.1,-0.9)--(-4.9,-0.1);
				\draw[->,black](-5.1,-0.9)--(-5.9,-0.1);
				\draw[->,black](-1.1,-1.9)--(-1.9,-1.1);
				\draw[->,black](-2.1,-1.9)--(-2.9,-1.1);
				\draw[->,black](-3.1,-1.9)--(-3.9,-1.1);
				\draw[->,black](-4.1,-1.9)--(-4.9,-1.1);
				\draw[->,black](-5.1,-1.9)--(-5.9,-1.1);
				\draw[->,black](-1.1,-2.9)--(-1.9,-2.1);
				\draw[->,black](-2.1,-2.9)--(-2.9,-2.1);
				\draw[->,black](-3.1,-2.9)--(-3.9,-2.1);
				\draw[->,black](-4.1,-2.9)--(-4.9,-2.1);
				\draw[->,black](-5.1,-2.9)--(-5.9,-2.1);
				\draw[->,black](-1.1,-3.9)--(-1.9,-3.1);
				\draw[->,black](-2.1,-3.9)--(-2.9,-3.1);
				\draw[->,black](-3.1,-3.9)--(-3.9,-3.1);
				\draw[->,black](-4.1,-3.9)--(-4.9,-3.1);
				\draw[->,black](-5.1,-3.9)--(-5.9,-3.1);
				\draw[->,black](-1.1,-4.9)--(-1.9,-4.1);
				\draw[->,black](-2.1,-4.9)--(-2.9,-4.1);
				\draw[->,black](-3.1,-4.9)--(-3.9,-4.1);
				\draw[->,black](-4.1,-4.9)--(-4.9,-4.1);
				\draw[->,black](-5.1,-4.9)--(-5.9,-4.1);
				%quadrante 2
				\draw[->,black](-1.1,0.1)--(-1.9,0.9);
				\draw[->,black](-2.1,0.1)--(-2.9,0.9);
				\draw[->,black](-3.1,0.1)--(-3.9,0.9);
				\draw[->,black](-4.1,0.1)--(-4.9,0.9);
				\draw[->,black](-5.1,0.1)--(-5.9,0.9);
				\draw[->,black](-1.1,1.1)--(-1.9,1.9);
				\draw[->,black](-2.1,1.1)--(-2.9,1.9);
				\draw[->,black](-3.1,1.1)--(-3.9,1.9);
				\draw[->,black](-4.1,1.1)--(-4.9,1.9);
				\draw[->,black](-5.1,1.1)--(-5.9,1.9);
				\draw[->,black](-1.1,2.1)--(-1.9,2.9);
				\draw[->,black](-2.1,2.1)--(-2.9,2.9);
				\draw[->,black](-3.1,2.1)--(-3.9,2.9);
				\draw[->,black](-4.1,2.1)--(-4.9,2.9);
				\draw[->,black](-5.1,2.1)--(-5.9,2.9);
				\draw[->,black](-1.1,3.1)--(-1.9,3.9);
				\draw[->,black](-2.1,3.1)--(-2.9,3.9);
				\draw[->,black](-3.1,3.1)--(-3.9,3.9);
				\draw[->,black](-4.1,3.1)--(-4.9,3.9);
				\draw[->,black](-5.1,3.1)--(-5.9,3.9);
				\draw[->,black](-1.1,4.1)--(-1.9,4.9);
				\draw[->,black](-2.1,4.1)--(-2.9,4.9);
				\draw[->,black](-3.1,4.1)--(-3.9,4.9);
				\draw[->,black](-4.1,4.1)--(-4.9,4.9);
				\draw[->,black](-5.1,4.1)--(-5.9,4.9);
				
				%\draw[->,gray](0.9,-0.9)--(0.1,-0.1);
				%\draw[->,gray](-0.1,0.1)--(-0.9,0.9);
				
				\draw[->, black, line width=1pt] (0.95,-2.87)--(0.05,-0.13);
				\draw[->, black, line width=1pt] (-0.05,0.14)--(-0.95,2.90);
				\draw[-, red, dotted,  line width=1pt] (-0.5, -5)--(-0.61,-4.78 );
				\draw[->, red, line width=1pt] (-0.61, -4.78)--(-0.95,-4.13 );
				
				\draw[-, red, , dotted, line width=1pt] (0, -5)--(-0.14,-4.72 );
				\draw[->, red, line width=1pt] (-0.14, -4.72)--(-0.95,-3.13 );
				
				\draw[-, red,dotted,  line width=1pt] (0.45, -5)--(0.31,-4.68 );
				\draw[->, red, line width=1pt] (0.31, -4.68)--(-0.95,-2.13 );
				
				\draw[->, red, line width=1pt] (0.93, -4.88)--(-0.95,-1.13 );
				\draw[->, red, line width=1pt] (0.93, -3.88)--(-0.95,-0.13 );
				\draw[->, red, line width=1pt] (0.93, -2.88)--(-0.95,0.87 );
				\draw[->, red, line width=1pt] (0.93, -0.88)--(-0.95,2.85 );
				\draw[->, red, line width=1pt] (0.93, 0.12)--(-0.95,3.87 );
				\draw[->, red, line width=1pt] (0.93, 1.12)--(-0.95,4.87 );
				\draw[red, line width=1pt] (0.93, 2.12)--(-0.36,4.7);
				\draw[red, dotted,  line width=1pt] (-0.36, 4.7)--(-0.5,5);
				\draw[red, line width=1pt] (0.93, 3.12)--(0.19,4.62);
				\draw[red, dotted, line width=1pt] (0.19, 4.62)--(0,5);
				\draw[red, line width=1pt] (0.93, 4.12)--(0.60,4.78);
				\draw[red, dotted, line width=1pt] (0.60, 4.78)--(0.5,5);
				
				\draw[->, red, line width=1pt](0.93,-1.88) [out=112, in=308] to (-0.95,1.87);
				
				\draw[->, black, line width=1pt] (0.9, -0.9)--(0.1,-0.1);
				\draw[->, black, line width=1pt] (-0.1, 0.1)--(-0.9,0.9);
				\draw[->, black, line width=1pt] (1.9, -1.9)--(-1.9,-0.1 );
				\draw[->, black, line width=1pt] (1.9, 0.1)--(-1.9,1.9);

				\draw[fill=yellow]{(-8,-7) circle(3pt)};
				\node at (-6.7,-7){=$M_0(0,0,0)$};
				\draw[fill=green]{(-5,-7) circle(3pt)};
				\node at (-3.6,-7){=$M_t(a,0,0)$};
				\draw[fill=red]{(-2,-7) circle(3pt)};
				\node at (-0.1,-7){=$M_t(0,0,-a-2)$};
				\draw[fill=blue]{(2,-7) circle(3pt)};
				\node at (3.35,-7){=$M_t(0,1,0)$};
				\draw[fill=black]{(5,-7) circle(3pt)};
				\node at (6.3,-7){=$M_t(1,0,0)$};
				\node at (7,-0.3){$a$};
				\node at (-0.2,5.8){$t$};
			\end{tikzpicture}$$}
		
	\end{center}

	\caption{Morphisms between Verma modules.} \label{all}
\end{figure}

\newpage
${}$
\newpage

\section{Exceptional de Rham complexes for E(4,4)}
As anticipated in the introduction, if we replace morphisms of degree one whose composition is not zero with their composition morphism of degree 2, we get a complex that we describe in Figure \ref{deRham1} involving only morphisms of degree 1 and 2, and in Figure \ref{deRham2} involving morphisms of degree 1, 3 and 4. 

In Section \ref{degree2} we
proved that the unique non zero compositions of morphisms of degree 1 between finite Verma modules are $\varphi_{[1_D]}\circ \varphi_{[1_A]}:M_{t-2}(2,0,0)\rightarrow M_t(0,0,0)$ (for $t\neq 0$) and $\varphi_{[1_E]}\circ \varphi_{[1_A]}:M_{-1}(1,0,0)\rightarrow M_1(1,0,0)$.  The only other non-zero compositions are
$\varphi_{[3_F]}\circ\varphi_{[1_A]}: M_{-1}(1,0,0)\rightarrow M_3(1,0,0)$ and
$\varphi_{[1_E]}\circ\varphi_{[3_G]}: M_{-3}(1,0,0)\rightarrow M_1(1,0,0)$. One can check that $\varphi_{[3_F]}\circ\varphi_{[1_A]}=\varphi_{[4_H]}$ and
$\varphi_{[1_E]}\circ\varphi_{[3_G]}=\varphi_{[4_H]}$.
The classification of morphisms shows that all other compositions are zero.

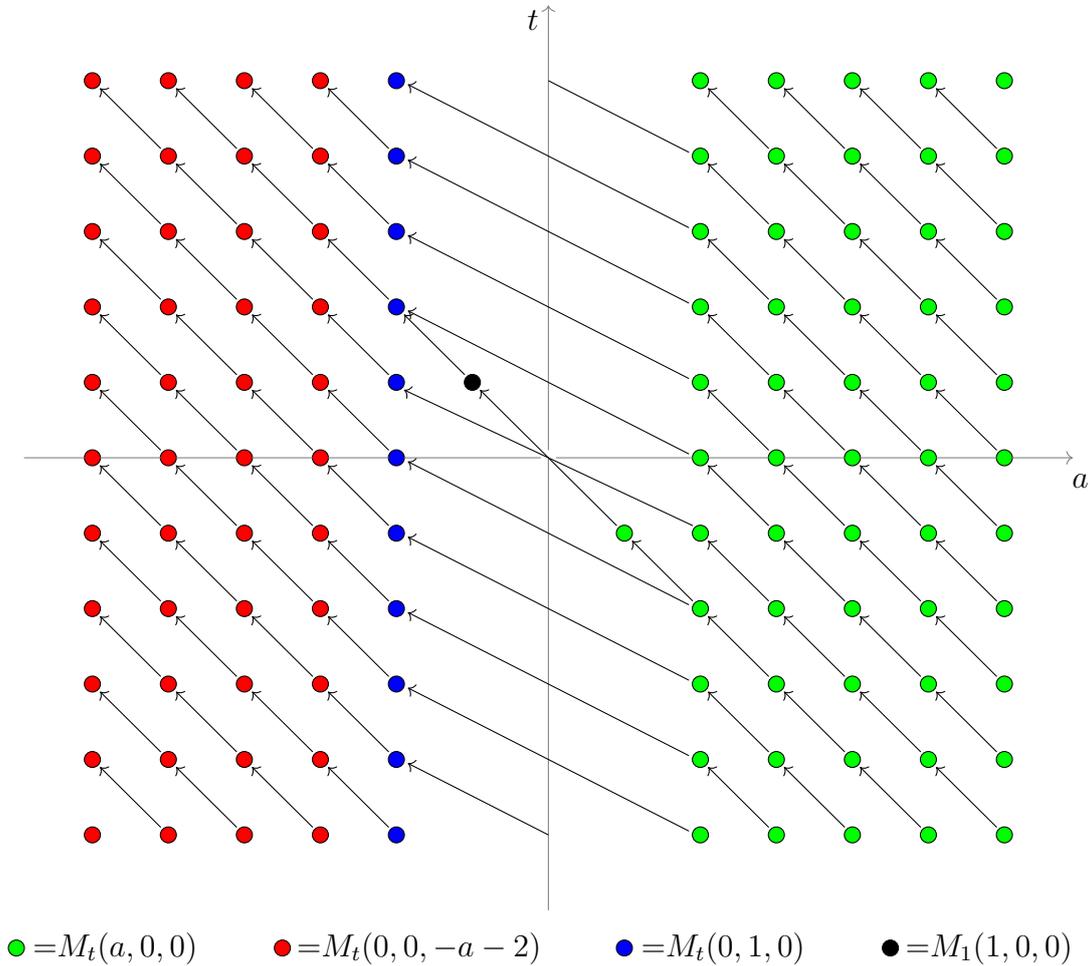
\begin{figure}[h]	
	\begin{center}
		\scalebox{1}{	$$
			\begin{tikzpicture} 
				%\draw[fill=green]{(1,0) circle(3pt)}; 
				\draw[fill=green]{(2,0) circle(3pt)};
				\draw[fill=green]{(3,0) circle(3pt)};
				\draw[fill=green]{(4,0) circle(3pt)};
				\draw[fill=green]{(5,0) circle(3pt)};
				\draw[fill=green]{(6,0) circle(3pt)};
				%\draw[fill=green]{(1,1) circle(3pt)};
				\draw[fill=green]{(2,1) circle(3pt)};
				\draw[fill=green]{(3,1) circle(3pt)};
				\draw[fill=green]{(4,1) circle(3pt)};
				\draw[fill=green]{(5,1) circle(3pt)};
				\draw[fill=green]{(6,1) circle(3pt)};
				%\draw[fill=green]{(1,2) circle(3pt)};
				\draw[fill=green]{(2,2) circle(3pt)};
				\draw[fill=green]{(3,2) circle(3pt)};
				\draw[fill=green]{(4,2) circle(3pt)};
				\draw[fill=green]{(5,2) circle(3pt)};
				\draw[fill=green]{(6,2) circle(3pt)};
				%\draw[fill=green]{(1,3) circle(3pt)};
				\draw[fill=green]{(2,3) circle(3pt)};
				\draw[fill=green]{(3,3) circle(3pt)};
				\draw[fill=green]{(4,3) circle(3pt)};
				\draw[fill=green]{(5,3) circle(3pt)};
				\draw[fill=green]{(6,3) circle(3pt)};
				%\draw[fill=green]{(1,4) circle(3pt)};
				\draw[fill=green]{(2,4) circle(3pt)};
				\draw[fill=green]{(3,4) circle(3pt)};
				\draw[fill=green]{(4,4) circle(3pt)};
				\draw[fill=green]{(5,4) circle(3pt)};
				\draw[fill=green]{(6,4) circle(3pt)};
				%\draw[fill=green]{(1,5) circle(3pt)};
				\draw[fill=green]{(2,5) circle(3pt)};
				\draw[fill=green]{(3,5) circle(3pt)};
				\draw[fill=green]{(4,5) circle(3pt)};
				\draw[fill=green]{(5,5) circle(3pt)};
				\draw[fill=green]{(6,5) circle(3pt)};
				
				\draw[fill=green]{(1,-1) circle(3pt)};
				\draw[fill=green]{(2,-1) circle(3pt)};
				\draw[fill=green]{(3,-1) circle(3pt)};
				\draw[fill=green]{(4,-1) circle(3pt)};
				\draw[fill=green]{(5,-1) circle(3pt)};
				\draw[fill=green]{(6,-1) circle(3pt)};
				%\draw[fill=green]{(1,-2) circle(3pt)};
				\draw[fill=green]{(2,-2) circle(3pt)};
				\draw[fill=green]{(3,-2) circle(3pt)};
				\draw[fill=green]{(4,-2) circle(3pt)};
				\draw[fill=green]{(5,-2) circle(3pt)};
				\draw[fill=green]{(6,-2) circle(3pt)};
				%\draw[fill=green]{(1,-3) circle(3pt)};
				\draw[fill=green]{(2,-3) circle(3pt)};
				\draw[fill=green]{(3,-3) circle(3pt)};
				\draw[fill=green]{(4,-3) circle(3pt)};
				\draw[fill=green]{(5,-3) circle(3pt)};
				\draw[fill=green]{(6,-3) circle(3pt)};
				%\draw[fill=green]{(1,-4) circle(3pt)};
				\draw[fill=green]{(2,-4) circle(3pt)};
				\draw[fill=green]{(3,-4) circle(3pt)};
				\draw[fill=green]{(4,-4) circle(3pt)};
				\draw[fill=green]{(5,-4) circle(3pt)};
				\draw[fill=green]{(6,-4) circle(3pt)};
				%\draw[fill=green]{(1,-5) circle(3pt)};
				\draw[fill=green]{(2,-5) circle(3pt)};
				\draw[fill=green]{(3,-5) circle(3pt)};
				\draw[fill=green]{(4,-5) circle(3pt)};
				\draw[fill=green]{(5,-5) circle(3pt)};
				\draw[fill=green]{(6,-5) circle(3pt)};
				%\draw[fill=yellow]{(0,0) circle(3pt)};
				
				%\draw[fill=black]{(-1,5) circle(3pt)};
				%\draw[fill=black]{(-1,4) circle(3pt)};
				%\draw[fill=black]{(-1,3) circle(3pt)};
				%\draw[fill=black]{(-1,2) circle(3pt)};
				\draw[fill=black]{(-1,1) circle(3pt)};
				%\draw[fill=black]{(-1,0) circle(3pt)};
				%\draw[fill=black]{(-1,-1) circle(3pt)};
				%\draw[fill=black]{(-1,-2) circle(3pt)};
				%\draw[fill=black]{(-1,-3) circle(3pt)};
				%\draw[fill=black]{(-1,-4) circle(3pt)};
				%\draw[fill=black]{(-1,-5) circle(3pt)};
				
				\draw[fill=blue]{(-2,5) circle(3pt)};
				\draw[fill=blue]{(-2,4) circle(3pt)};
				\draw[fill=blue]{(-2,3) circle(3pt)};
				\draw[fill=blue]{(-2,2) circle(3pt)};
				\draw[fill=blue]{(-2,1) circle(3pt)};
				\draw[fill=blue]{(-2,0) circle(3pt)};
				\draw[fill=blue]{(-2,-1) circle(3pt)};
				\draw[fill=blue]{(-2,-2) circle(3pt)};
				\draw[fill=blue]{(-2,-3) circle(3pt)};
				\draw[fill=blue]{(-2,-4) circle(3pt)};
				\draw[fill=blue]{(-2,-5) circle(3pt)};
				
				\draw[fill=red]{(-6,5) circle(3pt)};
				\draw[fill=red]{(-5,5) circle(3pt)};
				\draw[fill=red]{(-4,5) circle(3pt)};
				\draw[fill=red]{(-3,5) circle(3pt)};
				\draw[fill=red]{(-6,4) circle(3pt)};
				\draw[fill=red]{(-5,4) circle(3pt)};
				\draw[fill=red]{(-4,4) circle(3pt)};
				\draw[fill=red]{(-3,4) circle(3pt)};
				\draw[fill=red]{(-6,3) circle(3pt)};
				\draw[fill=red]{(-5,3) circle(3pt)};
				\draw[fill=red]{(-4,3) circle(3pt)};
				\draw[fill=red]{(-3,3) circle(3pt)};
				\draw[fill=red]{(-6,2) circle(3pt)};
				\draw[fill=red]{(-5,2) circle(3pt)};
				\draw[fill=red]{(-4,2) circle(3pt)};
				\draw[fill=red]{(-3,2) circle(3pt)};
				\draw[fill=red]{(-6,1) circle(3pt)};
				\draw[fill=red]{(-5,1) circle(3pt)};
				\draw[fill=red]{(-4,1) circle(3pt)};
				\draw[fill=red]{(-3,1) circle(3pt)};
				\draw[fill=red]{(-6,0) circle(3pt)};
				\draw[fill=red]{(-5,0) circle(3pt)};
				\draw[fill=red]{(-4,0) circle(3pt)};
				\draw[fill=red]{(-3,0) circle(3pt)};
				\draw[fill=red]{(-6,-1) circle(3pt)};
				\draw[fill=red]{(-5,-1) circle(3pt)};
				\draw[fill=red]{(-4,-1) circle(3pt)};
				\draw[fill=red]{(-3,-1) circle(3pt)};
				\draw[fill=red]{(-6,-2) circle(3pt)};
				\draw[fill=red]{(-5,-2) circle(3pt)};
				\draw[fill=red]{(-4,-2) circle(3pt)};
				\draw[fill=red]{(-3,-2) circle(3pt)};
				\draw[fill=red]{(-6,-3) circle(3pt)};
				\draw[fill=red]{(-5,-3) circle(3pt)};
				\draw[fill=red]{(-4,-3) circle(3pt)};
				\draw[fill=red]{(-3,-3) circle(3pt)};
				\draw[fill=red]{(-6,-4) circle(3pt)};
				\draw[fill=red]{(-5,-4) circle(3pt)};
				\draw[fill=red]{(-4,-4) circle(3pt)};
				\draw[fill=red]{(-3,-4) circle(3pt)};
				\draw[fill=red]{(-6,-5) circle(3pt)};
				\draw[fill=red]{(-5,-5) circle(3pt)};
				\draw[fill=red]{(-4,-5) circle(3pt)};
				\draw[fill=red]{(-3,-5) circle(3pt)};

				%assi
				\draw[gray](-6.9,0)--(-6.1,0);
				\draw[gray](-5.9,0)--(-5.1,0);
				\draw[gray](-4.9,0)--(-4.1,0);
				\draw[gray](-3.9,0)--(-3.1,0);
				\draw[gray](-2.9,0)--(-2.1,0);
				\draw[gray](-1.9,0)--(-0.1,0);
				%\draw[gray](-0.9,0)--(-0.1,0);
				\draw[gray](0.1,0)--(1.9,0);
				%\draw[gray](1.1,0)--(1.9,0);
				\draw[gray](2.1,0)--(2.9,0);
				\draw[gray](3.1,0)--(3.9,0);
				\draw[gray](4.1,0)--(4.9,0);
				\draw[gray](5.1,0)--(5.9,0);
				\draw[->,gray](6.1,0)--(6.9,0);
				\draw[gray](0,-6)--(0,-0.01);
				\draw[->,gray](0,0.1)--(0,6);

				%quadrante 1
				%\draw[->,black](1.4,0.6)--(1.1,0.9);
				%\draw[black, dotted](1.8,0.2)--(1.4,0.6);
				\draw[->,black](2.9,0.1)--(2.1,0.9);
				\draw[->,black](3.9,0.1)--(3.1,0.9);
				\draw[->,black](4.9,0.1)--(4.1,0.9);
				\draw[->,black](5.9,0.1)--(5.1,0.9);
				%\draw[->,black](1.9,1.1)--(1.1,1.9);
				\draw[->,black](2.9,1.1)--(2.1,1.9);
				\draw[->,black](3.9,1.1)--(3.1,1.9);
				\draw[->,black](4.9,1.1)--(4.1,1.9);
				\draw[->,black](5.9,1.1)--(5.1,1.9);
				%\draw[->,black](1.9,2.1)--(1.1,2.9);
				\draw[->,black](2.9,2.1)--(2.1,2.9);
				\draw[->,black](3.9,2.1)--(3.1,2.9);
				\draw[->,black](4.9,2.1)--(4.1,2.9);
				\draw[->,black](5.9,2.1)--(5.1,2.9);
				%\draw[->,black](1.9,3.1)--(1.1,3.9);
				\draw[->,black](2.9,3.1)--(2.1,3.9);
				\draw[->,black](3.9,3.1)--(3.1,3.9);
				\draw[->,black](4.9,3.1)--(4.1,3.9);
				\draw[->,black](5.9,3.1)--(5.1,3.9);
				%\draw[->,black](1.9,4.1)--(1.1,4.9);
				\draw[->,black](2.9,4.1)--(2.1,4.9);
				\draw[->,black](3.9,4.1)--(3.1,4.9);
				\draw[->,black](4.9,4.1)--(4.1,4.9);
				\draw[->,black](5.9,4.1)--(5.1,4.9);
				%quadrante 4
				%\draw[->,black](1.9,-0.9)--(1.1,-0.1);
				\draw[->,black](2.9,-0.9)--(2.1,-0.1);
				\draw[->,black](3.9,-0.9)--(3.1,-0.1);
				\draw[->,black](4.9,-0.9)--(4.1,-0.1);
				\draw[->,black](5.9,-0.9)--(5.1,-0.1);
				\draw[->,black](1.9,-1.9)--(1.1,-1.1);
				\draw[->,black](2.9,-1.9)--(2.1,-1.1);
				\draw[->,black](3.9,-1.9)--(3.1,-1.1);
				\draw[->,black](4.9,-1.9)--(4.1,-1.1);
				\draw[->,black](5.9,-1.9)--(5.1,-1.1);
				%\draw[->,black](1.9,-2.9)--(1.1,-2.1);
				\draw[->,black](2.9,-2.9)--(2.1,-2.1);
				\draw[->,black](3.9,-2.9)--(3.1,-2.1);
				\draw[->,black](4.9,-2.9)--(4.1,-2.1);
				\draw[->,black](5.9,-2.9)--(5.1,-2.1);
				%\draw[->,black](1.9,-3.9)--(1.1,-3.1);
				\draw[->,black](2.9,-3.9)--(2.1,-3.1);
				\draw[->,black](3.9,-3.9)--(3.1,-3.1);
				\draw[->,black](4.9,-3.9)--(4.1,-3.1);
				\draw[->,black](5.9,-3.9)--(5.1,-3.1);
				%\draw[->,black](1.9,-4.9)--(1.1,-4.1);
				\draw[->,black](2.9,-4.9)--(2.1,-4.1);
				\draw[->,black](3.9,-4.9)--(3.1,-4.1);
				\draw[->,black](4.9,-4.9)--(4.1,-4.1);
				\draw[->,black](5.9,-4.9)--(5.1,-4.1);
				
				%quadrante 3
				%\draw[black](-1.1,-0.9)--(-1.4,-0.6);
				%\draw[->,dotted,black](-1.4,-0.6)--(-1.8,-0.2);
				\draw[->,black](-2.1,-0.9)--(-2.9,-0.1);
				\draw[->,black](-3.1,-0.9)--(-3.9,-0.1);
				\draw[->,black](-4.1,-0.9)--(-4.9,-0.1);
				\draw[->,black](-5.1,-0.9)--(-5.9,-0.1);
				%\draw[->,black](-1.1,-1.9)--(-1.9,-1.1);
				\draw[->,black](-2.1,-1.9)--(-2.9,-1.1);
				\draw[->,black](-3.1,-1.9)--(-3.9,-1.1);
				\draw[->,black](-4.1,-1.9)--(-4.9,-1.1);
				\draw[->,black](-5.1,-1.9)--(-5.9,-1.1);
				%\draw[->,black](-1.1,-2.9)--(-1.9,-2.1);
				\draw[->,black](-2.1,-2.9)--(-2.9,-2.1);
				\draw[->,black](-3.1,-2.9)--(-3.9,-2.1);
				\draw[->,black](-4.1,-2.9)--(-4.9,-2.1);
				\draw[->,black](-5.1,-2.9)--(-5.9,-2.1);
				%\draw[->,black](-1.1,-3.9)--(-1.9,-3.1);
				\draw[->,black](-2.1,-3.9)--(-2.9,-3.1);
				\draw[->,black](-3.1,-3.9)--(-3.9,-3.1);
				\draw[->,black](-4.1,-3.9)--(-4.9,-3.1);
				\draw[->,black](-5.1,-3.9)--(-5.9,-3.1);
				%\draw[->,black](-1.1,-4.9)--(-1.9,-4.1);
				\draw[->,black](-2.1,-4.9)--(-2.9,-4.1);
				\draw[->,black](-3.1,-4.9)--(-3.9,-4.1);
				\draw[->,black](-4.1,-4.9)--(-4.9,-4.1);
				\draw[->,black](-5.1,-4.9)--(-5.9,-4.1);
				%quadrante 2
				%\draw[->,black](-1.1,0.1)--(-1.9,0.9);
				\draw[->,black](-2.1,0.1)--(-2.9,0.9);
				\draw[->,black](-3.1,0.1)--(-3.9,0.9);
				\draw[->,black](-4.1,0.1)--(-4.9,0.9);
				\draw[->,black](-5.1,0.1)--(-5.9,0.9);
				\draw[->,black](-1.1,1.1)--(-1.9,1.9);
				\draw[->,black](-2.1,1.1)--(-2.9,1.9);
				\draw[->,black](-3.1,1.1)--(-3.9,1.9);
				\draw[->,black](-4.1,1.1)--(-4.9,1.9);
				\draw[->,black](-5.1,1.1)--(-5.9,1.9);
				%\draw[->,black](-1.1,2.1)--(-1.9,2.9);
				\draw[->,black](-2.1,2.1)--(-2.9,2.9);
				\draw[->,black](-3.1,2.1)--(-3.9,2.9);
				\draw[->,black](-4.1,2.1)--(-4.9,2.9);
				\draw[->,black](-5.1,2.1)--(-5.9,2.9);
				%\draw[->,black](-1.1,3.1)--(-1.9,3.9);
				\draw[->,black](-2.1,3.1)--(-2.9,3.9);
				\draw[->,black](-3.1,3.1)--(-3.9,3.9);
				\draw[->,black](-4.1,3.1)--(-4.9,3.9);
				\draw[->,black](-5.1,3.1)--(-5.9,3.9);
				%\draw[->,black](-1.1,4.1)--(-1.9,4.9);
				\draw[->,black](-2.1,4.1)--(-2.9,4.9);
				\draw[->,black](-3.1,4.1)--(-3.9,4.9);
				\draw[->,black](-4.1,4.1)--(-4.9,4.9);
				\draw[->,black](-5.1,4.1)--(-5.9,4.9);
				
				%\draw[->,black](0.9,-0.9)--(0.1,-0.1);
				%\draw[->,black](1.1,-1.1)--(-0.9,0.9);
				
				%\draw[fill=yellow]{(-7,-6.5) circle(3pt)};
				%\node at (-5.7,-6.5){=$M_0(0,0,0)$};
				\draw[fill=green]{(-7,-6.5) circle(3pt)};
				\node at (-5.7,-6.5){=$M_t(a,0,0)$};
				\draw[fill=red]{(-3.5,-6.5) circle(3pt)};
				\node at (-1.7,-6.5){=$M_t(0,0,-a-2)$};
				\draw[fill=blue]{(1,-6.5) circle(3pt)};
				\node at (2.3,-6.5){=$M_t(0,1,0)$};
				\draw[fill=black]{(4.5,-6.5) circle(3pt)};
				\node at (5.8,-6.5){=$M_1(1,0,0)$};
				
				\node at (7,-0.3){$a$};
				\node at (-0.2,5.8){$t$};

				%\node at (0.8,-0.4){\scriptsize$\varphi_{[1_A]}$};
				%\node at (-2.2,-0.4){\scriptsize$\varphi_{{[1_{\!B, C}]}}$};
				%\node at (-3.2,-0.4){\scriptsize$\varphi_{[1_B]}$};
				%\node at (-4.2,-0.4){\scriptsize$\varphi_{[1_B]}$};
				%\node at (-5.2,-0.4){\scriptsize$\varphi_{[1_B]}$};
					%\node at (1.8,-0.4){\scriptsize$\varphi_{[1_A]}$};
				%\node at (2.8,-0.4){\scriptsize$\varphi_{[1_A]}$};
				%\node at (3.8,-0.4){\scriptsize$\varphi_{[1_A]}$};
				%\node at (4.8,-0.4){\scriptsize$\varphi_{[1_A]}$};
				
%				\node at (-0.3,0.7){\scriptsize$\varphi_{[1_E]}$};
	%			\node at (-1.3,0.7){\scriptsize$\varphi_{[1_D]}$};
		%		\node at (-2.3,0.7){\scriptsize$\varphi_{[1_C]}$};
			%	\node at (-3.3,0.7){\scriptsize$\varphi_{[1_B]}$};
				%\node at (-4.3,0.7){\scriptsize$\varphi_{[1_B]}$};
				%\node at (-5.3,0.7){\scriptsize$\varphi_{[1_B]}$};
				%\node at (1.7,0.7){\scriptsize$\tilde \varphi_{[1_A]}$};
				%\node at (2.7,0.7){\scriptsize$\varphi_{[1_A]}$};
				%\node at (3.7,0.7){\scriptsize$\varphi_{[1_A]}$};
				%\node at (4.7,0.7){\scriptsize$\varphi_{[1_A]}$};

				%\node at (-1.3,1.7){\scriptsize$\varphi_{[1_D]}$};
				%\node at (-1.1,-0.4){\scriptsize$\tilde \varphi_{[1_D]}$};
				%\node at (-2.3,1.7){\scriptsize$\varphi_{{[1_{\!B, C}]}}$};
				%\node at (-3.3,1.7){\scriptsize$\varphi_{[1_B]}$};
				%\node at (-4.3,1.7){\scriptsize$\varphi_{[1_B]}$};
				%\node at (-5.3,1.7){\scriptsize$\varphi_{[1_B]}$};
				%\node at (1.7,1.7){\scriptsize$\varphi_{[1_A]}$};
				%\node at (2.7,1.7){\scriptsize$\varphi_{[1_A]}$};
				%\node at (3.7,1.7){\scriptsize$\varphi_{[1_A]}$};
				%\node at (4.7,1.7){\scriptsize$\varphi_{[1_A]}$};
				%morfismi di grado 2
				\draw[->,black](0,-5)--(-1.85,-4.05);
				\draw[->,black](1.85,-4.95)--(-1.85,-3.05);
				\draw[->,black](1.85,-3.95)--(-1.85,-2.05);
				\draw[->,black](1.85,-2.95)--(-1.85,-1.05);
				\draw[->,black](1.85,-1.95)--(-1.85,-0.05);
				%\draw[->,black](1.85,-0.95)--(-1.85,0.95);
				\draw[->,black](1.85,0.05)--(-1.85,1.95);
				\draw[->,black](1.85,1.05)--(-1.85,2.95);
				\draw[->,black](1.85,2.05)--(-1.85,3.95);
				\draw[->,black](1.85,3.05)--(-1.85,4.95);
				\draw[-,black](1.85,4.05)--(0,5);
				\draw[->, black](1.9,-0.9)--(-1.9,0.9);
				\draw[->, black](0.9,-0.9)--(-0.9,0.9);
					\end{tikzpicture}$$}
				
			\end{center}
		
\caption{Exceptional de Rham complex with morphisms of degree 1 and 2 between Verma modules.} \label{deRham1}
\end{figure}

\begin{figure}[h]
	\begin{center}
		\scalebox{1}{	$$
			\begin{tikzpicture} 
				\draw[fill=green]{(1,0) circle(3pt)}; 
				\draw[fill=green]{(2,0) circle(3pt)};
				\draw[fill=green]{(3,0) circle(3pt)};
				\draw[fill=green]{(4,0) circle(3pt)};
				\draw[fill=green]{(5,0) circle(3pt)};
				\draw[fill=green]{(6,0) circle(3pt)};
				\draw[fill=green]{(1,1) circle(3pt)};
				\draw[fill=green]{(2,1) circle(3pt)};
				\draw[fill=green]{(3,1) circle(3pt)};
				\draw[fill=green]{(4,1) circle(3pt)};
				\draw[fill=green]{(5,1) circle(3pt)};
				\draw[fill=green]{(6,1) circle(3pt)};
				\draw[fill=green]{(1,2) circle(3pt)};
				\draw[fill=green]{(2,2) circle(3pt)};
				\draw[fill=green]{(3,2) circle(3pt)};
				\draw[fill=green]{(4,2) circle(3pt)};
				\draw[fill=green]{(5,2) circle(3pt)};
				\draw[fill=green]{(6,2) circle(3pt)};
				\draw[fill=green]{(1,3) circle(3pt)};
				\draw[fill=green]{(2,3) circle(3pt)};
				\draw[fill=green]{(3,3) circle(3pt)};
				\draw[fill=green]{(4,3) circle(3pt)};
				\draw[fill=green]{(5,3) circle(3pt)};
				\draw[fill=green]{(6,3) circle(3pt)};
				\draw[fill=green]{(1,4) circle(3pt)};
				\draw[fill=green]{(2,4) circle(3pt)};
				\draw[fill=green]{(3,4) circle(3pt)};
				\draw[fill=green]{(4,4) circle(3pt)};
				\draw[fill=green]{(5,4) circle(3pt)};
				\draw[fill=green]{(6,4) circle(3pt)};
				\draw[fill=green]{(1,5) circle(3pt)};
				\draw[fill=green]{(2,5) circle(3pt)};
				\draw[fill=green]{(3,5) circle(3pt)};
				\draw[fill=green]{(4,5) circle(3pt)};
				\draw[fill=green]{(5,5) circle(3pt)};
				\draw[fill=green]{(6,5) circle(3pt)};
				
				\draw[fill=green]{(1,-1) circle(3pt)};
				\draw[fill=green]{(2,-1) circle(3pt)};
				\draw[fill=green]{(3,-1) circle(3pt)};
				\draw[fill=green]{(4,-1) circle(3pt)};
				\draw[fill=green]{(5,-1) circle(3pt)};
				\draw[fill=green]{(6,-1) circle(3pt)};
				\draw[fill=green]{(1,-2) circle(3pt)};
				\draw[fill=green]{(2,-2) circle(3pt)};
				\draw[fill=green]{(3,-2) circle(3pt)};
				\draw[fill=green]{(4,-2) circle(3pt)};
				\draw[fill=green]{(5,-2) circle(3pt)};
				\draw[fill=green]{(6,-2) circle(3pt)};
				\draw[fill=green]{(1,-3) circle(3pt)};
				\draw[fill=green]{(2,-3) circle(3pt)};
				\draw[fill=green]{(3,-3) circle(3pt)};
				\draw[fill=green]{(4,-3) circle(3pt)};
				\draw[fill=green]{(5,-3) circle(3pt)};
				\draw[fill=green]{(6,-3) circle(3pt)};
				\draw[fill=green]{(1,-4) circle(3pt)};
				\draw[fill=green]{(2,-4) circle(3pt)};
				\draw[fill=green]{(3,-4) circle(3pt)};
				\draw[fill=green]{(4,-4) circle(3pt)};
				\draw[fill=green]{(5,-4) circle(3pt)};
				\draw[fill=green]{(6,-4) circle(3pt)};
				\draw[fill=green]{(1,-5) circle(3pt)};
				\draw[fill=green]{(2,-5) circle(3pt)};
				\draw[fill=green]{(3,-5) circle(3pt)};
				\draw[fill=green]{(4,-5) circle(3pt)};
				\draw[fill=green]{(5,-5) circle(3pt)};
				\draw[fill=green]{(6,-5) circle(3pt)};
				\draw[fill=yellow]{(0,0) circle(3pt)};
				
				\draw[fill=black]{(-1,5) circle(3pt)};
				\draw[fill=black]{(-1,4) circle(3pt)};
				\draw[fill=black]{(-1,3) circle(3pt)};
				\draw[fill=black]{(-1,2) circle(3pt)};
				\draw[fill=black]{(-1,1) circle(3pt)};
				\draw[fill=black]{(-1,0) circle(3pt)};
				\draw[fill=black]{(-1,-1) circle(3pt)};
				\draw[fill=black]{(-1,-2) circle(3pt)};
				\draw[fill=black]{(-1,-3) circle(3pt)};
				\draw[fill=black]{(-1,-4) circle(3pt)};
				\draw[fill=black]{(-1,-5) circle(3pt)};
				
				\draw[fill=blue]{(-2,5) circle(3pt)};
				\draw[fill=blue]{(-2,4) circle(3pt)};
				\draw[fill=blue]{(-2,3) circle(3pt)};
				\draw[fill=blue]{(-2,2) circle(3pt)};
				\draw[fill=blue]{(-2,1) circle(3pt)};
				\draw[fill=blue]{(-2,0) circle(3pt)};
				\draw[fill=blue]{(-2,-1) circle(3pt)};
				\draw[fill=blue]{(-2,-2) circle(3pt)};
				\draw[fill=blue]{(-2,-3) circle(3pt)};
				\draw[fill=blue]{(-2,-4) circle(3pt)};
				\draw[fill=blue]{(-2,-5) circle(3pt)};
				
				\draw[fill=red]{(-6,5) circle(3pt)};
				\draw[fill=red]{(-5,5) circle(3pt)};
				\draw[fill=red]{(-4,5) circle(3pt)};
				\draw[fill=red]{(-3,5) circle(3pt)};
				\draw[fill=red]{(-6,4) circle(3pt)};
				\draw[fill=red]{(-5,4) circle(3pt)};
				\draw[fill=red]{(-4,4) circle(3pt)};
				\draw[fill=red]{(-3,4) circle(3pt)};
				\draw[fill=red]{(-6,3) circle(3pt)};
				\draw[fill=red]{(-5,3) circle(3pt)};
				\draw[fill=red]{(-4,3) circle(3pt)};
				\draw[fill=red]{(-3,3) circle(3pt)};
				\draw[fill=red]{(-6,2) circle(3pt)};
				\draw[fill=red]{(-5,2) circle(3pt)};
				\draw[fill=red]{(-4,2) circle(3pt)};
				\draw[fill=red]{(-3,2) circle(3pt)};
				\draw[fill=red]{(-6,1) circle(3pt)};
				\draw[fill=red]{(-5,1) circle(3pt)};
				\draw[fill=red]{(-4,1) circle(3pt)};
				\draw[fill=red]{(-3,1) circle(3pt)};
				\draw[fill=red]{(-6,0) circle(3pt)};
				\draw[fill=red]{(-5,0) circle(3pt)};
				\draw[fill=red]{(-4,0) circle(3pt)};
				\draw[fill=red]{(-3,0) circle(3pt)};
				\draw[fill=red]{(-6,-1) circle(3pt)};
				\draw[fill=red]{(-5,-1) circle(3pt)};
				\draw[fill=red]{(-4,-1) circle(3pt)};
				\draw[fill=red]{(-3,-1) circle(3pt)};
				\draw[fill=red]{(-6,-2) circle(3pt)};
				\draw[fill=red]{(-5,-2) circle(3pt)};
				\draw[fill=red]{(-4,-2) circle(3pt)};
				\draw[fill=red]{(-3,-2) circle(3pt)};
				\draw[fill=red]{(-6,-3) circle(3pt)};
				\draw[fill=red]{(-5,-3) circle(3pt)};
				\draw[fill=red]{(-4,-3) circle(3pt)};
				\draw[fill=red]{(-3,-3) circle(3pt)};
				\draw[fill=red]{(-6,-4) circle(3pt)};
				\draw[fill=red]{(-5,-4) circle(3pt)};
				\draw[fill=red]{(-4,-4) circle(3pt)};
				\draw[fill=red]{(-3,-4) circle(3pt)};
				\draw[fill=red]{(-6,-5) circle(3pt)};
				\draw[fill=red]{(-5,-5) circle(3pt)};
				\draw[fill=red]{(-4,-5) circle(3pt)};
				\draw[fill=red]{(-3,-5) circle(3pt)};

				%assi
				\draw[gray](-6.9,0)--(-6.1,0);
				\draw[gray](-5.9,0)--(-5.1,0);
				\draw[gray](-4.9,0)--(-4.1,0);
				\draw[gray](-3.9,0)--(-3.1,0);
				\draw[gray](-2.9,0)--(-2.1,0);
				\draw[gray](-1.9,0)--(-1.1,0);
				\draw[gray](-0.9,0)--(-0.1,0);
				\draw[gray](0.1,0)--(0.9,0);
				\draw[gray](1.1,0)--(1.9,0);
				\draw[gray](2.1,0)--(2.9,0);
				\draw[gray](3.1,0)--(3.9,0);
				\draw[gray](4.1,0)--(4.9,0);
				\draw[gray](5.1,0)--(5.9,0);
				\draw[->,gray](6.1,0)--(6.9,0);
				\draw[gray](0,-6)--(0,-0.1);
				\draw[->,gray](0,0.1)--(0,6);

				%quadrante 1
				%\draw[->,gray](1.5,0.5)--(1.1,0.9);
				\draw[->,black](2.9,0.1)--(2.1,0.9);
				\draw[->,black](3.9,0.1)--(3.1,0.9);
				\draw[->,black](4.9,0.1)--(4.1,0.9);
				\draw[->,black](5.9,0.1)--(5.1,0.9);
				\draw[->,black](1.9,1.1)--(1.1,1.9);
				\draw[->,black](2.9,1.1)--(2.1,1.9);
				\draw[->,black](3.9,1.1)--(3.1,1.9);
				\draw[->,black](4.9,1.1)--(4.1,1.9);
				\draw[->,black](5.9,1.1)--(5.1,1.9);
				\draw[->,black](1.9,2.1)--(1.1,2.9);
				\draw[->,black](2.9,2.1)--(2.1,2.9);
				\draw[->,black](3.9,2.1)--(3.1,2.9);
				\draw[->,black](4.9,2.1)--(4.1,2.9);
				\draw[->,black](5.9,2.1)--(5.1,2.9);
				\draw[->,black](1.9,3.1)--(1.1,3.9);
				\draw[->,black](2.9,3.1)--(2.1,3.9);
				\draw[->,black](3.9,3.1)--(3.1,3.9);
				\draw[->,black](4.9,3.1)--(4.1,3.9);
				\draw[->,black](5.9,3.1)--(5.1,3.9);
				\draw[->,black](1.9,4.1)--(1.1,4.9);
				\draw[->,black](2.9,4.1)--(2.1,4.9);
				\draw[->,black](3.9,4.1)--(3.1,4.9);
				\draw[->,black](4.9,4.1)--(4.1,4.9);
				\draw[->,black](5.9,4.1)--(5.1,4.9);
				%quadrante 4
				\draw[->,black](1.9,-0.9)--(1.1,-0.1);
				\draw[->,black](2.9,-0.9)--(2.1,-0.1);
				\draw[->,black](3.9,-0.9)--(3.1,-0.1);
				\draw[->,black](4.9,-0.9)--(4.1,-0.1);
				\draw[->,black](5.9,-0.9)--(5.1,-0.1);
				\draw[->,black](1.9,-1.9)--(1.1,-1.1);
				\draw[->,black](2.9,-1.9)--(2.1,-1.1);
				\draw[->,black](3.9,-1.9)--(3.1,-1.1);
				\draw[->,black](4.9,-1.9)--(4.1,-1.1);
				\draw[->,black](5.9,-1.9)--(5.1,-1.1);
				\draw[->,black](1.9,-2.9)--(1.1,-2.1);
				\draw[->,black](2.9,-2.9)--(2.1,-2.1);
				\draw[->,black](3.9,-2.9)--(3.1,-2.1);
				\draw[->,black](4.9,-2.9)--(4.1,-2.1);
				\draw[->,black](5.9,-2.9)--(5.1,-2.1);
				\draw[->,black](1.9,-3.9)--(1.1,-3.1);
				\draw[->,black](2.9,-3.9)--(2.1,-3.1);
				\draw[->,black](3.9,-3.9)--(3.1,-3.1);
				\draw[->,black](4.9,-3.9)--(4.1,-3.1);
				\draw[->,black](5.9,-3.9)--(5.1,-3.1);
				\draw[->,black](1.9,-4.9)--(1.1,-4.1);
				\draw[->,black](2.9,-4.9)--(2.1,-4.1);
				\draw[->,black](3.9,-4.9)--(3.1,-4.1);
				\draw[->,black](4.9,-4.9)--(4.1,-4.1);
				\draw[->,black](5.9,-4.9)--(5.1,-4.1);
				
				%quadrante 3
				%\draw[->,gray](-1.1,-0.9)--(-1.5,-0.5);
				\draw[->,black](-2.1,-0.9)--(-2.9,-0.1);
				\draw[->,black](-3.1,-0.9)--(-3.9,-0.1);
				\draw[->,black](-4.1,-0.9)--(-4.9,-0.1);
				\draw[->,black](-5.1,-0.9)--(-5.9,-0.1);
				\draw[->,black](-1.1,-1.9)--(-1.9,-1.1);
				\draw[->,black](-2.1,-1.9)--(-2.9,-1.1);
				\draw[->,black](-3.1,-1.9)--(-3.9,-1.1);
				\draw[->,black](-4.1,-1.9)--(-4.9,-1.1);
				\draw[->,black](-5.1,-1.9)--(-5.9,-1.1);
				\draw[->,black](-1.1,-2.9)--(-1.9,-2.1);
				\draw[->,black](-2.1,-2.9)--(-2.9,-2.1);
				\draw[->,black](-3.1,-2.9)--(-3.9,-2.1);
				\draw[->,black](-4.1,-2.9)--(-4.9,-2.1);
				\draw[->,black](-5.1,-2.9)--(-5.9,-2.1);
				\draw[->,black](-1.1,-3.9)--(-1.9,-3.1);
				\draw[->,black](-2.1,-3.9)--(-2.9,-3.1);
				\draw[->,black](-3.1,-3.9)--(-3.9,-3.1);
				\draw[->,black](-4.1,-3.9)--(-4.9,-3.1);
				\draw[->,black](-5.1,-3.9)--(-5.9,-3.1);
				\draw[->,black](-1.1,-4.9)--(-1.9,-4.1);
				\draw[->,black](-2.1,-4.9)--(-2.9,-4.1);
				\draw[->,black](-3.1,-4.9)--(-3.9,-4.1);
				\draw[->,black](-4.1,-4.9)--(-4.9,-4.1);
				\draw[->,black](-5.1,-4.9)--(-5.9,-4.1);
				%quadrante 2
				\draw[->,black](-1.1,0.1)--(-1.9,0.9);
				\draw[->,black](-2.1,0.1)--(-2.9,0.9);
				\draw[->,black](-3.1,0.1)--(-3.9,0.9);
				\draw[->,black](-4.1,0.1)--(-4.9,0.9);
				\draw[->,black](-5.1,0.1)--(-5.9,0.9);
				\draw[->,black](-1.1,1.1)--(-1.9,1.9);
				\draw[->,black](-2.1,1.1)--(-2.9,1.9);
				\draw[->,black](-3.1,1.1)--(-3.9,1.9);
				\draw[->,black](-4.1,1.1)--(-4.9,1.9);
				\draw[->,black](-5.1,1.1)--(-5.9,1.9);
				\draw[->,black](-1.1,2.1)--(-1.9,2.9);
				\draw[->,black](-2.1,2.1)--(-2.9,2.9);
				\draw[->,black](-3.1,2.1)--(-3.9,2.9);
				\draw[->,black](-4.1,2.1)--(-4.9,2.9);
				\draw[->,black](-5.1,2.1)--(-5.9,2.9);
				\draw[->,black](-1.1,3.1)--(-1.9,3.9);
				\draw[->,black](-2.1,3.1)--(-2.9,3.9);
				\draw[->,black](-3.1,3.1)--(-3.9,3.9);
				\draw[->,black](-4.1,3.1)--(-4.9,3.9);
				\draw[->,black](-5.1,3.1)--(-5.9,3.9);
				\draw[->,black](-1.1,4.1)--(-1.9,4.9);
				\draw[->,black](-2.1,4.1)--(-2.9,4.9);
				\draw[->,black](-3.1,4.1)--(-3.9,4.9);
				\draw[->,black](-4.1,4.1)--(-4.9,4.9);
				\draw[->,black](-5.1,4.1)--(-5.9,4.9);
				
				%\draw[->,gray](0.9,-0.9)--(0.1,-0.1);
				%\draw[->,gray](-0.1,0.1)--(-0.9,0.9);
				
				\draw[->, black, line width=1pt] (0.95,-2.87)--(0.05,-0.13);
				\draw[->, black, line width=1pt] (-0.05,0.14)--(-0.95,2.90);
				\draw[-, red, dotted,  line width=1pt] (-0.5, -5)--(-0.61,-4.78 );
				\draw[->, red, line width=1pt] (-0.61, -4.78)--(-0.95,-4.13 );
				
				\draw[-, red, , dotted, line width=1pt] (0, -5)--(-0.14,-4.72 );
				\draw[->, red, line width=1pt] (-0.14, -4.72)--(-0.95,-3.13 );
				
				\draw[-, red,dotted,  line width=1pt] (0.45, -5)--(0.31,-4.68 );
				\draw[->, red, line width=1pt] (0.31, -4.68)--(-0.95,-2.13 );
				
				\draw[->, red, line width=1pt] (0.93, -4.88)--(-0.95,-1.13 );
				\draw[->, red, line width=1pt] (0.93, -3.88)--(-0.95,-0.13 );
				\draw[->, red, line width=1pt] (0.93, -2.88)--(-0.95,0.87 );
				\draw[->, red, line width=1pt] (0.93, -0.88)--(-0.95,2.85 );
				\draw[->, red, line width=1pt] (0.93, 0.12)--(-0.95,3.87 );
				\draw[->, red, line width=1pt] (0.93, 1.12)--(-0.95,4.87 );
				\draw[red, line width=1pt] (0.93, 2.12)--(-0.36,4.7);
				\draw[red, dotted,  line width=1pt] (-0.36, 4.7)--(-0.5,5);
				\draw[red, line width=1pt] (0.93, 3.12)--(0.19,4.62);
				\draw[red, dotted, line width=1pt] (0.19, 4.62)--(0,5);
				\draw[red, line width=1pt] (0.93, 4.12)--(0.60,4.78);
				\draw[red, dotted, line width=1pt] (0.60, 4.78)--(0.5,5);
				
				\draw[->, red, line width=1pt](0.93,-1.88) [out=112, in=308] to (-0.95,1.87);
				
				\draw[fill=yellow]{(-8,-7) circle(3pt)};
				\node at (-6.7,-7){=$M_0(0,0,0)$};
				\draw[fill=green]{(-5,-7) circle(3pt)};
				\node at (-3.6,-7){=$M_t(a,0,0)$};
				\draw[fill=red]{(-2,-7) circle(3pt)};
				\node at (-0.1,-7){=$M_t(0,0,-a-2)$};
				\draw[fill=blue]{(2,-7) circle(3pt)};
				\node at (3.35,-7){=$M_t(0,1,0)$};
				\draw[fill=black]{(5,-7) circle(3pt)};
				\node at (6.3,-7){=$M_t(1,0,0)$};
				\node at (7,-0.3){$a$};
				\node at (-0.2,5.8){$t$};
			\end{tikzpicture}$$}
		
	\end{center}
	
	\caption{Exceptional de Rham complex involving morphisms of degree 1, 3 and 4 between Verma modules.} \label{deRham2}
\end{figure}

\end{document}